\documentclass[a4paper,11pt,reqno]{amsart}
\usepackage{mhequ}
\usepackage{mhsymb}
\usepackage{amssymb}
\usepackage{times}

\usepackage[]{hyperref}

\let\emptyset \undefined
\newsymbol\emptyset    203F


\definecolor{darkred}{rgb}{0.9,0.1,0.1}

\def\d{\partial}

\theoremstyle{plain}
\newtheorem{theorem}{Theorem}[section]
\newtheorem{corollary}[theorem]{Corollary}
\newtheorem{lemma}[theorem]{Lemma}
\newtheorem{proposition}[theorem]{Proposition}

\newtheorem{definition}[theorem]{Definition}
\newtheorem{assumption}[theorem]{Assumption}

\theoremstyle{remark}
\newtheorem{remark}[theorem]{Remark}

\numberwithin{equation}{section}

\DeclareMathOperator{\Id}{Id}


\def\E{{\mathbb E}}
\def\P{{\mathbb P}}


\newcommand{\ga}{\gamma}


\newcommand{\cC}{\mathcal{C}}
\newcommand{\bB}{\mathcal{B}}
\newcommand{\dD}{\mathcal{D}}

\renewcommand{\eqdef}{:=}


\newcommand{\bN}{\bB^{0}}
\newcommand{\cN}{\cC^{0}}

\renewcommand{\hat}{\widehat}
\newcommand{\ot}{\otimes}			


\newcommand{\ls}{\lesssim}
\newcommand{\dd}{\,{d}}
\renewcommand{\CE}{\mathcal{D}}

\newcommand{\aX}{\alpha_\star}

\newcommand{\et}{\eta}

\newcommand{\ta}{{\widetilde{\alpha}}}

\newpage

\renewcommand{\CB}{\mathcal{B}}

\def\BV{\mathrm{BV}}

\newcommand{\tF}{\overline{F}}
\renewcommand{\Z}{\mathbb{Z}}
\renewcommand{\R}{\mathbb{R}}

\def\${|\!|\!|}
\newcommand{\XX}{\mathbf{X}}

\newcommand{\ue}{u_\eps}

\newcommand{\dFs}{F_{s,t} }

\newcommand{\dRs}{ R_{s,t}}

\newcommand{\MM}{\mathcal{M}}
\newcommand{\KK}{\mathcal{K}}

\newcommand{\tbp}{\tilde{\alpha}}
\newcommand{\tbm}{\tilde{\alpha}}

\def\${|\!|\!|} 
\def\deltam{\delta_{s,t} m_{\eps}}
\def\deltamg{\delta_{s,t} m_{\eps}^{\gamma}}
\def\deltaa{\delta_\eps m_{t}}
\def\deltaag{\delta_\eps m_{t}^{\gamma}}

\newcommand{\cD}{\mathcal{D}}

\newcommand{\Ru}{\mathcal{R}_u}
\newcommand{\Rue}{\mathcal{R}_{u_\eps}}
\newcommand{\Rub}{\mathcal{R}_{\bar{u}}}

\newcommand{\BB}{\mathcal{F}}
\newcommand{\BBe}{\BB_\eps}

\newcommand{\ve}{v_\eps}

\newcommand{\XE}{\Xi_\eps}

\newcommand{\Pt}{\Psi^\theta}
\newcommand{\Pte}{\Psi^\theta_\eps}
\newcommand{\tPte}{\tilde{\Psi}^\theta_\eps}
\newcommand{\tPt}{\tilde{\Psi}^\theta}
\newcommand{\Rt}{R^\theta}
\newcommand{\tRt}{\tilde{R}^\theta}
\newcommand{\Rte}{R^\theta_\eps}
\newcommand{\tRte}{\tilde{R}^\theta_\eps}
\newcommand{\phie}{\phi_\eps}
\newcommand{\mt}{m_{t}}
\newcommand{\met}{m_{\eps,t}}

\def\iiint{\rlap{\hspace{0.43em}\raisebox{0.2em}{\rule{0.5em}{0.1em}}}}
\def\siint{\rlap{\hspace{0.36em}\raisebox{0.26em}{\rule{0.3em}{0.06em}}}}
\def\iint{\mathchoice{\iiint}{\siint}{\siint}{\siint}\int}

\allowdisplaybreaks

\begin{document}

\title[]
{Approximating rough stochastic PDEs}

\author{Martin Hairer}
\address{Martin Hairer, The University of Warwick, Coventry CV4 7AL, UK} 
\email{M.Hairer@Warwick.ac.uk}

\author{Jan Maas}
\address{Jan Maas,
Institute for Applied Mathematics\\
University of Bonn\\
Endenicher Allee 60\\
53115 Bonn\\
Germany} 
\email{maas@uni-bonn.de}

\author{Hendrik Weber}
\address{Hendrik Weber, The University of Warwick, Coventry CV4 7AL, UK} 
\email{hendrik.weber@warwick.ac.uk}

\thanks{We are very grateful for the referee's careful reading of the original manuscript and 
for pointing out a mistake in our previous proof of Lemma~\ref{le:mistakecorrected}.\\
\rlap{\phantom{A}}\hspace\parindent JM is supported by Rubicon grant 680-50-0901
of the Netherlands Organisation for Scientific Research (NWO).
MH is supported by EPSRC grant EP/D071593/1 and by the 
Royal Society through a Wolfson Research Merit Award. Both MH and HW are supported by the Leverhulme Trust through a Philip Leverhulme Prize. 
}



 \begin{abstract}
We study approximations to a class of vector-valued equations of Burgers type driven by a multiplicative space-time white noise. A solution theory for this class of equations has been developed recently in [Hairer, Weber,  \emph{Probab. Theory Related Fields}, 2013]. The key idea was to use the theory of \emph{controlled rough paths} to give definitions of weak / mild solutions and to set up a Picard iteration argument.

In this article the limiting behaviour of a rather large class of (spatial) approximations to these equations is studied. These approximations are shown to converge and convergence rates are given, but the limit may depend on the particular choice of approximation. This effect is a spatial analogue to the It\^o-Stratonovich correction in the theory of stochastic ordinary differential equations, where it is well known that different approximation schemes may converge to different solutions.   
 \end{abstract}


\date\today

\maketitle
\thispagestyle{empty}

\section{Introduction}
The aim of the present paper is to study approximations to vector-valued stochastic Burgers-like equations with multiplicative noise. These equations are of the form 
\begin{equ}[e:SPDE]
 \partial_t {u} =   \nu \,  \partial_x^2 u 
 		+ F(u) + G(u) \partial_x u    +\theta(u) \, \xi  \;,
\end{equ}
where the function $u=u(t,x;\omega) \in \R^n$ is  vector-valued. We assume that the  functions $F\colon \R^n \to \R^n$ and $G, \theta \colon \R^n \to \R^{n\times n}$ are smooth and the products in the terms $G(u) \partial_x u$ as well as in $\theta(u) \xi$ are to be interpreted as matrix vector multiplication. The noise term $\xi$ denotes an $\R^n$-valued space-time white noise and the multiplication should be interpreted in the sense of It\^o integration
against an $L^2$-cylindrical Wiener process.

In the case $G=0$, approximations to \eref{e:SPDE} have been very well studied: we refer to \cite{Istvan1,Istvan2,MR1803132} for some of the earlier results in this direction. 
For non-zero $G$, there is a clear distinction between
the \emph{gradient case}, where $G = \nabla \mathcal{G}$ for some sufficiently regular 
function $\mathcal{G}\colon \R^n \to \R^n$ (so that $\d_t u = G(u)\d_x u$ would describe a system 
of conservation laws), and the general case.
In the gradient case, existence and uniqueness for \eref{e:SPDE} has been known at least since the nineties  \cite{dPDT94,Gyo98} and convergence results for numerical schemes have, for example, been obtained in  \cite{IstvanBurgers, DirkArnulf}. 

The emphasis of the present article is on the general, non-gradient, case.
A satisfactory solution theory for the general case is much more 
involved than the gradient case and has been given only very recently 
\cite{Hai10,HW10}. The difficulty in treating \eref{e:SPDE} lies in the 
lack of spatial regularity of its solutions. In fact, it follows from the results in \cite{HW10} that solutions to \eref{e:SPDE} take values in $\cC^\alpha$ for any $\alpha<  \frac{1}{2}$ but not for $\alpha =  \frac{1}{2}$. Unfortunately, it turns out that the pairing 
\begin{equ}[e:bilin]
\cC^\alpha \times \cC^\alpha \ni (u,v) \mapsto u \, \partial_x  v
\end{equ}
is well-defined if and only if $\alpha > \frac{1}{2}$. Even worse: there exists no ``reasonable'' Banach space
$\CB$ containing the solutions to the linearised version of \eref{e:SPDE} and such that \eref{e:bilin} extends to
a continuous bilinear map from $\CB \times \CB$ into the space of Schwartz distributions, see for
example \cite{TerryPath} and \cite{LCL07}. As a consequence, it is not clear at all a priori how to interpret the term $G(u)\partial_xu$ in \eref{e:SPDE} and the classical approach to the construction 
of mild solutions fails. 

In all of the above mentioned references on the gradient case, this issue is resolved by exploiting the conservation law structure of the nonlinearity. This means that the chain rule  is \emph{postulated}  and the nonlinearity is  rewritten as 
\begin{equs}[e:nonlin]
G\big(u(t,x)\big) \, \partial_x u(t,x)  \, = \, \partial_x \mathcal{G} \big( u(t,x) \big)\;,
\end{equs}
which makes sense as a distribution as soon as $u$ is continuous.
The approximation schemes studied e.g. in \cite{IstvanBurgers, DirkArnulf} respect this conservation law structure by considering natural approximations of $\partial_x \mathcal{G}$. For example, it is not difficult to show that
if $\ue$  solves
\begin{equ}[e:approx1]
 \partial_t{u}_\eps =   \nu \,  \partial_x^2 u_\eps 
 		+ F(u_\eps) + \frac{1}{\eps} \Big( \mathcal{G} \big( u_\eps(t,x+ \eps) \big)  - \mathcal{G} \big( u_\eps(t,x) \big) \Big)    +\theta(u_\eps) \, \xi  \; ,
\end{equ}
then $\ue$ converges to $u$ for $\eps \downarrow 0$. Similarly, full finite difference / element approximations
 also converge.

In the \emph{non-gradient} case, i.e.\ when such a function $\mathcal{G}$ does not exist, this approach does not work. The key idea developed in \cite{Hai10,HW10} to overcome this difficulty is  the following: in order to define the product $G\big(u(t,x) \big) \, \partial_x u(t,x)$ as a distribution, it has to be tested against a smooth test function $\varphi$. This expression takes the form
\begin{equs}[e:nonlin2]
\int_{-\pi}^\pi \varphi(x) \,  G\big(u(t,x) \big) \, \partial_x u(t,x) \,  dx = \, \int_{-\pi}^\pi \varphi(x) \,  G\big(u(t,x) \big) \, d_x u(t,x). 
\end{equs}
The fact that we expect $u$ to behave like a Brownian motion as a function of the space variable $x$ suggests that one should interpret this expression as a kind of stochastic integral. In particular, a stochastic integration theory is needed to capture stochastic cancellations. It turns out that the theory of \emph{controlled rough paths}  \cite{Ly98, LQ02, LCL07,Gu04, FV10,GT10}  provides a suitable way to deal with spatial stochastic 
integrals like \eref{e:nonlin2}. Using this idea,  a concept of solutions is given in \cite{Hai10,HW10}. These solutions exist and are unique up to a choice of \emph{iterated integral} which corresponds to the choice of the integral of $u$ against 
itself. This is a situation analogous to the choice between It\^o and Stratonovich integral that is familiar from the
classical theory of SDEs.

Even in the gradient case, effects of this non-uniqueness can be observed. 
A posteriori, this is not surprising: after some reflection, 
it clearly appears that postulating the chain rule \eref{e:nonlin} is a rather bold step to take! Indeed, 
we have just seen that 
the expression \eref{e:nonlin2} is akin to a stochastic integral, and we know very well that the
usual chain rule only holds if such an integral is interpreted in the Stratonovich sense, while it fails if it
is interpreted in the It\^o sense.
In \cite{HM10} approximations to \eref{e:SPDE} are studied in the special case $G = \nabla \mathcal{G}$ 
when the noise is additive, i.e.\ if $\theta(u) = 1$. For a whole class of different natural approximation schemes, convergence to a stochastic process $\bar{u}$ is shown.  The main difference with previous works is that in \cite{HM10}, natural discretisations of $G(u)\,\d_x u$ instead of natural
discretisations of $\d_x \CG(u)$ are considered. A typical example of the type of discretisation for the nonlinearity considered there is
\begin{equs}[e:approx2]
G\big( u(t,x) \big) \, \frac{1}{\eps} \big(  u(t,x+\eps) - u(t,x)\big).
\end{equs}

In general, the limiting process $\bar{u} = \lim_{\eps \to 0} u_\eps$ turns out \emph{not} to be
a solution of \eref{e:SPDE} in the classical sense. Instead, it solves a similar equation with an additional reaction term. This extra term  depends on the specific choice of approximation and it can be calculated explicitly. 
As noted in \cite{HM10}, this additional term is exactly the correction that appears when changing to 
a different stochastic integral.

In the present work, these approximation results are extended to the \emph{non-gradient} case with 
\emph{multiplicative} noise. We study a wide class of approximations (essentially the same as in 
\cite{HM10} but with slightly different technical assumptions)  and extend the convergence result to the general case.  Unsurprisingly, the techniques we use are quite different from \cite{HM10}, since the notion of solution
for the limiting object is completely different. We make full use of the machinery developed in \cite{Hai10,HW10} and we develop a method to 
include approximations to rough integrals.  
In particular, we do obtain an explicit rate of convergence of the order $\eps^{{1\over 6}-\kappa}$ for
$\kappa$ arbitrarily small. 

There are several motivations for this work: Equation \eref{e:SPDE} appears, for example, in the path sampling algorithm introduced in \cite{HSV07}  (see  also \cite{Hai10}). So far, the fact that the limit depends on the specific choice of approximation scheme had been shown only in the gradient case with additive noise. In this work we complete the picture by showing that the same effect can be observed in the general case and we obtain an expression for the correction
term that arises.

Another main motivation is to illustrate how the rough path machinery can be used to obtain concrete
approximation results, including convergence rates. 
This is particularly interesting, as  similar techniques were recently used in \cite{Hai11} to give a solution theory for the KPZ equation \cite{KPZ}
\begin{equs}
\partial_t{h} \, = \, \partial_{x}^2 h + \lambda (\partial_x h)^2 - \infty +\xi\;,
\end{equs}
where $\xi$ denotes space-time white noise and ``$\infty$'' denotes an ``infinite constant'' that needs to be 
subtracted in order to make sense of the diverging term $(\partial_x h)^2$.
This equation is a popular model for surface growth (see e.g.\ \cite{C11} and the references therein). It is conjectured that a large class of microscopic surface growth models (e.g.\ the lattice KPZ equation \cite{SaSp09}
and variations on the weakly asymmetric simple exclusion process \cite{Brazilian,Sigurd}), 
converge to $h$ in suitable scaling limits,
but so far this has only been shown for the weakly asymmetric simple exclusion process \cite{BG97}. 

The present article provides a case study illustrating how one can obtain approximation results for  
a class of equations exhibiting similar features to those of the KPZ equation (see \cite[Section~4]{Hai11}).
In this sense, the present work is really a ``proof of concept'' that lays the foundations for 
further analytical investigations into the universality of the KPZ equation. Notice that although the KPZ equation has 
additive noise, the construction in \cite{Hai11} yields an equation that is very close to the case of multiplicative noise treated here.   

\subsection{Framework and main result}

For $\eps > 0$ we consider a class of approximating stochastic PDEs given by
\begin{equs}[e:SPDEeps]
 d u_\eps &=  \Big( \nu \Delta_\eps u_\eps + F(u_\eps)
 		+ G(u_\eps) D_\eps u_\eps \Big) dt  + \theta(u_\eps) \,  H_\eps dW\\
		u_\eps(0) &= u_\eps^0\;.
\end{equs}
Here, as usual,  we have replaced the formal $\xi$ with the stochastic differential of a cylindrical Brownian motion $W$ on $L^2$. The integral against $dW$ should furthermore be interpreted in the It\^o sense.
For simplicity, we assume that $x$ takes values in $[-\pi,\pi]$ and we endow \eref{e:SPDEeps} with periodic boundary conditions. We do not expect our results to significantly depend on this choice. Throughout the paper we will assume that $F \in \cC^1, G \in \cC^3,$ and $\theta \in \cC^2$.

The operators $\Delta_\eps$, $D_\eps$, and $H_\eps$ appearing in \eref{e:SPDEeps} are Fourier multipliers 
providing approximations to $\partial_x^2$, $\partial_x$ and the identity respectively. In terms of their action
in Fourier space, they are given by 
\minilab{e:defApprox}
\begin{equs}
\widehat{\Delta_\eps u}(k) &= - k^2 f(\eps k)\hat u(k)\;, \label{e:DeltaEps}\\
\widehat{D_\eps u}(k) &= ik g(\eps k)\hat u(k)\;, \label{e:Deps}\\
\widehat{ H_\eps W}(k) &= h(\eps k) \hat W(k)\;.\label{Heps}
\end{equs}
Throughout the paper we will make some standing assumptions on the cut-off functions $f$, $g$ and $h$. 

\begin{assumption}\label{a:f}
The function $f : \R \to (0,+\infty]$ is  even, satisfies $f(0) =1$,  and 
is continuously differentiable on an interval $[-\delta,\delta]$ around $0$.
Furthermore, there exists $c_f \in (0,1)$ such that $f(k) \ge  c_f$ for all $k > 0$. 
\end{assumption}
Besides this weak regularity assumption of $f$ near the origin, we also need a global bound on its oscillations.
In order to state this bound, we introduce the family of functions 
\begin{equ}
b_t(k) = \exp\big(- k^2  f(k)   \, t\big)\;.
\end{equ}
With this notation at hand, we assume that

\begin{assumption}\label{a:BV}
The functions $b_{t}$ are uniformly bounded in the bounded variation norm: 
\begin{equ}
\sup_{t > 0} \big| b_{t} \big|_{\BV} < \infty\;.
\end{equ}
\end{assumption}

We make the following assumption on the approximation of the spatial derivative.

\begin{assumption}\label{a:D}
There exists a signed Borel measure $\mu$ such that
\begin{equ}
\int_\R e^{ikx}\,\mu({d} x) = ik\,g(k)\;,
\end{equ}
and such that 
\begin{align} \label{eq:moments}
 \mu(\R) = 0, \quad 
 |\mu|(\R) < \infty, \quad 
 \int_{\R} x \, \mu({d}x) = 1, \quad
 \int_{\R} |x|^\frac{5}{2} \, |\mu|({d}x) < \infty\;.
\end{align}
In particular, we have $ (D_\eps u)(x) :=  \frac{1}{\eps} \int_\R u(x + \eps y) \, \mu(dy)$, where we identify $u\colon [-\pi, \pi] \to  \R$ with its periodic extension on all of $\R$. 
\end{assumption}
Note that the case $D_\eps u(x)=  \frac{1}{\eps}( u(x +\eps) - u(x))$ mentioned in \eref{e:approx1}  and \eref{e:approx2}  is included as special case $\mu = \delta_1- \delta_0$.
Finally, we make the following assumption on the approximation of the noise.

\begin{assumption}\label{a:h}
The function $h$ is even, bounded and so that $h^2/f$ is of bounded variation.  
Furthermore, $h$ is twice differentiable at the origin with $h(0) = 1$ and $h'(0) = 0$.
\end{assumption}

Note that the assumptions on $g$ and $h$ are identical to those imposed in \cite{HM10}.
Regarding the function $f$, Assumption~\ref{a:f} is actually weaker than the corresponding assumption in
\cite{HM10}. However, we require the additional Assumption~\ref{a:BV}.  This assumption is not too restrictive and in particular all the examples discussed in \cite{HM10} satisfy it. See Remark~\ref{rem:Holder} below for
the main reason why this additional assumption is required. Note that the assumptions on $f$ do not imply that the approximated heat semigroup $S_\eps(t) := e^{t \Delta_\eps}$ is continuous at $0$ in the space of continuous functions. This is natural in the context of numerical approximations, since these would
always involve the projection onto a finite-dimensional subspace.
See Subsections \ref{ss:summary} and \ref{ss:proof} below for a more detailed discussion of this point.

Let $\bar u$ be the solution of the equation
\begin{equs}[e:SPDE2]
 d \bar u &=  \Big( \nu \partial_x^2 \bar u + \tF (\bar u)
 		+ G(\bar u) \partial_x \bar u \Big) \, dt  + \theta(\bar{u}) \, dW\;, \\
		\bar{u}(0) &= u^0  \;.
\end{equs}
In this equation, the vector valued function $\bar{F}$ is given by 
\begin{align}\label{e:Fbar}
 \tF^i := (F^i - \Lambda \,  \theta_k^j  \partial_j G^i_l   \,  \theta_k^l )\;,
\end{align}
where we follow the convention to  sum over repeated indices. The  correction constant   $\Lambda$ can be  calculated explicitly as
\begin{equ}[e:defLambda]
 \Lambda \eqdef \frac{1}{2\pi\nu}  \int_{\R_+} 
    \int_\R \frac{(1- \cos(yt))h^2(t)}{t^2 f(t)}  \, \mu(dy) \, dt\;.
\end{equ}
Note that a straightforward calculation shows that $\Lambda$ is indeed well-defined, as a consequence of the fact that 
$h^2 \ls  f$ by assumption and that $|\mu|$ has a finite second moments.
The constant $\Lambda$ is identical to the constant appearing in \cite{HM10}. There, it 
has been calculated for several natural approximation schemes including the case where 
only the nonlinearity is discretised, as well as a finite difference and a Galerkin 
discretisation. 

Note that in the non-gradient case $G \neq \nabla \mathcal{G}$,  \eref{e:SPDE2} has 
to be interpreted as in \cite{HW10}. Actually, there a slightly different equation 
is considered -- the equation studied in \cite{HW10} does not include the reaction 
term $\tF$ and more importantly, global boundedness of $G,\theta$ as well as 
its derivatives up to order three is assumed to guarantee global existence.  
Treating the additional reaction term $\tF$ is a straightforward modification 
that does not pose any problem for this approach. In the present paper, we also drop the 
assumption on the 
boundedness of $F$, $G$ and $\theta$, so we allow for explosion in finite time. 
We will deal with this by working up to a suitable stopping time. More precisely, for any $K >0$ we define the stopping times 
\begin{equs}
\tau_K^* := \inf \big\{ t \colon| \bar{u}(t) |_{\cN } \geq K   \big\},
\end{equs}
where $|\cdot|_{\cN}$ denotes the supremum norm. The explosion time of $\bar{u}$ is then defined to be $\tau^* = \lim_{K \to \infty} \tau_K^*$.

The main result of this article is the following theorem. 
\begin{theorem}\label{thm:main-one} 
Let $\alpha_\star = {1\over 2} -\kappa$ for some $\kappa > 0$. Then, for every $\kappa$ small enough,
there exists a $\gamma>0$ with $\lim_{\kappa \to 0} \gamma(\kappa) = {1\over 6}$
such that the following is true.

Let $|u^0|_{\cC^{\alpha_\star}} <\infty$ and  $\sup_{\eps \le 1} |u^0_\eps|_{\cC^{\alpha_\star}} < \infty $ and denote by $u_\eps$ and $\bar{u}$ the solutions to \eref{e:SPDEeps} and   \eref{e:SPDE2}. If the initial data $u_\eps^0$ and $u^0$ satisfy additionally  
\begin{equs}
| u_\eps^0 -  u^0|_{\cC^{1/3}} \ls  \eps^\gamma,
\end{equs}
then there exists a sequence of stopping times $\tau_\eps$ satisfying $\lim_{\eps \to 0}\tau_\eps = \tau^*$ 
in probability, and such that for any $\tilde{\gamma} < \gamma$
\begin{equ}
 \lim_{\eps \to 0} \P\Big( \sup_{0 \leq t \leq \tau_\eps} | u_\eps(t) - \bar u(t)|_{\cN } >  \eps^{  \tilde{\gamma}} \Big) = 0\;.
\end{equ}
\end{theorem}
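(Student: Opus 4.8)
The plan is to prove Theorem~\ref{thm:main-one} by recasting both $u_\eps$ and $\bar u$ as fixed points of Picard-type maps in a common space of controlled rough paths (in the spatial variable), and then estimating the difference of the two maps quantitatively in $\eps$. Concretely, I would first set up the solution theory of \cite{HW10} in a form that carries an explicit $\eps$-dependent parameter: write the mild formulation $u_\eps(t) = S_\eps(t)u_\eps^0 + \int_0^t S_\eps(t-s)\big(F(u_\eps) + G(u_\eps)D_\eps u_\eps\big)\,ds + \int_0^t S_\eps(t-s)\theta(u_\eps)H_\eps\,dW$, and similarly for $\bar u$ with the true heat semigroup $S(t)=e^{t\nu\partial_x^2}$, the true derivative, and the rough integral interpretation of $G(\bar u)\partial_x\bar u$. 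The key structural input is that, pathwise in the space variable, $u_\eps(t,\cdot)$ is controlled by the (approximate) stochastic convolution $\Psi_\eps$, while $\bar u(t,\cdot)$ is controlled by the stochastic convolution $\Psi$; the nonlinear term is then a genuine rough integral against the reference rough path $\mathbf{X}_\eps$ (resp.\ $\mathbf{X}$) built from $\Psi_\eps$ (resp.\ $\Psi$) and its iterated integral.

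The heart of the argument is a three-part convergence analysis. First, probabilistic/harmonic-analytic estimates: one shows that the approximate stochastic convolution $\Psi_\eps$ converges to $\Psi$ in the relevant $\cC^{\alpha_\star}$-in-space, time-Hölder topology at rate $\eps^\gamma$, and—crucially—that the \emph{iterated integral} (the area term) of $\mathbf{X}_\eps$ does \emph{not} converge to that of $\mathbf{X}$, but rather to that of $\mathbf{X}$ shifted by a symmetric correction. This is where the constant $\Lambda$ of \eqref{e:defLambda} emerges: the discrete derivative $D_\eps$ paired with the regularised noise $H_\eps$ and the regularised semigroup $S_\eps$ produces, in the $\eps\to0$ limit, exactly the Riemann-sum-type correction $\frac{1}{2\pi\nu}\int\int \frac{(1-\cos(yt))h^2(t)}{t^2 f(t)}\,\mu(dy)\,dt$ in the "diagonal" of the rough integral. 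Assumption~\ref{a:BV} and the moment conditions \eqref{eq:moments} on $\mu$, together with $h^2\lesssim f$, are precisely what make these estimates uniform in $\eps$ and give the stated rate; Assumption~\ref{a:f} lets the approximate semigroup behave well away from frequency zero (with the $c_f$-shift absorbing the bad low-frequency behaviour into $b_t$). Second, a stability estimate for the fixed-point map: given two reference rough paths that are $\eps^\gamma$-close (after the correction), the corresponding solutions of the Picard scheme are $\eps^\gamma$-close on a small time interval, with constants depending only on $\cC^{\alpha_\star}$-norms. The correction term $\tF$ defined in \eqref{e:Fbar} is exactly the compensator that must be added to $\bar u$'s equation so that the $\eps\to0$ limit of $u_\eps$'s (uncorrected) rough integral matches $\bar u$'s (corrected) rough integral—this identification is a direct computation once the area correction is known. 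Third, a localisation/patching argument: the a~priori bounds in \cite{HW10} are only local in time, so I would run the above on successive short intervals, controlling the accumulation of errors, and define $\tau_\eps$ as the first time the discrete solution's $\cC$-norm exceeds (say) $2K$ or the controlled-rough-path norm blows up; a Gronwall-type iteration over $O(1/\text{step})$ intervals up to $\tau_K^*$ yields the global-in-time-up-to-$\tau^*$ statement, and letting $K\to\infty$ gives $\tau_\eps\to\tau^*$ in probability.

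On the technical side, several points need care. One must match the two different semigroups: $S_\eps$ and $S$ act on functions of differing regularity, and $S_\eps$ is not even strongly continuous at $t=0$ on $\cC$, so the comparison must be done in fractional-Sobolev or Besov scales where $\|S_\eps(t)-S(t)\|$ is controlled by $\eps^\gamma t^{-\beta}$ for suitable $\beta<1$; the time-integrability of this singularity is what ultimately caps $\gamma$ near $1/6$ (the exponent being a compromise between the spatial regularity $\alpha_\star$ one can afford for the rough path and the temporal regularity needed for the convolution, with the $\cC^{1/3}$ hypothesis on the initial data chosen to match). One must also verify that the nonlinearity $G(u)D_\eps u$, rewritten as a discrete/rough integral $\int \varphi(x)G(u(x))\,d_x u(x)$ as in \eqref{e:nonlin2}, fits the controlled-rough-path calculus with $\eps$-uniform bounds—this uses $G\in\cC^3$ and the Gubinelli derivative structure of $u_\eps$ inherited from $\Psi_\eps$.

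\medskip

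The main obstacle, I expect, is the sharp convergence of the \emph{area process} together with the precise identification of its limiting correction: showing that $\int_0^t\int_0^t S_\eps(\cdot)D_\eps(\cdot)$-type double stochastic integrals converge, at rate $\eps^\gamma$, to the true rough-path area plus the \emph{exact} symmetric constant in \eqref{e:defLambda}, uniformly over the time interval and in the spatial Hölder norm. This requires careful Fourier-side computations with the multipliers $f,g,h$, exploiting Assumptions~\ref{a:f}--\ref{a:h} (in particular $h'(0)=0$ and $h(0)=1$ to kill unwanted contributions, $\mu(\R)=0$ and $\int x\,\mu(dx)=1$ to get the right normalisation, $|\mu|$ having a second moment for the error terms, and $|b_t|_{\BV}$ uniformly bounded to handle the oscillatory semigroup), and a Wick/chaos decomposition to separate the mean contribution (which gives $\Lambda$) from the martingale part (which gives the rate). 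Everything else—the fixed-point stability, the localisation, the passage $K\to\infty$—is comparatively routine given the machinery of \cite{HW10} and the quantitative input from this step.
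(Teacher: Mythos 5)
Your overall architecture --- mild formulation, controlled rough paths in the spatial variable, a quantitative stability estimate for the fixed-point map, and a localisation/restarting argument up to stopping times --- is indeed the route the paper takes. But the step you identify as the heart of the matter is described in a way that would fail if executed literally. You claim that the iterated integral of $\mathbf{X}_\eps$ ``does \emph{not} converge to that of $\mathbf{X}$, but rather to that of $\mathbf{X}$ shifted by a symmetric correction.'' In the rough-path topology this is false: Lemma~\ref{le:GRP} and Corollary~\ref{cor:GRP} show that $\XX_\eps \to \XX$ with \emph{no} shift, at rate almost $\eps^{1/2-\alpha}$ in $\bB^{2\alpha}$. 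The correction constant $\Lambda$ lives below the resolution of the rough-path metric: it is carried by the rescaled diagonal $D_\eps\XX_\eps(s;y) = \eps^{-1}\int\XX_\eps(s;y,y+\eps z)\,\mu(dz)$, a functional that the $\bB^{2\alpha}$ bound $|\XX_\eps(x,y)|\ls|x-y|^{2\alpha}$ does not even show to be bounded (it gives $\eps^{2\alpha-1}\to\infty$). The actual mechanism is that the first-order Riemann sums computed by the approximating PDE, namely $G(u_\eps)D_\eps u_\eps$, do not approximate the rough integral; one must add the missing second-order compensator $\Upsilon_\eps^{u_\eps}$ involving $DG(u_\eps)\,u_\eps'\,D_\eps\XX_\eps\,u_\eps'$, and it is this compensator that converges --- by stochastic cancellation in the second Wiener chaos, Proposition~\ref{prop:rand-fluc}, measured in $H^{-\alpha}$ rather than in any H\"older norm --- to the reaction term containing $\Lambda\,\Id$. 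The ``shifted area'' picture is a consistent \emph{a posteriori} reinterpretation (see the remark following the theorem), but as a proof strategy it points you at a convergence statement that is not true in the topology you would need it in.

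Two smaller corrections. First, your attribution of the exponent $1/6$ to the time-integrability of $\|S_\eps(t)-S(t)\|$ is not the real source: the rate is $\gamma \to \tfrac12-\tilde\alpha$ where $\tilde\alpha$ is the spatial H\"older exponent in which the error is measured, and $1/6$ arises by pushing $\tilde\alpha$ down to the rough-path threshold $1/3$ while the solution itself lives just below $1/2$; this is also why the hypothesis on the initial data is stated in $\cC^{1/3}$. Second, your restarting argument needs one nontrivial ingredient you do not mention: at each restart time one has no a priori $\cC^{\alpha_\star}$ bound on $u_\eps$ (only on $\cC^\alpha$), so to iterate with uniform constants one must recover the higher spatial regularity of $u_\eps(t_*)$ from the controlled decomposition $\delta u_\eps = \theta(u_\eps)\,\delta X_\eps + R_{u_\eps}$ together with the convergence of $\theta(u_\eps)$, $X_\eps$ and $R_{u_\eps}$ separately, restricting to the high-probability events where $|\bar u - u_\eps|_{\cC^{\alpha_\star}}\le 1$. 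Without this the Gronwall-type iteration over $O(1)$ many intervals does not close.
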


\begin{remark}
As pointed out below in Section~\ref{sec:OFP} and in Appendix~\ref{AppA} the construction of the integral $\int \! \phi \,  G(u) \, du$ involves 
in principle the choice of  \emph{iterated integrals} of a certain Gaussian process, but there turns out to exist a canonical choice  $\XX$. 
The solution theory developed in \cite{HW10} still works if we replace $\XX$ by 
\begin{equs}
\tilde{\XX}(s;x,y) \eqdef \XX( s;x,y) - \Lambda (y-x) \Id\;,
\end{equs}
but it yields a \emph{different} solution.  In \cite{HW10} it was shown that this solution then coincides with $\bar{u}$. One can then also interpret this as stating that the approximations $\ue$ converge to solutions of the \emph{correct} equation \eref{e:SPDE}, but where a \emph{different} stochastic integral is used to interpret the nonlinearity involving $G$. 
\end{remark}
\begin{remark}
In the additive noise case our rate of convergence is not optimal. Actually, at least in the case where the noise is additive and one only discretises the derivative, our argument in Sections \ref{sec:GauFluc} and  \ref{sec:RouPathEst} would give a better rate. We believe that in that case a slight improvement of our calculations would yield a rate of almost  $\eps^{1/2}$. We suspect this to be the true rate of convergence in that case.

In the multiplicative case we do not expect the convergence to be very quick and our rate could be close to optimal. Actually, in \cite{HV11}  approximations to \eref{e:SPDE} were studied numerically. In the case of additive noise the convergence which is the content of Theorem \ref{thm:main-one} could be observed, but not in the case of multiplicative noise. It might however be possible to improve the rate of convergence by considering weak
(in the probabilistic sense) convergence, as was observed in \cite{TalayTubaro} and recently exploited
in the approximation to \eref{e:SPDE} when $G=0$ \cite{Arnaud}. 

Note also that the rate ${1\over 6}$ obtained here seems unrelated to the ``order barrier'' mentioned in
\cite{MR1803132}. 
\end{remark}

\begin{remark}
The condition that the initial conditions are bounded in $\cC^{\alpha_\star}$ and converge in a larger space $\cC^{\frac13}$ may seem slightly bulky. We choose to state the result in this way to obtain the optimal rate of convergence.  Note that if $u^0$ has the regularity of Brownian motion and $u^0_\eps$ is a piecewise linearisation, then these conditions are satisfied. We also refer to Remark \ref{rem:InCond} for a more detailed discussion about the initial condition.  
\end{remark}

\begin{remark}\label{rem:Holder}
A crucial technical difference between the present article and \cite{HM10} comes from the fact that for most of the argument we work in H\"older spaces instead of Sobolev spaces. This is necessary to apply the theory of controlled rough paths. Some arguments become easier in H\"older spaces because Gaussian random fields tend to have the same degree of H\"older regularity as Sobolev regularity. The sample paths of Brownian motion, for example, take values in every Sobolev space $H^s$ for $s< \frac{1}{2}$ but in no $H^s$ for $s \geq \frac{1}{2}$. It also takes values in $\cC^\alpha$ for the same values of $\alpha$ which is a much stronger statement. (Sobolev embedding would not even yield continuous sample paths!)  Using this additional information, we can skip the  messy \emph{high frequency cut-off} needed in the proof in \cite{HM10}.  The price to pay is that it is more difficult to get bounds on the approximated heat semigroup. As the approximations are given in Fourier coordinates, bounds in  $L^2$-based Sobolev spaces are trivial to obtain, but the derivation in H\"older spaces requires some work.  For example, we need the additional Assumption~\ref{a:BV} to ensure that the approximations of the heat semigroup are well-behaved not only in Sobolev but also in H\"older spaces.
\end{remark}

\begin{remark}
It is always possible to reduce ourselves to the case $\nu = 1$ by performing a simple time change.
For the sake of conciseness, we therefore make this choice throughout the remainder of this article.
\end{remark}

\subsection{Structure of the paper}
We start Section \ref{sec:OFP} with a short reminder of the solution theory from \cite{HW10}. Then we introduce  the main quantities needed for the proof of Theorem \ref{thm:main-one} and state the bounds on these. Finally, at the end of this section we give the proof of our main result. In the remaining sections we give the proofs for the bounds stated in Section \ref{sec:OFP}. In Section \ref{sec:PrlmCalc} we provide  a priori bounds on the main quantities involved.  In Section \ref{sec:GauFluc} the convergence of the \emph{extra term} is proved. In  Section \ref{sec:RouPathEst} the convergence of the term involving the spatial rough integrals is shown. The Sections \ref{sec:PrlmCalc} -- \ref{sec:RouPathEst} form the core of our argument.   In Section \ref{sec:HarmAn} we prove some auxiliary regularity results. Finally, in Appendix \ref{AppA} we recall some basic notions of rough path theory used in this
work and in Appendix \ref{AppB} we give a higher-dimensional extension of the classical Garsia-Rodemich-Rumsey Lemma.   

\subsection{Norms and notation}

Throughout the paper we will use a whole zoo of different H\"older type norms and for later reference we provide a list here. For a normed vector space $V$ we denote by $\cN(V)$ the space of continuous functions from $[-\pi,\pi]$ to $V$ and by $\bN(V)$ the space of continuous functions from $[-\pi,\pi]^2$ to $V$  vanishing on the diagonal (i.e.\ for  $R \in \bN (V)$ we have $R(x,x)=0$ for all $x \in[-\pi,\pi]$). We will often omit the reference to the space $V$ 
when it is clear from the context and simply write $\cN$ and $\bN$ instead. 

For a given parameter $\alpha \in (0,1)$  we define H\"older-type semi-norms:
\begin{gather}
| X |_{\alpha} = \sup_{x \neq y } \frac{| X(x) - X(y) |}{|x-y|^\alpha} \quad \text{and} \quad | R |_{\alpha} = \sup_{x \neq y } \frac{|R(x,y)|}{|x-y|^{\alpha}},\label{eq:norms-rp} 
\end{gather}
and denote by $\cC^\alpha$ resp. $\bB^\alpha$ the set of functions for which these semi-norms are finite. The space $\cC^\alpha$ endowed with $| \cdot  |_{\cC^\alpha} = |\cdot |_\cN + | \cdot |_\alpha $ is a Banach space. Here $|\cdot|_{\cN}$ denotes the supremum norm.  The space $\bB^\alpha(V)$ is a Banach space endowed with $| \cdot |_\alpha$ alone. As usual, for $\alpha\geq1$, we will denote by $\cC^\alpha$ the space of $\lfloor \alpha \rfloor$ times continuously differentiable functions whose $\lfloor \alpha \rfloor$th derivative is $\alpha -  \lfloor \alpha \rfloor$ H\"older continuous.

For a function $u: [0,T] \times [-\pi,\pi] \to \R^n$ or $u: [0,T] \times [-\pi,\pi] \to \R^{n\times n}$ and for any $\alpha_1,\alpha_2 \in (0,1)$ and $t_1 < t_2 \leq T$ we denote by
\begin{equ}[e:Norm17]
\| u \|_{\cC^{\alpha_1,\alpha_2}_{[t_1,t_2]}}  := \sup_{\substack{s_1,s_2 \in [t_1,t_2] \\x,y \in [-\pi, \pi]}}  \frac{| u(s_1,x)  - u(s_2,y) | }{ |s_1-s_2|^{\alpha_1} + |x-y|^{\alpha_2} }   + \sup_{\substack{ s\in [t_1,t_2] \\ x \in [-\pi,\pi]}} |u(s,x)|
\end{equ}
the inhomogeneous $\alpha_1,\alpha_2$-H\"older norm of $u$. In most cases we will have $t_1 =0$ and then we simply write
\begin{equs}[e:alphaHol1]
\| u \|_{\cC^{\alpha_1,\alpha_2}_t}  := \| u \|_{\cC^{\alpha_1,\alpha_2}_{[0,t]}} .
\end{equs}
If we are only interested in the spatial regularity, we write for $\gamma \in (0,1)$
\begin{equ}[e:alphaHol2]
\| u \|_{\cC^{\gamma}_{[t_1,t_2]}}  := \sup_{\substack{s \in [t_1,t_2] \\x,y \in [-\pi, \pi]}}  \frac{| u(s,x)  - u(s,y) | }{ |x-y|^{\gamma} }   + \sup_{\substack{ s\in [t_1,t_2] \\ x \in [-\pi,\pi]}} |u(s,x)|,
\end{equ}
and if $t_1 =0$ we use $\| u \|_{\cC^{\gamma}_{t}} := \| u \|_{\cC^{\gamma}_{[0,t]}}$.
We simply write 
\begin{equ}[e:alphaHol3]
\| u \|_{\cN_{[t_1,t_2] }} := \sup_{ s\in [t_1,t_2]} \sup_{  x \in [-\pi,\pi] } |u(s,x)|
\end{equ}
and $\cN_t := \cN_{[0,t]}$ for the supremum norm. We will also need a similar norm, for functions that depend  on two space variables and that vanish on the diagonal. For $R: [0,T] \times [-\pi,\pi]^2 \to \R^n $ or $R: [0,T] \times [-\pi,\pi]^2 \to \R^{n \times n}$   we write \begin{equ}[e:alphaHol4]
\| R \|_{\bB^{\gamma}_{[t_1,t_2]}}  := \sup_{ s\in [t_1,t_2]} \sup_{  x\neq y \in [-\pi,\pi] }  \frac{| R(s;x,y)  | }{ |x-y|^{\gamma} }   .
\end{equ}
Finally, we will sometimes have to allow for blowup of a function near time $t_1 \geq 0$. This can be captured by 
\begin{equ}[e:alphaHolHor]
\| R \|_{\bB^{\gamma}_{[t_1,t_2],\beta}}  :=  \sup_{s \in (t_1,t_2]} (s-t_1)^{\beta}\sup_{x,y \in [-\pi, \pi]}  \frac{| R(s;x,y)  | }{ |x-y|^{\gamma} } ,  
\end{equ}
for some $\beta \in [0,1]$. As above, if $t_1 =0$ we write
\begin{equs}[e:alphaHol4A]
\| R \|_{\bB^{\gamma}_{t,\beta}} := \| R \|_{\bB^{\gamma}_{[0,t],\beta}} . 
\end{equs}

We will write $\cC^{\alpha_1,\alpha_2}_{[t_1,t_2]}, \cC^{\gamma}_{[t_1,t_2]}, \cN_{[t_1,t_2]}, \bB_{[t_1,t_2]}^{\gamma}$  and $\bB_{[t_1,t_2],\beta}^{\gamma}$ for the spaces of functions for which these norms are finite.

We will avoid the use of indices as much as possible and only use them if expressions would get ambiguous otherwise. When we do use indices, we always use the convention of summation over repeated indices. We will write  $A^+ = \frac12(A + A^*)$ and $A^- = \frac12(A - A^*)$ for the symmetric and anti-symmetric part of a matrix $A$. The Hilbert-Schmidt norm of a matrix $A$ will simply be denoted by $|A|$.

Finally, we will use the notation $x \lesssim y$ to indicate that there exists a constant $C$ that does not depend on the relevant quantities so that $x \leq C \, y$.  Similarly, $x \eqsim y$ means that $C^{-1} x \leq y \leq C x$.

\section{Outline and proof of the main result}\label{sec:OFP}

We start this section by presenting an outline of the construction of solutions to \eref{e:SPDE} in Subsection \ref{ss:solutions}. In Subsection \ref{ss:summary} we discuss how the quantities involved behave under approximations. The proofs of the bounds announced in this subsection form the core of this article and will be presented in the subsequent sections. Finally, in Subsection \ref{ss:proof} these bounds will be summarised to give a proof of Theorem \ref{thm:main-one}.

\subsection{Construction of solutions to rough Burgers-like equations}
\label{ss:solutions}

In this section we give  an outline of the construction of local solutions to \eref{e:SPDE}. The construction given here differs slightly from  the construction  presented in \cite{HW10}, as this will hopefully make the proof of the main result in Subsection \ref{ss:proof} more transparent. We comment on the differences below in Remark \ref{re:Differences} and Remark~\ref{re:Differences2}. We refer the reader to Appendix \ref{AppA} for the necessary notions of rough path theory. 
For the moment, we assume that $\alpha$ is an arbitrary exponent in $(\frac{1}{3}, \frac12)$; it will be fixed later in Subsection \ref{ss:proof}. 

Let us start by fixing some notation. 
 Throughout the paper we will write
\begin{equs}
S(t) = e^{t\Delta}
\end{equs}
for  the semigroup generated by $\Delta$. Recall that the operator $S(t)$ acts on functions as convolution (on the torus) with the heat kernel 
\begin{equ}[e:DefptA]
p_t(x) \, = \,  \frac{1}{\sqrt{2 \pi}} \sum_{k \in \Z} e^{- t k^2   } e^{i k x}. 
\end{equ}
For adapted $L^2[-\pi,\pi]$ valued processes $\theta$  and $F$ we will frequently write
\begin{equs}
\Psi^\theta(t) := \int_0^t S(t-s) \,  \theta (s) \,  dW(s) \quad \text{and} \quad \Phi^F(t) := \int_0^t S(t-s) \, F (s) \,  ds.
\end{equs}
Then with this notation the \emph{mild} formulation of  \eref{e:SPDE} reads 
\begin{equ}[e:SPDEA]
u(t) =  S(t) \, u^0 + \Psi^{\theta(u)}(t)  + \Phi^{F(u)}(t) + \iint_0^t S(t-s)  \, G\big( u(s) \big) \, \partial_x  u (s) \, ds\;, 
\end{equ}
where the symbol $\iint$ denotes a \textit{rough integral} which we will define below.

The terms $S(t)u^0$ and the reaction term $\Phi^{F(u)}$ do not cause any major difficulty, so we will concentrate  for the moment on  the two terms
\begin{equ}
 \Psi^{\theta(u)}(t) = \int_0^t S(t-s) \,  \theta (u(s)) \,  dW(s)  \quad \text{and} \quad  \iint_0^t S(t-s)  \, G\big( u(s) \big) \, \partial_x  u (s) \, ds.
\end{equ}
As pointed out in the introduction, we will use the theory of controlled rough paths to interpret the term involving $G$.  We introduce the auxiliary function
\begin{equ}[e:Out1]
Z(t,x) := \iint_{-\pi}^x G\big(u(t,y)\big) \, d_y u(t,y),
\end{equ}
and write
\begin{equ}
  \Xi^u(t)  :=  \int_0^t S(t-s)   \partial_x Z(s) \, ds =  \int_0^t \partial_x \bigl( S(t-s)  Z(s)\bigr) \, ds.
\end{equ}
We argue below that for every $t$ the rough integral in equation \eqref{e:Out1} defines $x \mapsto Z(t,x)$ as an $\alpha$-H\"older function. Hence the spatial derivative is well defined in the sense of distributions. The last equality follows because $\partial_x$ commutes with $S(t-s)$.  Equivalently, assuming that we know how to
define the rough integral in \eref{e:Out1}, the last identity can be taken as the definition of $\Xi^u$.

In order to define the spatial integral on the right-hand side of \eref{e:Out1} as a rough integral, for every $s \in (0,t)$ we must specify  a reference path $X(s)$. These reference paths must meet the following requirements: 
\begin{itemize}
\item For every $s$ it must be possible to construct the iterated integrals
\begin{equs}[e:Out3]
\XX(s;x,y) =: \iint_x^y \big( X(s,z) - X(s,x) \big)  \otimes \, d_z X(s,z).
\end{equs}
Note here that the right hand side is \textit{defined} by the values of $\XX$.
Regarding $\XX$ itself, we require it to satisfy a number of algebraic and analytic
properties that are natural in view of its interpretation as an ``integral''.
\item For every $s$ the random function $x \mapsto u(s,x)$ must be controlled by $X(s)$, in the sense that we need to be able find a derivative process $u'(s,x)$ such that
\begin{equs}[e:Out2]
u(s,y) -u(s,x) \, = \, u'(s,x) \big( X(s,y) - X(s,x) \big) + \Ru (s;x,y),  
\end{equs}
where the remainder $\Ru$ vanishes sufficiently fast as $|y-x| \to 0$. 
\end{itemize}
Such reference paths are provided by the stationary, zero mean solution to the linear stochastic heat equation 
\begin{equs}
\partial_t{X} =  \partial_{x}^2 X + \Pi \xi.
\end{equs}
Here $\Pi$ denotes the orthogonal projection in $L^2$ onto the space of functions with zero mean. Actually, the construction of $\XX$ is rather straightforward. The point is that $X$ is a Gaussian process with explicitly known covariance structure so that known existence results (see \cite{FV10}) apply.  The process $\XX$ is constructed by evaluating \eref{e:Out3} for a sequence of approximations to $X$ and checking that the sequence of approximate iterated integrals converges in the right sense. The crucial ingredient for this calculation is provided by Nelson's estimate (Lemma~\ref{lem:Nelson}) that yields the equivalence of all moments in a given Wiener chaos.

When checking \eref{e:Out2} it is sufficient to  look at the term $\Psi^{\theta(u)}$. Actually, the terms $S(t) \, u^0$ and $\Phi^{F(u)}$ will be $\cC^1$ in space, so they can be included in the remainder $R^u$ and  we need not worry about them. The same 
turns out to be true for the term $\Xi^u$ discussed in \eref{e:Out1}.

For $\Psi^{\theta(u)}$ we can write 
\begin{equs}
  \Psi^{\theta(u)}(t,y)- \Psi^{\theta(u)} (t,x) =  \theta(t,x) \big(X(t,y)- X(t,x)\big) + R^{\theta(u)}(t;x,y). 
\end{equs}
It is shown in \cite[Proposition 4.8]{HW10} that the  term $R^{\theta(u)}$ does indeed have the necessary $2\alpha$-regularity near the diagonal as soon as 
\begin{equ}[e:alphaHol1A]
 \E  \Bigg( \sup_{\substack{s,t \in [0,\tau] \\x,y \in [-\pi, \pi]}}  \frac{| u(s,x)  - u(t,y) | }{ |s-t|^{\alpha/2} + |x-y|^\alpha }  + \sup_{\substack{t \in [0,\tau] \\x \in [-\pi, \pi]}}  |u(t,x)|  \Bigg)^p 
\end{equ}
is finite for a suitable stopping time $\tau$ and large enough $p$. This is precisely the regularity we expect for $u$. With these observations at hand we are ready to set up a fixed point argument to solve \eref{e:SPDEA}.

Then for some $p \geq 2$  we denote by $\mathcal{A}_p$ the space of triples 
\begin{equs}
(u,u',\Ru) \in L^p\big( \cC^{\alpha/2,\alpha}_T \big) \times L^p\big( \cC_T^\alpha \big) \times L^p\big( \bB^{2\alpha}_{T , \alpha/2} \big)
\end{equs}
that satisfy the following conditions:
\begin{itemize}
\item The processes $t \mapsto u(t), \, t \mapsto u'(t), \, t \mapsto \Ru(t)$ are adapted. 
\item Almost surely, for every $t \in [0,T]$ the triple $u(t, \cdot), u'(t,\cdot), \Ru(t,\cdot)$ is controlled by $X(t,\cdot)$. To be more precise, we assume that \eref{e:Out2} holds almost surely for all $s,x,y$.
\end{itemize}
Here the $L^p$ refers to $p$-th stochastic moments. 

It is easy  to check that $\mathcal{A}_p$ is a closed linear subspace of $ L^p\big( \cC^{\alpha/2,\alpha}_T \big) \times L^p\big( \cC_T^\alpha \big) \times L^p\big( \bB^{2\alpha}_{T,\alpha/2} \big)$. Then for such a triple $(u,u',\Ru)$ and for any stopping time $\tau \leq T$ it makes sense to define 
\begin{equs}
\mathcal{M}: (u,u',\Ru) \mapsto (\tilde{u}, \tilde{u}', \mathcal{R}_{\tilde{u}}),
\end{equs}
where
\begin{equs}[e:MainFP]
\tilde{u}(t) &:=  \Big( S \, u^0 + \Psi^{\theta(u)}  + \Phi^{F(u)} + \Xi^{u}  \Big)(t \wedge \tau), \\
\tilde{u}'(t) &:= \theta\big (u( t  \wedge \tau) \big), \\
\mathcal{R}_{\tilde{u} }(t)  &:= \delta \,  \Big(  S \, u^0  + \Phi^{F(u)}    + \Xi^u \Big) (t \wedge \tau) + \Big(  \delta \Psi^{\theta(u)}   - \theta(u) \delta X \Big) (t  \wedge \tau).
\end{equs}
Here the difference operator $\delta$ is defined as 
\begin{equs}[e:Defdelta]
\delta u(s;x,y) := u(s,y) - u(s,x).
\end{equs}
Using the bounds mentioned above, we can show that, for $u^{0} \in \cC^{\alpha}$ and under suitable assumptions on $\tau, \kappa$ and $p$, the operator $\mathcal{M}$ is a contraction from a ball in $\mathcal{A}_p$ into itself. 

As usual one can obtain solutions on a longer time interval by iterating this procedure. For fixed choice of the iterated integral process $\XX$, these solutions are unique. 

\begin{remark}\label{rem:InCond}
The reason for allowing the remainder $\mathcal{R}_{\tilde{u}}$ to blow up like $t^{-\alpha/2}$ near zero lies in  the initial condition  $u^{0} \in \cC^{\alpha}$. Actually, the regularising property of the heat semigroup implies that we have
\begin{equs}
\sup_{t \le 1} t^{{\alpha}/{2}} \big| S(t) u^{0} \big|_{\cC^{2\alpha}} \ls |u^0|_{\cC^\alpha} .
\end{equs}
We need this bound to control the contribution of the initial condition to the remainder term. 

 This issue would  be avoided completely if we could assume that $u^{0} \in \cC^{2\alpha}$. The problem is that even under this stronger assumption on the initial condition, after positive time the solutions $u(t)$ would only attain values in $\cC^\alpha$ for any $\alpha< \frac{1}{2}$. This  would make it impossible to iterate this construction to get solutions on a longer time interval. 
 \end{remark}

\begin{remark}\label{re:Differences}
The construction in \cite{HW10} is slightly different as it is split up into an \emph{inner}  fixed point argument to  deal with the term 
involving $G$ and an \emph{outer} fixed point argument to conclude. This corresponds to a semi-implicit Picard iteration \begin{equs}
u_{n+1}(t) & =  S(t) \, u^0 + \int_0^t S(t-s) \,  \theta (u_n(s)) \,  dW(s)  \\
& + \int_0^t S(t-s)  \, F \big( u_{n}(s) \big) \,ds + \int_0^t S(t-s)  \, G\big( u_{n+1}(s) \big) \, \partial_x  u_{n+1} (s) \, ds.
\end{equs}
The advantage of this approach is that it separates more clearly the deterministic part from the probabilistic part of the construction. The price to pay is that some stopping arguments get more involved. In terms of the bounds needed, both constructions are essentially equivalent. 
\end{remark}
\begin{remark}\label{re:Differences2}
Another difference between the construction presented here and the one in \cite{HW10} concerns the treatment of the term $\Xi^u$. In \cite{HW10} the term $Z$ defined above in \eqref{e:Out1} is not defined, but $\Xi^u$ is defined directly as 
\begin{equ}
\Xi^u(t):= \int_0^t  \iint_{-\pi}^\pi  p_{t-s}(\cdot-y)  \, G\big( u(s,y) \big) \, d_y  u (s,y)  ds . 
\end{equ}
Then the $\cC^1$-regularity is shown using the scaling behaviour of the heat kernel $p_{t-s}$. In the current context we prefer the new approach (suggested to us by 
the referee) because it allows to separate the rough path bounds from the 
bounds involving the heat semigroup, which makes the argument more transparent.
\end{remark}

\subsection{Outline: Behaviour of the main quantities under approximation}
\label{ss:summary}
In order to prove Theorem \ref{thm:main-one}, we will go through the construction we just described and see how the terms  behave under approximation. 

For $\eps >0$ we denote by $S_\eps(t) = e^{ t \Delta_\eps}$ the semigroup generated by the approximate Laplacian defined in \eref{e:DeltaEps}. Similarly to $S(t)$, it is given by convolution (on the torus) with a heat kernel 
\begin{equ}[e:Defpt]
p_t^\eps(x) \, = \,  \frac{1}{\sqrt{2 \pi}} \sum_{k \in \Z} e^{- t k^2 f(\eps k)  } e^{i k x}. 
\end{equ}
As above, for any adapted $L^2[-\pi,\pi]$ valued processes $\theta$ and $F$  we will write 
\begin{equ}
\Pte (t) = \int_0^t S_{\eps}(t-s) \, \theta(s) \, H_\eps \, dW(s)  \quad \text{and} \quad  \Phi_\eps^F (t) = \int_0^t S_{\eps}(t-s) \,F(s)  \,  ds. 
\end{equ}

Using this notation  the mild version of the approximation  \eref{e:SPDEeps} takes the form
\begin{equs}[e:SPDEB]
u_\eps(t) &=   S_\eps (t) \, u_\eps^0 + \Psi_\eps^{\theta (u_\eps)}(t)  + \Phi_\eps^{F(u_\eps)}(t) \\
&\qquad + \int_0^t S_\eps(t-s)  \, G\big( \ue(s) \big) \, D_\eps \ue (s) \, ds.
\end{equs}

Note that for fixed $\eps > 0$, we do not need rough path theory to solve this fixed point problem. The existence of local solutions can for example be shown through a fixed point argument in $\cN_T$. For positive $\eps$ the approximate derivative operator $D_\eps$ is actually continuous on the space of continuous functions, but the operator norm blows up as $\eps$ goes to zero. Therefore, it will be useful to introduce approximate reference rough paths $(X_\eps,\XX_\eps)$ and interpret the  term involving $G$ on the right-hand side of \eref{e:SPDEB} as an approximation  to a rough integral. 
This allows us to obtain uniform control as $\eps \to 0$.

Similarly to before, we choose as $X_\eps$ the stationary, zero mean  solution of the approximated stochastic heat equation
\begin{equs}
d X_\eps \, =  \, \Delta_\eps X_\eps \, dt + \Pi H_\eps \,dW(t)\;.
\end{equs}
This will be used as a reference rough path for the approximate solution. 
Here, as above, $\Pi$ denotes the orthogonal projection on $L^2$ onto the 
functions with zero mean. If we extend the cylindrical Brownian motion $W$ to negative times we get
\begin{equs}
 X_\eps (t) \, = \, \int_{-\infty}^t \! S_\eps(t-s) \,  \Pi H_\eps \, dW(s).  
\end{equs}
Our first task then consists of checking that for every $s$ the process  $X_\eps(s)$ can indeed be lifted to a rough path $(X_\eps(s),\XX_\eps(s))$, and that for a given adapted process $\theta$ the approximate stochastic convolutions  $\Psi_\eps^{\theta}$ are indeed controlled by $X_\eps$. This is established in Section \ref{sec:PrlmCalc}. To be more precise, we will give bounds on the H\"older regularity of $\Psi^\theta$ in Lemma \ref{le:PTE}. The behaviour of $(X_\eps,\XX_\eps)$ as a rough path is discussed in Lemma \ref{le:GRP}. Finally, in Lemma \ref{le:mistakecorrected} the regularity of the remainder 
\begin{equ}[e:DefRte]
R^\theta_\eps(t;x,y) \, = \,  \big( \Pte(t,y)  - \Pte(t,x) \big) - \theta(t,x) \big( X_\eps(t,y) - X_\eps(t,x) \big)
\end{equ}
is treated. For all of these quantities we can show convergence as $\eps$ goes to zero towards the corresponding terms for the limiting equation. 

Let us point out that, while the derivations of the a priori bounds on $\Psi^\theta_\eps$ and on $(X_\eps, \XX_\eps)$ are rather straightforward, the result for the remainder $R^\theta_\eps$ requires more thought. The necessary spatial $2\alpha$-regularity is shown with a bootstrap argument.

 Once we have established that  $\Psi_\eps^\theta$ is well behaved, it remains to check the behaviour of the term 
\begin{equs}[e:IFPA] 
\int_{-\pi}^x   G\big( u_\eps(s,y)  \big) D_\eps u_\eps(s,y)   \, dy  
\end{equs}
when $\eps \downarrow 0$.  One might hope that for small $\eps$ the integral behaves like an approximation to the rough integral 
\begin{equs}[e:RoIntA]
\iint_{-\pi}^x   G\big( u_\eps(s,y) \big) d_y u_\eps(s,y) .
\end{equs}
Unfortunately, this is not always true. As pointed out in Appendix~\ref{AppA}, rough integrals are limits of second order Riemann sums like \eref{eq:Riem-sum2}. Since the contribution of the second order term may not be negligible in the
limit, one cannot hope to prove that the first order expression in the second line of \eref{e:IFPA} approximates the rough integral \eref{e:RoIntA} in general.
In order to enforce this convergence, we simply add the ``missing'' second order  term to the right-hand side of \eref{e:IFPA}.

 Therefore, we set  
\begin{equs} [e:IFPBA]
Z_\eps(t,x) &  :=  \int_{-\pi}^x \, G\big( u_\eps(s,y)  \big) D_\eps u_\eps(s,y) \, dy  \\
&\qquad+ \int_{-\pi}^x   \, D  G\big(\ue(s,y) \big)  \, u_\eps^\prime  (s,y)    \, D_\eps \XX_\eps(s;y) \, u_\eps^\prime(s,y)   \,  dy  ,
\end{equs}
where
\begin{align*}
D_\eps \XX_\eps(s;y) =  \frac1{\eps}  \int_\R \XX_\eps(s;y, y + \eps z) \, \mu(dz)\;,
\end{align*}
and $u_\eps'$ is a \emph{rough path derivative} of $u_\eps$ with respect to $X_\eps$.
We then define 
\begin{equ}[e:IFPB]
\Xi^{\ue}_\eps(t) := \int_0^t S_\eps(t-s) \partial_x Z_\eps(s) \, ds = \int_0^t \partial_x S_\eps(t-s)  Z_\eps(s) \, ds  .   
\end{equ}
%
%
%
We will denote the extra term appearing on the right-hand side of \eref{e:IFPB} by
\begin{equs}[e:DefUps]
\Upsilon_\eps^{\ue} (t, \cdot) :=\int_0^t \!\! \int_{-\pi}^\pi  p^{\eps}_{t-s}(\cdot-y )& \, D  G\big(\ue(s,y) \big) \\
& \,\times  u_\eps^\prime  (s,y)    \, D_\eps \XX_\eps(s;y) \, u_\eps^\prime(s,y)   \,  dy \,  ds .
\end{equs}
Actually, here we have hidden a bit of non-trivial linear algebra in the notation. The expression defining $\Upsilon_\eps^{\ue}$ is trilinear and it is not obvious which term is paired with which. At this level, this does not matter and we will give a precise definition in \eref{e:epsiFP} below.

In Section \ref{sec:RouPathEst} we establish that, under suitable assumptions, $\Xi^{\ue}_\eps$ approximates $\Xi^u$
%
provided that the quantity 
\begin{equs}
 \CE_\eps \, = \, &   \|X-X_\eps \|_{\cC^{\alpha}_T} +  \|\XX-\XX_\eps \|_{\bB^{2 \alpha}_T}    +    \| u- u_\eps \|_{\cC^{\alpha}_{T}} +   \|u'-u_\eps' \|_{\cC^{\alpha}_{[\eps^2,T]}} \\
 &\qquad +  \|\Ru - \mathcal{R}_{u_\eps}\|_{\bB^{2\alpha}_{[\eps^2,T],\alpha}}     
\end{equs}
is small.  

Throughout the calculations we will need uniform in $\eps$ smoothing properties of the approximate heat semigroup $S_\eps$. These are established in Section \ref{sec:HarmAn}. A key ingredient is Lemma \ref{lem:Mark}, a version of the  Marcinkiewicz multiplier theorem.  

By adding a rough path derivative and a remainder we can then interpret $\ue$ as solution to the fixed point problem for the operator
\begin{equs}
(\ue, \ue', \Rue) \mapsto  \mathcal{M}_\eps (\ue, \ue', \Rue) = (\tilde{u}_\eps, \tilde{u}_\eps', \mathcal{R}_{\tilde{u}_\eps})
\end{equs}
where 
\begin{equs}
\tilde{u}_\eps(t) &:= \Big( S_\eps \, u^0_\eps +  \Psi_\eps^{\theta(u_\eps)} + \Phi_\eps^{F(u_\eps) }  - \Upsilon_\eps^{\ue}  +  \Xi_\eps^{\ue}  \Big)(t \wedge \tau)  \label{e:OFPB}\\
 \tilde{u}_\eps'(t) &:= \theta\big (u_\eps( t \wedge \tau) \big), \\
\mathcal{R}_{\tilde{u}_\eps }(t)  &:= \delta \,  \Big(  S_\eps \, u_\eps^0  + \Phi_\eps^{F(u_\eps)}  - \Upsilon_\eps^{\ue} + \Xi^{\ue}_\eps  \Big)(t \wedge \tau) \\
&\qquad+ \Big(  \delta \Psi_\eps^{\theta(u_\eps)}   - \theta(u_\eps) \delta X_\eps \Big) (t  \wedge \tau) ,
\end{equs}
for a suitable stopping time $\tau$ (which also depends on $\eps$). 
Here the difference operator $\delta$ is defined as in \eref{e:Defdelta}. 

As expected, the term $\Upsilon_\eps^{\ue}$  will be responsible for the emergence of an extra term in the limit. This term will be treated in Section \ref{sec:GauFluc}, where the convergence of the term $D_\eps \XX_\eps$  will be discussed.

It turns out that  for $\eps$ small enough this term behaves like $\Lambda \Id$, where $\Lambda$ is the constant introduced above in \eref{e:defLambda}. Note that the a 
priori knowledge of the regularity of $\XX_\eps$ would not even imply that the quantity 
$\Upsilon_\eps^{\ue}$ remains bounded, so that the proof of its convergence 
requires to exploit stochastic cancellations. The relevant bound is given in 
Proposition~\ref{prop:rand-fluc}. There, convergence in any stochastic $L^p$ space with respect to the Sobolev norm $H^{-\et}$, for $\eta>0$ is proved. In the next subsection we will be able to establish convergence of the triple $(u_\eps, u_\eps', \Rue)$.

\subsection{Proof of the main result}
\label{ss:proof}
Now we are ready to finish the proof of our main result, Theorem \ref{thm:main-one}, assuming the results from Sections \ref{sec:PrlmCalc} -- \ref{sec:HarmAn}. 

Similar to \eref{e:DefUps} we define 
\begin{equs}[e:DefUps2]
\Upsilon^{\bar{u}} (t) := \Lambda  \int_0^t \!  S(t-s) \big(  D  G\big(\bar{u}(s) \big)  \bar{u}^\prime  (s)    \, \bar{u}^\prime(s)    \big)\,  ds  \quad 
\end{equs}
where the indices are to be interpreted as in \eref{e:Fbar}. Then the mild form of \eref{e:SPDE2} can be written as
\begin{equs}[e:MainFP1]
\bar{u}(t) &:= S(t) \, u^0 + \Psi^{\theta(\bar{u})}(t  )  + \Phi^{F(\bar{u})}(t )  - \Upsilon^{\bar{u}}(t) + \Xi_\eps^{\bar{u}}(t), \\
\bar{u}'(t) &:= \theta\big (\bar{u}( t) \big), \\
\Rub(t)  &:= \delta \Big(  S \, u^0  + \Phi^{F (\bar{u})} - \Upsilon^{\bar{u}}  +  \Xi_\eps^{\bar{u}} \Big)(t) + \Big(  \delta \Psi^{\theta(\bar{u})}  - \theta(\bar{u}) \delta X \Big) (t ).
\end{equs}
Furthermore, for $t < \tau_\eps^{*}$ the process $u_\eps$ solves the fixed point problem  for the operator $\mathcal{M}_\eps$ defined in \eref{e:OFPB}. Recall that  $\tau^*$ denotes the explosion time of $\bar{u}$. Similarly, here $\tau^*_\eps$ denotes the explosion time for $\ue$.  Note that the extra term $\Upsilon^{\bar{u}}$ corresponds to a reaction term and poses no additional problems for the well-posedness of the equation.

In order to optimize the convergence rate we have to work with three different H\"older exponents
\begin{align*}
\alpha_\star> \alpha \geq \tilde{\alpha}\;.
\end{align*}
More precisely for a $0<\kappa < \frac{1}{12}$  we set  $\alpha_\star =\frac{1}{2}- \kappa$. We then fix $\alpha = \alpha_\star - \kappa$ to be a bit smaller and $\tilde{\alpha}\in (\frac{1}{3}, \alpha]$ arbitrary. Note that the regularising property of the heat semigroup implies that 
\begin{equs}
t^{  \frac{\alpha}{2} }\big| S(t) u^0\big|_{2 \alpha} \ls  |u^0 |_{\alpha} \qquad \text{ and } \qquad t^{  \frac{\alpha}{2}  }\big| S_\eps(t) u_\eps^0\big|_{2 \alpha} \ls |u_\eps^0 |_{\aX}.
\end{equs}
In the case of the heat semigroup $S$, this is a standard regularity result. For the approximated semigroup $S_\eps$ the regularisation is shown in  Corollary~\ref{cor:S-Seps}. 

We will measure the regularity of $\bar{u},\ue ,\Rub,$ and $\Rue$ with norms indexed by 
$\alpha$ and $\tilde{\alpha}$. For most terms, the rate of convergence becomes better 
when measured in a norm of lower regularity, see for example Lemma \ref{le:PTE} or Lemma 
\ref{le:GRP} below. In those situations, we use the norms indexed by $\tilde{\alpha}$. 
But in some estimates it is useful to use a priori knowledge on the regularity of 
$u$ that is close to optimal -- this is when we use $\alpha$. We will use $\alpha_\star$ 
to measure the regularity of the initial condition and the regularity of $X$ and 
$X_\eps$. It is useful to have a little bit more regularity available for these quantities.

First we  introduce the following stopping times.  Recall the definitions of the norms $\| \cdot \|_{\cC^{\alpha/2, \alpha}_t}, \| \cdot \|_{\cC^{ \alpha}_t}$  from \eref{e:alphaHol1} and  \eref{e:alphaHol2}. Then for any $K>0$ we define: 
\minilab{e:Stoppzeiten}
\begin{equs}
\sigma^{X}_K &:= \inf \Big\{ t \geq 0 \colon  \|X \|_{\cC^{\aX/2, \aX}_t} \geq K \quad \text{or} \quad \|\XX\|_{\bB^{2\alpha}_t}  \geq K \Big\}, \label{sigmaX}\\
\sigma^{u}_K &:= \inf \Big\{ t \geq 0 \colon  \big\|  \bar{u} \|_{\cC^{\alpha/2, \alpha}_t} \geq K  \Big\},\label{sigmau}\\
\sigma^{R}_K  &:= \inf \Big\{ t  \geq 0\colon |\Rub(t) |_{2\alpha} \geq K t^{-  \frac{\alpha}{2}} \quad  \text{or} \quad |\Rub(t) |_{2\tilde{\alpha}} \geq K t^{-  \frac{\ta}{2}}  \Big\}.   \qquad \label{sigmaR}
\end{equs}
Here we follow the convention to  set the stopping times to be $T$ if the sets are empty. It follows from the bounds in Sections \ref{sec:PrlmCalc} -- \ref{sec:RouPathEst} that for suitable initial conditions $u^0$ these stopping times are almost surely positive. Remark that for $t \leq \sigma^{u}_K$ we have 
\begin{equs}[e:bouthetau]
\| \bar{u}' \|_{\cC^{\alpha}_t} = \| \theta(\bar{u}) \|_{\cC^{\alpha}_t} \leq \sup_{-K \leq |u| \leq K } |\theta(u)| +  \sup_{-K \leq |u| \leq K } | D\theta(u)| K.
\end{equs}
Then we set 
\begin{equs}
\sigma_K =  \sigma^{X}_K  \wedge \sigma^{u}_K  \wedge \sigma^{R}_K .
\end{equs}
In order to have a priori bounds on the corresponding $\eps$-quantities, we fix yet another parameter $\eta=\alpha-\kappa$, and  introduce the stopping times  
\minilab{e:Stoppzeiteneps}
\begin{equs}
\rho^{X}_{\eps} &:= \inf \Big\{ t \geq 0 \colon  \|X - X_\eps \|_{\cC^{\aX/2, \aX}_t} \geq 1, \, \text{or }  \|\XX - \XX_\eps\|_{\bB^{2\alpha}_t}  \geq 1,  \\
& \qquad  \quad  \text{or } \| X-X_\eps\|_{\cN_t} \geq \eps^{\aX},\, \text{or } |X_\eps |_{\dD^{\aX,\eps}} \geq K, \\
&\qquad \quad \text{or }  \Big|  D_\eps \XX_\eps(t) - \Lambda \Id  \Big|_{H^{-\et}}  \geq 1\Big\}, \,\label{sigmaXe}\\
\rho^{u}_{\eps} &:= \inf \Big\{ t \geq 0 \colon \big\|  \bar{u} - \ue \|_{\cC^{ \alpha}_t} \geq 1  \Big\} \wedge \inf \Big\{ t > \eps^2 \colon \big\|  \bar{u} - \ue \|_{\cC^{\alpha/2, \alpha}_{[\eps^2,t]}} \geq 1  \Big\}  , \quad  \qquad \label{sigmaue}\\
\rho^{R}_{\eps}  &:= \inf \Big\{ t > \eps^2 \colon |  \Rub(t) -\Rue(t)  |_{2\alpha} \geq  (t-\eps^2)^{-  \frac{\alpha}{2}}\\
 &\qquad  \quad \text{or} \quad |  \Rub(t) -\Rue(t)  |_{2\tilde{\alpha}} \geq  (t-\eps^2)^{-  \frac{\ta}{2}} \Big\} \label{e:sigmaRe},
\end{equs}
where again, we set the stopping times equal to $T$ if the sets are empty. Here the norm $\dD^{\aX,\eps}$ is defined in \eqref{e:DNorm} below.   In \eref{e:sigmaRe} the quantity $\Rue$ is defined as in \eref{e:OFPB}. Again the bounds proved below in Sections \ref{sec:PrlmCalc} -- \ref{sec:RouPathEst} show that these stopping times are almost surely positive. Note in particular that Lemma \ref{le:GRP} implies that  for any $t>0$ the probability of $\{ \| X-X_\eps\|_{\cN_t} \geq \eps^{\aX}\}$ goes to zero.   

Finally, define 
\begin{equs}
\rho_{K,\eps} = \sigma_K \wedge \rho^{X}_{\eps}  \wedge \rho^{u}_{\eps}  \wedge \rho^{R}_{\eps}\;.
\end{equs}
  It is clear from the definition that if $\rho_{K,\eps}>0$ and for $0 \leq t \leq \rho_{K,\eps}$ we have deterministic bounds on%
\begin{equs}
 &\|X_\eps \|_{\cC^{\aX/2,\aX}_t}, \quad
 \| \XX_\eps \|_{\bB^{2\alpha}_t}, \quad   
 \big|  D_\eps \XX_\eps(t) \big|_{H^{-\et}},  \quad
 \big\|   \ue \|_{\cC^{\alpha}_t}, \quad
 \big\|   \ue \|_{\cC^{\alpha/2, \alpha}_{[\eps^2,t]}}, \\
 &   \| u_\eps^\prime\|_{\cC^\alpha_t } = \|\theta( u_\eps)\|_{\cC^\alpha_t },\quad
 |  \Rue  |_{\bB^{2\alpha}_{[\eps^2,t],\alpha } }, \quad
 |  \Rue  |_{\bB^{2 \tilde{\alpha}}_{[\eps^2,t],\tilde{\alpha}}  }
. 
\end{equs}
From now on, to reduce the number of indices, we will write
\begin{equs}
t_\eps := t \wedge \rho_{K,\eps}.
\end{equs}

Most of the rest of this subsection will be devoted to the proof of the following theorem which will then be shown to imply our main result, Theorem \ref{thm:main-one}. 

\begin{theorem}\label{thm:main-one-mod}
Let the exponents $\alpha_\star, \alpha, \tilde{\alpha}$ and $\kappa$ be as stated at the beginning of this subsection. Suppose that the initial conditions satisfy 
\begin{equs}
\big| u^0 \big|_{\cC^{\alpha_\star}}  < K \quad  \text{and} \quad  \big| u^0_\eps \big|_{\cC^{\alpha_\star}}  < K
\end{equs}
for some large constant $K$.
Then there exists a constant
\begin{equs}
\gamma = \gamma(\tilde{\alpha},\kappa) >0, 
\end{equs}
%
such that for any terminal time $T > 0$ and for any $p \geq 1$, we have
\begin{equs}[e:MainBound]
 \E \Big[ \| \bar{u} - \ue \|_{\cC^{\tilde{\alpha}}_{T_\eps } }^p + \| \bar{u} - \ue \|_{\cC^{\tilde{\alpha}/2,\tilde{\alpha}}_{[\eps^2, T_\eps] } }^p &+    \| \Rub -\Rue  \|_{\bB^{2 \tilde{\alpha}}_{[\eps^2, T_\eps]  , \ta/2} }^p   \Big]  \\
 &\quad\ls \big|u^0 - u^0_\eps \big|_{\cC^{  \tilde{\alpha}}}^p + \eps^{p \gamma} . \qquad 
\end{equs}
Furthermore, for every fixed $\tilde{\alpha}$, the constant $\gamma(\tilde{\alpha},\kappa)$ can be chosen arbitrarily close to $ \frac12 -  \tilde{\alpha}$ by taking $\kappa > 0$ sufficiently small. 
\end{theorem}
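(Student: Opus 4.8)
The plan is to compare the two controlled-rough-path fixed-point problems directly. Both $\bar u$, which solves \eref{e:MainFP1}, and $u_\eps$, which solves the fixed-point problem for $\mathcal{M}_\eps$ in \eref{e:OFPB}, are assembled from the same four ingredients --- a stochastic convolution $\Psi$, a deterministic convolution $\Phi$, a rescaled rough integral $\Xi$, and the correction term $\Upsilon$ --- the only $\eps$-dependence entering through the Fourier multipliers $\Delta_\eps,D_\eps,H_\eps$ and the reference path $X_\eps$. For each of these four terms I would split
\[
(\text{limiting term}) - (\eps\text{-term}) = \big[(\text{limit at }\bar u) - (\eps\text{ at }\bar u)\big] + \big[(\eps\text{ at }\bar u) - (\eps\text{ at }u_\eps)\big]\;,
\]
bound the first bracket by a genuine approximation rate $\eps^\gamma$ using the a priori bounds available on $[0,\rho_{K,\eps}]$, and treat the second bracket via the local Lipschitz continuity of $F,G,\theta,DG$ together with the contraction estimates underlying the fixed-point construction of Subsection~\ref{ss:solutions}, so that it can be absorbed into the left-hand side on a short time interval and controlled by a Gronwall-type iteration on $[0,T]$.

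Concretely, I would run this comparison for the error functional
\[
\mathcal{N}_\eps(t) := \| \bar u - u_\eps \|_{\cC^{\tilde\alpha}_{t_\eps}} + \| \bar u - u_\eps \|_{\cC^{\tilde\alpha/2,\tilde\alpha}_{[\eps^2,t_\eps]}} + \| R_{\bar u} - R_{u_\eps}\|_{\bB^{2\tilde\alpha}_{[\eps^2,t_\eps],\tbp/2}}\;,
\]
and simultaneously for the analogue $\CE_\eps$ in the larger exponent $\alpha$ (which is $\le 1$ below $\rho_{K,\eps}$ by construction). The contributions are estimated as follows: the initial-data discrepancy $(S-S_\eps)u^0 + S_\eps(u^0-u^0_\eps)$ via Lemma~\ref{lem:S-Seps} and the hypothesis $|u^0-u^0_\eps|_{\cC^{\tilde\alpha}}\lesssim\eps^\gamma$; the difference $\Psi^{\theta(\bar u)}-\Psi_\eps^{\theta(u_\eps)}$ and the associated remainder $\delta\Psi^{\theta(\bar u)}-\theta(\bar u)\,\delta X$ via Lemmas~\ref{le:PTE}, \ref{le:GRP} and \ref{le:Rte}; the deterministic term $\Phi^{F(\bar u)}-\Phi_\eps^{F(u_\eps)}$ directly; the rescaled rough integral $\Xi^{\bar u}-\Xi_\eps^{u_\eps}$ via the main estimate of Section~\ref{sec:RouPathEst}, built on the scaling Lemma~\ref{le:scaling} and on Lemma~\ref{le:ApOp} for the approximate semigroup, which bounds it by $\CE_\eps + \eps^\gamma$; and the correction term $\Upsilon^{\bar u}-\Upsilon_\eps^{u_\eps}$ via Proposition~\ref{prop:rand-fluc}, which gives $|D_\eps\XX_\eps - \Lambda\Id|_{H^{-\alpha}}\lesssim\eps^\gamma$ in every stochastic $L^p$, together with the Lipschitz dependence of $y\mapsto DG(y)\,y'\,y'$. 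Collecting everything --- and using that $\CE_\eps$ is itself $O(\eps^\gamma)$, by the same argument carried out in the exponent $\alpha$ --- one reaches an estimate of the form
\[
\mathcal{N}_\eps(t) \;\le\; C\,\eps^\gamma + C\,t^\eta\,\mathcal{N}_\eps(t) + C\,\mathcal{N}_\eps(t)^2
\]
for all $t\le T$ (with $t_\eps = t\wedge\rho_{K,\eps}$), where $\eta>0$ and $C=C(K,T)$ is independent of $\eps$.

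From such an inequality a short-time contraction gives $\mathcal{N}_\eps\lesssim\eps^\gamma$ on a first interval of length depending only on $K,T$; iterating over $[0,T]$ with the a priori bounds in force yields $\mathcal{N}_\eps(T_\eps)\lesssim\eps^{\tilde\gamma}$ with probability tending to $1$, for any $\tilde\gamma<\gamma$. This is then fed back into the definitions \eref{sigmaue}--\eref{e:sigmaRe}: for $\eps$ small enough, $\eps^{\tilde\gamma}<1$, so the $\eps$-triggered parts of $\rho_{K,\eps}$ cannot activate before $\sigma_K\wedge T$; combined with $\rho^X_\eps\wedge T = T$ with probability tending to $1$ (from the convergences $X_\eps\to X$ of Lemma~\ref{le:GRP} and $D_\eps\XX_\eps\to\Lambda\Id$ of Proposition~\ref{prop:rand-fluc}), this gives $\P(\rho_{K,\eps}<\sigma_K\wedge T)\to 0$, hence \eref{e:MainBound}. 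Theorem~\ref{thm:main-one} then follows by sending $K\to\infty$, so that $\sigma_K\uparrow T\wedge\tau^*$, and identifying the exit times $\tau_\eps$ accordingly. The short initial slab $[0,\eps^2]$, where the remainder bounds degenerate, must be handled separately: for fixed $\eps>0$ the operator $D_\eps$ is bounded on $\cC$, so a crude fixed-point argument in $\cC_{\eps^2}$ keeps $u_\eps$ and its increments under control there, while the purely spatial norm $\cC^{\tilde\alpha}_t$ is in any case controlled down to time $0$.

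The delicate point, and the place where the value of $\gamma$ is pinned down, is the bookkeeping of exponents. Each ingredient contributes its own rate: the semigroup approximation of Lemma~\ref{lem:S-Seps} loses spatial regularity, which forces the deterministic part to be estimated at the exponent $\alpha_\star$; the random-fluctuation bound of Proposition~\ref{prop:rand-fluc} and the rescaled-rough-integral bound of Lemma~\ref{le:scaling} both \emph{improve} as the H\"older exponent increases, which is why the three exponents $\alpha_\star>\alpha\ge\tilde\alpha$ and the blow-up rates $\beta,\tilde\beta$ near $\eps^2$ are carried through in parallel. Optimising the interplay between the loss $\alpha_\star-\alpha$, the blow-up rates and the exponent $\tilde\alpha$ produces a $\gamma(\tilde\alpha,\kappa)$ tending to $\tfrac12-\tilde\alpha$ as $\kappa\downarrow 0$; letting additionally $\tilde\alpha\downarrow\tfrac13$ recovers the rate $\tfrac16$ announced in Theorem~\ref{thm:main-one}. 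I expect the rescaled rough-integral term $\Xi^{\bar u}-\Xi_\eps^{u_\eps}$ --- in particular the control of rescaled rough integrals under the $D_\eps$-discretisation provided by Lemma~\ref{le:scaling} --- to be the main obstacle, with the failure of time-continuity of $S_\eps$ at the origin (available only for $t\ge\eps^2$, via Lemma~\ref{lem:timecontS}) a close second.
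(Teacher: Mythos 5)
Your overall strategy coincides with the paper's: the same decomposition into initial data, $\Psi$, $\Phi$, $\Upsilon$ and $\Xi$, the same split of each difference into an approximation error at fixed argument plus a Lipschitz term, the same input lemmas, and the same ``absorb $t^\eta\mathcal{N}_\eps(t)$ for small $t$, then iterate'' structure. (One small remark: the quadratic term $C\mathcal{N}_\eps(t)^2$ in your master inequality is both unnecessary and awkward --- below $\rho_{K,\eps}$ all the relevant norms and all Lipschitz constants of $F,G,\theta$ are bounded by deterministic constants depending on $K$, so every contribution is genuinely \emph{linear} in the error, which is what makes the absorption argument clean.)

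The genuine gap is in the iteration step, which you dispose of with ``iterating over $[0,T]$ with the a priori bounds in force''. The short-time constants $K_*$ and $t_*$ are uniform only over initial data satisfying $|u^0|_{\cC^{\alpha_\star}}<K$ and $|u^0_\eps|_{\cC^{\alpha_\star}}<K+1$: the bound \eref{e:PMR0B} on the initial-data contribution costs a factor $|u^0_\eps|_{\cC^{\alpha_\star}}\eps^{\alpha_\star-\tilde\alpha-\kappa/2}$. When you restart at $t_*$, the stopping time $\sigma^u_K$ does give $|\bar u(t_*)|_{\cC^{\alpha_\star}}<K$, but \emph{no} stopping time controls $|u_\eps(t_*)|_{\cC^{\alpha_\star}}$ --- the definition of $\rho^u_\eps$ only bounds $\|\bar u - u_\eps\|_{\cC^\alpha}$, and $\alpha<\alpha_\star$. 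So the hypothesis needed to re-apply the short-time estimate is simply not available, and the Gronwall loop does not close as stated. The paper resolves this by restricting to the events $\mathfrak{X}_{kt_*/2}=\{|\bar u-u_\eps|_{\cC^{\alpha_\star}}\le 1\}$, running the iteration on their intersection, and then proving separately that $\P(\mathfrak{X}_{kt_*/2})\to 1$; the latter requires upgrading the $\cC^{\tilde\alpha}$-convergence obtained from the estimate to $\cC^{\alpha_\star}$-convergence \emph{in probability}, which is done via the controlled-path decomposition $\delta u_\eps=\theta(u_\eps)\,\delta X_\eps+R_{u_\eps}$ together with the convergence of $X_\eps$ in $\cC^{\alpha_\star}$ and of $R_{u_\eps}$ in $\bB^{\alpha_\star}$. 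This is a real idea, not bookkeeping, and your proposal is missing it. Two further points that must be addressed in the restart: the remainders after restarting are taken with respect to the shifted reference paths $\hat X = X - S(\cdot-t_*)X(t_*)$ and $\hat X_\eps$, so one must compare $\hat R_{\bar u}-\hat R_{u_\eps}$ with $R_{\bar u}-R_{u_\eps}$ (using the regularisation of $S$, $S_\eps$ applied to $X(t_*)$, $X_\eps(t_*)$); and the restarted estimate only controls the remainder with blow-up at $kt_*+\eps^2$ rather than at $\eps^2$ as required by the definition of $\rho^R_\eps$, which the paper fixes by restarting additionally at the half-steps $\tfrac{2k+1}{2}t_*$ and taking the better of the two bounds.
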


\begin{proof}[Proof of Theorem \ref{thm:main-one-mod}]
We start by introducing some $\hat{K} >K$ to be fixed later. We will also only use the fact that
the initial conditions satisfy the bound 
\begin{equ}
\big| u^0 \big|_{\cC^{\alpha_\star}}  < \hat K \quad  \text{and} \quad  \big| u^0_\eps \big|_{\cC^{\alpha_\star}}  < \hat K\;. 
\end{equ}
This will be useful later on.

The functions $F,G,\theta$ will only be evaluated for $u$ with $|u| \leq  K+1$. All the quantities of interest will remain unchanged if we change $F,G$ and $\theta$ outside a ball. Therefore, from now on we can and will make the additional assumption 
that 
\begin{equs}
\big| F \big|_{\cC^1} < \infty\;, \quad \big| G \big|_{\cC^3} < \infty\;, \quad \big| \theta \big|_{\cC^2} < \infty\;.
\end{equs}
For any $\tilde{\alpha}\in (\frac13,\alpha]$ we will derive a bound on the quantity 
\begin{equs}
\mathcal{E}^{\tilde{\alpha}} (t) :=& \bigg( \E    \| \bar{u} - \ue \|_{\cC^{\tilde{\alpha}}_{t_\eps } }^p  \bigg)^{  \frac{1}{p}} +  \bigg( \E    \| \bar{u} - \ue \|_{\cC^{\tilde{\alpha}/2,\tilde{\alpha}}_{[\eps^2, t_\eps] } }^p  \bigg)^{  \frac{1}{p}} \\
&\qquad +  \bigg( \E  \|  \Rub - \Rue \|_{\bB^{2 \tilde{\alpha}}_{[\eps^2,t_\eps] , {\ta}/{2}} }^p  \bigg)^{  \frac{1}{p}}.
\end{equs}
Using the equations \eref{e:MainFP1} and \eref{e:OFPB}  that $\bar{u}$ and $\ue$ satisfy, we get the bound
\begin{equs}[e:DefEE]
\mathcal{E}^{\tilde{\alpha}} (t) \ls \sum_{i=1}^5 I_i^{\tilde{\alpha}}(t),
\end{equs}
where
\minilab{e:contris}
\begin{equs}
I_1^{\tilde{\alpha}}(t) &:=   \big\| S(\cdot) u^0  -S_\eps(\cdot) u^0_\eps \big\|_{\cC^{\tilde{\alpha}/2, \tilde{\alpha}}_{[\eps^2, t] }} +  \big\| S(\cdot) u^0  -S_\eps(\cdot) u^0_\eps \big\|_{\cC^{\tilde{\alpha}}_{t }}  \\
& \qquad  +  \sup_{0  < s \leq t } s^{  \frac{\tbp}{2}} \big| S(s) u^0  -S_\eps(s) u^0_\eps  \big|_{\cC^{2 \tilde{\alpha}}}, \qquad  \label{e:contri1}\\
I_2^{\tilde{\alpha}}(t) &:=  \Big( \E   \big\|  \Psi^{  \theta(\bar{u})}  - \Psi_\eps^{\theta(\ue)} \big\|_{\cC^{\tilde{\alpha}/2,\tilde{\alpha}}_{t_\eps }}^p \Big)^{  \frac{1}{p}}
\\& \qquad  + \Big(  \E   \big\| R^{\theta(\bar{u})} - R_\eps^{\theta(\ue)}  \big\|_{\bB^{2 \tilde{\alpha}}_{[\eps^2,t_\eps], \tilde{\alpha}/2}  }^p  \Big)^{\frac{1}{p}}, 
\qquad \label{e:contri2} \\
I_3^{\tilde{\alpha}}(t) &:=  \Big(  \E   \big\| \Phi^{F(\bar{u}) }  - \Phi_\eps^{F( \ue)}  \big\|_{\cC^{\tilde{\alpha}/2, \tilde{\alpha}}_{t_\eps}}^p  \Big)^{  \frac{1}{p}}   \\
&\qquad +   \bigg(\E \Big(  \sup_{0  < s \leq t_\eps } s^{  \frac{\tbp}{2}} \big|\Phi^{F(\bar{u}) }(s)  - \Phi_\eps^{F( \ue )}(s)   \big|_{\cC^{2 \tilde{\alpha}}}   \Big)^p  \bigg)^{  \frac{1}{p}} ,  \label{e:contri3}\\
I_4^{\tilde{\alpha}}(t) &:=  \Big( \E   \big\|    \Upsilon^{\bar{u}}  - \Upsilon^{\ue}_\eps \big\|_{\cC^{\tilde{\alpha}/2, \tilde{\alpha}}_{t_\eps}}^p  \Big)^{  \frac{1}{p}}  \\
& \qquad  +   \Big( \E \Big(  \sup_{0  < s \leq t_\eps } s^{  \frac{\tbp}{2}} \big| \Upsilon^{\bar{u}}(s)  - \Upsilon^{\ue}_\eps(s) \big|_{\cC^{2 \tilde{\alpha}}}   \Big)^p  \Big)^{  \frac{1}{p}} , \qquad  \label{e:contri3B}\\
I_5^{\tilde{\alpha}}(t) &:=  \Big( \E   \big\| \Xi^{\bar{u}} - \Xi^{\ue}_\eps   \big\|_{\cC^{ \tilde{\alpha}}_{t_\eps}}^p  \Big)^{  \frac{1}{p}}   +\Big(  \E   \big\| \Xi^{\bar{u}} - \Xi^{\ue}_\eps   \big\|_{\cC^{\tilde{\alpha}/2, \tilde{\alpha}}_{[\eps^2,t_\eps]}}^p  \Big)^{  \frac{1}{p}}  \\
& \qquad  +   \Big( \E    \Big( \sup_{0  < s \leq t_\eps } s^{  \frac{\tbp}{2}} \big| \Xi^{\bar{u}}(s) - \Xi^{\ue}_\eps(s)    \big|_{\cC^{2 \tilde{\alpha}}} \Big)^p  \Big)^{  \frac{1}{p}}.   \label{e:contri4}
\end{equs}
Actually, in $I_1^{\tilde{\alpha}}$ -- $I_5^{\tilde{\alpha}}$ we give slightly more information than needed.  Note in particular, that in $I_3^{\tilde{\alpha}}, I_4^{\tilde{\alpha}},$ and $I_5^{\tilde{\alpha}}$ we only allow for blowup at $0$, not at $\eps^2$. This bound is stronger.

We start by giving a bound on  $I_1^{ \tilde{\alpha}}$. For every $t > \eps^2$ we get for any $\lambda_1<\aX-\tilde{\alpha}$ that
\begin{equs}[e:PMR0]
\big\| S(\cdot) & u^0  -S_\eps(\cdot) u^0_\eps \big\|_{\cC^{\tilde{\alpha}/2, \tilde{\alpha}}_{[\eps^2, t] }} \\
&\leq \big\|   S(\cdot)  \big(  u^0 - u^0_\eps \big)  \big\|_{\cC^{\tilde{\alpha}/2, \tilde{\alpha}}_{t }} + \big\| \big(  S(\cdot)  -S_\eps(\cdot) \big)  u^0_\eps \big\|_{\cC^{\tilde{\alpha}/2, \tilde{\alpha}}_{[\eps^2,t] }}\\
&\ls \big| u^0 - u^0_\eps \big|_{\cC^{  \tilde{\alpha}}} +  \big|u_\eps^0 \big|_{\cC^{\alpha_\star}} \eps^{\lambda_1}.
\end{equs}
Here we have used the fact that the heat semigroup is a contraction from $\cC^{\alpha}$  
to $\cC^\alpha$ as well as the time continuity of the heat semigroup in the first term. 
In the second term we have used Corollary \ref{cor:S-Seps}, which provides uniform 
bounds on the spatial regularisation due to the approximated heat semigroup. We use 
Lemma~\ref{lem:timecontS} to get the temporal regularity. 

The remaining terms in \eref{e:contri1},
\begin{equ} [e:PMR0A]
\big\| S(\cdot)  u^0  -S_\eps(\cdot) u^0_\eps \big\|_{\cC^{ \tilde{\alpha}}_{ t }} \qquad \text{and} \qquad 
\sup_{0  < s \leq t  } s^{  \frac{\tbp}{2}} \big| S(s) u^0  -S_\eps(s) u^0_\eps  \big|_{\cC^{2 \tilde{\alpha}}}\, ,
\end{equ}
can be bounded by the same quantity. Here we use that both Lemmas~\ref{le:ApOp} and \ref{cor:S-Seps} regarding the spatial regularity hold for arbitrary times.  Hence, using the boundedness of $|u_\eps|_{\cC^{\aX}}$ we can conclude that 
\begin{equs}[e:PMR0B]
I_1^{\tilde{\alpha}}(t) \ls \big| u^0 - u^0_\eps \big|_{\cC^{  \tilde{\alpha}}} +  \eps^{\lambda_1}.
\end{equs}
This is the only part in the argument, where we will use the boundedness of $u_\eps^0$ or $u^0$ in the $\cC^{\alpha_\star}$-norm. Note that the implicit constants are uniform for all $u^0,u^0_\eps$ satisfying  $|u^0|_{\cC^{\aX}} \leq \hat K$  and $|u_\eps^0|_{\cC^{\aX}}\leq \hat K$ .

The bounds on $I_2^{\tilde{\alpha}}$ are derived in Section \ref{sec:PrlmCalc}. More specifically, for
\begin{equs}
p >   \frac{6}{1 - 2\tilde{\alpha}} \qquad \text{and} \qquad \lambda_1 = 1 -  2 \tilde{\alpha} -  \frac{6}{p} 
\end{equs}
we get using  Corollary \ref{Cor:boundsonPSI}
\begin{equation}\begin{aligned}
\label{e:PMR1}
\Big( \E &  \big\|  \Psi^{  \theta(\bar{u})}  - \Psi_\eps^{\theta(\ue)} \big\|_{\cC^{ \tilde{\alpha}/2, \tilde{\alpha}}_{t_\eps }}^p \Big)^{  \frac{1}{p}}   \\
& \leq   \Big( \E   \big\|  \Psi^{  \theta(\bar{u})}  - \Psi^{\theta( \ue )} \big\|_{\cC^{ \tilde{\alpha}/2, \tilde{\alpha}}_{  t_\eps }}^p \Big)^{  \frac{1}{p}}  + \Big( \E   \big\|  \Psi^{  \theta(\ue)}  - \Psi_\eps^{\theta(\ue)} \big\|_{\cC^{ \tilde{\alpha}/2, \tilde{\alpha}}_{  t_\eps }}^p \Big)^{  \frac{1}{p}} \\
& \ls t^{  \frac{\lambda_1}{4} } \,   \Big( \E  \big\|  \theta(\bar{u})  - \theta( \ue ) \big\|_{\cN_{  t_\eps }}^p \Big)^{  \frac{1}{p}} + \eps^{ \lambda_1 \alpha} \, \E\Big(   \big\|  \theta( \ue ) \big\|_{\cC^{\alpha}_{  t_\eps }}^p \Big)^{  \frac{1}{p}} \\
& \ls  t^{  \frac{\lambda_1}{4}} \,  \Big( \E   \big\|  \bar{u}  - \ue  \big\|_{\cN_{  t_\eps }}^p \Big)^{  \frac {1}{p}} + \eps^{\lambda_1 \alpha} \,  \Big( \E\Big( 1+  \big\|   \ue  \big\|_{\cC^{ \alpha}_{  t_\eps }}^p \Big) \Big)^{  \frac{1}{p}}  \\
& \ls t^{  \frac{\lambda_1}{4}} \mathcal{E}^{\tilde{\alpha}}(t) + \eps^{ \lambda_1 \alpha} (  K+1).
\end{aligned}\end{equation}
In passing from the second to the third line, we have used \eref{e:PsiT} and \eref{e:Psiep} as well as the linearity of the map $\theta \mapsto \Psi^{\theta}$. When passing from the third to the fourth line, we have used the fact that the $\cC^1$-norm of $\theta$ is bounded by a deterministic constant
depending on $  K$. 

In particular, by choosing $p$ large enough and $\kappa$ small enough, the rate ${\lambda}_1 \alpha$ can be increased arbitrarily close to $ \frac12 - \tilde{\alpha}$.

In order to get a bound on the second quantity in $I_2^{\tilde{\alpha}}$ we write 
\begin{equs}
 \Big( \E &   \big\| R^{\theta(\bar{u} )} - R_\eps^{\theta(\ue)}  \big\|_{\bB^{2\tilde{ \alpha}}_{[\eps^2,t_\eps], \tilde{\alpha}/2}  }^p  \Big)^{  \frac{1}{p}} \label{e:PMR2}\\
 & \leq \Big( \E   \big\| R^{\theta( \bar{u})} - R_\eps^{\theta(\bar{u})}  \big\|_{\bB^{2  \tilde{\alpha}}_{[\eps^2,t_\eps], \tilde{\alpha}/2 }  }^p  \Big)^{  \frac{1}{p}} +  \Big( \E   \big\| R_\eps^{\theta(\bar{u} )} - R_\eps^{\theta(\ue)}  \big\|_{\bB^{2 \tilde{\alpha}}_{[\eps^2, t_\eps] , \tbp/2 }  }^p  \Big)^{  \frac{1}{p}},
\end{equs}
where we use the notations $R^{\theta}$ and $\Rte$ from Section \ref{sec:PrlmCalc}.

The first term on the right hand side of  \eref{e:PMR2} can be bounded directly using Corollary \ref{cor:Rte}. Actually, using the time regularity of $\theta(u)$ for all times in  $[0, \rho_{  K,\eps}]$ we even get a bound without blowup. Then for any
\begin{equs}
\lambda_2  <  \alpha\frac{\alpha + \aX -2 \ta }{\alpha +\aX}
\end{equs}
and for $p$ large enough we obtain 
\begin{equs}
\Big( \E    \big\| R^{\theta( \bar{u})} - R_\eps^{\theta(\bar{u})}  \big\|_{ \bB^{2  \tilde{\alpha}}_{[\eps^2,t_\eps], \tilde{\alpha}/2 }}^p  \Big)^{  \frac{1}{p}}  & \ls \eps^{\lambda_2 } \, \Big( \Big[ \E   \big\|  \theta( \bar{u} ) \big\|_{\cC^{\alpha/2,\alpha}_{ [\eps^2, t_\eps] }}^p \Big]^{  \frac{1}{p}}  + \Big[\E  \big\|  \theta( \bar{u} ) \big\|_{\cC^{\alpha}_{  t_\eps }}^p \Big]^{  \frac{1}{p}}\Big)
 \\
& \ls   \eps^{ \lambda_2 } . 
\end{equs}
Here we have used the fact that the stopping time $\rho_{  K,\eps}$ is almost surely 
smaller than the stopping time $\rho^X_{\eps,  K}$ defined in \eref{e:DefTau} and than 
$\rho^X_\infty$ defined in \eqref{e:DefTau11}.  Note that as above by choosing $\kappa$ 
small enough (and $p$ large enough, which is already implicit in the expression for 
$\lambda_2$), the rate $\lambda_2 $ can be increased arbitrarily close to  
$\frac{1}{2}- \tilde{\alpha}$.

For the remaining term in \eref{e:PMR2}, using \eqref{e:RRbo1} we get for any $ \lambda_3  <  \frac12 (\aX -\tilde{\alpha})$ and for $p$ large enough that
\begin{equs}
\Big( \E  \big\|& R_\eps^{\theta( \bar{u})} - R_\eps^{\theta( u_\eps)}  \big\|_{\bB^{2\tilde{\alpha}}_{[\eps^2, \tau] ,\tilde{\alpha}/2}}^p  \Big)^{\frac{1}{p}}      \\   
& \ls t^{\lambda_3 }  \Big(\Big(\E \, \| \theta(\bar{u})  - \theta( \ue ) \|_{\cC_{[\eps^2,\tau]}^{\alpha/2,\alpha}}^p\Big)^{\frac{1}{p}} + \Big(\E \, \| \theta(\bar{u})  - \theta( \ue )\|_{\cN_{\tau}}^p \Big)^{\frac{1}{p}}\Big)\\
& \ls t^{  \lambda_3} \mathcal{E}^{\tilde{\alpha}}(t) .
\end{equs}

The bounds on $I_3^{\tilde{\alpha}}$ and $I_4^{\tilde{\alpha}}$ are provided in Section \ref{sec:GauFluc}. Using Proposition  \ref{prop:Phi-bound}  twice we get 
\begin{equs}
  I_3^{\tilde{\alpha}}(t)  &=
 \Big( \E   \big\| \Phi^{F(\bar{u}) }  - \Phi_\eps^{F( \ue)}  \big\|_{\cC^{\tilde{\alpha}/2, \tilde{\alpha}}_{t_\eps}}^p  \Big)^{  \frac{1}{p}} \\
&   \quad + \Big( \E  \Big(  \sup_{0  < s \leq t_\eps } s^{  \frac{\tbp}{2}} \big|\Phi^{F(\bar{u}) }(s)  - \Phi_\eps^{F( \ue )}(s)   \big|_{\cC^{2 \tilde{\alpha}}}   \Big)^p  \Big)^{  \frac{1}{p}}\label{e:i4BO}\\
  &  \ls \,   t^{  1 - \frac{\tilde{\alpha}}{2} } \Big( \E  \| \bar{u} - u_\eps \|_{\cC_{t_\eps}^\ta}^p \Big)^{  \frac{1}{p}} +  \eps^{  1 - \frac{\ta}{2}} \;.
  \end{equs}
Here we have used that, as a consequence of our definitions, 
the norms $\| \bar{u}\|_{\cC^{ \alpha}}$ and  $\| u_\eps \|_{\cC^{\alpha}}$ 
are bounded by $K$  before the stopping time $\rho_{  K,\eps}$.

 Then using Proposition \ref{prop:Ups-bound}  and Proposition \ref{prop:rand-fluc} we get  
\begin{equs}
 I_4^{\tilde{\alpha}}(t)  &=  \,  \Big( \E   \big\|    \Upsilon^{\bar{u}}  - \Upsilon^{\ue}_\eps \big\|_{\cC^{\tilde{\alpha}/2, \tilde{\alpha}}_{t_\eps}}^p  \Big)^{  \frac{1}{p}}   +  \Big( \E  \Big(  \sup_{0  < s \leq t_\eps } s^{  \frac{\tbp}{2}} \big| \Upsilon^{\bar{u}}  - \Upsilon^{\ue}_\eps  \big|_{\cC^{2 \tilde{\alpha}}}   \Big)^p  \Big)^{  \frac{1}{p}} \\ 
&\ls \eps^{1 -\frac{ \tilde{\alpha}}{2}} +  \bigg( \E \sup_{t \in [0,T]} \Big|
  D_\eps \XX_\eps(t, \cdot) - \Lambda \Id 
  \Big|_{H^{-\et}}^p\bigg)^{\frac1p} \\
& \qquad   + t^{1 -\frac{ \tilde{\alpha}}{2}}\Big( \Big[ \E  \big\|  \bar{u}  -\ue  \big\|_{\cC^{\tilde{\alpha}}_{  t_\eps }}^p \Big]^{  \frac{1}{p}} +  \Big[\E   \big\|  \theta(\bar{u})  - \theta( \ue ) \big\|_{\cC^{\tilde{\alpha}}_{  t_\eps }}^p \Big]^{  \frac{1}{p}} \Big) \\
& \ls \eps^{\eta- \kappa} +  t^{1 -\frac{ \tilde{\alpha}}{2}}\mathcal{E}^{\tilde{\alpha}}(t).
 \end{equs}

We use Proposition \ref{prop:Xi-bounds} and Proposition \ref{prop:Xi-time}, the main results of Section \ref{sec:RouPathEst}, to  bound $I_5^{\tilde{\alpha}}$. We get
\begin{equ}[e:PMR3]
\Big( \E  \big\|  \Xi^{\bar{u}}  -\Xi_\eps^{\ue}  \big\|_{\cC^{\tilde{\alpha}/2, \tilde{\alpha}}_{[\eps^2,t_\eps]}}^p  \Big)^{  \frac{1}{p}} + \Big( \E   \big\| \Xi^{\bar{u}}  -\Xi_\eps^{\ue}  \big\|_{\cC^{ \tilde{\alpha}}_{t_\eps}}^p  \Big)^{  \frac{1}{p}}  
\ls  \big( \E \CE_\eps^p\big)^{  \frac{1}{p}} \,  t^{  \frac{1 - 2\tilde{\alpha}- \kappa}{2} } + \eps^{ 3 \alpha -1 }.
\end{equ}

Using Proposition \ref{prop:Xi-bounds} again for $\gamma = 2 \tilde{\alpha}$, we 
get a similar bound for the remaining term:
\begin{equs}[e:PMR4]
 \Big[ \E   \Big( \sup_{0  < s \leq t_\eps } s^{-  \frac{\tbp}{2}} \big| \Xi^{\bar{u}}(s) - \Xi^{\ue}_\eps (s)    \big|_{\cC^{2 \tilde{\alpha}}} \Big)^p  \Big]^{  \frac{1}{p}} &\ls  \big( \E \CE_\eps^p\big)^{  \frac{1}{p}} \, t^{  \frac{1 -2 \tilde{\alpha}- \kappa}{2} } \\
 & \qquad + \eps^{ 3 {\alpha} - 1 } + \eps^{1 - 2 \tilde{\alpha} - \kappa}. \qquad
\end{equs}
Note that we have used again that, thanks to the stopping time $\rho_{  K,\eps}$, all the relevant norms are bounded almost surely. In particular, the constants that are suppressed in the $\ls$ notation do depend on $K$. As above, in \eref{e:PMR3} and
\eref{e:PMR4} we have used the notation 
\begin{equs}
 \CE_\eps \, = \, &   \|X-X_\eps \|_{\cC^{\tilde{\alpha}}_T} +  \|\XX-\XX_\eps \|_{\bB^{2\tilde{\alpha}}_T}    +    \| \bar{u}- u_\eps \|_{\cC^{\tilde{\alpha}}_{t_\eps}} +   \|\theta(\bar{u})- \theta(u_\eps) \|_{\cC^{\tilde{\alpha}}_{[\eps^2,t_\eps]}} \\
 &\qquad +  \|\Rub - \Rue\|_{\bB^{2\tilde{\alpha}}_{[\eps^2, t_\eps], \tbp/2 }}  \;. 
\end{equs}
The quantity $\E[ \CE_\eps^p]^{  \frac{1}{p}} $ can be bounded by 
\begin{equs}[e:PMR5]
\big( \E \CE_\eps^p\big)^{  \frac{1}{p}}  \ls  & \,  \Big( \E  \|X-X_\eps \|_{\cC^{\tilde{\alpha}}_{t}}^p  \Big)^{  \frac{1}{p}} +   \Big( \E  \|\XX-\XX_\eps \|_{\bB^{2\tilde{\alpha}}_{t}}^p \Big)^{  \frac{1}{p}}    +   \Big( \E  \| \bar{u}- u_\eps \|_{\cC^{\alpha}_{t_\eps}}^p \Big)^{  \frac{1}{p}}  \\
&+  \Big( \E   \| \Rub - \Rue\|_{\bB^{2\tilde{\alpha}}_{[\eps^2,t_\eps], \tbp/2}}^p \Big)^{  \frac{1}{p}}    \\
\ls  & \,   \Big( \E  \|X-X_\eps \|_{\cC^{\tilde{\alpha}}_{t}}^p  \Big)^{  \frac{1}{p}} +  \Big( \E  \|\XX-\XX_\eps \|_{\bB^{2\tilde{\alpha}}_{t}}^p \Big)^{  \frac{1}{p}} + \mathcal{E}^{\tilde{\alpha}}(t)  .
\end{equs}
Here,  we have used the fact that $\bar{u}^{\prime}= \theta (\bar{u})$ and $\ue^{\prime}= \theta (\ue)$ as well as the bound
\begin{equs}
 \| \theta(\bar{u}) - \theta(\ue) \|_{\cC^{\alpha}_{[\eps^2, t_\eps]}} \leq \,  \big| \theta \big|_{\cC^1} \| \bar{u} -\ue \|_{\cC^{\alpha}_{t_\eps}} + \big| \theta \big|_{\cC^2} \| \bar{u} \|_{\cC^{\alpha}_{t_\eps}} \| \bar{u} -\ue \|_{\cN_{t_\eps}} .
\end{equs}
For the second-order information  $\XX_\eps$ and $\XX$ of our rough paths, 
Corollary~\ref{cor:GRP} implies that for
$\lambda_4  < 1 - 2 \tilde{\alpha}
$
we get
\begin{equ}[e:PMR6]
\Big( \E     \big\| X_\eps -X   \big\|_{\cC_{t}^{\alpha/2,\alpha}}^p \Big)^{  \frac{1}{p}}   \, \ls \,      
  \eps^{  \frac{\lambda_4}{2}} \quad \text{and} \quad 
\Big( \E  \big\| \XX_\eps -\XX   \big\|_{ \bB^{ 2 \tilde{\alpha}}_{t}  }^p  \Big)^{1/p}
 \, \ls \,      
  \eps^{  \frac{\lambda_4}{2} }\;.
\end{equ}
So finally, combining the estimates \eref{e:DefEE}, \eref{e:contris}, \eref{e:PMR0B}, \eref{e:PMR1}, \eref{e:PMR2}, \eref{e:PMR3}, \eref{e:PMR4} and \eref{e:PMR6}, we obtain
\begin{equs}[e:PMR9]
\mathcal{E}^{\tilde{\alpha}}(t) \ls t^{\tilde{\gamma}} \mathcal{E}^{\tilde{\alpha}}(t)  + \eps^{\gamma} + \big| u^0 -  u^0_\eps \big|_{\cC^{\tilde{\alpha}}}.
\end{equs}
Here the exponents $\tilde{\gamma}, \gamma>0$ are the minima of the corresponding exponents in the above calculations. Note that  $\gamma$ depends on $\kappa$ and $p$. and we have  $\gamma = \gamma(p,\tilde{\alpha},\kappa)$ increases to  $\frac{1}{2}- \tilde{\alpha}$ as $p \uparrow \infty$ and  $\kappa \downarrow 0$.

By choosing $t = t_{*}$ small enough we can absorb the first term on the right-hand side of \eref{e:PMR9} into the left-hand side. Then we get for some constant $C_*$,
\begin{equs}[e:UBB1]
\sup_{0 \leq s \leq t_* } \mathcal{E}^{\tilde{\alpha}}(s)  \leq  C_* \eps^{\gamma} +  C_* \big| u^0 -  u^0_\eps \big|_{\cC^{\tilde{\alpha}}}.
\end{equs}
Note that the specific values of $t_*$ and $C_*$ only depend on $K$ and $\hat K$.

We now iterate this argument taking $\bar{u}(t_*)$ and $\ue(t_*)$ as new initial data.
The definition of the stopping times  $\sigma^u_K$ and $\sigma^u_\eps$ in \eref{sigmau} and \eref{sigmaue}  ensure that for  $t< \rho_{K,\eps}$  we have the bounds   $|\bar{u}(t )|_{\cC^\alpha}\leq K$ and $|\ue(t)|_{\cC^\alpha}\leq K$, but in order to be able to iterate the argument we need a bound in the slightly stronger $\cC^{\aX}$-norm. 
For this, we make use of the following trick: we know already that $\bar{u}(t)$  is a  controlled rough path and that, for any pair $x,y$, we have the bound
\begin{equs}
\delta \bar{u} (t;x,y ) &\, = \, \theta \big( \bar{u} (t, x ) \big)\,  \delta X (t; x,y )  + \Rub(t; x,y ). 
\end{equs}
Using this decomposition, we can conclude that in order to prove boundedness of $\bar{u}(t)$ in $\cC^{\aX}$, it is sufficient to have bounds on  $\theta(\bar{u}(t))$ in $\cN$, on $X$ in $\cC^{\aX}$ as well as on $\Rue(t)$ in $\bB^{\aX}$. These norms are all bounded by deterministic constants due to the definition of the stopping time. Hence we can conclude that for $t \leq \rho_{\eps,K}$ we have $|\ue(t_*) |_{\cC^{\aX}} \leq \sup_{|u| \leq K+1} |\theta(u) | K + K $. Now if we choose $\hat{K} = \sup_{|u| \leq K+1} |\theta(u) | K + K $ we can restart the process at $t_*$ and obtain the estimate
%
%


%
%
%
%
%
\begin{equs}
\Big( \E   &  \big\| \bar{u} - u_\eps \big\|_{\cC^{\tilde{\alpha}/2, \tilde{\alpha}}_{ [t_*+\eps^2 , 2 t_* \wedge \rho_{K,\eps}]}}^p  \Big)^{1/p} + \Big( \E    \big\| \bar{u} - u_\eps \big\|_{\cC^{\tilde{\alpha}}_{ [t_* , 2 t_* \wedge \rho_{K,\eps}]}}^p    \Big)^{1/p}  \\
&\qquad + \Big( \E   \big\| \mathcal{R}_{\bar{u}} - \mathcal{R}_{\ue}   \big\|_{\bB^{2 \tilde{\alpha}}_{[t_* + \eps^2,  2 t_* \wedge \rho_{K,\eps}], \tbp/2}  }^p    \Big)^{1/p} \\
& \leq C_* \eps^\gamma + C_* \, \big( \E | \bar{u}(t_* \wedge \rho_{K,\eps}) - u_\eps(t_* \wedge \rho_{K,\eps}) |_{\cC^{\tilde{\alpha}}}^p \,   \big)^{1/p} \\
& \leq  \big( C_*  + C_*^2 \big) \eps^\gamma  + C_*^2  |u^0- u_\eps^0 |_{\cC^{\tilde{\alpha}}}, 
\end{equs}
which can then be iterated recursively. Note that the values of $C_*$ and $t_*$ remain constant throughout the recursion (because they only depend on $K$ and $\hat K$) so that the final time $T$ is reached within a finite number of steps. 

The bound one obtains in this way not yet the desired result because of the  weight $(s- kt_*)^ {\tilde{\alpha}}$ for $k =1, 2, \ldots$. Note that in the definition of $\mathcal{E}^{\tilde{\alpha}}$ the $\bB^{2 \tilde{\alpha}}$-norm of $\Rub$ may only blow up at $\eps^2$ but not at every multiple time $ kt_* + \eps^2 $. For $\eps$ small enough this issue can be avoided if we additionally restart the process at  times $ \frac{2k +1}{2} t_*$, and then for every $s$ take the better of the two bounds obtained in this way.
\end{proof}

\begin{proof}[Proof of Theorem \ref{thm:main-one}]
In order to conclude Theorem \ref{thm:main-one} it is sufficient to show that we have 
\begin{equ}[e:UBB2]
\lim_{K \uparrow \infty} \lim_{\eps \downarrow 0} \P\Big[ \sup_{0 \leq s \leq \tau_K^{*}} \big\| \bar{u} - \ue \big\|_{\cN} \geq \eps^{\tilde{\gamma}} \Big]   =0.
\end{equ}
Indeed, then the sequence $\tau_\eps$ can be chosen as a suitable diagonal sequence. 

Recall the definitions of the stopping times $\rho_{K,\eps}$ and $\sigma_K$ in \eref{e:Stoppzeiten} and \eref{e:Stoppzeiteneps}.

In order to see \ref{e:UBB2} we write for any $\bar{K} >K$
\begin{equs}
\P\Big[ \sup_{0 \leq s \leq \tau_K^{*}}  \big\| \bar{u} - \ue \big\|_{\cN} \geq \eps^\gamma \Big] &\, \leq \, \P\Big[ \sup_{0 \leq s \leq \rho_{\bar{K},\eps}^{*}}  \big\| \bar{u} - \ue \big\|_{\cN} \geq \eps^{\tilde{\gamma}} \Big] \label{e:UBB3}\\
&\qquad + \P \Big[ \rho_{\bar{K},\eps} < \sigma_{\bar{K}} \Big] + \P \Big[  \sigma_{\bar{K}} < \tau_K \Big].
\end{equs}
Theorem \ref{thm:main-one-mod} above and Chebyshev's inequality directly imply that the first term goes to zero. In order to obtain the optimal rate, we choose $\tilde{\alpha}$ to be as small as possible, i.e. just a bit larger than $  \frac{1}{3}$.  In particular, by choosing $\kappa$ small enough we can increase $\gamma$ up to arbitrarily close to $\frac16$.

According to the definition of the stopping times we have 
\begin{equs}
\P & \Big[ \rho_{\bar{K},\eps} < \sigma_{\bar{K}} \Big] \\
& =  \P \Big[  \|X - X_\eps \|_{\cC^{\aX/2, \aX}_{\rho_{\bar{K},\eps}}} \geq 1,  \text{ or }  \|\XX - \XX_\eps \|_{\bB^{2\alpha}_{\rho_{\bar{K},\eps}}}  \geq 1, \label{e:248}\\
&  \quad \text{ or }  \sup_{0 \leq t \leq \rho_{\bar{K},\eps}}\Big|  X(t) -X_\eps(t)  \Big|_{\cN}  \geq \eps^{\aX}, \text{ or } \sup_{0 \leq t \leq \rho_{\bar{K},\eps}} \big|  X_\eps(t) |_{\cD^{\aX, \eps}} \geq \bar{K},\quad   \\
&  \quad \text{ or }  \sup_{0 \leq t \leq \rho_{\bar{K},\eps}}\Big|  D_\eps \XX_\eps(t) - \Lambda \Id  \Big|_{H^{-\et}}  \geq 1, \text{ or }  \big\|  \bar{u} - \ue \|_{\cC^{\alpha/2, \alpha}_{[\eps^2, \rho_{\bar{K},\eps}]}} \geq 1,\quad   \\
&  \quad  \text{ or }  \big\|  \bar{u} - \ue \|_{\cC^{  \alpha}_{\rho_{\bar{K},\eps}}} \geq 1,    \text{ or }   \sup_{\eps^2 < t \leq \rho_{\bar{K},\eps}}  (t- \eps^2)^{  \frac{\alpha}{2}}  |  \Rub(t) -\Rue(t)  |_{2\alpha} \geq  1, \\
& \quad  \text{ or} \!\!  \sup_{0 < t \leq \rho_{\bar{K},\eps}} (t-\eps^2)^{  \frac{\tbm}{2}}  |  \Rub(t) -\Rue(t)  |_{2\tilde{\alpha}} \geq  1 \Big].
\end{equs}
We have already provided all the bounds that imply that for any $\bar{K}$ this probability goes to zero. In fact, the bounds for $X - X_\eps $,  $|\XX - \XX_\eps|_{\bB^{2\alpha}_t}$ and $ \big|  D_\eps \XX_\eps(t) - \Lambda \Id  \big|_{H^{-\et}} $ are independent of $\bar{K}$ and given in Corollary \ref{cor:GRP},  Lemma \ref{le:strangenorm} and Proposition \ref{prop:rand-fluc}.

The bounds for the remaining terms in \eref{e:248} follow from applying Theorem \ref{thm:main-one-mod} again, once with $\tilde{\alpha}= \alpha$ and once for arbitrary $\tilde{\alpha}$. Note that it is crucial for this argument, that we allow for the choice $\tilde{\alpha} = \alpha$. In this case the convergence is very slow, but this does not matter.

Finally, we write for the last term on the right-hand side of \eref{e:UBB3} that  
\begin{equs}
\P &\Big[  \sigma_{\bar{K}} < \tau_K \Big] =  \P \Big[  \|X  \|_{\cC^{\aX/2, \aX}_{\tau_K}} \geq \bar{K},  \text{ or }  \|\XX  \|_{\bB^{2\alpha}_{\tau_K}}  \geq \bar{K},  \text{ or }  \big\|  \bar{u}  \|_{\cC^{\alpha/2, \alpha}_{\tau_K}} \geq \bar{K},  \\
& \qquad  \text{ or }   \sup_{0 <t \leq \tau_K}  t^{  \frac{\beta}{2}}  |  \Rub(t)  |_{2\alpha} \geq   \bar{K},  \text{ or }   \sup_{0 <t \leq \tau_K}  t^{  \frac{\tbm}{2}}  |\Rub(t)  |_{2\tilde{\alpha}} \geq  \bar{K}  \Big].
\end{equs}
It follows from the bounds in Corollary \ref{cor:GRP} that the probability of $ \|X  \|_{\cC^{\aX/2, \aX}_{\tau_K}} \geq \bar{K}$  and the probability of  $ \|\XX  \|_{\bB^{2\alpha}_t}  \geq \bar{K}$ go to zero as $\bar{K}$ goes to $\infty$. The same statement about the probabilities involving $\bar{u}$ and $\Rub$ follows from the global well-posedness of the equation with bounded $g$ and $\theta$. The details of this calculation can be found in the proof of Theorem 3.5 in  \cite{HW10} and will be omitted here. 

This finishes the proof of our main result, Theorem  \ref{thm:main-one}.
\end{proof}

\section{The stochastic convolution}
\label{sec:PrlmCalc}
In this section we provide the necessary bounds on the stochastic convolutions $\Psi^{\theta(u)}$ and $\Psi_\eps^{\theta(\bar{u})}$. We will adopt a slightly more general framework than the one adopted in Section \ref{sec:OFP}. Actually, we will fix an adapted $L^2[-\pi, \pi]$-valued proces $( \theta(t))_{t \geq 0}$ and consider the stochastic convolutions with the heat semigroup, i.e.
\begin{equ}
\Pte (t) \, = \, \int_0^t S_{\eps}(t-s) \, \theta(s) \, H_\eps \, dW(s)  \quad \text{ and } \quad \Pt (t) \, = \, \int_0^t S(t-s) \, \theta(s) \, dW(s).
\end{equ}
As in Section \ref{sec:OFP}, the Gaussian case $\theta \equiv 1$ will play a special role. We write 
\begin{equ}
X_\eps (t) \, = \, \int_{-\infty}^t \! S_\eps(t-s) \,  \Pi H_\eps \, dW(s)  \quad \text{ and } \quad X (t) \, = \, \int_{-\infty}^t \! S(t-s) \, \Pi dW(s).
\end{equ}
Here we have extended the cylindrical Brownian motion $W$ to negative times in order to ensure that $X$ and $X_\eps$ are stationary.

It will be useful to consider the Fourier expansion of $X_\eps$ given by
\begin{equs}[e:XSum]
X_\eps (t,x) \, = \,&  \frac{1}{\sqrt{2 \pi}} \sum_{k \in \Z^\star} \int_{-\infty}^t e^{ikx} \,  e^{-  k^2 f(\eps k)  (t-s) }  \, h( \eps k) \, d w_k(s) \\
= \, & \sum_{k \in \Z^\star}  q_\eps^k  \,   \xi^{k}_\eps(t) \, e^{ikx}\,.  
\end{equs}
Here the $w_k$ are $\C^n$-valued two-sided Brownian motions (i.e.\ real and imaginary part of every component are independent real-valued Brownian motions so that $\E  | w_k^i(t)|^2 =  |t|$), which are independent up to the constraint $w_k = \bar{w}_{-k}$ that ensures that $X_\eps$ is real-valued. Furthermore, we use the notation
\begin{equ}[e:Defq]
q_\eps^k  \, = \,
  \frac{h(\eps k) }{  |k|   \,  \sqrt{ 4 \pi f(\eps k)} } 
\end{equ}
and  the $\xi^k_\eps$ are centred stationary  $\C^n$-valued Gaussian processes, independent up to $\xi^k_\eps = \bar{\xi}^{-k}_\eps$, so that for any $t>0$,
\begin{equ}[e:Defxi]
\E \big( \xi_\eps^{k} (0) \ot  \xi_\eps^{-k} (t) \big) \, = \, \KK_k^{t} \Id ,
\end{equ}
where
\begin{equs}[e:Defxi1]
  \KK_k^{t} = 
  e^{-f(\eps k)k^2 t} .
\end{equs}

The series decomposition \eref{e:XSum} can be used to define the iterated integral 
\begin{equ}[e:ItInt]
\XX_\eps(t; x,y) \, = \, \int_x^y  \big( X_\eps(t, z) - X_\eps(t,x) \big)  \,  \otimes d_z X_\eps(t,z). 
\end{equ}
In fact, for fixed $t, x,y$ this integral can simply be defined as the limit in $L^2(\Omega)$ as $N\to \infty$ of the double series
\begin{equs}[e:Xsum]
\XX^N_\eps(t;x,y) \, = \,& \sum_{0 < |k|, |l| \leq N }  \xi_\eps^k (t) \otimes  \xi_\eps^l(t) \,  q_{\eps}^k \,  q_\eps^l \,  \int_x^y \big(  e^{ikz} - e^{ikx}  \big) \,  i l   \, e^{ilz} \, dz  \\
= \,&\sum_{0 < |k|, |l| \leq N }    \xi_\eps^k (t) \otimes   \xi_\eps^l(t)  \,  e^{i(k+l)x} \,   q_{\eps}^k \,  q_\eps^l  \,    I_{k,l}(y-x), 
\end{equs}
where 
\begin{equ}[e:DefI]
I_{k,l}(y) = \begin{cases}
 \frac{l}{k+l} \big(  e^{i(k+l) y}  -1\big) -   \big(  e^{il   y}  -1\big)  \qquad &\text{for } k \neq -l, \\
 \qquad  i l y  -  \big(  e^{il   y}  -1\big)   &\text{for }  k = -l . 
 \end{cases}
\end{equ}
The regularity properties of $\XX_\eps$ are discussed in Lemma \ref{le:GRP}. Note however, that the regularity of $X_\eps$ is not sufficient to give a pathwise  argument for the existence of \eref{e:ItInt}. Also the series does not converge absolutely in $L^2(\Omega)$ so that the symmetric choice of approximation matters. We will make use of some cancellation in Lemma \ref{le:GRP}.  Let us point out that the iterated integrals $\XX_\eps$ satisfy the consistency relation \eref{eq:cond-it-int}, and that for the symmetric part $\XX_\eps^+ :=  \frac{1}{2} \big( \XX_\eps + \XX_\eps^* \big)$ we have
\begin{equ}[e:Symmetric]
\XX_\eps^+ (t; x,y) \, = \,   \frac{1}{2} \big( X_\eps ( t,y) - X_\eps (t,x) \big) \otimes \big( X_\eps ( t,y) - X_\eps (t,x) \big) .  
\end{equ}
These relations can easily be checked for any $N$ and then follow by passing to the limit. The regularity results given in Lemma \ref{le:GRP} will then imply that for every $t$ the pair $(X_\eps(t, \cdot) , \XX_\eps(t; \cdot, \cdot))$ is indeed a geometric rough path in $x$ in the sense of definition \ref{def:RP}. 

A crucial tool to derive the moment estimates for $\XX_\eps$ will be the equivalence of moments for random variables in a given Wiener chaos, see  Lemma~\ref{lem:Nelson}. The decomposition \eref{e:Xsum} shows that $\XX_\eps$ is in the second Wiener chaos. Therefore, Nelson's estimate  implies that we can estimate all moments of $\XX_\eps$ in terms of the second moments.

Note that our definition of $\XX_\eps$ coincides with the canonical rough path lift of a Gaussian process discussed in \cite[Ch.~15]{FV10} and also used in \cite{Ha10,HW10}.  We prefer to work with the Fourier decomposition as it gives a direct way to prove moment bounds and avoids the notion of $2$-dimensional variation of the covariance, which seems a bit cumbersome in the present context. (See however \cite{Peter} for related calculations involving the two dimensional variation.)

A key step in the construction of solutions to \eref{e:SPDE} in  \cite{HW10} was to show that the process $\Pt(t, \cdot)$ is  controlled by $X(t, \cdot)$ as soon as  $\theta$ has a certain regularity. Its derivative process is then given by $\theta(t, \cdot)$.  

We will prove a similar statement for $\Pte$ and derive bounds that are uniform in $\eps$. For a given $\theta$ denote by $R^\theta_\eps$ the remainder in the rough path decomposition of $\Pte$ with respect to $X_\eps$, i.e.
\begin{equ}[e:DefRteA]
R^\theta_\eps(t;x,y) \, = \,  \big( \Pte(t,y)  - \Pte(t,x) \big) - \theta(t,x) \big( X_\eps(t,y) - X_\eps(t,x) \big)
\end{equ}
and  
\begin{equ}[e:DefRt]
R^\theta(t;x,y) \, = \,  \big( \Pt(t,y)  - \Pt(t,x) \big) - \theta(t,x) \big( X(t,y) - X(t,x) \big).
\end{equ}

The bounds on the spatial regularity of $\Rte$ are provided in Lemma \ref{le:mistakecorrected}. A key tool to derive these a priori bounds is provided by a higher-dimensional version of the Garsia-Rodemich-Rumsey Lemma that can be found in Lemma  \ref{lem:AnIsKol}.

For the bound on $\Pte$ we will impose a regularity assumption on $\theta$. For any stopping time $\tau$ recall the definition of the parabolic $\alpha$-H\"older norm in \eref{e:alphaHol1}. 
\begin{lemma}\label{le:PTE}
Let $\alpha \in (\frac13, \frac12)$. Let $\alpha_1, \alpha_2>0$ and $p \geq 2$ satisfy 
\begin{align}\label{eq:alp}
\alpha_1  \, < \, \frac{\lambda_1}4  -  \frac{1}{p}
  \,, \quad 
\alpha_2  \, < \, \frac{\lambda_2}2 -  \frac{1}{p}  
\end{align}
for some $\lambda_1, \lambda_2 >0$ with $\lambda_1 + \lambda_2 \leq 1$. Then for any stopping time $\tau \leq T$
\begin{align}\label{e:FF1}
\E  \big\| \Pte   & \big\|_{\cC_\tau^{\alpha_1} (\cC^{ \alpha_2})}^p    \, \ls \,      
     \E  \| \theta \|_{\cC_\tau} ^p \;.
\end{align}
and 
\begin{align}\label{e:FF2}
\E  \big\| \Pte -\Pt  & \big\|_{\cC_\tau^{\alpha_1} (\cC^{ \alpha_2})}^p    \, \ls \,      
  \eps^{(1 - \lambda_1 - \lambda_2) \alpha p}
     \, \E  \| \theta \|_{\cC^{\alpha}_\tau} ^p\;.
\end{align}
\end{lemma}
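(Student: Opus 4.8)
Both bounds follow the classical two-step scheme for stochastic convolutions: first estimate $p$-th moments of parabolic increments of $\Pte$ (resp.\ of $\Pte-\Pt$), then upgrade to the H\"older norms $\cC^{\alpha_1}_\tau(\cC^{\alpha_2})$ via the anisotropic Garsia-Rodemich-Rumsey estimate of Lemma~\ref{lem:AnIsKol}, which costs $\tfrac1p$ in each H\"older exponent --- exactly the loss recorded in \eref{eq:alp}, while the constraint $\lambda_1+\lambda_2\le1$ will appear as the convergence threshold of a Fourier series arising when one estimates the \emph{spatial} increment of a \emph{temporal} increment (needed to control $\|\Pte(t)-\Pte(s)\|_{\cC^{\alpha_2}}$), so that the available scaling budget has to be shared between the two variables. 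A standard localisation lets us assume $\theta$ already stopped at $\tau$, so $\tau$ plays no further role. Realising $W$ as an $L^2$-cylindrical Wiener process and $S_\eps(t-s)$ as convolution with $p^\eps_{t-s}$, one has $\Pte(t,x)=\int_0^t\langle H_\eps[\theta(s,\cdot)p^\eps_{t-s}(x-\cdot)],dW(s)\rangle$, and likewise for increments of $\Pte$ with $p^\eps_{t-s}(x-\cdot)$ replaced by the corresponding difference of heat kernels. Since $\|H_\eps[\theta(s,\cdot)g]\|_{L^2}\le\|h\|_{L^\infty}\,|\theta(s,\cdot)|_{\cC}\,\|g\|_{L^2}$ by Plancherel and boundedness of $h$ (Assumption~\ref{a:h}), the Burkholder-Davis-Gundy inequality gives
\begin{equ}
\E\big|\delta\Pte\big|^p \;\lesssim\; \E\big[\|\theta\|^p_{\cC_\tau}\big]\;\Big(\int_0^t\big\|K_{t-s}\big\|_{L^2}^2\,ds\Big)^{p/2}\;,
\end{equ}
with $K_{t-s}$ the relevant heat-kernel increment, so everything reduces to deterministic estimates of explicit Fourier series.

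\textbf{Proof of \eref{e:FF1}.} For a spatial increment at fixed $t$ the series is $\sum_k\frac{|e^{ikx}-e^{iky}|^2}{k^2 f(\eps k)}\big(1-e^{-2k^2f(\eps k)t}\big)$; using $f\ge 2c_f$ (Assumption~\ref{a:f}) and $|e^{ikx}-e^{iky}|^2\lesssim|k|^{\lambda_2}|x-y|^{\lambda_2}$ this is $\lesssim|x-y|^{\lambda_2}\sum_k|k|^{\lambda_2-2}$, finite for $\lambda_2<1$. For a temporal increment one splits $\Pte(t)-\Pte(s)$ into the new integral over $[s,t]$ and the re-evolution $\int_0^s(S_\eps(t-r)-S_\eps(s-r))\theta(r)H_\eps\,dW(r)$ of the past, both of whose $L^2$-kernel time integrals are $\lesssim|t-s|^{\lambda_1/2}$ (again using $f\ge 2c_f$). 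Combining the two mechanisms, the spatial increment of a temporal increment is controlled by $|t-s|^{\lambda_1/2}|x-y|^{\lambda_2}\sum_k|k|^{\lambda_1+\lambda_2-2}$, convergent precisely when $\lambda_1+\lambda_2\le1$. Hence $\E|\delta\Pte|^p\lesssim\E[\|\theta\|^p_{\cC_\tau}]\,|t-s|^{\lambda_1 p/4}|x-y|^{\lambda_2 p/2}$, and Lemma~\ref{lem:AnIsKol} yields \eref{e:FF1}.

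\textbf{Proof of \eref{e:FF2}.} Write the difference of integrands as $(S_\eps(t-s)-S(t-s))\theta(s)H_\eps+S(t-s)\theta(s)(H_\eps-\Id)$. For the first term the multiplier $e^{-k^2f(\eps k)\tau}-e^{-k^2\tau}$ is bounded, via $|f(\eps k)-1|\lesssim|\eps k|$ on $|\eps k|\le\delta$ (Assumption~\ref{a:f}) and $f\ge 2c_f$ everywhere, by $C\min(\eps|k|^3\tau,1)\,e^{-c_fk^2\tau}$; for the second, $|h(\eps k)-1|\lesssim\min(|\eps k|^2,1)\lesssim(\eps|k|)^\alpha$ since $h(0)=1$, $h'(0)=0$ and $h\in\cC^2$ near $0$ (Assumption~\ref{a:h}). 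Reducing the $L^2$-norm of the kernel as in \eref{e:FF1} but keeping one factor $(\eps|k|)^\alpha$ --- which for the second term forces $\alpha$ spatial derivatives onto $\theta(s,\cdot)p_{t-s}(x-\cdot)$, furnished by the $\cC^\alpha$-bound on $\theta$, a Sobolev product estimate (valid because $\alpha<\tfrac12$), and $\int_0^t\|p_{t-s}(x-\cdot)\|_{H^\alpha}^2\,ds<\infty$ --- yields the scale-invariant bound $\E|\delta(\Pte-\Pt)|^p\lesssim\eps^{\alpha p}\,\E[\|\theta\|^p_{\cC^\alpha_\tau}]$. Interpolating this against the increment estimate $\E|\delta(\Pte-\Pt)|^p\lesssim\E[\|\theta\|^p_{\cC_\tau}]\,|t-s|^{\mu_1 p/4}|x-y|^{\mu_2 p/2}$ with $\mu_1+\mu_2\le1$, supplied by \eref{e:FF1} applied to the difference, and optimising the weights, gives $\E|\delta(\Pte-\Pt)|^p\lesssim\eps^{(1-\lambda_1-\lambda_2)\alpha p}\,\E[\|\theta\|^p_{\cC^\alpha_\tau}]\,|t-s|^{\lambda_1 p/4}|x-y|^{\lambda_2 p/2}$, and Lemma~\ref{lem:AnIsKol} yields \eref{e:FF2}.

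\textbf{Main obstacle.} Estimate \eref{e:FF1} is routine; the content is in \eref{e:FF2}. Since the multiplication operator $\theta(s)$ does not commute with the Fourier multipliers $S_\eps,S,H_\eps$, the difference of integrands does not factor, forcing the splitting above; the awkward piece is $S(t-s)\theta(s)(H_\eps-\Id)$, where extracting a power of $\eps$ costs spatial regularity that has to be borrowed from $\theta$, and it is precisely here that the hypothesis $\theta\in\cC^\alpha$ and the restriction $\alpha<\tfrac12$ (so that $\cC^\alpha$-functions multiply boundedly on the relevant Sobolev scale) are indispensable. The remaining work --- choosing the interpolation weights to land exactly on the exponents in \eref{eq:alp} and \eref{e:FF2}, and checking the hypotheses of Lemma~\ref{lem:AnIsKol} with budget $\lambda_1+\lambda_2\le1$ --- is mechanical.
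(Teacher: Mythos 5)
Your proposal follows essentially the same route as the paper: Burkholder--Davis--Gundy plus the uniform $L^2$-boundedness of $H_\eps$ to reduce everything to Parseval-type estimates on heat-kernel increments, the identical splitting of $\Pte-\Pt$ into a kernel-difference part (handled via $|f(\eps k)-1|\lesssim|\eps k|$ and $f\ge 2c_f$) and an $(\Id-H_\eps)$ part (handled via the product estimate in $H^{\alpha'}$, which is where $\theta\in\cC^\alpha$ and $\alpha<\tfrac12$ enter), and finally Lemma~\ref{lem:AnIsKol} to pass to the H\"older norms. The one place where you deviate is in your explanation of the constraint $\lambda_1+\lambda_2\le 1$: the paper never estimates a mixed space-time increment at all. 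It only proves the three \emph{separate} bounds (pure time increment $\lesssim|t_1-t_2|^{p/4}$, pure space increment $\lesssim|x_1-x_2|^{p/2}$, uniform bound, plus the $\eps^{\alpha' p}$ bound for the difference), and the budget $\lambda_1+\lambda_2+\lambda_3=1$ with $\lambda_3\ge0$ arises from the generalized H\"older interpolation carried out \emph{inside} Lemma~\ref{lem:AnIsKol}; the leftover weight $\lambda_3$ is what produces the factor $\eps^{(1-\lambda_1-\lambda_2)\alpha}$ in \eref{e:FF2}. Your direct mixed-increment computation is not what the lemma takes as input, and moreover your claim that $\sum_k|k|^{\lambda_1+\lambda_2-2}$ converges ``precisely when $\lambda_1+\lambda_2\le 1$'' fails at the endpoint, where the series diverges logarithmically. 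Since you do establish the three separate bounds that Lemma~\ref{lem:AnIsKol} actually requires, and the exponent conditions \eref{eq:alp} are open so the endpoint is never needed, this misattribution is harmless for the conclusion, but the interpolation mechanism of the paper is both cleaner and valid at $\lambda_1+\lambda_2=1$.
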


In our application of this lemma we shall need a small power of $T$ appearing in the right-hand side. This additional factor can be easily obtained by observing that as $\Pte(0) =0$ we have for any $0 < \kappa < \alpha_1$
\begin{equs}
\big\| \Pte  \big\|_{\cC_\tau^{\alpha_1- \kappa} (\cC^{ \alpha_2})} \leq   T^\kappa \big\| \Pte  \big\|_{\cC_\tau^{\alpha_1} (\cC^{ \alpha_2})}.
\end{equs}
Furthermore, we prefer to work with the space-time H\"older norms introduced in \eref{e:alphaHol1}, instead of working in spaces of  functions that are H\"older in time taking values in a H\"older space. To this end we observe that  
\begin{equs}
\|\Psi^\theta_\eps \|_{\cC^{\alpha/2, \alpha}_{\tau}} \leq \big\| \Pte  \big\|_{\cC_\tau^{\alpha/2} (\cN)} + \big\| \Pte  \big\|_{\cN_\tau (\cC^{ \alpha})}.
\end{equs}
In view of these remarks, the following result is an easy consequence of Lemma~\ref{le:PTE}.
\begin{corollary}\label{Cor:boundsonPSI}
Let $\alpha, \tilde{\alpha} \in (\frac13, \frac12)$. Suppose that $p$ satisfies 
\begin{equs}
 p > \frac{6}{1 - 2 \tilde{\alpha}}.
\end{equs}
Then for  $\lambda =  1 - 2  \tilde{\alpha} - \frac{6}{p}$ and for any stopping time $\tau \leq T$ we have
\begin{align}\label{e:PsiT}
\E \| \Pte  \|_{\cC ^{\tilde{\alpha}/2, \tilde{\alpha}}_\tau  }^p 
 \, \ls \,      
    T^{\lambda p/4}  \,  \E  \| \theta \|_{\cN_\tau} ^p \;
\end{align}
and 
\begin{align}\label{e:Psiep}
\E \| \Pte - \Pt  \|_{\cC ^{\tilde{\alpha}/2, \tilde{\alpha}}_\tau  }^p    \, \ls \,      
  \eps^{ \lambda p \alpha}\, 
   \,  \E  \| \theta \|_{\cC_\tau^\alpha}^p \;.
\end{align}
\end{corollary}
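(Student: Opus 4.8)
The plan is to deduce Corollary~\ref{Cor:boundsonPSI} directly from Lemma~\ref{le:PTE}, together with the two elementary remarks recorded just above the statement. The first remark gives, for any $\tilde\alpha$ and any stopping time $\tau\le T$,
\begin{equs}
\|\Pte\|_{\cC^{\tilde\alpha/2,\tilde\alpha}_\tau} \;\le\; \big\|\Pte\big\|_{\cC^{\tilde\alpha/2}_\tau(\cC)} + \big\|\Pte\big\|_{\cC_\tau(\cC^{\tilde\alpha})}\;,
\end{equs}
and likewise with $\Pte$ replaced by $\Pte-\Pt$; the second remark says that, since $\Pte(0)=0$ and $\Pt(0)=0$, lowering a temporal H\"older exponent by $\kappa>0$ costs only a factor $T^{\kappa}$. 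Thus it suffices to bound each of the two summands on the right by specialising the parameters $(\alpha_1,\alpha_2,p,\lambda_1,\lambda_2)$ in Lemma~\ref{le:PTE}, subject to $\alpha_1<\lambda_1/4-1/p$, $\alpha_2<\lambda_2/2-1/p$ and $\lambda_1+\lambda_2\le 1$; note that $\tilde\alpha<1/2$ is exactly what makes $p>6/(1-2\tilde\alpha)$ a nonempty range.

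For \eref{e:PsiT} I would apply \eref{e:FF1} twice. For the term $\|\Pte\|_{\cC^{\tilde\alpha/2}_\tau(\cC)}$, I first invoke the trivial embedding $\cC^{\kappa_0}\hookrightarrow\cC$ with a tiny $\kappa_0>0$ to legitimise the requirement $\alpha_2>0$, then apply \eref{e:FF1} with temporal exponent $\alpha_1$ as close as permitted to $\tfrac14-\tfrac{3}{2p}$ (i.e.\ saturating $\lambda_1+\lambda_2=1$ with $\lambda_2\downarrow 2/p$), and convert the surplus $\alpha_1-\tilde\alpha/2$ of temporal regularity into a factor $T^{p(\alpha_1-\tilde\alpha/2)}$ using $\Pte(0)=0$. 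Since $p>6/(1-2\tilde\alpha)$ guarantees $\tfrac14-\tfrac{3}{2p}-\tfrac{\tilde\alpha}{2}=\lambda/4>0$, this yields a $T$-power arbitrarily close to $\lambda p/4$ from below. For the term $\|\Pte\|_{\cC_\tau(\cC^{\tilde\alpha})}$, I apply \eref{e:FF1} with $\alpha_2=\tilde\alpha$ (so $\lambda_2$ just above $2\tilde\alpha+2/p$) and a small temporal exponent $\alpha_1$, again trading $\alpha_1$ against $T^{p\alpha_1}$; the budget $\lambda_1+\lambda_2=1$ now permits $\alpha_1$ up to $\lambda/4$, giving the same $T$-power. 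Taking the worse of the two yields \eref{e:PsiT}, using along the way that $\theta\mapsto\Pte$ is linear so the bound transfers from $\theta$ to $\theta(\bar u)-\theta(\ue)$ etc.

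The bound \eref{e:Psiep} is entirely analogous but uses \eref{e:FF2} instead of \eref{e:FF1}: for each of the two summands of $\|\Pte-\Pt\|_{\cC^{\tilde\alpha/2,\tilde\alpha}_\tau}$ one takes $(\alpha_1,\alpha_2)=(\tilde\alpha/2,\kappa_0)$, resp.\ $(\kappa_0',\tilde\alpha)$, and chooses $\lambda_1,\lambda_2$ with $\lambda_1+\lambda_2$ as small as the strict inequalities allow, i.e.\ arbitrarily close to $2\tilde\alpha+6/p$. This value is $\le 1$ precisely because $p>6/(1-2\tilde\alpha)$, and then $1-\lambda_1-\lambda_2$ is arbitrarily close to $1-2\tilde\alpha-6/p=\lambda$, producing the factor $\eps^{\lambda\alpha p}$. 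Here no power of $T$ is needed, so the vanishing at $t=0$ plays no role, and the norm $\|\theta\|_{\cC^\alpha_\tau}$ rather than $\|\theta\|_{\cC_\tau}$ appears simply because \eref{e:FF2} requires the extra spatial regularity of $\theta$.

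There is no serious obstacle here: the entire content sits in Lemma~\ref{le:PTE}, and the present deduction is pure bookkeeping of H\"older exponents combined with the two remarks above. The only mildly delicate point is that the constraints $\alpha_1<\lambda_1/4-1/p$ and $\alpha_2<\lambda_2/2-1/p$ in Lemma~\ref{le:PTE} are strict, so the clean exponents $\lambda p/4$ and $\lambda\alpha p$ are reached only up to an arbitrarily small deficit; this is immaterial, since throughout the paper these exponents are themselves only ever used "up to $\kappa$".
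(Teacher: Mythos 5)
Your proposal is correct and is precisely the argument the paper intends: the paper offers no written proof beyond the two preparatory remarks (splitting $\|\cdot\|_{\cC^{\tilde\alpha/2,\tilde\alpha}_\tau}$ into the two mixed norms and trading surplus temporal regularity for a power of $T$ via $\Pte(0)=0$) followed by ``easy consequence of Lemma~\ref{le:PTE}'', and your bookkeeping of $(\alpha_1,\alpha_2,\lambda_1,\lambda_2)$ fills this in faithfully, including the verification that $p>6/(1-2\tilde\alpha)$ is exactly what makes the exponent budget $4\alpha_1+2\alpha_2+6/p<1$ nonempty. Your closing caveat is also accurate: the strict inequalities in Lemma~\ref{le:PTE} mean the exponents $\lambda p/4$ and $\lambda\alpha p$ are attained only up to an arbitrarily small deficit, which is a (harmless) imprecision of the corollary's statement rather than of your argument, and is consistent with how the analogous Corollary~\ref{cor:GRP} is phrased with a strict inequality on $\lambda$.
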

\begin{proof}[Proof of Lemma \ref{le:PTE}.]
We start by reducing the derivation of \eqref{e:FF1} and \eqref{e:FF2} to the case where $\tau = T$. In fact, for general $\tau$  we can define 
\begin{equ}
\tilde{\theta}(t) := \theta(t \wedge \tau) \qquad \text{and} \qquad \tilde{\Psi}^\theta_\eps := \int_0^t S_\eps(t-s) \,\tilde{\theta}(s) \, H_\eps \, dW_s.
\end{equ}
Observing that $\tilde{\Psi}^\theta_\eps = \Pte$ for $t \leq \tau$ we almost surely have the estimates
\begin{equ}
\big\| \Pte    \big\|_{\cC_\tau^{\alpha_1} (\cC^{ \alpha_2})} \leq \big\|  \tilde{\Psi}^\theta_\eps    \big\|_{\cC_T^{\alpha_1} (\cC^{ \alpha_2})} \quad \text{and} \quad  \big\| \Pte -\Pt   \big\|_{\cC_\tau^{\alpha_1} (\cC^{ \alpha_2})} \leq \big\|  \tilde{\Psi}^\theta_\eps - \tilde{\Psi}^\theta    \big\|_{\cC_T^{\alpha_1} (\cC^{ \alpha_2})}. 
\end{equ}
On the other hand we also have the almost sure identities
\begin{equ}
\| \tilde{\theta} \|_{\cN_T} = \| \theta \|_{\cN_\tau} \qquad \text{and}  \qquad \| \tilde{\theta} \|_{\cN_T} = \| \theta \|_{\cC^\alpha_\tau} .
\end{equ}
Hence the estimates \eqref{e:FF1} and \eqref{e:FF2} for general $\tau$ follow as soon as we have established them for $\tau=T$. We will make this assumption for the rest of the proof. 

Lemma \ref{lem:AnIsKol} applied to $F =  \Pte  $ will imply the desired bound \eref{e:FF1} as soon as we have established the following inequalities
\minilab{e:lotsofbounds}
\begin{equs}
\E  \big| \Pte(t_1 ,x) -\Pte(t_2,x) \big|^p  & \ls   \E  \| \theta \|_{\cN_T} ^p \,   (t_1-t_2)^{\frac{p}{4}},\label{e:Ptimebou} \\
\E  \big| \Pte(t, x_1) -\Pte(t,x_2) \big|^p  & \ls  \E  \| \theta \|_{\cN_T} ^p \,   |x_1-x_2|^{  \frac{p}{2}}, \label{e:Ptspacebou}    \\
\E  \big| \Pte(t ,x)  \big|^p  &\ls  \E  \| \theta \|_{\cN_T} ^p.\label{e:FF3}
\end{equs}
Then  Lemma \ref{lem:AnIsKol}, applied for  $F =  \Pte  -\Pt$ implies \eqref{e:FF2} as soon as we establish in addition that 
\minilab{e:lotsofbounds}
\begin{equ}[e:Ptdiffbou]
\E \big| \Pte(t ,x) -\Pt(t ,x) \big|^p  \ls   \E  \| \theta \|_{\cC_T^\alpha}^p \, \eps^{\alpha' p}
\end{equ}
for any $\alpha' < \alpha$. We state \eref{e:Ptimebou} and \eref{e:Ptspacebou} only for $\Pte$ noting that $\Pt$ is included as the special case $\eps= 0$. 

To see \eref{e:Ptimebou} we can write for $t_1 \geq t_2$,
\begin{equs}
\Pte(t_1,x) & -\Pte(t_2,x)  \\
 = \, & \int_0^{t_2}   \!\! \int_{-\pi}^{\pi}  H_\eps \Big( \,  \big( p_{t_1-s}^\eps (x- \cdot) -  p_{t_2-s}^\eps (x- \cdot )\big) \theta(s,\cdot)  \Big)   \,  dW(s) \\
& + \int_{t_2}^{t_1}  \int_{-\pi}^{\pi} H_\eps \,  \Big( \,  p_{t_1-s}^\eps (x - \cdot) \,  \theta(s,\cdot ) \,  \Big)  \,  dW(s). 
\end{equs}
Here we recall the definitions of the heat kernel $p_t^\eps$ in \eref{e:Defpt} and the smoothing operator $H_\eps$ in Assumption \ref{a:h}. 

Using the Burkholder-Davis-Gundy  inequality \cite[Theorem 3.28]{KS91} we get
\begin{equs}
\E  \big|&   \Pte(t_1,x) -\Pte(t_2,x) \big|^p   \label{e:pt1}
 \\  \ls \,&  \,  \E  \Big(  \int_0^{t_2} \!\! \int_{-\pi}^{\pi}
   \Big[ H_\eps \Big(\big( p_{t_1-s}^\eps (x- \cdot) -  p_{t_2-s}^\eps (x- \cdot )\big) \theta(s,\cdot ) \Big)(y)  \Big]^2 \,  ds \, dy \Big) ^{  \frac{p}{2}}  \qquad    
   \\ &+  \,  \E \Big( \int_{t_2}^{t_1}  \!\!  \int_{-\pi}^{\pi}  
   \Big[ H_\eps \Big( p_{t_1-s}^\eps (x - \cdot) \,  \theta(s,\cdot ) \Big)(y)  \Big]^2 \,  d s \, dy \Big)^{  \frac{p}{2}}\; .
\end{equs}
Observing that due to the boundedness of $h$ (see Assumption \ref{a:h}) the convolution with $H_\eps$ is uniformly bounded as an operator on $L^2$ we can bound the first expectation on the right-hand side of \eref{e:pt1} by a constant times 
\begin{equ}[e:pt2]
 \E  \|\theta \|_{\cN_T}^p  \, 
  \bigg( \int_0^{t_2} \!\!  \int_{-\pi}^\pi      \big( p_{t_1 -s}^\eps (x- y) -  p_{t_2-s}^\eps (x- y )\big)^2 \,  ds  \, d y \bigg)^{\frac{p}{2}}. 
\end{equ}
Using Parseval's identity the double integral in \eref{e:pt2} can be bounded by
\begin{equs}[e:newlabb]
  \sum_{k \in \Z} 
 & \Big(e^{- k^2 f(\eps k)\, ( t_1-t_2) } -1 \Big)^2  
  \int_0^{t_2} e^{-2  k^2 f(\eps k)\, ( t_2-s) } d s \\
 & \ls \,  \sum_{k \in \Z^\star} \Big(  k^2 f(\eps k) \, ( t_1-t_2)  \wedge 1 \Big)^2  \frac{1}{ k^2 f(\eps k) } \\
& \ls  \,  \sum_{k \in \Z^\star }   ( t_1-t_2)  \wedge  \frac{1}{ k^2 f(\eps k) }  \\ 
& \ls  \sum_{|k| \leq    ( t_1-t_2)^{-1/2}}   \!\!\!\!  ( t_1-t_2)  
+  \sum_{|k| >  ( t_1-t_2)^{-1/2} }   \frac{1}{ k^2} \ls  \,  ( t_1-t_2)^{  \frac{1}{2}}.
\end{equs}
Here in the second inequality we have used that $|a|^2 \leq |a|$ whenever $|a| \leq 1$, and in the third inequality we used the assumption that $f$ is bounded from below (see Assumption \ref{a:f}).

The second integral on the right-hand side of \eref{e:pt1} can be bounded in  a similar way by
\begin{equs}
 \E \Big( \int_{t_2}^{t_1}   \! \!  \int_{-\pi}^\pi  \,\Big( H_\eps \big(p_{t_1-s}^\eps &  (x - \cdot) \,  \theta(s,\cdot )\big) (y)  \Big)^2 \,  d s \, dy \Big)^{  \frac{p}{2}}  \, \\
  & \ls  \E  \|\theta \|_{\cN_T}^p  \, \Big(  \int_{t_2}^{t_1}  \!\! \int_{-\pi}^\pi  p_{t_1-s}^\eps (x - y)^2 \,  d s \, dz \Big)^{  \frac{p}{2}}.
\end{equs}
To bound the integral we calculate
\begin{equs}
 \int_{t_2}^{t_1} \int_{-\pi}^\pi  p_{t_1-s}^\eps (x - y)^2 \,   d s \, dy  \,  = \, &   \sum_{k \in \Z }   \int_{t_2}^{t_1}  e^{-2  k^2 f(\eps k)\, ( t_1-s) }\, d s \\
 \ls  \, &  \sum_{k \in \Z^\star}     \frac{  k^2 f(\eps k)\, ( t_1-t_2) \wedge 1  }{ k^2 f(\eps k) }  + (t_1-t_2) \\
  \ls \, &   (t_1-t_2)^{  \frac{1}{2}}.
\end{equs}
This finishes the proof of \eref{e:Ptimebou}.

Using the Burkholder-Davis-Gundy inequality and the uniform $L^2$-boundedness of the convolution with $H_\eps$ in the same way as before, the derivation of \eref{e:Ptspacebou} can be reduced to showing that 
\begin{equ}[e:uselat]
\int_0^t  \int_{-\pi}^\pi \big(  p_{t-s}^\eps(x_1-y) - p_{t-s}^\eps(x_2-y) \big)^2 dy \, d s \, \ls   \, \big| x_1-x_2 \big|.
\end{equ}
To prove the latter bound, we estimate
\begin{equs}
\int_0^t  \int_{-\pi}^\pi \big(&  p_{t-s}^\eps(x_1 -z) - p_{t-s}^\eps(x_2-z) \big)^2 dy \, d s  \\
&\, = \,   \sum_{k \in \Z^\star}  \big| e^{ik(x_2 - x_1)} -1 \big|^2   \int_0^t e^{-  k^2 f(\eps k) (t-s)} \, d s \\
& \ls \,    \sum_{k \in \Z^\star }\big( k |x_1 - x_2| \wedge 1 \big)^2  \frac{1}{ k^2 f(\eps k)} \\
& \ls \, \sum_{k \leq |x_1 - x_2|^{-1}} |x_1 - x_2|^2 +  \, \sum_{k > |x_1 - x_2|^{-1}}  \frac{1}{k^2}   \ls  |x_1 - x_2|.
\end{equs}
This shows \eref{e:Ptspacebou}.

The bound \eref{e:FF3} follows immediataly, by using the Burkholder-Davis-Gundy inequality in the same way as above.

In order to obtain \eref{e:Ptdiffbou} we write 
\begin{equs}
\Pte(t,x) -&  \Pt(t,x) \label{e:pt3}\\ 
 \, = \, & \int_0^{t} \int_{-\pi}^\pi     H_\eps \, \Big( \big( \,  p_{t-s}^\eps (x-\cdot )
    - p_{t- s}(x-\cdot) \big) \theta(s, \cdot) \Big)(y) \, dW(s, y)  \\
 & + \int_0^{t}  \int_{-\pi}^\pi    \big( \Id  -    H_\eps \big) \, \big(   p_{t- s}(x,\cdot)  \, \theta(s, \cdot)  \big)(y)\, dW(s, y).  
\end{equs}
The first term on the right-hand side of \eref{e:pt3} can be treated as before. Up to a constant its $p$-th moment is bounded by
\begin{equ}[e:pt4]
\E  \|\theta  \|_{\cN_T}^p   \Big( \sum_{k \in \Z} \int_0^t \Big(e^{-(t-s)  k^2 f(\eps k)} - e^{-(t-s) k^2 } \Big)^2 d s \Big)^{  \frac{p}{2}}.
\end{equ}
To get a bound on \eref{e:pt4} we write
\begin{equs}[e:pt5]
\sum_{k \in \Z} \int_0^t & \Big(e^{-(t-s) k^2 f(\eps k) } - e^{-(t-s) k^2 }  \Big)^2 d s  
\\ &= 
\sum_{k \in \Z^\star} \int_0^t 
e^{-2k^2 c_f s} 
   \Big(e^{-k^2(f(\eps k) - c_f) s } - e^{-k^2(1-c_f)s}  \Big)^2 d s\;
\\ &\leq \,
\sum_{k \in \Z^\star} \int_0^t
e^{-2 k^2 c_f s} 
   \big(1 \wedge s  k^2 |f(\eps k) -1| \big) ^2 d s\;.
\end{equs}

Recall that the constant $c_f$ is defined in Assumption \ref{a:f}. 
Using  Assumption \ref{a:f} on $f$ once more,  one can see that  for $|\eps k| < \delta$  we have
\begin{align*}
1 \wedge s  k^2 |f(\eps k) -1| 
\ls
  1 \wedge |sk^2 \eps k |.
\end{align*}
Hence, up to a constant the sum in \eref{e:pt5} can then be bounded by
\begin{equs}
\sum_{0 <|k| \leq \delta \eps^{-1} } \int_0^t &
e^{-2 k^2 c_f s} \, 
  s^2 \,  k^4 \,   (\eps^2 k^2 ) \,  d s + \sum_{ \delta \eps^{-1}  <|k|   } \int_0^t 
e^{-2 k^2 c_f s}  \,  d s  \, \ls \eps.
   \end{equs} 

Finally, to treat the second term in \eref{e:pt3} we need to impose a stronger regularity assumption on $\theta$. We have the estimate
\begin{equ}[e:pt10]
\big| \theta(s,  \cdot) \,  p_{t-s}(x - \cdot)  \big|_{H^{\alpha'}} \ls \big| \theta(s, \cdot) \,  \big|_{\cC^\alpha} \,  \big| p_{t-s}(x-\cdot) \big|_{H^{\alpha}} 
\end{equ}
which holds for every $\alpha' < \alpha$. In fact, to see \eref{e:pt10} write 
\begin{equs}
\int_{-\pi}^\pi   & \! \int_{-\pi}^\pi  \frac{ \big| \theta\,  p_{t-s} (x_1) - \theta \,  p_{t-s}(x_2)  \big|^2 }{|x_1 - x_2|^{2 \alpha' +1}} \dd x_1 \dd x_2 \\ 
&  \ls \sup_{x} | \theta(x)|^2 \, | p_{t-s} |_{ H^{\alpha' }}^2 +  |\theta |_{\cC^\alpha}^2  \int_{-\pi}^\pi    \! \int_{-\pi}^\pi  \frac{  |x_1 -x_2|^{2 \alpha}\,  p_{t-s}(x_2)^2  } {|x_1 -x_2|^{2 \alpha' +1} } \dd x_1 \dd x_2 \\
& \ls  \sup_{x} | \theta(x)|^2 \, | p_{t-s} |_{ H^{\alpha' }}^2 +  |\theta |_{\cC^\alpha}^2   |p_{t-s}|_{L^2}.
\end{equs}

Then the Burkholder-Davis-Gundy inequality yields 
\begin{equs}
\E  \bigg|  & \int_0^{t} \!\! \int_{-\pi}^\pi \Big( \big( \Id  -    H^\eps \big) \, 
 p_{t- s}(x -\cdot) \, \theta(s, \cdot)   \Big)(y) \, dW(s, y) \bigg|^p   \label{e:pt6} \\ 
& \ls \,  \E\Big(  \sup_{ s \in [0,T]}  \big| \theta(s, \cdot ) \big|_{\cC^\alpha}  \Big)^p   \Big(  \int_0^t  |p_{t-s}|_{H^\alpha}^2 \big|\Id - H^\eps \big|_{H^{\alpha'} \to L^2}^2 \, ds \Big)^{  \frac{p}{2}}. 
\end{equs}
Now for any $\alpha \in (0,1)$ 
\begin{equs}
|p_{t}(x - \cdot) |_{H^\alpha}^2 \, \eqsim \, &  \sum_{k \in \Z} \big( 1 + |k|^{2\alpha} \big) e^{- 2 t k^2}
 \ls   t^{-\frac{1}{2}- \alpha}.
\end{equs}
 On the other hand 
\begin{equs}
\big| \Id - H^\eps \,  \big|_{H^{\alpha'} \to L^2} \, \eqsim \,& \sup_{k \in \Z} \frac{1 - h(\eps| k |)}{1 + |k|^{\alpha'}}
\leq  \eps^{\alpha'}  \sup_{r \in \R} \frac{1 - h(|r|)}{\eps^{\alpha'} +|r|^{\alpha'}}
\ls     \eps^{\alpha'} ,  
\end{equs}
due to  Assumption \ref{a:h} on $h$. By assumption  $\alpha <  \frac{1}{2}$. Hence  the right-hand side of \eref{e:pt6} is integrable and we arrive at \eref{e:Ptdiffbou}. This finishes the proof. 
\end{proof}

As a next step we give bounds on the approximation of the Gaussian rough path $(X, \XX)$.
 
\begin{lemma}\label{le:GRP}
For any $\alpha \in (\frac13, \frac12]$, any $\eps \geq 0$ and any $t$, the pair $\big(X_\eps(t, \cdot), \XX_\eps(t; \cdot) \big)$ is a geometric $\alpha$-rough path in the sense of Definition \ref{def:RP}.

Furthermore, let $p \in [1, \infty)$ and let $\alpha_1, \alpha_2 > 0$ 
satisfy 
\begin{align} \label{eq:alp-weak}
\alpha_1  \, < \, \frac{\lambda_1}4
  \,, \quad 
\alpha_2  \, < \, \frac{\lambda_2}2
\end{align}
for some $\lambda_1, \lambda_2 >0$ with $\lambda_1 + \lambda_2 \leq 1$. Then we have for any $\eps \geq 0$
\begin{equ}[e:Xe1]
\E  \big\| X_\eps  \big\|_{\cC_T^{\alpha_1} (\cC^{ \alpha_2})}^p    \, \ls \,      
  1,
  \end{equ}

\begin{equ}[e:Xe]
\E  \big\| X_\eps -X   \big\|_{\cC_T^{\alpha_1} (\cC^{ \alpha_2})}^p    \, \ls \,      
  \eps^{\frac{1}{2} (1 - \lambda_1 - \lambda_2)   p},
  \end{equ}
and 
\begin{align}
\label{e:XXonly}
\E  \big\| \XX_\eps  \big\|_{\cC_T^{\alpha_1} (\bB^{ 2 \alpha_2})}^p    & \, \ls \,  1,
\\
\label{e:XX}
\E  \big\| \XX_\eps -\XX   \big\|_{\cC_T^{\alpha_1} (\bB^{ 2 \alpha_2})}^p  
& \, \ls \,      
  \eps^{  \frac{1}{2}(1 - \lambda_1 - \lambda_2)  p}\;.
\end{align}
\end{lemma}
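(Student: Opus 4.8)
\textbf{Proof proposal for Lemma~\ref{le:GRP}.}

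The plan is to exploit the Fourier series representations \eref{e:XSum} and \eref{e:Xsum} together with the equivalence of moments in a fixed Wiener chaos (Nelson's estimate), so that everything reduces to second-moment computations, and then to feed these second-moment bounds into a suitable Kolmogorov/Garsia-Rodemich-Rumsey criterion. First I would record that $X_\eps(t,\cdot)$ lives in the first Wiener chaos and $\XX_\eps(t;\cdot,\cdot)$ in the second (inhomogeneous) Wiener chaos generated by the $w_k$; by Nelson's estimate it therefore suffices to prove the bounds \eref{e:Xe1}--\eref{e:XX} for $p=2$, the general case following by raising to the appropriate power. The geometric rough path claim is then essentially bookkeeping: the consistency relation \eref{eq:cond-it-int} and the symmetry identity \eref{e:Symmetric} have already been stated to hold by truncation, so combined with the (soon to be established) finiteness of the $\bB^{2\alpha}$ norm and $\alpha>\frac13$, one invokes Definition~\ref{def:RP} directly. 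For this part one only needs that for a fixed $t$ the pair has finite $\alpha$/$2\alpha$ regularity in space, which is the $\eps$-fixed, $T$-independent version of \eref{e:Xe1} and \eref{e:XXonly}.

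The core estimates are the second-moment bounds on space-time increments. For $X_\eps$ one computes, using \eref{e:Defq} and \eref{e:Defxi},
\begin{equs}
\E \big| X_\eps(t,x) - X_\eps(t,y)\big|^2 &\ls \sum_{k\neq 0} \frac{h(\eps k)^2}{k^2 f(\eps k)} \big| e^{ikx} - e^{iky}\big|^2 \ls \sum_{k\neq 0} \big( k^2|x-y|^2 \wedge 1\big) \frac{1}{k^2} \ls |x-y|,
\end{equs}
using $h^2\ls f$ and $f\ge 2c_f$, and similarly for the time increment one uses $|\KK_k^{s,t}-\KK_k^{t,t}|\ls (k^2 f(\eps k)|t-s|)\wedge 1$ to obtain a bound of order $|t-s|^{1/2}$; interpolating gives the $\lambda_1/4$, $\lambda_2/2$ scaling required by \eref{eq:alp-weak}. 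For the \emph{difference} $X_\eps - X$, the key point is that the factor $h(\eps k)^2/f(\eps k) - 1$, or the discrepancy between $e^{-k^2 f(\eps k)(t-s)}$ and $e^{-k^2(t-s)}$, is bounded by $1\wedge |\eps k|$ on the range $|\eps k|<\delta$ (using $h(0)=1$, $h'(0)=0$, $f(0)=1$, $f\in\cC^1$ near $0$) and simply by a constant for $|\eps k|\ge\delta$; this is exactly the mechanism already used in the proof of Lemma~\ref{le:PTE} for \eref{e:Ptdiffbou}, and it produces a gain of $\eps^{1-\lambda_1-\lambda_2}$ after interpolating against the regularity one spends. The $\XX_\eps$ estimates are analogous but heavier: from \eref{e:Xsum}--\eref{e:DefI} one writes $\XX_\eps(t;x,y)-\XX_\eps(t;x',y')$ as a double sum, applies Wick's theorem to reduce the fourth moment of the Gaussian coefficients to products of the $\KK_k^{t,t}$, and then estimates the resulting double sums in $k,l$; the bounds on $I_{k,l}$ (it behaves like $\min(1,|l(y-x)|)$ times harmless factors, with the diagonal $k=-l$ contributing the $ily$ term that must be handled separately) give the $2\alpha_2$ spatial regularity, and the difference $\XX_\eps-\XX$ again gains a power of $\eps$ through the same $1\wedge|\eps k|$ trick applied to both the $q_\eps^k$ and the exponential kernels. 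Finally one passes from these increment bounds to the stated norms via Lemma~\ref{lem:AnIsKol} (the higher-dimensional Garsia-Rodemich-Rumsey lemma from Appendix~\ref{AppB}), noting as in the remark after Lemma~\ref{le:PTE} that $X_\eps(0)=\XX_\eps(0)=0$ lets one trade a little H\"older exponent for a power of $T$ if needed, though here the statement is already in the $\cC_T^{\alpha_1}(\cC^{\alpha_2})$ form so only the GRR step is required.

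The main obstacle I expect is the bookkeeping in the second-moment estimate for the \emph{difference} $\XX_\eps - \XX$: one has a double series whose summand is a difference of products $\xi_\eps^k\ot\xi_\eps^l q_\eps^k q_\eps^l I_{k,l}$ versus the $\eps=0$ analogue, so one must split the difference across three factors (the two $q$'s and the covariance structure entering through Wick's theorem), control each piece with the $1\wedge|\eps k|$ gain on one factor while keeping the others summable, and handle the diagonal $k+l=0$ terms where $I_{k,l}$ has the extra linear-in-$y$ term. Keeping the interpolation exponents aligned so that the final gain is $\eps^{\frac12(1-\lambda_1-\lambda_2)p}$ uniformly, while not losing summability in the double sum, is the delicate part; the analogous single-sum computation for $X_\eps-X$ is a good warm-up and the structure of the argument in the proof of Lemma~\ref{le:PTE} shows the pattern, but the two-parameter case genuinely requires care with which index carries the smallness.
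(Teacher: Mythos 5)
Your proposal is correct and follows the same overall strategy as the paper: reduce to second moments via equivalence of moments in a fixed Wiener chaos, estimate the Fourier-space increments with the $1\wedge|\eps k|$ gain, and conclude with Lemma~\ref{lem:AnIsKol} (the paper obtains the $X_\eps$ bounds by simply citing Lemma~\ref{le:PTE} with $\theta\equiv 1$ rather than recomputing, and reduces the strict conditions \eref{eq:alp-weak} to the $p$-dependent ones \eref{eq:alp} by proving the bound for large $p$ and using monotonicity of $L^p$-norms). The one place where the paper takes a genuinely lighter route is the treatment of $\XX_\eps$: instead of running the Wick computation on the whole double series, it splits $\XX_\eps=\XX_\eps^++\XX_\eps^-$ and observes that by \eref{e:Symmetric} the symmetric part is just $\tfrac12\,\delta X_\eps\otimes\delta X_\eps$, so all of its bounds, including the $\eps$-difference bound, follow immediately from the already-established increment estimates \eref{e:Ptimebou}--\eref{e:Ptdiffbou} for $X_\eps$. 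Only the antisymmetric part then requires the delicate double sum, and there the diagonal terms --- whose extra linear-in-$y$ contribution from $I_{k,-k}$ you flag as needing separate care --- drop out entirely, so the sum runs over $k\neq -l$ from the start. Your route of estimating the full second moment of $\XX_\eps$ directly would also work, but you would have to carry the diagonal terms and the nonzero mean of the symmetric part through the Wick expansion; the splitting buys a shorter argument and isolates the only genuinely singular sum, which is exactly the part where, as you anticipate, the bookkeeping of which index carries the smallness is delicate.
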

  We will need uniform in time estimates  on $\XX_\eps$ and we will not make use of the H\"older in time regularity provided by Lemma \ref{le:GRP}. Therefore, we will actually use the following version of Lemma \ref{le:GRP}.
 \begin{corollary}\label{cor:GRP}
Let $\alpha \in (\frac13, \frac12)$. Suppose that $\lambda <  1 - 2 \alpha$. Then for any $p\geq 1$ and  $ T >0$ we have
\begin{align}
 \E \| X_\eps  \|_{\cC^{\alpha/2, \alpha}_T}^p    \, \ls \,&      
    T^{\frac{ \lambda p}{4}}, \label{e:Xe1b} \\
 \E \|     X_\eps -X  \|_{\cC^{\alpha/2, \alpha}_T}^p     \, \ls \,&      
      \eps^{  \frac{\lambda p}{2}  }. \label{e:Xeb}  
 \end{align}
Similarly,  we get 
\begin{align}
\label{e:XXonlyb}
\E  \big\| \XX_\eps  \big\|_{\bB_T^{ 2 \alpha}}^p    & \, \ls \,  T^{  \frac{\lambda p}{4}}\;,
\\
\label{e:XXb}
\E  \big\| \XX_\eps -\XX   \big\|_{\bB_T^{ 2\alpha }}^p  
& \, \ls \,      
  \eps^{   \frac{\lambda p}{2} }\;.
\end{align}
 \end{corollary}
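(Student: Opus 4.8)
The plan is to deduce Corollary \ref{cor:GRP} directly from Lemma \ref{le:GRP} by two elementary manipulations, in complete analogy with the passage from Lemma \ref{le:PTE} to Corollary \ref{Cor:boundsonPSI}. First, I would replace the ``H\"older in time, valued in a H\"older space'' norms appearing in Lemma \ref{le:GRP} by the inhomogeneous parabolic norms of \eref{e:Norm17}, using the trivial bound
\begin{equs}
\| u \|_{\cC^{\tilde{\alpha}/2, \tilde{\alpha}}_T} \;\leq\; \big\| u \big\|_{\cC^{\tilde{\alpha}/2}_T(\cC)} \,+\, \big\| u \big\|_{\cC_T(\cC^{\tilde{\alpha}})}\;,
\end{equs}
together with the identity $\| R \|_{\bB^{\tilde{\beta}}_T} = \| R \|_{\cC_T(\bB^{\tilde{\beta}})}$ for the two-parameter objects. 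This reduces everything to bounding the $p$-th moments of $\| X_\eps \|_{\cC^{\tilde{\alpha}/2}_T(\cC)}$, $\| X_\eps \|_{\cC_T(\cC^{\tilde{\alpha}})}$, $\| \XX_\eps \|_{\cC_T(\bB^{\tilde{\beta}})}$ and of the corresponding differences with the quantities for $X$, $\XX$, all of which are controlled by Lemma \ref{le:GRP}.

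The second ingredient is the observation that $X_\eps(0) = 0$, which forces $\XX_\eps(0;\cdot,\cdot) = 0$ as well, and also $X_\eps(0) - X(0) = 0$, $\XX_\eps(0) - \XX(0) = 0$. Hence, for any Banach-space-valued $u$ vanishing at $t=0$ and any $0 \leq a < a' \leq 1$, one has $\| u \|_{\cC^{a}_T(V)} \ls T^{a'-a}\,\| u \|_{\cC^{a'}_T(V)}$ on bounded time intervals (the left-hand side being read as $\sup_{t\le T}\|u(t)\|_V$ when $a=0$), since $\|u(t)\|_V = \|u(t)-u(0)\|_V \le t^{a'}[u]_{\cC^{a'}_T(V)}$ and the H\"older quotient is estimated the same way. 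This lets me trade a small amount of time regularity for the powers $T^{\lambda p/4}$ claimed in \eref{e:Xe1b} and \eref{e:XXonlyb}.

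It then remains to feed suitable parameters into Lemma \ref{le:GRP}. For $\| X_\eps \|_{\cC_T(\cC^{\tilde{\alpha}})}$ I would take $\alpha_2$ slightly above $\tilde{\alpha}$, forcing $\lambda_2$ slightly above $2\tilde{\alpha}$ and hence $\lambda_1$ slightly below $1-2\tilde{\alpha}$, together with $\alpha_1$ slightly below $\lambda_1/4$, and pay the factor $T^{\alpha_1 p}$ via vanishing at $0$; this yields $T^{\lambda p/4}$ for every $\lambda < 1-2\tilde{\alpha}$, and the analogous choice in \eref{e:Xe} gives $\eps^{\lambda p/2}$ for the difference. For $\| X_\eps \|_{\cC^{\tilde{\alpha}/2}_T(\cC)}$ one can afford $\alpha_2$ arbitrarily small, so $\lambda_1$ may be pushed close to $1$; combining this with $\alpha_1 \approx \tilde{\alpha}/2$ and trading down to the exponent $\tilde{\alpha}/2$ again produces a power $T^{\lambda p /4}$. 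For $\XX_\eps$ one repeats the argument with $\alpha_2 \approx \tilde{\beta}/2$ in \eref{e:XXonly} and \eref{e:XX}, which forces $\lambda_1 < 1 - \tilde{\beta}$ and thus gives the factors $T^{\lambda p/4}$, $\eps^{\lambda p/2}$ for $\lambda < (1-\tilde{\beta})/2$. Finally one recombines the three contributions in the first display using sub-additivity of $x\mapsto x^{1/p}$.

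There is no serious obstacle here: the whole analytic content sits in Lemma \ref{le:GRP} (and ultimately in the Garsia--Rodemich--Rumsey type estimate of Lemma \ref{lem:AnIsKol} together with the equivalence of moments on the second Wiener chaos). The only point requiring a little care is the exponent bookkeeping, i.e.\ checking that the parameter choices $(\alpha_1,\alpha_2,\lambda_1,\lambda_2)$ needed to reach $\cC^{\tilde\alpha/2,\tilde\alpha}_T$ resp.\ $\bB^{\tilde{\beta}}_T$ remain simultaneously admissible under the constraint $\lambda_1 + \lambda_2 \le 1$, and that, after the ``$T$-for-regularity'' trade-off, the surviving powers of $T$ and $\eps$ are exactly those stated. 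Since $\tilde{\alpha} \in (\frac13,\frac12)$ one automatically has $\tilde{\beta} = 2\tilde{\alpha} \in (\frac12,1)$, so the hypothesis $\tilde{\beta} \in (\frac12,1)$ in the corollary is consistent with the intended application.
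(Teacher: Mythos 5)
Your proposal is correct and is precisely the argument the paper intends: Corollary \ref{cor:GRP} is stated without proof as an immediate consequence of Lemma \ref{le:GRP}, obtained by the same two manipulations spelled out before Corollary \ref{Cor:boundsonPSI} (splitting the parabolic norm into $\cC^{\tilde\alpha/2}_T(\cC)$ and $\cC_T(\cC^{\tilde\alpha})$ pieces, and trading a small amount of time regularity for powers of $T$ using $X_\eps(0)=\XX_\eps(0)=0$). Your exponent bookkeeping for the admissible choices of $(\alpha_1,\alpha_2,\lambda_1,\lambda_2)$ is consistent with the (non-sharp) rates claimed in the corollary, so there is nothing to add.
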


\begin{proof}[Proof of Lemma \ref{le:GRP}.]
By the monotonicity of $L^p$-norms, we may assume without loss of generality that \eref{eq:alp-weak} is replaced by \eref{eq:alp}.
The bounds \eref{e:Xe1} and \eref{e:Xe} on $X_\eps$ can be proved as in Lemma \ref{le:PTE} for  the  special case of $\theta =1$. The only difference is that some integrals over $[0,t]$ have to be replaced by integrals over $(-\infty,t]$ and the zero Fourier mode has to be removed, but this does not change the bounds. The consistency relation \eref{eq:cond-it-int} and the symmetry condition were already observed above (see \eref{e:Symmetric} and above). Thus it only remains to show \eref{e:XXonly} and  \eref{e:XX}.

To apply Lemma \ref{lem:AnIsKol} to the $\XX_\eps$ we need to prove the following bounds: For every $\gamma <1$ we have
\begin{equs}
\E  \big| \XX_\eps(t;x,y) - \XX_\eps(s;x,y) \big|^p   &\ls   |t-s|^{  \frac{ \gamma p}{4}},\label{e:Xb1}\\
\E    \big| \XX_\eps(t;x,y)   \big|^p      &\ls |x-y|^{\gamma p} \label{e:Xb2}, \\
\E   \big| \delta \XX_\eps(t) \big|_{[x,y]}^p  &\ls |x-y|^{\gamma p}, \label{e:Xb3}\\
\E  \big| \XX_\eps(t;x,y) - \XX(t;x,y) \big|^p  &\ls \eps^{  \frac{ \gamma p }{2}   } \label{e:Xb4}.
\end{equs}

The bound \eref{e:Xb3} follows directly from the consistency relation \eref{eq:cond-it-int} as for all $x \leq y$
\begin{equs}[e:X1]
\E  \big| \delta \XX_\eps(t) \big|_{[x,y]}^p   &=  \E \Big(  \sup_{x \leq z_1 < z_2 < z_3 \leq y}\big|\delta X_\eps(t;z_1,z_2) \otimes  \delta X_\eps(t,z_2,z_3)   \big|\Big)^p \\
&\ls |x-y|^{\gamma} \,  \E  \big| X_\eps(t) \big|_{  \frac{\gamma}{2}}^{2p}\;.
\end{equs}
Lemma \ref{le:PTE} implies that the expectation on the right-hand side of \eref{e:X1} is finite, which shows \eref{e:Xb3}.

Due to the equivalence of all moments in the second Wiener chaos (see Lemma~\ref{lem:Nelson})  it is sufficient to prove the bounds in the special case $p=2$. To derive \eref{e:Xb2} we write
\begin{equ}
\XX_\eps(t;x,y)  =  \sum_{ k, l \in \Z^\star}    \xi_\eps^k (t) \otimes   \xi_\eps^l(t)  \,  e^{i(k+l)x} \,   q_{\eps}^k \,  q_\eps^l  \,   I_{k,l}(y-x)\; .
\end{equ}%
Recall the definitions \eref{e:Defq} for the  $q_k^\eps$, \eref{e:Defxi} for the  $\xi_\eps^k(t)$, and \eref{e:DefI} for the $I_{k,l}$. This sum is actually a slight abuse of notation because the sum may not converge absolutely in $L^2(\Omega)$. Hence as above in \eqref{e:Xsum} it should be interpreted as the limit as $N \to \infty$ of the symmetric approximations where the sum goes over all $k,l$ with absolute value bounded by $N$.

Then we can write
\begin{equs}
\E \big| \XX_\eps(t;x,y) \big|^2 &= \sum_{k,l, \bar k, \bar l \in \Z^\star} \E \big[ \tr    (\xi^k_\eps(t)  \otimes \xi^l_\eps(t))    (  \xi^{-\bar{l}}_\eps(t) \otimes \xi^{- \bar{k}}_\eps(t) )    \big] \,   q_{\eps}^k \,  q_\eps^l     \, q_{\eps}^{\bar{k}} \,  q_\eps^{\bar{l}}     \\
& \qquad \times  I_{k,l}(y-x) I_{-\bar{k},-\bar{l}}(y-x)\;.  
\end{equs}
For the first term in the sum we get
\begin{equs}
\E \big[ \tr \,     (\xi^k_\eps(t)  \otimes \xi^l_\eps(t) )    (  \xi^{-\bar{l}}_\eps(t)  \otimes \xi^{-\bar{k}}_\eps(t) )    \big]  &=    n^2 \,  \delta_{k , \bar{k}} \, \delta_{l , \bar{l}} \,  +\,  n \,  \delta_{k , \bar{l}} \, \delta_{l , \bar{k}}   + n \, \delta_{k , -l} \, \delta_{\bar k , - \bar{l}}\,\; ,
\end{equs}
and hence we can conclude that 
\begin{equs}[e:new1]
\E \big| \XX_\eps(t;x,y) \big|^2 & =  n^2 \sum_{k,l \in \Z^\star} \big( q_{\eps}^k \big)^2 \, \big(  q_\eps^l \big)^2  \, I_{k,l}(y-x) I_{-k,-l}(y-x) \, \\
&+ n \sum_{k,l \in \Z^\star} \big( q_{\eps}^k \big)^2 \, \big(  q_\eps^l \big)^2  I_{k,l}(y-x) I_{-l,-k}(y-x)\,  \\
&+n\Big(  \sum_{k \in \Z^\star} \big( q_{\eps}^k \big)^2 \,   I_{k,-k}(y-x)   \Big)^2.
\end{equs}

We treat the off-diagonal terms first. If $k  \neq -l$ the expression \eqref{e:DefI} shows that we have the bound
\begin{equ}
 |I_{k,l}(y)| \le  |k l| |y|^2 \wedge \Big( \Big| \frac{k- l}{k+l}\Big| +1 \Big) \ls   \Big(  \Big|\frac{k- l}{k+l}\Big| +1 \Big)^{ 1- \tilde{\gamma}}  |k l|^{\tilde \gamma} |y|^{2\tilde \gamma}    
\end{equ}
for every $\tilde{\gamma} \in [0,1]$. We plug that into the right hand side of \eqref{e:new1} and we get for the terms involving only $k \neq -l$
\begin{equs}
 &   n^2 \Big| \sum_{k \neq -l \in \Z^\star} \big( q_{\eps}^k \big)^2 \, \big(  q_\eps^l \big)^2  I_{k,l}(y-x) I_{-k,-l}(y-x) \, \Big|\\
&+ n \Big| \sum_{k\neq -l \in \Z^\star} \big( q_{\eps}^k \big)^2 \, \big(  q_\eps^l \big)^2  I_{k,l}(y-x) I_{-l,-k}(y-x)\,  \Big| \\
&\ls |y|^{4\tilde \gamma} \sum_{k\neq -l \in \Z^\star}  \frac{ |k l|^{ 2 \tilde \gamma} }{l^2 k^2}  \Big(  \Big|\frac{k-l}{k+l}\Big| +1 \Big)^{ 2- 2\tilde{\gamma}}  .
\end{equs}
The sums appearing in the last line are finite if $\tilde{\gamma} <  \frac{1}{2}$. In order to treat the diagonal terms for which $k=-l$ we recall that in this case
\begin{equ}[e:diago]
I_{k,-k}(y)= iky - \big(e^{iky} -1 \big).
\end{equ} 
Then the diagonal terms in the first two lines of \eqref{e:new1} can be treated directly with the brutal bound $|I_{k,-k}(y)| \ls |ky|$ which yields 
\begin{equs}
   n^2 \Big|& \sum_{k \in \Z^\star} \big( q_{\eps}^k \big)^4   I_{k,-k}(y-x) I_{-k,k}(y-x)  \Big| \\
 & + n  \Big| \sum_{k \in \Z^\star} \big( q_{\eps}^k \big)^4  I_{k,-k}(y-x)^2 \, \Big|  \ls \sum_{k \in \Z^\star} \frac{1}{k^2}  |y-x|^2 \ls |y-x|^2.
\end{equs}
In order to treat the last sum we need to make use of a cancellation. In fact, we can write for any $N$
\begin{equs}
   \sum_{0< |k| \leq N }& \big( q_{\eps}^k \big)^2 \,   I_{k,-k}(y-x)  \label{e:canc}\\
   &=  \sum_{0 < |k| \leq N} \big( q_{\eps}^k \big)^2 \,  ik (y-x)     +   \sum_{0< |k| \leq N} \big( q_{\eps}^k \big)^2 \, \big[e^{ik(y-x)} -1 \big]  .
\end{equs}
Now the summands in the first sum are antisymmetric with respect to changing the sign of $k$. Hence this sum vanishes identically for any $N$ and also in the limit. For the second term we use the easy bound $\big|e^{ik(y-x)} -1 \big| \ls |k(y-x)|^{\tilde \gamma} $ for any $\tilde \gamma \leq1$ to obtain 
\begin{equ}
\Big(  \sum_{k \in \Z^\star} \big( q_{\eps}^k \big)^2 \, \big(e^{ik(y-x)} -1 \big)   \Big)^2 \ls |y-x|^{2 \tilde \gamma} \Big(  \sum_{k \in \Z^\star} |k|^{\tilde \gamma-2}   \Big)^2.
\end{equ}
The sum converges for any $\tilde \gamma < 1$. This establishes \eref{e:Xb2}.

To derive the bound \eref{e:Xb1} on the time regularity of $\XX_\eps$ we write for $ t >0$
\begin{equs}
\XX_\eps(t;x,y) &- \XX_\eps(0;x,y) \, \label{e:Gau00}\\
&=  \sum_{k, l  \in \Z ^\star} \Big[  \xi^k_\eps(t) \otimes \xi^l_\eps(t) -  \xi^k_\eps(0) \otimes \xi^l_\eps(0) \Big] q_\eps^k \, q_\eps^l\,e^{i(k+l)x} I_{k,l} (y-x) . 
\end{equs}
We rewrite the term involving the Gaussian random variables as 
\begin{equs}[e:Gau1]
 \xi^k_\eps(t) \otimes & \,\xi^l_\eps(t) -  \xi^k_\eps(0) \otimes \xi^l_\eps(0)  =  \xi^k_\eps(t) \otimes \delta_{0,t} \xi^l_\eps +   \delta_{0,t} \xi^k_\eps    \otimes \xi^l_\eps(0) ,
\end{equs}
where $\delta_{0,t}\xi^l_\eps:= \xi^l_\eps(t) - \xi^l_\eps(0)$. Then using \eref{e:Defxi} we get  for the first term on the right hand side
\begin{equs}
 \,\E &  \tr  \big( \xi^k_\eps(t) \otimes  \delta_{0,t}\xi^l_\eps   \big)   \big(\delta_{0,t}\xi^{-\bar{l}}_\eps \otimes \xi^{-\bar{k}}_\eps(t) \big)     \\
  &= n^2 \delta_{k,\bar k} \delta_{l,\bar l}  \, \big( 2  - 2 \CK^{t}_l  \big) +  n  \delta_{k,\bar l} \delta_{l,\bar k} \big( 1 - \CK^{t}_k \big) \, \big( 1 - \CK^{t}_l \big)   \\  
  &\qquad +   n  \delta_{k,- l} \delta_{\bar k ,-\bar l} \big( 1- \CK^{t}_k \big) \, \big( 1- \CK^{t}_{\bar k} \big).  \label{e:Gau2}
\end{equs}
Then we observe that for any $k$ we have
\begin{equs}[e:Kss]
\big| 1- \CK^{t}_k  \big| \ls& 1 \wedge f(\eps k) k^2 (t).
\end{equs}
Plugging this into \eqref{e:Gau2} yields
\begin{equs}
 \big| \,\E   \tr & \big( \xi^k_\eps(t) \otimes  \delta_{0,t}\xi^l_\eps   \big)   \big(\delta_{0,t}\xi^{-\bar{l}}_\eps \otimes \xi^{-\bar{k}}_\eps(t) \big)   \big| \label{e:Gau3} \\
&\ls \big(   \delta_{k,\bar k} \delta_{l,\bar l}    +  \delta_{k,\bar l} \delta_{l,\bar k} \big) \big( 1 \wedge f(\eps l) l^2 t \big)  
+\delta_{k,- l} \delta_{\bar k,-\bar l} \,  \big( 1- \CK^{t}_k \big) \, \big( 1- \CK^{t}_{\bar k} \big).  \quad 
\end{equs}

Performing the same calculation for the second term on the right hand side of \eqref{e:Gau1} we obtain. 
\begin{equs}
 &\big| \,\E   \tr   \big( \delta_{0,t}\xi^k_\eps \otimes   \xi^l_\eps(0)   \big)   \big( \xi^{-\bar{l}}_\eps(0) \otimes \delta_{0,t}\xi^{-\bar{k}}_\eps(t) \big)   \big| \label{e:Gau33} \\
& \ls \big(   \delta_{k,\bar k} \delta_{l,\bar l}  + \delta_{k,\bar l} \delta_{l,\bar k}\big)   \big( 1 \wedge f(\eps k) k^2 (t-s) \big) 
+\delta_{k,- l} \delta_{\bar k,-\bar l} \, \big( 1- \CK^{t}_k \big) \, \big( 1- \CK^{t}_{\bar k} \big).\quad
\end{equs}

We treat the off-diagonal terms for which $k\neq -l$ first. In this case, as above, we use the simple bound $|I_{k,l}| \ls \big| (k-l) (k+l)^{-1} \big|+1 $. Summing over those terms we obtain
\begin{equs}
 \sum_{\substack{k \neq -l \in \Z ^\star\\ \bar k \neq -\bar l \in \Z ^\star}}& \E \tr \big(  \xi^k_\eps(t) \otimes \xi^l_\eps(t) -  \xi^k_\eps(0) \otimes \xi^l_\eps(0) \big) \big(  \xi^{-\bar l}_\eps(t) \otimes \xi^{-\bar k}_\eps(t) -  \xi^{-\bar l}_\eps(0) \otimes \xi^{-\bar k}_\eps(0) \big)  \\
 &\times q_\eps^k \, q_\eps^l q_\eps^{\bar k} \, q_\eps^{\bar l}  \,e^{i(k+l)x} e^{i(\bar k+\bar l)x} I_{k,l} (y-x) I_{\bar k,\bar l} (y-x)\\
 & \ls   \sum_{k,l \in \Z^{\star} }  \frac{1}{k^2} \frac{1}{l^2}   \Big( 1 \wedge  l^2 t \Big)\Big( \Big| \frac{k-l}{k+l}\Big| +1 \Big)^2  \ls |t|^{\frac{1}{2}} .
\end{equs}
For the diagonal terms $k=-l$ we make use of the same cancellation as in \eqref{e:canc}. More precisely, using \eqref{e:diago} we write
\begin{equs}
\sum_{k, \bar k  \in \Z ^\star}& \E \tr \big(  \xi^k_\eps(t) \otimes \xi^{-k}_\eps(t) -  \xi^k_\eps(0) \otimes \xi^{-k}_\eps(0) \big) \big(  \xi^{\bar k}_\eps(t) \otimes \xi^{-\bar k}_\eps(t) -  \xi^{\bar k}_\eps(0) \otimes \xi^{-\bar k}_\eps(0) \big)  \\
 & \times \big( q_\eps^k\big) ^2 \, \big(  q_\eps^{\bar k} \big)^2 \,   I_{k,-k} (y-x) I_{\bar k, -\bar k} (y-x)\\
 & \ls  \sum _{k, \in \Z^\star} \frac{1}{k^4}   \big( 1 \wedge  k^2 t \big) | ky| + \Big( \sum_{k \in \Z ^\star} \big( 1 - \CK^{t}_k \big) \, q_k^2I_{k,-k}\Big)^2. \label{e:Gau5}
\end{equs}
Here the first term on the right hand side corresponds to the first terms on the right hand side of \eqref{e:Gau3} and of \eqref{e:Gau33}. Here we have again made use of the estimate $I_{k,-k} \ls |k y|$. To treat the remaining two terms on the right hand side of \eqref{e:Gau5}  we observe in the same way as above in \eqref{e:canc} that the terms corresponding to $iky$ in \eqref{e:diago} cancel. Then bounding the other term by $1$ it is easy to see that the whole expression on the right hand side of \eqref{e:Gau5} is bounded by  $|t|^{\frac12}$ up to a constant. This establishes \eqref{e:Xb1}.

In order to derive the bound on the  $\eps$-differences we write for any $t,x,y$
\begin{equs}
\XX_\eps&(t;x,y) - \XX(t;x,y) \, \label{e:Gau01}\\
&=  \sum_{k, l  \in \Z ^\star} \big[  q^k_\eps \xi^k_\eps(t) \otimes \big( q^l_\eps  \xi^l_\eps(t)  -  q^l_0  \xi^l_0(t) \big)\\
&\qquad  +  \big( q^k_\eps  \xi^k_\eps(t)  -  q^k_0  \xi^k_0(t) \big) \otimes  q^l_0  \xi^l_0(t) \big] \,e^{i(k+l)x} \, I_{k,l} (y-x) .
\end{equs}
Here we have put the subscript $0$ to emphasise the limit case $\eps=0$. We set for any $\eps,\bar \eps \geq 0$
\begin{equ}
\CR^{k}_{\eps,\bar \eps} := \E  \, \big(  q^k_\eps \xi^k_{\bar \eps}(t)  q^{-k}_\eps \xi^{-k}_{\bar \eps}(t)  \big) = \frac{h(\eps k) h(\bar \eps k)}{k^2 \big(f(\eps k) + f(\bar \eps k) \big)},
\end{equ}
where we have made use of the definitions \eqref{e:Xsum} of $q^k_\eps$ and $\xi^k_\eps$. In particular, using the assumptions~\ref{a:f} and~\ref{a:h} on $f$ and $h$   we get for any $k$ and any $\eps,\bar \eps \geq 0$ that
\begin{equs}
\big| \CR^{k}_{\eps, \eps}\big|  &\ls |k|^{-2}, \\
\big| \CR^{k}_{\eps, \eps}  - \CR^k_{\eps,\bar \eps} \big|   &\ls  |k|^{-2} \big( |\eps-\bar \eps| |k| \wedge 1\big), \\
\big| \CR^{k}_{\eps, \eps}  + \CR^k_{\bar \eps, \bar \eps}  -2 \CR^k_{\eps,\bar \eps}  \big| &\ls  |k|^{-2} \big( |\eps-\bar \eps| |k| \wedge 1\big) .
\end{equs}
Then a calculations which is very similar to the proof for the time regularity shows that one gets
\begin{equs}
 \E  \big|&\XX_\eps(t;x,y) - \XX(t;x,y) \big|^2 \\
& \ls  \sum_{k,l \in \Z^{\star} }  \frac{1}{k^2} \frac{1}{l^2}   \Big( 1 \wedge | l \eps|  \Big)\Big( \Big| \frac{k-l}{k+l}\Big| +1 \Big)^2 + \sum_{k \in \Z^\star} \frac{1}{k^4}  \eps |k|  |ky| \\
& \qquad + \Big( \sum_{k \in \Z^\star} \big( \CR^{k}_{\eps, \eps}  - \CR^k_{\eps,0} \big) I_{k,-k}\Big)^2  + \Big( \sum_{k \in \Z^\star} \big( \CR^{k}_{\eps, 0}  - \CR^k_{0,0} \big) I_{k,-k}\Big)^2 \ls \eps.
\end{equs}
This finishes the proof for \eqref{e:Xb4}.
\end{proof}

In the proof of Corollary \ref{cor:Rte} we will need another bound on $X_\eps$. For the usual heat semigroup $S(t)$, a bound on the H\"older norm $|X|_{\cC^\alpha}$ is enough to conclude that the map $t\mapsto S(t) X$ is $\frac{\alpha}{2}$-H\"older continuous taking values in $\cC^0$. Unfortunately, as discussed for example in Lemma  \ref{lem:timecontS}, the same statement is not true for the approximate heat semigroup $S_\eps$. We thus define a norm that ensures this property. For $X \in \cC^0$ we set
\begin{equ}[e:DNorm]
|X|_{\dD^{\alpha,\eps}}:= \sup_{0 \leq r_1<r_2 \leq 1} |r_2-r_1|^{-\frac{\alpha}{2}}|S_\eps(r_2)X -S_\eps(r_1)X|_{\cC^0}
 \in [0,+\infty].
\end{equ}
Note that $r_1 = 0$ is allowed inside the supremum. 
For any $k \in \Z_\eps = \{k \in \Z^\star \,:\, f(\eps k) < \infty\}$, we then have
\begin{equ}
| e^{ikx} |_{\dD^{\alpha,\eps}} \eqsim |k|^{\alpha}f(\eps k)^{\frac{\alpha}{2}}.
\end{equ}
We also denote by $\dD^{\alpha,\eps}$ the closure of the vector space generated by $\{e^{ikx}\}_{k \in \Z_\eps}$ under this norm.  Note that this space is finite dimensional (with dimension depending on $\eps$) if $f$ is equal to $\infty$ outside a compact domain.

We now establish bounds for $X_\eps$ measured with respect to $| \cdot |_{\dD^{\alpha,\eps}}$. 

\begin{lemma}\label{le:strangenorm}
For any $\eps \geq 0$ and for any $\alpha<\frac12$ the process $(X_\eps(t))_{0 \leq t \leq T}$ is continuous with values in $\dD^{\alpha,\eps}$,
and for any $p<\infty$ we have that 
\begin{equ}
\E \Big( \sup_{0\leq t \leq T} | X_\eps(t) |_{\dD^{\alpha,\eps}}^p \Big) \ls 1\;,
\end{equ}
uniformly over $\eps \in (0,1]$.
\end{lemma}
\begin{proof}
We fix $0 \leq s_1 < s_2 \leq T  $ and an $0<r_1 <r_2 \leq1$. Then, using H\"older's inequality we get for any $0<\lambda<1$ and any $p >0$ that 
\begin{equs}
&\E\Big|  \bigl(S_\eps(r_1) - S_\eps(r_2)\bigr) \big( X_\eps(s_2) -X_\eps(s_1) \big) \Big|_{\cC^0}^p 
 \\ &\qquad \ls \Big[ \max_{t \in \{ s_1,s_2\}} \E\big| 
   ( S_\eps(r_1)   - S_\eps(r_2)) X_\eps(t)  \big|_{\cC^0}^p\Big]^\lambda   \\
 & \qquad   \qquad  \times\bigg[ \max_{r^{\prime} \in \{r_1,r_2 \}}\E\Big|  S_\eps(r^{\prime}) \big( X_\eps(s_2) -X_\eps(s_1) \big) \Big|_{\cC^0}^p \bigg]^{1-\lambda}.
\end{equs}
The second factor can be bounded easily using the regularity of $X_\eps$ (see \eqref{e:Xe1}) and the fact  that $|S_\eps(r)|_{\cC^\kappa \to \cC^0} \ls 1$ for any $\kappa>0$ by Lemma \ref{le:newOp}. In this way we obtain for any $\kappa >0$ that
\begin{equ}
 \max_{r^{\prime} \in \{r_1,r_2 \}}\E\Big|  S_\eps(r^{\prime}) \big( X_\eps(s_2) -X_\eps(s_1) \big) \Big|_{\cC^0}^p \ls |s_2-s_1|^{\frac{p}{4}-\kappa}.
\end{equ}
To treat the first term, for any $x \in [-\pi,\pi]$ a straightforward calculation yields
\begin{equs}
\E\big|  &\big(S_\eps(r_1)  - S_\eps(r_2) \big) X_\eps(t)(x)  \big|^2  \\
&\ls \sum_{k \in \Z_\eps} \int_{-\infty}^t \Big( e^{-k^2f(\eps k) (t+r_1-s)   } - e^{-k^2f(\eps k) (t+r_2-s)   } \Big)^2 ds \\
&\ls \sum_{k \in \Z_\eps} \big( 1 \wedge f(\eps k ) k^2 | r_1 -r_2| \big)^2 \frac{1}{k^2 f(\eps k)} \ls |r_1-r_2|^{\frac12},
\end{equs}
uniformly over $\eps \in (0,1]$.
Then an easy argument yields 
\begin{equ}
\E\big|  \delta \big( (S_\eps(r_1)   - S_\eps(r_2)) X_\eps(t)\big)(x_1,x_2)  \big|^2 \ls  |r_1-r_2|^{\frac12} \wedge|x_1-x_2|,
\end{equ}
where $\delta X(x_1,x_2) = X(x_2) - X(x_1)$. 
Hence,  Kolmogorov's criterion (see e.g. Lemma \ref{lem:Kolmogorov} applied to the Banach space $\R$) together with the equivalence of Gaussian moments  (Lemma~\ref{lem:Nelson}) yields that for any $\kappa>0$ we have 
\begin{equ}
 \E\big|\big(  S_\eps(r_1) - S_\eps(r_2)\big) X_\eps(t)  \big|_{\cC^0}^p \ls |r_1-r_2|^{\frac{p}{4} - \kappa}.
\end{equ}
Hence, by choosing an appropriate $\lambda$ (close to $1$) and small values of $\kappa$ we obtain for any $\alpha <\alpha^\prime<\frac12$ and for a small $\kappa^\prime>0$ that 
\begin{equ}
\E\Big| \big( S_\eps(r_1) - S_\eps(r_2) \big)\big( X_\eps(s_2) -X_\eps(s_1) \big) \Big|_{\cC^0}^p \ls |r_1-r_2|^{p\frac{\alpha^{\prime}}{2}} |s_2-s_1|^{p \kappa^\prime}, 
\end{equ}
so that applying Kolmogorov's Criterion once more (in $r$) yields that
\begin{equ}
\E \big| X_\eps(s_2) - X_\eps(s_1) \big|_{\dD^{\alpha,\eps}}^p \ls |s_2-s_1|^{\kappa^{\prime}}.
\end{equ}
Then using the fact that $X_\eps(0)=0$ and applying Kolmogorov's criterion \ref{lem:Kolmogorov}  once more we obtain the desired bound.
\end{proof}

Finally, we give the necessary bounds on the remainder term $\Rte$. The derivation of the uniform bounds requires more work than in the cases of $\Pte$ or $(X_\eps,\XX_\eps)$.  As in \cite{HW10} the regularity of $\Pte$ follows from the space-time regularity of $\theta$.  Actually, formally one obtains 
\begin{equs}
\Rte(t;x_1,x_2) =&  \!\int_0^t \!\! \!\int_{-\pi}^\pi\big(  p_{t-s}^\eps(x_2 -z) - p_{t-s}^\eps(x_1 -z) \big)\big(\theta(s,z) - \theta(t,x_1) \big)  \, dW(s,z)\\
& + \delta_{x_1,x_2} S_\eps(t)  X_\eps(0) \label{e:wrong}
\end{equs}
and a formal application of the Burkholder-Davis-Gundy inequality suggests that the space-time regularity of $\theta$ can be used to deduce higher spatial regularity for $\Rte$. Unfortunately, this reasoning is not correct because the process $(\theta(s,z) - \theta(t,x_1))$ is not adapted to the filtration generated by $W$ since $\theta(t,x_1)$ lies ``in the future". In particular, the Burkholder-Davis-Gundy inequality cannot be applied directly. This problem is overcome in Lemma~\ref{le:mistakecorrected} with a bootstrap argument.

The lack of time regularity of the process $u$ near zero also causes a slight inconvenience. Recall that in the application we have in mind we have $\theta = \theta(\ue)$.  We have seen in Subsections \ref{ss:summary} and \ref{ss:proof} that in general the process $u_\eps$ need not be time continuous near $0$. We only have the necessary ``almost $1\over4$" H\"older regularity for times $t \geq \eps^2$. Hence we will only obtain the $\frac{\alpha}{2}$-regularity for  times $t \geq \eps$ with a blowup for $t \downarrow \eps^2$.

Recall  the definition of the parabolic H\"older norms  $\|  \cdot \|_{\cC^{\alpha/2, \alpha}_{[s,t]}}$   and  $\| \cdot \|_{\cC^{\alpha/2, \alpha}_t }$ in \eref{e:Norm17}  and \eref{e:alphaHol1}. Then for  any $K>0$ and for a $0 < \aX< \frac12$ we introduce the stopping time 
\begin{equ}[e:DefTau]
\rho^{X,\aX}_{\eps, K} \, = \, \inf \Big\{  t \geq 0\colon   \| X  \|_{\cC^{\aX/2, \aX}_t}  \geq K \text{ or }   \| X_\eps -  X  \|_{\cC^{\aX/2, \aX}_t}    \geq 1  \Big\}.
\end{equ}
Observe (recalling the definitions \eref{sigmaX} and \eref{sigmaXe}) that $\rho^{X,\aX}_{\eps,K} \geq \sigma^X_K \wedge \rho^X_\eps$ almost surely. 

We start by showing that $R^\theta_{\eps  } $ has the required spatial regularity, uniformly in $\eps$.
\begin{lemma}\label{le:mistakecorrected}
Suppose that  $\alpha,\aX \in (0,\frac12)$.  Let $\tau$ be a stopping time that almost surely satisfies 
\begin{equs}[e:COndST1]
 0 \leq \tau  \leq \rho^{X,\aX}_{\eps, K} \wedge T.
\end{equs}
For every $0 \leq t \leq T$ we set 
\begin{equs}[e:DefTtPR]
\tilde{\theta}(t) &:=\theta(t \wedge \tau),\\
\tilde{\Psi}_\eps^\theta(t) &:= \int_0^{t\wedge\tau}  S_\eps(t-r) \, \tilde{\theta}(r) \, H_\eps dW(r),\\
\tilde{X}_\eps(t) &:=\mathbf{1}_{ \{\tau >0\}} \int_{-\infty}^{t \wedge\tau}  S_\eps(t-r) \, \Pi H_\eps dW(r),\\
\tRte(t;x,y) &:= \delta \tilde{\Psi}_\eps^\theta(t;x,y) - \tilde{\theta}(t ,x) \, \delta \tilde{X}_\eps (t;x,y).
\end{equs} 
Then for any $p$ large enough, and for any
\begin{equ}[e:recongam]
 \gamma <  \aX +\alpha -  \frac{1}{p}   - \sqrt{\frac{1}{2p} (1 + \alpha - \aX) } \;, 
\end{equ}
the following bound holds true:
\begin{equ}[e:Rt0]
\sup_{\eps^2 < t \leq T}  (t-\eps^2)^{\alpha p/2} \, \E  \big| \tRte(t)  |_{\bB^\gamma}^p   \,  \ls \,  \,    \E \, \| \theta \|_{\cC_{[\eps^2,\tau]}^{\alpha/2,\alpha}}^p + \E \, \| \theta \|_{\cN_{\tau}}^p  .
\end{equ}
\end{lemma}
 \begin{proof}
If $\tau =0$ the processes $\tilde{\Psi}_\eps^\theta(t)$, $\tilde{X}_\eps(t)$, and $\tRte(t;x,y)$ are zero for all $t \geq 0$. Else, for times $0 \leq t \leq \tau$ the processes $\tilde{\theta}, \tilde{\Psi}_\eps^\theta, \tilde{X}_\eps$, and  $\tRte$ coincide with $\theta, \Pte, X_\eps$, and  $\Rte$. 
For $t >\tau$ the processes $\tilde{\Psi}^\theta_\eps$ and $\tilde{X}_\eps$ satisfy the identities $\tilde{\Psi}_\eps^\theta(t) = S_\eps(t-\tau)\Pte(\tau)$ and $\tilde{X}_\eps(t) = S_\eps(t-\tau) X_\eps(\tau)$. Recalling the regularity properties of the approximated heat semigroup in Corollary \ref{cor:S-Seps}, we obtain  for any $\kappa>0$,
\begin{equ}[e:tXtP]
\sup_{0 \leq s \leq T}| \tilde{\Psi}_\eps^\theta(s)|_{\cC^{\aX-\kappa}} \ls  \| \Psi_\eps^\theta\|_{\cC^{\aX}_\tau}  \quad \text{and} \quad  \sup_{0 \leq s \leq T}| \tilde{X}_\eps(s)|_{\cC^{\aX-\kappa}} \ls  \| X_\eps\|_{\cC^{\aX}_\tau}\;.
\end{equ}
We will fix such a small $\kappa$ for the rest of the proof.

In particular, recalling the condition \eqref{e:COndST1} on the stopping time $\tau$ we have the almost sure estimate
\begin{equ}[e:StcondX]
\sup_{0\leq s \leq T} | \tilde{X}_\eps(s) |_{\cC^{\aX-\kappa}} \ls (K +1)\ls 1\;.
\end{equ}

After these preliminary considerations we are now ready to start the derivation of the estimate \eqref{e:Rt0}.  We first observe that the definition of $\tRte$ as well as the regularity results for $\Pte$ (Lemma \ref{le:PTE}) together with \eqref{e:tXtP}  immediately imply that for any $p$ satisfying 
  \begin{equ}
p > \frac{6}{1- 2 \aX }\;,
  \end{equ}
we have 
 \begin{equs}
 \E\bigg(   \sup_{0 \leq s \leq T} \big| \tRte(s)  \big|_{\bB^{\aX-\kappa}}\bigg)^p &\ls  \E   \big\| \Pte  \big\|_{\cC^{\aX}_\tau}^p + \E\Big( \|\theta\|_{\cN_{\tau}}\,  \big\|  {X}_\eps \big\|_{\CC^{\aX }_{\tau}} \Big)^p \\
& \ls \E   \big\| \theta \|_{\cN_{\tau}}^p\;,  \label{e:corrAA}
 \end{equs}
where in the second line we have made use of the deterministic a priori bound   \eqref{e:StcondX}.
 
The idea is to use this (very weak) a priori information on the regularity of $\tRte$ as the starting point for a bootstrap argument. 
For any $\eps^2 < s<t \leq T$  and for $x_1,x_2 \in [-\pi,\pi]$  we define the following three quantities
\begin{equs}
R_1(s,t;x_1,x_2) &:= \big( \tilde{\theta}(t,x_1)- \tilde{\theta}(s ,x_1)   \big) \delta \tilde{X}_\eps(t;x_1,x_2),\\
R_2(s,t;x_1,x_2)&:=    \delta\bigg[ \int_{s\wedge{\tau}}^{t\wedge\tau}  S_\eps(t-r) \, \tilde{\theta}(r)\, H_\eps \,dW(r)\bigg](x_1,x_2)\\
&\qquad  - \tilde{\theta}(s,x_1) \,  \delta\bigg[ \int_{s\wedge \tau}^{t\wedge\tau}  S_\eps(t-r) \,  H_\eps \, dW(r)\bigg](x_1,x_2) , \\
R_3 (s,t;x_1,x_2)&:= \delta \bigg[ \int_{0}^{s\wedge \tau} S_\eps(t-r) \, \tilde{\theta}(r) \,  H_\eps \,dW(r)\bigg](x_1,x_2)\\
& \qquad  -  \tilde{\theta}(s,x_1) \delta \bigg[ \int_{-\infty}^{s\wedge\tau} S_\eps(t-r) \,   H_\eps \,dW(r)\bigg](x_1,x_2). 
\end{equs}
Note that for all $s,t,x_1,x_2$ we have the identity
\begin{equ}
\tRte(t;x_1,x_2) = -R_1(s,t;x_1,x_2)  + R_2(s,t;x_1,x_2)  +R_3(s,t;x_1,x_2). 
\end{equ}
We will now bound the $R_i$ individually.
 
A bound on $R_1$ can be established easily. We get almost surely 
\begin{equ}[e:corrA]
\big| R_1 (s,t;x_1,x_2) \big|   \ls  \|\theta\|_{\cC_{[s,\tau]}^{\alpha/2,\alpha}} \,   |x_1 -x_2|^{\aX-\kappa } |t-s|^{\frac{\alpha }{2}},
\end{equ}
where we have again made use of the deterministic  bound  \eqref{e:StcondX} on $\big\|\tilde{X}\|_{\cC^{\aX-\kappa}_T}$. 

To bound $R_2$, we rewrite it as
\begin{equs}
R_2(s,t;x_1,x_2) &=    \int_{s\wedge{\tau}}^{t\wedge\tau} \int_{-\pi}^\pi \big(  p^\eps_{t-r}(x_2 -y) - p^\eps_{t-r}(x_1 -y) \big) \\
& \qquad \qquad  \qquad  \big( \tilde{\theta}(r,y) -\tilde{\theta}(s,x_1) \big) \,H_\eps \,  dW(r,y).
\end{equs}
This time the integrand is adapted, so we can apply the Burkholder-Davis-Gundy inequality. 
Combining this with the fact that $H_\eps$ is a contraction on $L^2$implies for $p>1$
\begin{equs}[e:corrB]
 \E & |R_2(s,t;x_1,x_2)|^p  \\
& \ls \E \Big( \int_{s\wedge \tau}^{t \wedge \tau}  \!\! \int_{-\pi}^\pi \big(  p_{t-r}^\eps (x_1-y) - p_{t-r}^\eps (x_2-y) \big)^2  \big( \tilde{\theta}(r,y) - \tilde{\theta}(s,x_1)   \big)^{2} \,  dy \, dr \Big)^{  \frac{p}{2}} \\
& \ls \E \| \theta \|_{\cC_{[s,\tau]}^{\alpha/2,\alpha}}^p \, \bigg(  \int_{s}^t  \!\! \int_{-\pi}^\pi  \! \big(  p_{t-r}^\eps (x_1-y) - p_{t-r}^\eps (x_2-y) \big)^2 \\
& \qquad\qquad  \qquad \qquad  \qquad \qquad  \times \big(  |s-r|^{\alpha} +|y-x_1|^{2 \alpha}  \big) \,  dy \, dr \, \bigg)^{  \frac{p}{2}} \\
 & \ls   \E \| \theta \|_{\cC_{[s,\tau]}^{\alpha/2,\alpha}}^p \, |x_1 - x_2|^{\frac{p}{2} } |t-s|^{\frac{\alpha p}{2}} .  
\end{equs}
Here we used the trivial bound $|s-r|\leq |t-s|$ for $r \in [s,t]$ and the bound \eqref{e:uselat} for the term involving $|s-r|$. The calculations for the term involving $|y-x_1|$ can be found in  Lemma~\ref{le:pepscalc} below.

For the third term we write \begin{equs}
R_3 (s,t;x_1,x_2)=& \int_{-\pi}^\pi \big(  p_{t-s}^\eps(x_2 - z)  - p_{t-s}^\eps(x_1 - z)    \big) \tilde{\Psi}^\theta_\eps(s,z) \, dz \label{e:corr1}\\
&- \tilde{\theta} (s, x_1) \int_{-\pi}^\pi \big(  p_{t-s}^\eps(x_2 - z)  - p_{t-s}^\eps(x_1 - z)    \big) \tilde{X}_\eps(s,z) \, dz . 
\end{equs}
We rewrite the first integral as
\begin{equs}[e:corr2]
  \int_{-\pi}^\pi& \big(  p_{t-s}^\eps(x_2 - z)  - p_{t-s}^\eps(x_1 - z)    \big) \tilde{\Psi}^\theta_\eps(s,z) \, dz \\
  &= \int_{-\pi}^\pi \Big( \int_{x_1}^{x_2}  (p_{t-s}^\eps)'(\lambda - z) \, d\lambda      \Big) \tilde{\Psi}^\theta_\eps(s,z) \, dz \\ 
  &= \int_{-\pi}^\pi  \int_{x_1}^{x_2}  (p_{t-s}^\eps)'(\lambda - z) \,\Big(  \tilde{\Psi}^\theta_\eps(s,z) - \tilde{\Psi}^\theta_\eps(s,x_1) \Big)  \, d\lambda \,dz\;.  
  \end{equs}
  Here in the last line we have made use of the identity
  \begin{equ}
  \int_{-\pi}^\pi   (p_{t-s}^\eps)'(\lambda - z) \, \tilde{\Psi}^\theta_\eps(s,x_1)   \, \,dz = 0\;,
  \end{equ}
 which holds for every $\lambda$ due to the periodicity of $p_t^\eps$.

 We can rewrite the second integral in \eqref{e:corr1} in the same way. Hence, inserting this back into \eqref{e:corr1} we obtain
 \begin{equ}
R_3(s,t;x_1,x_2) =  - \int_{-\pi}^\pi  \int_{x_1}^{x_2}  (p_{t-s}^\eps)'(\lambda - z) \,\tRte(s; x_1,z) \, d\lambda \,dz\;.
 \end{equ}
 In particular, we can conclude that as soon as $\big| \tRte(s) \big|_{\bB^\gamma}$ is finite for some $0 < \gamma < 1$,  we have
 \begin{equ}[e:corr3]
 \big|R_3 (s,t;x_1,x_2) \big|  \,
 \leq \, \big| \tRte(s) \big|_{\bB^\gamma}    \int_{-\pi}^\pi  \int_{x_1}^{x_2}  \big| (p_{t-s}^\eps)'(\lambda - z) \big|   \, |x_1 -z|^\gamma  d\lambda \,dz\;.
 \end{equ}
 For the integral in the last line of \eqref{e:corr3} we get
\begin{equs}[e:corr4]
\int_{-\pi}^\pi&  \int_{x_1}^{x_2}  \big| (p_{t-s}^\eps)'(\lambda - z) \big|  \, |x_1 -z|^\gamma  d\lambda \,dz \\
&\ls \int_{-\pi}^\pi  \int_{x_1}^{x_2}  \big| (p_{t-s}^\eps)'(\lambda - z) \big|  \,\big(  |x_1 -\lambda|^\gamma +  |\lambda -z|^\gamma \big) \, d\lambda \,dz \\
&\ls |x_1 -x_2|^{1+\gamma} \int_{-\pi}^\pi   \big| (p_{t-s}^\eps)'( z) \big| \, dz + |x_1 -x_2| \int_{-\pi}^\pi   \big| (p_{t-s}^\eps)'( z) \, \big|  \,  |z|^\gamma\, dz .
\end{equs}
Plugging the bounds obtained in Lemma \ref{le:pepscalc} into the right-hand side of \eqref{e:corr4}, we obtain 
\begin{equs}
 \big|R_3 (s,t;x_1,x_2) \big|  &\leq \big| \tRte(s) \big|_{\bB^\gamma}   \Big(|x_1 -x_2|^{1+\gamma} (t-s)^{-\frac12} \big|\log(t-s) \big|\\
 &  \qquad  \qquad \qquad   \qquad \qquad \qquad+ |x_1 -x_2|(t-s)^{\frac{-1+\gamma}{2}} \Big) .
\end{equs}
Now for fixed $x_1,x_2$ and $t$ and $s>0$ we can summarise the above calculations as follows. We have
\begin{equs}[e:bs]
\Big( \E \big|&\tRte(t;x_1,x_2) \big|^p \Big)^{\frac1p}  \ls  \,\Big( \E \Big| R_1(s,t;x_1,x_2) \big|^p \Big)^{\frac1p} + \Big(  \E \big| R_2(s,t;x_1,x_2) \big|^p \Big)^{\frac1p} \\
& \qquad +\Big(  \E\big| R_3(s,t;x_1,x_2) \big|^p\Big)^{\frac1p}\\
&\ls \Big(  \E \big\| \theta \big\|_{\cC_{[s,\tau]}^{\alpha/2,\alpha}}^p \Big)^{\frac1p} |x_1 - x_2|^{\aX - \kappa} \, | t-s|^{\frac\alpha2} + \Big( \E \big| \tRte(s) \big|_{\bB^\gamma}^p \Big)^{\frac1p}   \\
& \quad \times \Big(|x_1 -x_2|^{1+\gamma} (t-s)^{-\frac12} \big|\log(t-s) \big| + |x_1 -x_2|(t-s)^{\frac{-1+\gamma}{2}} \Big).
\end{equs}
Recall that the bound \eqref{e:corrAA} implies that all moments of 
the norm $| \tRte(s)|_{\bB^\gamma}$ are finite if we choose $\gamma =\aX-\kappa$.  
Now we shall use this weak a priori knowledge on the regularity of $\tRte$ as a starting point for a bootstrap argument based on the estimate \eqref{e:bs}. Let us briefly outline the argument before going into details. 

We fix a time $t$ and $x_1,x_2$ and choose the time difference $|t-s| =\Delta$ as $|x_1-x_2|^\nu$, where $\nu= \nu(\gamma,\alpha,\aX)$ is chosen such that the first and the last term  on the right hand side of \eqref{e:bs} scale in the same way. Plugging this into \eqref{e:bs} yields an improved bound for $\tRte$, which can in turn be used as the right hand side of \eqref{e:bs}, etc. 

There are two obstacles that have to be overcome. On the one hand,  \eqref{e:bs} is only a good estimate if we have control over the space-time regularity of $\theta$ on $[s,t]$. As this is only the case for $s > \eps^2$ we treat the case where $\Delta$ is too large in a brutal way causing a blowup like $(t-\eps^2)^{-\alpha/2}$. On the other hand, \eqref{e:bs} yields an  estimate on expectations of $\tRte(t,x_1,x_2)$ for fixed values of $t,x_1$ and $x_2$. But in order to plug that back into the right hand side of \eqref{e:bs} we need to turn this into an estimate on the expectation of a spatial supremum. We move the supremum under the expectation by an application of Gubinelli's version of the Garsia-Rodemich-Rumsey lemma, Lemma~\ref{lem:GRR}.

We begin the iteration by setting $\gamma_0 := \aX-\kappa$ as in  \eqref{e:corrAA}. Then, for fixed $t$, $x_1$ and $x_2$ we set  
\begin{equs}
\nu_0 = 2\; \frac{1-\gamma_0}{1 - \gamma_0+\alpha} \quad \text{and} 	\qquad \Delta_0  =|x_1 -x_2|^{\nu_0}\;.
\end{equs}
If $t-\eps^2 > 2\Delta_0$, we are `far enough' from time $\eps^2$ and we apply \eqref{e:bs} for $s = t-\Delta_0$. This yields
\begin{equs}[e:imp1]
\Big(& \E \big|\tRte(t;x_1,x_2) \big|^p \Big)^{\frac1p} 
 \ls  
\Big( \E \big\| \theta \big\|_{\cC_{[\eps^2,\tau]}^{\alpha/2,\alpha}}^p \Big)^{\frac1p}
 |x_1 - x_2|^{\tilde{\gamma}_1} 
  + \Big( \E \big| \tRte(s) \big|_{\bB^{\gamma_0}}^p \Big)^{\frac1p}   \\
& \qquad \times \Big(|x_1 -x_2|^{1+\gamma_0}\Delta_0^{-\frac12} \big|\log \Delta_0 \big| + |x_1 -x_2|(\Delta_0)^{\frac{-1+\gamma_0}{2}} \Big)\\
&\ls \bigg[\Big( \E   \big\| \theta \big\|_{\cC_{[\eps^2,\tau]}^{\alpha/2,\alpha}}^p \Big)^{\frac1p}  + \Big( \E \big| \tRte(s) \big|_{\bB^{\gamma_0}}^p \Big)^{\frac1p}\bigg]   |x_1 - x_2|^{\tilde{\gamma}_1} \\
&\ls \Big( \E   \big\| \theta \big\|_{\cC_{[\eps^2,\tau]}^{\alpha/2,\alpha}}^p \Big)^{\frac1p}  |x_1 - x_2|^{\tilde{\gamma}_1}\;,
\end{equs}
where 
\begin{equ}
 \tilde{\gamma}_1:= \frac{\gamma_0(1 - \gamma_0) + \alpha }{1 - \gamma_0 + \alpha}\;.
\end{equ}
In the second inequality we have used the identity $1 - \frac{\nu_0}{2}(1-\gamma_0) = \tilde\gamma_1$, as well as the inequality $1 - \frac{\nu_0}{2} +\gamma_0 > \tilde\gamma_1$, which holds since $\nu_0<2$. The latter inequality implies that
\begin{equ}
|x_1 -x_2|^{1+\gamma_0}\Delta_0^{-\frac12} \big|\log \Delta_0  \big| \ls  |x_1 -x_2|(\Delta_0)^{\frac{-1+\gamma_0}{2}}\;.
\end{equ}
In the  last line of \eqref{e:imp1} we have used the a priori information \eqref{e:corrAA} on the regularity of $\tRte$.

Otherwise, if $t-\eps^2 \leq 2\Delta_0 = 2 |x_1 -x_2|^{\nu_0}$ we use the a priori knowledge \eqref{e:corrAA} to obtain
\begin{equs}[e:appp]
\Big( \E \big|\tRte(t;x_1,x_2) \big|^p \Big)^{\frac1p}  \ls& \, \Big( \E   \| \theta \|_{\cN_{\tau}}^p \Big)^{\frac{1}{p}}|x_1 -x_2|^{\aX-\kappa} \\
\ls& \,  \Big( \E \| \theta \|_{\cN_{\tau}}^p\Big)^{\frac{1}{p}} |x_1 -x_2|^{\tilde{\gamma}_1} (t-\eps^2)^{-\alpha/2}\;. 
\end{equs}

As mentioned above, in order  to plug the estimate \eqref{e:imp1} back into \eqref{e:bs} we still have to replace the bound on the supremum over expectations by a bound on the expectation of the supremum. Therefore, in order to apply Lemma \ref{lem:GRR} we still need extra information on the behaviour of the spatial $\delta$ operator (defined in Appendix \ref{AppB}) applied to $\tRte$. Note that
\begin{equs}
\delta \tRte(t;z_1,z_2,z_3) :=& \, \tRte(t;z_1,z_3) - \tRte(t;z_1,z_2) - \tRte(t;z_2,z_3)\\
=& \,\delta \tilde{\theta}(t; z_1,z_2) \, \delta \tilde{X}_\eps(t;z_2,z_3)\;. 
\end{equs}
Recalling the deterministic a priori bound \eqref{e:StcondX} on the regularity of $\tilde{X}_\eps$ (and the definition of $| \cdot |_{[x_1,x_2]}$ in Appendix \ref{AppB}), this identity implies that
\begin{equ}[e:Gubingr]
\bigg( \E  \sup_{x_1,x_2}\bigg|
 \frac{|\delta \tRte(t)|_{[x_1,x_2]}}{|x_1 -x_2|^{\alpha + \gamma_0}}
\bigg|^p \bigg)^{\frac1p} \ls
  \Big( \E   \big\| \theta \big\|_{\cC_\tau^{\alpha/2,\alpha}}^p \Big)^{\frac1p}\;.
\end{equ}

Recall that, as mentioned above, $\tilde{\gamma}_1< \alpha + \gamma_0$.
Hence, combining \eqref{e:imp1} with \eqref{e:appp} and \eqref{e:Gubingr}, we finally obtain from  Lemma \ref{lem:GRR} that
\begin{equ}[e:Horray]
\sup_{\eps^2 \leq t \leq T} (t-\eps^2)^{\alpha/2}  \Big( \E \big\|\tRte(t) \big\|_{\bB_\tau^{\gamma_1}}^p \big)^{\frac1p}  \ls  \Big( \E   \big\| \theta \big\|_{\cC_{[\eps^2,\tau]}^{\alpha/2,\alpha}}^p + \E  \big\| \theta \big\|_{\cN_\tau}^p\Big)^{\frac1p}\;,
\end{equ}
where $\gamma_1 := \tilde{\gamma}_1 - \frac{1}{p} - \kappa$. 
Note that this definition of $\gamma_1$ guarantees the finiteness of the integral in Lemma \ref{lem:GRR}.

Let us observe that our calculations have led to improved regularity bounds.  Indeed, we will apply \eqref{e:Horray} in the cases where $\kappa$ is very small, $\aX$ very close to $\frac12$, $p$ very large, and $\alpha$ either close to $\frac12$ or close to $\frac13$. In both cases the a priori information \eref{e:corrAA} yields a regularity exponent $\gamma_0= \aX-\kappa$, which is very close to $\frac12$. On the other hand, the regularity exponent $\gamma_1$ in the new bound \eqref{e:Horray} is  close to $\frac{7}{10}$ if $\alpha \approx\frac13$, and close to $\frac34$ if  $\alpha \approx\frac12$.

Now we iterate this argument. For $n \geq 1$ we define $\nu_n$, $\Delta_n$ and $\gamma_n$ recursively as
\begin{align*}
\nu_n:= {2\;\frac{1-\gamma_0}{1 - \gamma_n + \alpha}}\;, \qquad
\Delta_{n} :=  |x_1 -x_2|^{\nu_2}\;,  \qquad \text{where}
\end{align*}
\begin{align*}
\gamma_{n+1}&:= \tilde{\gamma}_{n+1} - \frac{1}{p}  - \kappa\;,\qquad \text{and}\\
\tilde{\gamma}_{n+1}&:=\frac{\gamma_0(1 - \gamma_n) + \alpha }{1 - \gamma_n + \alpha}\;. 
\end{align*}
%
It is readily checked that for all $n$,
\begin{equation}\begin{aligned}\label{eq:algebra}
 \gamma_0 + \frac12 \alpha \nu_n & = \tilde\gamma_{n+1} 
 = 1 - \frac{\nu_n}{2}(1-\gamma_n)\;,\\
 1 - \frac{\nu_n}{2} + \gamma_n  & > \tilde\gamma_{n+1}\;,
\end{aligned}\end{equation}%
the latter being equivalent to the inequality $\gamma_{n}<\alpha+\gamma_0$, which can be checked by induction. 

We now claim that, for every $n \ge 0$, one has the bound
\begin{equ}\label{eq:induction-n}
\sup_{\eps^2 < t \leq T} (t-\eps^2)^{\alpha/2}  \Big( \E \big|\tRte(t) \big|_{\bB_\tau^{\gamma_n}}^p \Big)^{\frac1p}  \ls  \Big( \E   \big\| \theta \big\|_{\cC_{[\eps^2,\tau]}^{\alpha/2,\alpha}}^p +   \E \big\| \theta \big\|_{\cN_\tau}^p\Big)^{\frac1p}\;.
\end{equ}
The case $n = 0$ has just been proved above, so it remains to obtain the remaining bounds
by induction. We now assume that \eref{eq:induction-n} holds and we aim to show that this implies
the same bound with $n$ replaced by $n+1$.

As above, we fix $t,x_1,x_2$ in each step. Taking \eqref{eq:algebra} and \eqref{eq:induction-n} into account and applying \eqref{e:bs} if $t-\eps^2 > 2 \Delta_n$, we obtain
\begin{equs}[e:imp2]
\big(& \E \big|\tRte(t;x_1,x_2) \big|^p \big)^{\frac1p}  \ls  \\
&\ls \bigg[\Big( \E   \big\| \theta \big\|_{\cC_{[\eps^2,\tau]}^{\alpha/2,\alpha}}^p \Big)^{\frac1p}  + \Big( \E \big| \tRte(t-\Delta_n) \big|_{\bB^{\gamma_n}}^p \Big)^{\frac1p}\bigg]   |x_1 - x_2|^{\tilde{\gamma}_{n+1}} \\
&\ls  \Big(1 + (t-\Delta_n-\eps^2)^{-\alpha/2} \Big) \Big( \E   \big\| \theta \big\|_{\cC_{[\eps^2,\tau]}^{\alpha/2,\alpha}}^p 
+  \E \big\| \theta \big\|_{\cN_\tau}^p\Big)^{\frac1p}  |x_1 - x_2|^{\tilde{\gamma}_{n+1}}\\
&\ls (t-\eps^2)^{-\alpha/2}  \Big( \E   \big\| \theta \big\|_{\cC_{[\eps^2,\tau]}^{\alpha/2,\alpha}}^p 
+   \E \big\| \theta \big\|_{\cN_\tau}^p\Big)^{\frac1p}   |x_1 - x_2|^{\tilde{\gamma}_{n+1}}\;.
\end{equs}
On the other hand we get in the same way as in \eqref{e:appp} that if  $t-\eps^2\leq 2{\Delta_n}$ we have
\begin{equs}[e:apppn]
\Big( \E \big|\tRte(t;x_1,x_2) \big|^p \big)^{\frac1p}  \ls& \,  \E \| \theta \|_{\cN_{\tau}}^p  |x_1 -x_2|^{\tilde{\gamma}_{n+1}} (t-\eps^2)^{-\alpha/2}\;. 
\end{equs}
The bound \eqref{e:Gubingr} on the spatial $\delta$ operator is strong enough to be applicable in every step. Applying Lemma \ref{lem:GRR} we obtain indeed that \eqref{eq:induction-n} holds with $n$ replaced by $n+1$, as required, so that it holds for every $n$.

Now on the one hand, we have that $\gamma_{n+1}>\gamma_n$ as long as $\gamma_n \neq [\gamma_-,\gamma_+]$ where
\begin{equs}
   \gamma_{\pm}& :=
  \frac12\Big(1 + \alpha + \gamma_0 - \frac1p - \kappa\Big)
 \\
   \pm &  \frac12 \sqrt{ {(1 - \gamma_0 -\alpha )^2}  +
     \Big( \frac1p + \kappa \Big)^2 + 2 \Big( \frac1p + \kappa \Big)(1 + \alpha - \gamma_0)   }.
\end{equs}
On the other hand, the mapping 
\begin{equ}
\gamma \mapsto \frac{\gamma_0 (1 - \gamma) +\alpha}{1 - \gamma + \alpha} - \kappa - \frac1p
\end{equ}
is monotonically increasing (as can be checked easily by calculating its derivative). As $\gamma_-$ is a fixed point of this map and for $\kappa$ small enough and $p$ large enough we have $\gamma_0 \leq \gamma_-$, this implies that $\gamma_n \leq \gamma_-$ for all $n$. Hence we can conclude that $\gamma_n$ converges to $\gamma_-$ as $n$ goes to infinity.

Now we use the elementary estimate $\sqrt{a^2 +b^2+c^2} < |a|+ |b|+|c|$ for $a,b,c \neq 0$  to get
\begin{equs}
\gamma_- >&  \,  \alpha+ \gamma_0  -  \frac{1}{p} -\kappa  - \sqrt{\frac12 \Big( \frac{1}{p} +\kappa\Big) (1 + \alpha - \gamma_0) }  .
\end{equs}
Therefore,  for a given $\gamma$ satisfying \eref{e:recongam}, the desired bound \eqref{e:Rt0} follows from \eqref{eq:induction-n} if we choose $\kappa$ small enough and $p$ large enough.
\end{proof}
 With Lemma \ref{le:mistakecorrected} in hand, we will now derive bounds on the 
 space-time regularity as well as the dependence on $\eps$. To this end we 
 introduce yet another stopping time. We write (recalling the definition of 
 the norm $ |X|_{\dD^{\alpha,\eps}}$ in \eqref{e:DNorm}).
 \begin{equ}[e:DefTau11]
\rho^{X}_{ \infty} \, = \, \inf \Big\{  t \geq 0\colon  \| X_\eps -  X  \|_{\cN_t}    \geq \eps^{\aX} \quad  \text{or} \quad |X_\eps(t)|_{\dD^{\aX,\eps}} \geq K   \Big\}.
\end{equ}
Then we get the following statement.
 \begin{corollary}\label{cor:Rte}
Suppose that $\frac13 <  \alpha <  \aX <   \frac{1}{2}$ and let $\tilde{\alpha}< \frac{\alpha+\aX}{2}$. Let $\tau$ be a stopping time that almost surely satisfies 
\begin{equs}[e:COndST]
 \tau  \leq \rho^{X,\aX}_{\eps, K}\wedge \rho^{X}_{\infty}  \wedge T,
\end{equs}
where $\rho^{X,\aX}_{\eps, K}$ is defined as above in \eqref{e:DefTau}.
Then for any $\lambda < \frac{\aX-\tilde{\alpha}}{2}$  and for $p$ large enough  we get 
\begin{equs}[e:RRbo1]
\E  \Big( \big\| \Rte \big\|_{\bB^{2\tilde{\alpha}}_{[\eps^2, \tau] ,\tilde{\alpha}/2}}  \Big)^p   \ls   \,   
T^{\lambda p}  \Big(\E \, \| \theta \|_{\cC_{[\eps^2,\tau]}^{\alpha/2,\alpha}}^p + \E \, \| \theta \|_{\cN_{\tau}}^p \Big) .
\end{equs}
%
Furthermore, for any
\begin{equ}
\bar{\lambda} < \alpha  \, \frac{\alpha +\aX -2\tilde{\alpha}}{\alpha +\aX   } 
\end{equ}
and for $p$ large enough, we have
\begin{equs}[e:RRbo2]
\E  \Big(\big\| \Rte - \Rt \big\|_{\bB^{2\ta}_{[\eps^2, \tau] ,\tilde{\alpha}/2}} \ \Big)^p   \ls \eps^{\bar{\lambda} p }      \, \Big(   \E \, \| \theta \|_{\cC_{[\eps^2,\tau]}^{\alpha/2,\alpha}}^p + \E \, \| \theta \|_{\cC^\alpha_{\tau}}^p  \Big)  .
\end{equs}
\end{corollary}
\begin{proof}[Proof of Corollary \ref{cor:Rte}.] 
We define the processes $\tilde{\theta}, \,\tilde{\Psi}^\theta_\eps, \tilde{X}_\eps$ and  $\tRte, $ as above in \eqref{e:DefTtPR}. Furthermore, we denote by $\tRt$ and $\tPt$ the analogous quantities for $\eps=0$. We will actually establish the slightly stronger statements
\begin{equs}[e:RRbo1a]
\E  \Big( \big\| \tRte \big\|_{\bB^{2\tilde{\alpha}}_{[\eps^2, T] ,\tilde{\alpha}/2}}  \Big)^p   \ls   \,   
T^{\lambda p}  \Big(\E \, \| \tilde{\theta} \|_{\cC_{[\eps^2,T]}^{\alpha/2,\alpha}}^p + \E \, \| \tilde{\theta} \|_{\cN_{T}}^p \Big) .
\end{equs}
and
\begin{equs}[e:RRbo2a]
\E  \Big(\big\| \tRte - \tilde{R}^\theta \big\|_{\bB^{2\ta}_{[\eps^2, T] ,\tilde{\alpha}/2}} \ \Big)^p   \ls \eps^{\bar{\lambda} p }      \, \Big(   \E \, \| \tilde{\theta} \|_{\cC_{[\eps^2,T]}^{\alpha/2,\alpha}}^p + \E \, \| \tilde{\theta} \|_{\cC^\alpha_{T}}^p  \Big)  .
\end{equs}
To this end, we will apply the key bound \eqref{e:Rt0} from Lemma \ref{le:mistakecorrected} to this situation. More precisely, we will use that 
\begin{equ}
 \sup_{\eps^2 < s \leq T} (s-\eps^2)^{\alpha_L p/2}  \,\,\E   \big\| \tRte(s)   \|_{\bB^{\gamma_L}}^p   \,  \ls \,  \,    \E \, \| \tilde{\theta} \|_{\cC_{[\eps^2,T]}^{\alpha_L/2,\alpha_L}}^p + \E \, \| \tilde{\theta} \|_{\cN_{T}}^p   \label{e:Rt10}
\end{equ}
for different values of $p$, $\gamma_L$, and $\alpha_L$ which will be specified below.

In order to improve the estimate \eqref{e:Rt10} to the desired estimate \eqref{e:RRbo1a} we have to move the temporal supremum under the expectation. We get for any  $\eps^2 < s <t$  that
\begin{equs}[e:cr1]
\E\Big(  \Big| &(t-\eps^2)^{\tilde{\alpha}/2} \, |\tRte(t) |_{\bB^{2\tilde{\alpha}}} - (s-\eps^2)^{\tilde{\alpha}/2} |\tRte(s ) |_{\bB^{2\tilde{\alpha}}} \, \Big|^p  \Big) \\
& \ls  (t-s)^{{\tilde{\alpha}p/2}}  \, \E \big|\tRte(t)  \big|_{\bB^{2\tilde{\alpha}}}^p \\
&\qquad  \,+\,  (s-\eps^2)^{{\tilde{\alpha}p/2} } \,  \E  \Big(\big| |\tRte(t) |_{\bB^{2\tilde{\alpha}}}  - |\tRte(s) |_{\bB^{2\tilde{\alpha}}}\big|^p\,  \Big)  .  \qquad 
\end{equs}
To bound the first term on the right hand side of \eqref{e:cr1}, we fix $0 < \lambda_1 \leq \frac{\tilde\alpha}{2}$ and write
\begin{equs}
(t-s)^{{\tilde{\alpha}p/2}}  \, \E \big|\tRte(t) \big|_{\bB^{2\tilde{\alpha}}}^p &\ls \,  (t-s)^{{\lambda_1}  p} \, (t-\eps^2)^{ (\frac{\tilde{\alpha}}{2}-{\lambda_1})  p}  \, \E \big|\tRte(t) \big|_{\bB^{2\tilde{\alpha}}}^p. 
\end{equs}

Then we apply  \eqref{e:Rt10} with 
\begin{equs}
\gamma_L &:= 2 \tilde{\alpha}, \\
\alpha_L &:= 2 \tilde\alpha - \aX + \frac{1}{p}   + \sqrt{\frac{1}{2p}  (1 + \alpha - \aX) }     +\kappa,
\end{equs}
for some small value of $\kappa$. We make the assumption that $p$ is large enough and $\kappa$ small enough to ensure that $\alpha_L < \alpha$ and $\alpha_L < \tilde{\alpha}-2{\lambda_1}$. The condition $\alpha_L < \alpha$ can always be realised due to the assumption $\aX +\alpha>2 \tilde{\alpha}$. In order to  also ensure the second condition $\alpha_L < \tilde{\alpha}-2{\lambda_1}$ one has to choose ${\lambda_1}$ sufficiently small. 
In this way we obtain 
\begin{align}
\label{e:cr9}
(t-s)^{{\tilde{\alpha}} p/2}  \, \E \big|\tRte(t) \big|_{\bB^{2\tilde{\alpha}}}^p &\ls   (t-s)^{{\lambda_1} p} 
(t-\eps^2)^{(\frac12(\tilde{\alpha}-{\alpha}_L)-\lambda_1)p}
\\& \qquad \times
\Big( \E \, \| \tilde{\theta} \|_{\cC_{[\eps^2,T]}^{\alpha/2,\alpha}}^p + \E \, \| \tilde{\theta} \|_{\cN_{T}}^p\Big)\;.
\end{align}

For the second term on the right hand side of \eqref{e:cr1} we fix $\lambda_2>0$. Then we get using H\"older's inequality  once
\begin{equs}
(s&-\eps^2)^{{\tilde{\alpha}} p/2} \,  \E  \Big( \big| |\tRte(t) |_{\bB^{2\tilde{\alpha}}}  - |\tRte(s) |_{\bB^{2\tilde{\alpha}}}\big|^p \,  \Big) \\
&\ls \bigg[ (s-\eps^2)^{ \frac{(\tilde{\alpha} + {\lambda_2})p}{2}} \,  \E  \Big(  |\tRte(t)  |_{\bB^{2\tilde{\alpha}+2{\lambda_2}}}^p + |\tRte(s) |_{\bB^{2\tilde{\alpha}+2{\lambda_2}}}^p  \Big)\bigg]^{\frac{\tilde{\alpha} }{\tilde{\alpha}+ {\lambda_2}}} \\
&\qquad \times\bigg[  \E \Big(  \Big| |\tRte(t)  |_{\bN}  - |\tRte(s) |_{\bN}\Big|^p   \,  \Big) \bigg]^{\frac{{\lambda_2} }{\tilde{\alpha}+ {\lambda_2}}}.\label{e:cr4}
\end{equs}
Now we apply \eqref{e:Rt10} with
\begin{equs}
\gamma_L &:= 2 (\tilde{\alpha}+\lambda_2), \\
\alpha_L &:= 2 (\tilde\alpha +\lambda_2)- \aX + \frac{1}{p}   + \sqrt{\frac{1}{2p}  (1 + \alpha - \aX) }     +\kappa,
\end{equs}
for some small value of $\kappa>0$. In order to ensure that $\alpha_L<\alpha$ we choose $p$ very large and $\kappa$ very small.  Then due to the assumption that $\alpha+\aX>2\tilde{\alpha}$ it is possible to choose $\lambda_2$ small enough to ensure that $\alpha_L<\alpha$. 
The condition on the blowup in this situation is $\tilde\lambda := \frac{ \tilde{\alpha} - \alpha_L + {\lambda_2}}{2} > 0 $ which is satisfied as soon as 
\begin{equ}
0 <   \aX - \tilde\alpha -\lambda_2 - \frac{1}{p}   - \sqrt{\frac{1}{2p}  (1 + \alpha - \aX) }     -\kappa.
\end{equ}
This can be achieved by choosing $p$ even larger and $\kappa$ even smaller.

In this way, we can estimate the first factor on the right-hand side of \eqref{e:cr4} by
\begin{equs}[e:cr3]
 (s &-\eps^2)^{ \frac{(\tilde{\alpha} + \lambda_2)  p}{2}} \,  \E  \Big(   |\tRte(t)  |_{\bB^{2\tilde{\alpha}+2\lambda_2}}^p + |\tRte(s) |_{\bB^{2\tilde{\alpha}+2{\lambda_2}}}^p \Big)\\
 &\ls
 (s-\eps^2)^{\tilde\lambda p}
  \Big(\E \, \| \tilde{\theta} \|_{\cC_{[\eps^2,T]}^{\alpha/2,\alpha}}^p + \E \, \| \tilde{\theta} \|_{\cN_{T}}^p\Big).
\end{equs}
In order to get a bound on the last factor on the right-hand side of \eqref{e:cr4} we write using the definition \eqref{e:DefRteA} of $\tRte$ 
\begin{equs}
\sup_{x,y}|\tRte&(t; x,y) - \tRte(s ;x,y)|   \\
 \, \ls  \, &  \sup_{x,y} \Big(  \big| \tPte(t,x) - \tPte(s,x) \big|+     \big| \tilde{\theta}(t,x) -\tilde{\theta}(s,x) \big| \,  \big| \tilde{X}_\eps (t,y)  \big| \\
 & \qquad +     \big| \tilde{\theta}(s,x) \big|  \big| \tilde{X}_\eps(t,y) - \tilde{X}_\eps( s,y)   \big| \Big).
 \end{equs}
We take the $p$-th moments of this inequality. Then we use the bound \eqref{e:FF1} on the regularity of $\tPte$ (note here that $\tPte$ satisfies the assumption of Lemma  \ref{le:PTE} with $\theta(t)$ replaced by the adapted process $\theta(t) \mathbf{1}_{\{ t < \tau \}}$). For $p$ large enough we can choose the temporal regularity $\alpha_1$ in this bound to be $\frac{\aX}{2}$. For the process $X_\eps$ we make use of the  condition \eqref{e:COndST} on the stopping time $\tau$ to get a deterministic bound: 
\begin{equs}
\big| &\tilde{X}_\eps(t,y) - \tilde{X}_\eps( s,y)   \big|\\
& \ls 
 \begin{cases}
|t-s|^{\frac{\aX}{2}}\|\tilde{X}_\eps\|_{\cC^{\aX/2,\aX}_{\tau} }\qquad &\text{if $t \leq \tau$}
\\   |\tau -s|^{\frac{\aX}{2}}\|\tilde{X}_\eps\|_{\cC^{\aX/2,\aX}_{\tau} } + 
 |t-\tau|^{\frac{\aX}{2}}\|\tilde{X}_\eps(\tau)\|_{\dD^{\aX,\eps} }
 \; &\text{if $s \leq \tau <t  \leq \tau+1$}
 \\ |t-s|^{\frac{\aX}{2}} \|\tilde{X}_\eps(\tau)\|_{\dD^{\aX,\eps} }
 \quad &\text{if $\tau < s <t \leq \tau+1$}
\end{cases}
 \\ &\ls K |t-s|^{\frac{\aX}{2}} .
\end{equs}
In the two remaining cases $s \leq \tau < \tau +1 \leq t$ and $\tau+1 \leq s$ we get the same bound easily.
Note that this is the only point in the whole article where we actually
make use of the norm $\dD^{\aX,\eps}$.

We obtain for $p$ large enough
\begin{equs}[e:zweitens]
 \E\Big(   \Big| |\tRte(t)&  |_{\bN}  - |\tRte(s) |_{\bN}\Big|^p   \Big)\\
 & \ls |t-s|^\frac{\aX p}{2} \E \, \| \tilde{\theta} \|_{\cN_{\tau}}^p + |t-s|^\frac{\alpha p}{2} \E \, \| \tilde{\theta} \|_{\cC_{[\eps^2,\tau]}^{\alpha/2,\alpha}}^p.
\end{equs}
 Then summarising \eqref{e:cr4} -- \eqref{e:zweitens} we obtain 
\begin{equs}[e:cr8]
(s&-\eps^2)^{{\tilde{\alpha}} p/2} \,  \E  \Big( \big| |\tRte(t) |_{\bB^{2\tilde{\alpha}}}  - |\tRte(s) |_{\bB^{2\tilde{\alpha}}}\big|^p \,  \Big)\\
& \ls  |t-s|^\frac{\lambda_2\alpha p}{2(\lambda_2 +\tilde{\alpha})}
 (s-\eps^2)^{\frac{\tilde\lambda\tilde\alpha}{\tilde\alpha+\lambda_2}}
 \Big(\E \, \| \tilde{\theta} \|_{\cN_{\tau}}^p +\E \, \| \tilde{\theta} \|_{\cC_{[\eps^2,\tau]}^{\alpha/2,\alpha}}^p\Big) .
\end{equs}
Taking $\lambda_2$ and $\kappa$ sufficiently small and $p$ sufficiently large, the exponent ${\frac{\tilde\lambda\tilde\alpha}{\tilde\alpha+\lambda_2}}$ can be arbitrarily close to $\frac{\aX-\tilde\alpha}{2}$.
Combining \eqref{e:cr9} and \eqref{e:cr8} and applying again Lemma~\ref{lem:GRR} yields the desired bound \eqref{e:RRbo1}. 

To get the bound \eqref{e:RRbo2} we use an interpolation argument.  In fact, in the same way as above we can write 
\begin{equs}
\sup_{x,y,t} &|\tRte(t; x,y) - \tRt(t ;x,y)|   \\
 \, \ls  \, &    \sup_{x,t}  \big| \tPte(t,x) - \tPt(t,x) \big|+ \sup_{x,y,t}  \big| \tilde{\theta}(t,x) \big|  \big| \tilde{X}_\eps(t,y) - \tilde{X}( t,y)   \big|.
 \end{equs}
Hence the regularity results from Lemma \ref{le:PTE} on $\Pte$ as well as the condition \eqref{e:COndST}  on $\tau$ imply that for any small $\kappa>0$ and for any  $p$ large enough we have
\begin{equ}
\E \big\|  \tRte - \tRt   \big\|_{\bN_T}^p \ls \eps^{\alpha-\kappa}  \,\E\big\| \tilde{\theta} \big\|_{\cC^\alpha_T}^p. 
\end{equ}
On the other hand we can apply \eqref{e:RRbo1} for close to maximal spatial regularity to obtain
\begin{equs}
\E  \Big( \big\| \tRte \big\|_{\bB^{\alpha+\aX-\kappa}_{[\eps^2, T] ,\tilde{\alpha}/2}} \Big)^p   \ls   \,     \E \, \| \tilde{\theta} \|_{\cC_{[\eps^2,T]}^{\alpha/2,\alpha}}^p + \E \, \| \tilde{\theta} \|_{\cN_{T}}^p  .
\end{equs}
Then finally, we can conclude by using the deterministic bound 
\begin{equ}
\big\|  \tRte - \tRt   \big\|_{\bB^{2\tilde{\alpha}}_t} \ls \Big(  \|  \tRte  \big\|_{\bB^{\frac{2\tilde{\alpha}}{\lambda}}_t} + \big\| \tRt   \big\|_{\bB^{\frac{2\tilde{\alpha}}{\lambda}}_t}  \Big)^{\lambda} \, \big\|  \tRte - \tRt   \big\|_{\bN_t}^{1-\lambda}
\end{equ}
for $\lambda = \frac{2\tilde{\alpha}}{\alpha +\aX -\kappa}$. We obtain the desired bound \eqref{e:RRbo2} by choosing $\kappa$ small enough.
\end{proof}

\section{The reaction term}

In this section we derive bounds for the convergence of the reaction term.
\label{sec:GauFluc}
\subsection{The Gaussian process and stochastic fluctuations}\label{ss:GauFluc}

As before, we consider the  solution to the approximate stochastic heat equation
\begin{equ}[e:R1]
X_\eps(t,x) \, = \, \sum_{k \in \Z^*}    q_\eps^k  \,   \xi^{k}_\eps(t) \, e^{ikx}\;,
\end{equ}
together with its area process $\XX_\eps(t;x,y)$, where we use the notation from Section \ref{sec:PrlmCalc}.
We recall in particular that
\begin{align*}
 \E \big[ \xi_\eps^k(s) \ot \xi_\eps^l(t) \big]
    = \delta_{k,-l} \KK_k^{|t-s|} \Id\;,
\end{align*}
where 
\begin{align*}
  \KK_k^{t} = 
  e^{-f(\eps k)k^2 t}
\end{align*}
for $k \in \Z^*$.
Recall also that 
\begin{align*}
  \Lambda = \frac{1}{2\pi}  \int_{\R_+} 
    \int_\R \frac{(1- \cos(z t))h^2(t)}{t^2 f(t)}  \, \mu(dz) \, dt \,.
\end{align*}
The goal of this section is to prove the following result, which yields a sharp bound, uniform in time, on the difference
\begin{align*}
 D_\eps \XX_\eps(t, \cdot)
     - \Lambda \Id
\end{align*}
measured in the spatial negative Sobolev norm $| u |_{H^{-\et}}$ for $\et < \frac12$. 
\begin{proposition}\label{prop:rand-fluc}
Let $1 \leq p < \infty$ and $0 < \et < \frac12$. Then for all $T >0$ and for all $\kappa > 0$ sufficiently small, we have
\begin{align}
\label{e:RF1}
 \E \bigg[ \sup_{t \in [0,T]} \Big|
  D_\eps \XX_\eps(t, \cdot) - \Lambda \Id 
  \Big|_{H^{-\et}}^p\bigg]^{\frac1p}
   &\lesssim \eps^{\et - \kappa}\;.
\end{align}
\end{proposition}
\begin{proof}
For fixed $z \in \R$ and for $\eps>0$ we introduce  the quantity%
\begin{equ}[e:Le]
\Lambda_{z,\eps} \, = \,  \frac{1}{\eps} \sum_{k \in \Z^\star}  \big(q_\eps^k \big)^2 \, \big(1 - \cos(z  \eps k) \big) 
= 
\frac{1}{4 \pi } \sum_{k \in \Z^\star}  \eps  \, \frac{h(\eps k)^2  }{ \,    f(\eps k)     } \,
\frac{1 - \cos (z  \eps k)}{ |\eps k|^2}  \, .
\end{equ}
We also set
\begin{equ}
\Lambda_{z,0} = \frac{1}{2\pi}  
    \int_{\R^+} \frac{(1- \cos(z t))h^2(t)}{t^2 f(t)}  \,  \, dt,
\end{equ}
and for $\eps>0$ we define the integrated version  
\begin{align*}
\Lambda_{\eps} = \int_{\R}\Lambda_{z,\eps} \, \mu(dz)\;.
\end{align*}
It follows from Lemma \ref{lem:asym} and Lemma \ref{lem:sym} below that
\begin{align*}
& \E \bigg[ \sup_{t \in [0,T]} \Big|
  D_\eps \XX_\eps(t, \cdot) - \Lambda_\eps \Id 
  \Big|_{H^{-\et}}^p\bigg]^{\frac1p}
 \\&\lesssim
 \int_\R 
   \E \bigg[ \sup_{t \in [0,T]} \Big|
    \frac1\eps\XX_\eps(t; \cdot, \cdot + \eps z) - \Lambda_{z,\eps} \Id 
  \Big|_{H^{-\et}}^p\bigg]^{\frac1p}
 |\mu|(dz) 
 \\&\lesssim \eps^{\et - \kappa} 
 \int_\R 
  |z|^{1 + \et - \kappa}
 |\mu|(dz) 
 \lesssim\eps^{\et - \kappa}\;.
\end{align*}
Furthermore, we claim that for any $t \in [0,T]$ and
$\kappa > 0$, 
\begin{align}
\label{eq:La-claim}
  | \Lambda_{z,\eps} - \Lambda_{z,0} | 
      \lesssim  
       \eps |z|^2 \;.
\end{align}
Integrating this inequality with respect to $z$ and using the fact that the second moment of $|\mu|$ is finite, the result follows.

It thus remains to prove the claim \eref{eq:La-claim}. 
As in \cite[Proposition 4.6]{HM10} we use the fact that 
for any function $g : \R \to \R$ of bounded variation,
\begin{equ}
\Bigl|\sum_{k \in \Z^\star}\eps g(\eps k) - \int_{\R} g(t)\,dt\Bigr|
		 \le \eps \big( |g|_\BV + |g(0)| \big) \;.
\end{equ}
Using this fact together with the assumptions on $h$ and $f$, we obtain
\begin{equs}
 | \Lambda_{z,\eps}& -  \Lambda_{z,0} |
  \lesssim 
   \eps  \bigg| s \mapsto \frac{h^2(s)}{f(s)} 
   \frac{ 1- \cos(sz)}{s^2} \bigg|_\BV + \eps |z|^2
\\  &\lesssim  
   \eps  \Big| \frac{h^2}{f} \Big|_{L^\infty}  
     \Big| s \mapsto \frac{1 - \cos(sz)}{s^2} \bigg|_\BV \!\!
  + \eps \Big| \frac{h^2}{f} \Big|_\BV  
     \Big| s \mapsto \frac{ 1- \cos(sz)}{s^2} \bigg|_{L^\infty} \!\!\!\!+ \eps |z|^2
\\  &\lesssim 
   \eps |z|^2\;.  \label{eq:cl-2}
\end{equs}
which proves the claim.
\end{proof}

For a matrix $A$, it will be convenient to work with the decomposition $A = A^{+} + A^{-}$, where
\begin{align*}
   [A^{\pm}]_{ij} := \frac12 \Big(   A_{ij} \pm    A_{ji}\Big)\;.
\end{align*}   
The following two lemmas are the main ingredients in the proof of \eref{e:RF1}.

\begin{lemma}\label{lem:asym} 
Let $1 \leq p < \infty$ and $0 < \et < \frac12$. Then for all $T>0$, for all $\kappa >0$ small enough and $z \in \R$ we have
\begin{equ}[e:RF1-asym]
\E \bigg[  \sup_{ t \in [0, T]}\Big|
 \frac{1}{\eps} \XX_\eps^-(t;\cdot, \cdot + \eps z) \Big|_{H^{-\et}}^p\bigg]^{\frac1p}
   \lesssim \eps^{\et - \kappa} |z|^{1 + \et - \kappa}\;.
\end{equ}
\end{lemma}

\begin{proof}
We will show that for any $0<\et< \frac12$ and for $0\leq \kappa <\frac12$   we have for all $0 \leq s <t \leq T$,
\begin{align}\label{eq:A-min2}
\E \Big|\frac{1}{\eps}\Big(\XX_\eps^-(t;\cdot, \cdot +\eps z) - \XX_\eps^-(s;\cdot, \cdot +\eps  z)\Big)\Big|_{H^{-\et}}^2 
&\lesssim \eps^{2(\et -  \kappa)}  |z|^{2(1+\et - \kappa )} |t-s|^{\kappa }\;,\\
\E \Big|\frac{1}{\eps}\Big(\XX_\eps^-(t;\cdot, \cdot +\eps z)\Big)\Big|_{H^{-\et}}^2 
&\lesssim \eps^{2(\et -  \kappa)}  |z|^{2(1+\et - \kappa )}\;.\label{eq:A-min}
\end{align}
The result then follows from Lemma~\ref{lem:Kolmogorov}.
As above in Lemma \ref{le:GRP}, we use the fact that $\XX_\eps^-(t,\cdot)$ belongs to the $H^{-\et}$-valued second order Wiener chaos to see that \eqref{eq:equiv-mom} is satisfied (see Lemma~\ref{lem:Nelson}).

Recall that
\begin{align*}
\XX_\eps (t;x, x + \eps z)
 =  \sum_{k, l \in \Z^{\star}} {q_\eps^k q_\eps^l} \, I_{kl} (\eps z) \,\big(\xi_k(t) \ot \xi_l(t) \big)
  e^{i(k+l)x}\;,
\end{align*}
where for $k\neq -l$ we have
\begin{align*}
 I_{kl}(\eps z) & :=  \int_{0}^{\eps z}  
      ( e^{ikw} - 1 ) \, i l e^{ilw} \dd w \
     	= 
     		\frac{l}{k+l} \  \big( e^{i (k+l) \eps z} -1\big) 
-  \big(  e^{i l \eps z} -1\big) .
\end{align*}
As above this sum has to be interpreted as the limit as $N \to \infty$ of the sums over $0 < |k|,|l| \leq N$.

As a consequence, we have the identity
\begin{align*}
 \XX_\eps^- (t;x, x + \eps z)
  =  \sum_{ k, l \in \Z^\star}{q_\eps^k q_\eps^l}  \, {J^{-}_{kl}( \eps z) }  \, \big(\xi_k(t) \ot \xi_l(t) \big)\, e^{i (k+l)x}\;,
\end{align*}
where for $|k| \neq |l|$,
\begin{align*}
 J^{-}_{kl}(\eps z) & := \frac{1}{2}( I_{kl}(\eps z) -  I_{lk}(\eps z))  = \frac12 (l-k)  
     		\bigg(  \frac{ e^{i(k+l)\eps z}-1}{k+l}  - \frac{  
				   e^{i l \eps z} - e^{i \eps k z} }{l-k} \bigg) \;.
\end{align*}
For $|k|=|l|$ this expression must be read as the appropriate limit.
Writing $\zeta_{k,l}^{i,j}(s,t) := \xi_k^i(t)\xi_{l}^j(t) - \xi_k^i(s)\xi_{l}^j(s)$ for brevity, we obtain
\begin{align*}
 \E \Big| &\frac{1}{\eps}\Big(\XX_\eps^-(t;\cdot, \cdot +\eps z) - \XX_\eps^-(s;\cdot, \cdot +\eps  z)\Big)\Big|_{H^{-\et}}^2 
\\& \ = \sum_{\substack{ k, l,\bar k ,\bar l \in \Z^\star\\ k+l = \bar k +\bar l }} \big(1+(k+l)^2\big)^{-\et}
    {q_\eps^k q_\eps^{l} q_\eps^{\bar k} q_\eps^{\bar l}} \,\,
    \frac{1}{\eps}{J^{-}_{k,l}(\eps z) \,   \frac{1}{\eps}J^{-}_{-\bar k, -\bar l}}(\eps z) 
\\& \qquad \qquad \times      \E\bigg[\sum_{i \neq j}
   \zeta_{k,  l}^{i,j}(s, t)    
   \zeta_{-\bar k, -\bar l}^{i,j}(s,t)  \bigg]
\;.
\end{align*}
Note that the diagonal entries of $\zeta_{k,l}^{i,j}(s,t) $ vanish so that in the last line we only have to sum over indices $i \neq j$.
For such indices $i \neq j$, 
we use the fact that 
\begin{align*}
\E\Big[ \xi_k^i(s) \, \xi_{l}^j(s) \,\xi_{-\bar k}^i(t) \, \xi_{-\bar l }^j(t) \Big]
 &= \E\Big[\xi_k^i(s) \xi_{-\bar k}^i(t)\Big]
 \E\Big[\xi_{l}^j(s)\xi_{-\bar l}^j(t)\Big]
\\	& = \delta_{k,\bar k} \, \delta_{l,\bar l} \,\KK_k^{t-s} \KK_{l}^{t-s}\;,
\end{align*}
to obtain, for any $\kappa \in [0,1]$ and for all indices satisfying $k+l=\bar k+ \bar l$,
\begin{align}
\E\Big[
   \zeta_{k, l}^{i,j}(s,t)  \, 
   	\zeta_{-\bar k, -\bar l}^{i,j}(s,t)  \Big] \notag
	& = 2 \delta_{k,\bar k} \Big( 1 - \KK_k^{t-s} \KK_{l}^{t-s}
   \Big) \notag
 \\& \lesssim \delta_{k,\bar k} 
 \big(1 - e^{- (f(\eps k) k^2 + f(\eps l) l^2 ) |t-s|}\big)
 \notag
  \\&\lesssim  
   \delta_{k,\bar k} \Big( f(\eps k)^\kappa |k|^{2\kappa} + f(\eps l)^\kappa |l|^{2\kappa}\Big)
     |t-s|^{\kappa}\;. \label{e:zettcalc}
\end{align}
Furthermore, a simple calculation yields that 
\begin{align*}
 \Big| \frac{1}{\eps}J^{-}_{kl}(\eps z) \Big|^2
    =  (k-l)^2 z^2 \,  
       \big[S((k+l)\eps z) - S((k-l)\eps z) \big] ^2\;,
\end{align*}
where $S(x) = \sin(x/2)/x$.
Using the Assumption \ref{a:h} that $h$ is bounded, we obtain for $k, l \neq 0$ that, 
\begin{align*}
q_\eps^k q_\eps^l
   \lesssim \frac{1}{\sqrt{f(\eps k)f(\eps l)}|k l|}
   \lesssim \frac{1}{\sqrt{f(\eps k)f(\eps l)}\big|(k+l)^2 - (k-l)^2\big|}\;.
\end{align*}
Moreover, since  by Assumption \ref{a:f} $f \geq c_f$, it follows that for all $0 \leq \kappa < \frac12$,
\begin{align*}
&\E  \Big|\frac{1}{\eps}\Big(\XX_\eps^-(t;\cdot, \cdot +\eps z) - \XX_\eps^-(s;\cdot, \cdot +\eps  z)\Big)\Big|_{H^{-\et}}^2
 \\&  \lesssim     \sum_{ k,  l \in \Z^\star } \big(1+(k+l)^2\big)^{-\et}
    (q_\eps^k q_\eps^{l})^2
    \Big|\frac{1}{\eps}J^{-}_{kl}(\eps z)\Big|^2
\\& \qquad \qquad \times 
   \Big( f(\eps k)^\kappa |k|^{2\kappa} + f(\eps l)^\kappa |l|^{2\kappa}\Big)
     |t-s|^{\kappa}
 \\&  \lesssim      |t-s|^{\kappa}  |z|^2 \sum_{k, l \in \Z^\star} 
 \big(1+ (k+l)^2\big)^{-\et}
 |k-l|^2 \bigg|\frac{
		{ S((k+l)\eps z) - S((k-l)\eps z) }}
       {(k+l)^2 - (k-l)^2} \bigg|^2 
   \\& \qquad \qquad \times    \Big( |k|^{2\kappa} + |l|^{2\kappa}\Big)
  \\& \lesssim |t-s|^{\kappa} |z|^2
     \sum_{|k| \neq  |l| \in \Z} 
   	\big(1+ k^2\big)^{-\et}
 l^2 \bigg|\frac{
		{ S(k\eps z) - S(l\eps z)  }}
       {k^2 - l^2} \bigg|^2 \Big( |k|^{2\kappa} + |l|^{2\kappa}\Big)
	\;,
\end{align*}
where in the last line we used the change of variables $(k+l, k-l) \leadsto (k,l)$.
We infer that
\begin{align}
\E &\Big|\frac{1}{\eps}\Big(\XX_\eps^-(t;\cdot, \cdot +\eps z) - \XX_\eps^-(s;\cdot, \cdot +\eps  z)\Big)\Big|_{H^{-\et}}^2 \label{e:integ}
\\  & \ls  |t-s|^\kappa \eps^{2\et - 2 \kappa} |z|^{2 + 2\et - 2\kappa}\int_{\R^2} 
     \bigg( \frac{x}{y^\et}
       \frac{S(x) - S(y)}{x^2 - y^2} \bigg)^2 (x^2 + y^2)^{\kappa}
    \dd x \dd y\;. \notag
\end{align}
It is an elementary exercise to show that
\begin{align*}
 \frac{|S(x) - S(y)|}{|x^2 - y^2|} 
   \lesssim 1 \wedge \frac{1}{x^2 + y^2}\;.
\end{align*}
Hence, taking into account that $\et < \frac12$  it follows that the integral on the right hand side of \eqref{e:integ}  converges. This establishes the desired estimate  \eqref{eq:A-min}.

Furthermore, to prove \eqref{eq:A-min} we observe that for $i \neq j$, 
\begin{align*}
\E\Big[ \xi_k^i(t) \, \xi_{l}^j(t) \,\xi_{-\bar k}^i(t) \, \xi_{-\bar l }^j(t) \Big]
 &= \E\Big[\xi_k^i(t) \xi_{-\bar k}^i(t)\Big]
 \E\Big[\xi_{l}^j(t)\xi_{-\bar l}^j(t)\Big]
= \delta_{k,\bar k} \, \delta_{l,\bar l}\;.
\end{align*}
Using this identity we obtain
\begin{align*}
 \E \Big| &\frac{1}{\eps}\Big(\XX_\eps^-(t;\cdot, \cdot +\eps z) 
 \Big|_{H^{-\et}}^2 
\\& \ = \sum_{\substack{ k, l,\bar k ,\bar l \in \Z^\star\\ k+l = \bar k +\bar l }} \big(1+(k+l)^2\big)^{-\et}
    {q_\eps^k q_\eps^{l} q_\eps^{\bar k} q_\eps^{\bar l}} \,\,
    \frac{1}{\eps}{J^{-}_{k,l}(\eps z) \,   \frac{1}{\eps}J^{-}_{-\bar k, -\bar l}}(\eps z) 
\\& \qquad \qquad \times   
\E\Big[\sum_{i \neq j} \xi_k^i(t) \, \xi_{l}^j(t) \,\xi_{-\bar k}^i(t) \, \xi_{-\bar l }^j(t) \Big]
 \\&  \lesssim     \sum_{ k,  l \in \Z^\star } \big(1+(k+l)^2\big)^{-\et}
    (q_\eps^k q_\eps^{l})^2
    \Big|\frac{1}{\eps}J^{-}_{kl}(\eps z)\Big|^2\;.
\end{align*}
The desired estimate \eqref{eq:A-min} follows from this expression, by repeating the argument for \eqref{eq:A-min2} with $\kappa = 0$.
\end{proof}
  
The following result gives the corresponding estimate for the symmetric part $\XX_\eps^+$. Recall that $\Lambda_{z,\eps}$ has been defined in \eqref{e:Le}.
   
\begin{lemma}\label{lem:sym}
Let $1 \leq p < \infty$ and $0 < \et < \frac12$. For any $T>0$, for  $\kappa > 0$ small enough, and for any $z \in \R$ we have
\begin{equ}[e:RF0]
 \E \bigg[ \sup_{t \in [0,T]}\Big|
 \frac{1}{\eps} \XX_\eps^+(t;\cdot, \cdot + \eps z) 
 - \Lambda_{z,\eps} \Id
   \Big|_{H^{-\et}}^p\bigg]^{\frac1p}
   \lesssim \eps^{\et - \kappa} |z|^{1 + \et - \kappa}\;.
\end{equ}
\end{lemma} 
   
\begin{proof}
In view of Lemma \ref{lem:Kolmogorov}, it suffices to show that 
\begin{align}\label{eq:sym1}
\E \Big| \frac{1}{\eps} \XX_\eps^+ (t;x, x + \eps z) 
  		 - \frac{1}{\eps} \XX_\eps^+ (s;x, x + \eps z) 
		 \Big|_{H^{-\et}}^2
& \lesssim |t-s|^\kappa		 \eps^{2 \et - 2\kappa}
  |z|^{2 + 2 \et - 2\kappa}\;,\\
\E \Big| \frac{1}{\eps} \XX_\eps^+ (t;x, x + \eps z) 
 -  \Lambda_{z,\eps} \Id 		 \Big|_{H^{-\et}}^2
& \lesssim  \eps^{2 \et}  |z|^{2 + 2 \et}\;.\label{eq:sym2}
\end{align}
As in the proof of Lemma \ref{lem:asym}, we write
\begin{align*}
  \XX_\eps^+ (t;x, x + \eps z)
 &  =  \sum_{k, l \in \Z^\star}{q_\eps^k q_\eps^l} \, J^+_{kl}(\eps z) \big(\xi_k(t) \ot \xi_l(t) \big)e^{i (k+l)x}\;,
\end{align*}
where
\begin{align*}
 J_{kl}^+(\eps z) & := \frac{1}{2} \big( I_{kl}(\eps z) +  I_{lk}(\eps z)\big)
          = \frac{1}{2}
     		\big(1 - e^{ik\eps z}\big)\big(1 - e^{il\eps z}\big)\;,
\end{align*}
and $I_{kl}(\eps z)$ is as in the proof of Lemma \ref{lem:asym}.
As above we write
\begin{equs}
\tilde\zeta_{k,l}(s,t) =&  \xi_k(t) \ot \xi_{l}(t) 
-   \xi_k(s) \ot \xi_{l}(s)
\end{equs}
for brevity. Then we obtain 
\begin{align*}
 &\E \Big| \frac{1}{\eps} \XX_\eps^+ (t;x, x + \eps z) 
  		 - \frac{1}{\eps} \XX_\eps^+ (s;x, x + \eps z) 
		 \Big|_{H^{-\et}}^2
 \\ & = \sum_{\substack{ k, l,\bar k ,\bar l \in \Z^\star\\k+l=\bar k +\bar l}} \big(1+(k+l)^2\big)^{-\et}
    {q_\eps^k q_\eps^{l} q_\eps^{\bar k} q_\eps^{\bar l}} \,
    \frac{1}{\eps}{J_{kl}^+(\eps z) \, \frac{1}{\eps} J_{-\bar k,-\bar l}^+}(\eps z)
\\& \qquad \qquad  \times  
     \E \tr\big( \tilde\zeta_{k,l}(s,t) \,   {\tilde\zeta_{-\bar l,-\bar k}(s, t)}\big) \;.
\end{align*}
A case by case argument yields 
\begin{align*}
   &  \E \tr \big( \tilde\zeta_{k,l}(s,t)  {\tilde\zeta_{-\bar l,-\bar k}(s,t)}
	    \big)
 =  
2 \Big(n^2 \delta_{k, \bar l} \delta_{\bar k, l} + n \delta_{k,\bar k} \delta_{l, \bar l} \Big)
      	\Big( 1
		   - \KK_k^{t-s} \KK_{l}^{t-s}\Big)\;,
\end{align*}
and using the definition of $\KK$ we infer as above in \eqref{e:zettcalc} that
\begin{align*}
 &\Big| 1
		   - \KK_k^{t-s} \KK_{l}^{t-s}\Big|  \lesssim \Big( f(\eps k)|k|^{2\kappa}
  + f(\eps l)|l|^{2\kappa} \Big)|t-s|^\kappa
\end{align*}
for $\kappa \in [0,1]$.
Using the estimate
$|q_k| \lesssim \frac{1}{\sqrt{f(\eps k)}|k|}$ for $k \neq 0$, together with the identity
\begin{align*}
|J_{kl}^+(z \eps)|^2 = 2 \big( 1 - \cos(z \eps k) \big) \,\big( 1 - \cos(z \eps l)\big)\;,
\end{align*}
we obtain
\begin{align*}
&\E \Big| \frac{1}{\eps} \XX_\eps^+ (t;x, x + \eps z) 
  		 - \frac{1}{\eps} \XX_\eps^+ (s;x, x + \eps z) 
		 \Big|_{H^{-\et}}^2
 \\& \lesssim  
  \sum_{k, l \in \Z^\star} \big(1+(k+l)^2\big)^{-\et}
    (q_\eps^k q_\eps^{l})^2 \,
   \frac{1}{\eps^2} |J_{k,l}^+|^2 
	\Big( 1 - \KK_k^{t-s} \KK_{l}^{t-s} \Big)		   
 \\& \lesssim  
  \sum_{k, l \in \Z^\star} \big(1+(k+l)^2\big)^{-\et}
     \frac{1 - \cos(k\eps z)}{f(\eps k) \eps k^2 }
		\frac{1 - \cos(l\eps z)}{f(\eps l) \eps l^2 }
		\\& \qquad \times \Big(f(\eps k)^\kappa |k|^{2\kappa} + f(\eps l)^\kappa |l|^{2\kappa} \Big)|t-s|^\kappa
 \\& \lesssim  
  \sum_{k, l \in \Z^\star} \frac{1}{|k|^\et |l|^\et}
     \frac{  1 - \cos(k\eps z) }{k^2\eps }
		\frac{  1 - \cos(l\eps z) }{l^2\eps } \Big( |k|^{2\kappa} + |l|^{2\kappa} \Big)|t-s|^\kappa
 \\& \lesssim |t-s|^\kappa		 \eps^{2 \et - 2\kappa}
  |z|^{2 + 2 \et - 2\kappa}\;,
\end{align*}
which proves \eqref{eq:sym1}.

In order to prove \eqref{eq:sym2}, we note that for all $x \in [-\pi,\pi]$ and $t \geq 0$
\begin{align*}
 \frac{1}{\eps}\E \, \XX_\eps^+ (t;x, x + \eps z)
  = \frac{1}{\eps}\sum_{k\in \Z^\star} |q_\eps^k|^2 \, J_{k,-k}^+(\eps z) \Id =  \Lambda_{z,\eps} \Id.
\end{align*}
Moreover, we write
\begin{equs}
\hat\zeta_{k,l}(t) = \big( \xi_k(t) \ot \xi_{l}(t) \big)- \delta_{k,-l}\Id
\end{equs}
for brevity. Then we obtain 
\begin{align*}
 &\E \Big| \frac{1}{\eps} \XX_\eps^+ (t;x, x + \eps z) -  \Lambda_{z,\eps} \Id  \Big|_{H^{-\et}}^2
 \\ & = \sum_{\substack{ k, l,\bar k ,\bar l \in \Z^\star\\k+l=\bar k +\bar l}} \big(1+(k+l)^2\big)^{-\et}
    {q_\eps^k q_\eps^{l} q_\eps^{\bar k} q_\eps^{\bar l}} \,
    \frac{1}{\eps}{J_{kl}^+(\eps z) \, \frac{1}{\eps} J_{-\bar k,-\bar l}^+}(\eps z)
\\& \qquad \qquad  \times  
     \E \tr\big( \hat\zeta_{k,l}(t) \,   {\hat\zeta_{-\bar l,-\bar k}(t)}\big) \;.
\end{align*}
A case by case argument yields 
\begin{align*}
   &  \E \tr\big( \hat\zeta_{k,l}(t)  {\hat\zeta_{-\bar l,-\bar k}(t)}
	    \big)
  =  n^2 \delta_{k, \bar l} \delta_{\bar k, l} + n \delta_{k,\bar k} \delta_{l, \bar l}\;.
\end{align*}
Arguing as above we obtain
%
\begin{align*}
&\E \Big| \frac{1}{\eps} \XX_\eps^+ (t;x, x + \eps z) -  \Lambda_{z,\eps} \Id \Big|_{H^{-\et}}^2
 \\& \lesssim  
  \sum_{k, l \in \Z^\star} \big(1+(k+l)^2\big)^{-\et}
    (q_\eps^k q_\eps^{l})^2 \,
   \frac{1}{\eps^2} |J_{k,l}^+|^2 
 \\& \lesssim  
  \sum_{k, l \in \Z^\star} \big(1+(k+l)^2\big)^{-\et}
     \frac{1 - \cos(k\eps z)}{f(\eps k) \eps k^2 }
		\frac{1 - \cos(l\eps z)}{f(\eps l) \eps l^2 }
 \\& \lesssim  
  \sum_{k, l \in \Z^\star} \frac{1}{|k|^\et |l|^\et}
     \frac{  1 - \cos(k\eps z) }{k^2\eps }
		\frac{  1 - \cos(l\eps z) }{l^2\eps } 
 \\& \lesssim \eps^{2 \et}
  |z|^{2 + 2 \et}\;,
\end{align*}
which proves \eqref{eq:sym2}.
\end{proof}

\subsection{Bounds for the reaction term}
\label{ss:ReactionTerm}

We will now derive the estimates for the reaction terms $\Phi^{F(\bar{u})}, \, \Phi_\eps^{F(u_\eps)}, \,\Upsilon^{\bar{u}}$, and $\Upsilon_\eps^{u_\eps}$  defined in Section \ref{sec:OFP}. 

To this end let $\eps \in (0,1)$. We fix  H\"older exponents $0 <  \alpha < \frac12$ and let $0<\eta<\frac12$. Furthermore, we fix $\R^{n\times n}$-valued functions $\phi \in  \cC_T^\alpha$ and  $\phie = 
 \phi + \CE_\eps$ with $\CE_\eps \in \cN([0,T]; H^{-\et})$. (See \eqref{e:alphaHol2} and \eqref{e:alphaHolHor} above for the definition of the H\"older norms with blowup).
We shall use the notation $\| u \|_{H_t^{-\et}}$ to denote the temporal  supremum of the negative spatial Sobolev norm, i.e.,
\begin{align*}
 \| u \|_{H_t^{-\et}} := \sup_{s \in [0,t]} |u(s)|_{H^{-\et}}\;.
\end{align*}
In our application we will have 
\begin{equation*}
  \phi(t) = \Lambda \Id\;, \quad 
  \CE_\eps(t) =  D_\eps \XX_\eps(t, \cdot) - \Lambda\Id\;. 
\end{equation*}
Of course, $\phi$ is constant in space and not merely $\cC^\alpha$, but we will not make use of this.

Furthermore, we fix $\R^n$-valued functions $u, \ue$ and $\R^{n\times n}$ -valued functions $v,\ve$ in $L^\infty([0,T]; \cC^\alpha)$.
In our application, $u$ and $u_\eps$ will be as in the previous sections, and $ v = u'$, $ v_\eps = u_\eps'$.

Then we set
\begin{align*}
 \Phi(t) = \int_0^t S(t-s)\, F(u(s)) \dd s\;,\qquad
 \Upsilon(t) =\int_0^t S(t-s) \BB(u(s),v(s)) \dd s\;,
\end{align*}
where
\begin{align*}
 \BB^i(u, v) =  \partial_k G^i_j(u) v_l^k  \phi^{l,m}  \,  v_m^j\;.
\end{align*}
Similarly, for $\eps  >0$ we define
\begin{align*}
 \Phi_\eps(t) = \int_0^t S_\eps(t-s) F(\ue(s)) \dd s\;,\quad
 \Upsilon_\eps(t) =\int_0^t S_\eps(t-s) \BB_\eps(\ue(s),\ve(s)) \dd s\;,
\end{align*}
where
\begin{align*}
 \BBe^i(u,v)
 = 
  \partial_k  G^i_j(u)
    \,v^k_l    \, \phie^{l,m}  v^j_m\;.
\end{align*}

Throughout the remainder of this section we assume that the norms
\begin{align*}
    \| \phi \|_{\cC^{\alpha}_T}\;,\qquad
  \| u \|_{\cC^{\alpha}_T}\;,\qquad
   \| v\|_{\cC^{\alpha}_T}\;,\qquad
  \| u_\eps \|_{\cC^{\alpha}_T}\;, \qquad
    \| \ve \|_{\cC^{\alpha}_T}\;,\qquad
\end{align*}
are bounded by a constant $K > 0$, which does not depend on $\eps$.
This constant will often be suppressed below.

We shall first prove a bound on the difference between $\Phi$ and $\Phi_\eps$.
\begin{proposition}\label{prop:Phi-bound}
Let $0 \leq \alpha \leq \gamma \leq 1$. Then, for any $t \in [0,T]$  we have
\begin{align*}
\big|  \Phi(t) - \Phi_\eps(t) \big|_{\cC^{\gamma}}
&   \lesssim 
     t^{1 - \frac12(\gamma - \alpha)} \| u - u_\eps \|_{\cC_t^\alpha} + 
      	\eps
 \;,\\
 \big\|  \Phi  - \Phi_\eps \big\|_{\cC^{  \frac{\gamma}{2}}([0,t],\cN)}
&   \lesssim 
   t^{1 - \frac{\gamma}{2}}\|u - u_\eps \|_{\cC_t^\alpha}  +\eps^{1-\frac{\gamma}{2}}\;,
\end{align*}
with implied constant depending on $K$ and $T$.
\end{proposition}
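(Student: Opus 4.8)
\textbf{Proof proposal for Proposition \ref{prop:Phi-bound}.}

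The plan is to decompose the difference $\Phi(t)-\Phi_\eps(t)$ into the three natural error sources: (i) the mismatch in the semigroups, (ii) the mismatch in the nonlinearity $F(u)$ versus $F(u_\eps)$, and (iii) the smoothing of $F(u_\eps)$ by $S$ versus $S_\eps$. Writing
\begin{equs}
\Phi(t) - \Phi_\eps(t) &= \int_0^t \big( S(t-s) - S_\eps(t-s)\big) F(u(s))\,ds \\
&\quad + \int_0^t S_\eps(t-s)\big( F(u(s)) - F(u_\eps(s))\big)\,ds\;,
\end{equs}
one bounds the first integral using the uniform approximation estimates for $S-S_\eps$ on H\"older spaces (Lemma \ref{le:ApOp} and Lemma \ref{lem:S-Seps}, together with the time-continuity of $S_\eps$ from Lemma \ref{lem:timecontS}), which should give a factor $\eps^{1-\kappa}$ once one invokes the smoothing $|S_\eps(t-s)|_{\cC^0\to\cC^{2\alpha}}\ls (t-s)^{-\alpha}$ and integrates the integrable singularity against the $\eps$-gain coming from $|S-S_\eps|$. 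Here one uses crucially that $F\in\cC^1$ and $\|u\|_{\cC^\alpha_T}\le K$, so $F(u(s))$ is bounded in $\cC^\alpha$ (hence in $\cC^0$) uniformly in $s$.

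For the second integral, I would use the mean value theorem: $|F(u(s))-F(u_\eps(s))|_{\cC^0}\ls \|F\|_{\cC^1}\,\|u-u_\eps\|_{\cC_s}\le \|F\|_{\cC^1}\,\|u-u_\eps\|_{\cC^{\tilde\alpha}_t}$. Then the smoothing estimate $|S_\eps(r)|_{\cC^0\to\cC^\gamma}\ls r^{-\gamma/2}$ (uniformly in $\eps$, again from Lemma \ref{le:ApOp}) gives
\begin{equs}
\Big| \int_0^t S_\eps(t-s)\big(F(u(s))-F(u_\eps(s))\big)\,ds\Big|_{\cC^\gamma}
\ls \|u-u_\eps\|_{\cC_t^{\tilde\alpha}}\int_0^t (t-s)^{-\gamma/2}\,ds \ls t^{1-\gamma/2}\,\|u-u_\eps\|_{\cC_t^{\tilde\alpha}}\;.
\end{equs}
To match the stated exponent $t^{1-\frac12(\gamma-\tilde\alpha)}$ one should interpolate: write $|S_\eps(r)|_{\cC^{\tilde\alpha}\to\cC^\gamma}\ls r^{-(\gamma-\tilde\alpha)/2}$ and use $|F(u(s))-F(u_\eps(s))|_{\cC^{\tilde\alpha}}\ls \|u-u_\eps\|_{\cC^{\tilde\alpha}_t}$ (using $F\in\cC^1$, $\|u\|_{\cC^\alpha}\le K$, and $\alpha>\tilde\alpha$, so that a $\cC^{\tilde\alpha}$-Lipschitz bound for the Nemytskii operator $F(\cdot)$ holds), which produces the integrable singularity $(t-s)^{-(\gamma-\tilde\alpha)/2}$ and hence the claimed power $t^{1-\frac12(\gamma-\tilde\alpha)}$. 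The second displayed bound, on the temporal $\cC^{\gamma/2}$-seminorm in time with values in $\cC$, follows the same decomposition: the time-H\"older regularity of $\Phi$, $\Phi_\eps$ is standard for the reaction term since the integrand is bounded, and one picks up $t^{1-\gamma/2-\kappa}$ from the $\cC^0$-valued part and $\eps^{1-\gamma/2-\kappa}$ from the semigroup mismatch, using Lemma \ref{lem:timecontS} to control the time increments of $S_\eps$ for times $\gtrsim\eps^2$ and absorbing the small-time contribution into the $\eps$ power (this is where the $\kappa$ loss enters).

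The main obstacle is the first integral: controlling $\int_0^t (S(t-s)-S_\eps(t-s))F(u(s))\,ds$ in $\cC^\gamma$ requires a bound of the form $|S(r)-S_\eps(r)|_{\cC^0\to\cC^\gamma}\ls \eps^{?}\,r^{-?}$ that is simultaneously integrable in $r$ and carries a genuine $\eps$-gain, and this is exactly the delicate point that Lemma \ref{le:ApOp}/Lemma \ref{lem:S-Seps} are designed to handle. One typically splits into $r\le\eps^2$ (where one uses that both semigroups are contractions up to the $\cC^\gamma$-smoothing and the time interval is of length $\eps^2$) and $r>\eps^2$ (where the quantitative multiplier estimates give the $\eps$-gain), and optimizing the split yields $\eps^{1-\kappa}$ rather than $\eps$; the logarithmic-type loss that forces the $\kappa$ is precisely the price of working in H\"older rather than $L^2$-based spaces, as discussed in Remark \ref{rem:Holder}. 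Everything else is routine given the uniform bound $K$ on the relevant norms and the regularity $F\in\cC^1$.
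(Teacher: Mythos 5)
Your proposal is correct and follows essentially the same route as the paper: the only cosmetic difference is that you place the semigroup mismatch $(S-S_\eps)$ on $F(u)$ and apply $S_\eps$ to $F(u)-F(u_\eps)$, whereas the paper writes $\Phi-\Phi_\eps=\int_0^t S(t-s)(F(u)-F(u_\eps))\,ds+\int_0^t(S-S_\eps)(t-s)F(u_\eps)\,ds$, both versions resting on the same inputs (Lemma \ref{lem:S-Seps} for the $\eps^{1-\kappa}$ gain against an integrable singularity, and $\cC^{\ta}\to\cC^\gamma$ smoothing for the factor $t^{1-\frac12(\gamma-\ta)}$). For the temporal bound the paper carries out the four-term increment decomposition you call routine, interpolating on each piece between an $\eps^{1-\kappa}$ bound and a $(t-s)^{1-\kappa}$ bound to produce $\eps^{1-\frac{\gamma}{2}-\kappa}(t-s)^{\frac{\gamma}{2}}$, exactly as you indicate.
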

\begin{remark}
In the statements of  Proposition \ref{prop:Phi-bound} and Proposition \ref{prop:Ups-bound} we do not include the positive powers of $t$ after the terms involving powers of $\eps$. We simply bound these by powers of $T$, which will be absorbed into the implied constants, because we do not need them.
\end{remark}

\begin{proof}
Using Corollary \ref{cor:S-Seps} we obtain for any $\kappa >0$ small enough,
\begin{align*}
&   \big| \Phi(t) - \Phi_\eps(t) \big|_{\cC^\ga}
 \\& \leq  \bigg|  \int_0^t S(t-s)  ( F(u) -  F(u_\eps))(s) \dd s \bigg|_{\cC^\ga}
   +  \bigg|  \int_0^t (S - S_\eps)(t-s)  F(u_\eps)(s) \dd s \bigg|_{\cC^\ga}
  \\& \lesssim
   t^{1 - \frac12(\gamma - \alpha)} \| F(u) - F(u_\eps) \|_{\cC_t^\alpha} 
   		+ \eps \,  t^{1 - \frac12(\ga - \alpha + 1 +\kappa )}
				\| F(\ue) \|_{\cC_t^\alpha}
  \\& \lesssim
   t^{1 - \frac12(\gamma - \alpha)} \| u - u_\eps \|_{\cC_t^\alpha} + 
      	\eps \, t^{1 - \frac12(\ga - \alpha + 1+\kappa )}\;, 
\end{align*}
which proves the first bound.

To prove the second inequality, we write
\begin{align*}
 \big(& \Phi  - \Phi_\eps \big)(t) 
  - \big(\Phi - \Phi_\eps \big)(s) 
 \\& = \int_0^s \Big( (S - S_\eps)(t-r) -  (S - S_\eps)(s-r) \Big)
   F(u_\eps(r)) \dd r
   \\& \quad + 
    \int_s^t  (S - S_\eps)(t-r) 
   F(u_\eps(r)) \dd r
\\& \quad+  \int_0^s \big( S(t-r) -  S(s-r) \big)
     \big( F(u(r)) - F(u_\eps(r))\big) \dd r
   \\& \quad + 
    \int_s^t   S(t-r) 
     \big( F(u(r)) - F(u_\eps(r)) \,\big)  \dd r\;   \;
    \\&   =: I_1 + I_2 + I_3 + I_4\;.
\end{align*}

We shall estimate the first term in two ways. First, using \eref{eq:S-diff} and the fact that $|(S - S_\eps)(t-s)|_{\cC^\alpha \to \cN} \lesssim 1$, we obtain $\kappa>0$ small enough,
\begin{equation}\begin{aligned}\label{eq:I-1}
 |I_1|_{\cN} & \lesssim \bigg| \int_0^s \Big( (S - S_\eps)(t-r) -  (S - S_\eps)(s-r) \Big)
   F(u_\eps(r)) \dd r \bigg|_{\cN}
\\&    \lesssim 
  \int_0^s | (S - S_\eps)(s-r)|_{\cC^\alpha \to \cN}
  | F(u_\eps(r)) |_{\cC^\alpha} \dd r 
\\&    \lesssim \eps
  \int_0^s (s-r)^{-\frac12(1-\alpha+\kappa)}  \| u_\eps \|_{\cC_t^\alpha} 
 \dd r
    \lesssim \eps  \;.
\end{aligned}\end{equation}
On the other hand, using Lemma \ref{le:newOp} and Lemma \ref{lem:timecontS} for every and any $\kappa>0$ sufficiently small we have the bound
\begin{equs}\label{eq:I-1b}
 \Big| \int_0^{s-\eps^2 \vee 0}&\big( S_\eps(t-r) -  S_\eps(s-r)  \big)   F(u_\eps(r))  \dd r \Big|_{\cN}
\\ & \ls
  \int_0^{s-\eps^2  \vee 0} \Big| S_\eps (t-s+\eps^2 ) -  S_\eps ( \eps^2 \Big)  \Big|_{\cC^{ 2 +\kappa} \to \cN } \,\\
  &\qquad \qquad \times \Big| S_\eps ( s-r -\eps^2)  \Big|_{\cC^{\alpha} \to \cC^{2  +\kappa}}  \big| F(u_\eps(r)) \big|_{\cC^\alpha} \,ds\\
 &\ls (t-s) \int_0^{s-\eps^2 \vee 0}   (s-r -\eps^2)^{-\frac{ 2 -\alpha +2\kappa }{2}} dr \ls (t-s).
  \end{equs}
  The analogous bound 
  \begin{equ}[eq:I-1d]
  \Big| \int_0^{s}  \big( S(t-r) -  S(s-r)  \big)   F(u_\eps(r))  \dd r \Big|_{\cN} \ls (t-s),
  \end{equ}
  follows in the same way. Using Lemma \ref{lem:timecontS} once more we also obtain for any $0 \leq \lambda < \frac{2+\alpha}{4}$ and for $\kappa>0$ small enough,
\begin{equs}\label{eq:I-1e}
 \Big| \int_{s-\eps^2 \vee 0}^s & \big( S_\eps(t-r) -  S_\eps(s-r)  \big)  F(u_\eps(r))  \dd r \Big|_{\cN}
\\ & \ls
  \int_{s-\eps^2 \vee 0}^{s} \Big| S_\eps \Big(t-s+\frac{s-r}{2}\Big) -  S_\eps \Big( \frac{s-r}{2} \Big)  \Big|_{\cC^{ 2\lambda +\kappa} \to \cN } \,\\
  &\qquad \qquad \times \Big| S_\eps \Big(\frac{s-r}{2} \Big)  \Big|_{\cC^{\alpha} \to \cC^{2 \lambda +\kappa}}  \big| F(u_\eps(r)) \big|_{\cC^\alpha} \,ds\\
 &\ls (t-s)^{\lambda} \int_{s-\eps^2 \vee 0}^s  \eps^{ 2 \lambda }(s-r)^{-\lambda}  (s-r)^{-\frac{2 \lambda - \alpha +2\kappa}{2}} dr\ls (t-s)^{ \lambda} \eps^{2\lambda}.
  \end{equs}
Combining the  estimates~\eqref{eq:I-1},~\eqref{eq:I-1b},~\eqref{eq:I-1d} and~\eqref{eq:I-1e}  we infer that for $\gamma \in [0,1]$,
\begin{equation}\begin{aligned}
\label{eq:I-1c}
 |I_1|_{\cN}
 & \lesssim \eps^{1-\frac{\gamma}{2}} (t-s)^{\frac{\gamma}{2}}.
\end{aligned}\end{equation}
Using \eref{eq:S-diff} once more, the term $I_2$ is bounded by
\begin{equation}\begin{aligned}\label{eq:I-2}
  |I_2|_{\cN}
&  \lesssim \int_0^{t-s} |(S - S_\eps)(r)|_{\cC^\alpha \to \cN} 
  		\|F(u_\eps) \|_{\cC_t^\alpha} \dd r 
\\&  \lesssim \eps \int_0^{t-s} r^{-\frac12(1 - \alpha+\kappa)} 
  		\|u_\eps \|_{\cC_t^\alpha} \dd r 
 \lesssim \eps \, (t-s)^{\frac12(1 + \alpha -\kappa)} \;.
\end{aligned}\end{equation}
To bound $I_3$ we proceed as in \eref{eq:I-1} to infer that
\begin{equs}[e:I-33]
  |I_3|_{\cN} \notag
   & \leq \Big| \int_0^s  \big( S(t-r) -  S(s-r) \big)
     \big( F(u(r)) - F(u_\eps(r))\big) \dd r  \Big|_{\cC^0}\notag
  \\&\lesssim  \int_0^s
    (t-s)^{\frac{\gamma}{2}}
    (s-r)^{-\frac12(\gamma-\alpha)}
       \| F(u) - F(u_\eps) \|_{\cC_t^\alpha} \dd r \notag
  \\&\lesssim      (t-s)^{\frac{\gamma}{2}}  s^{1- \frac12(\gamma-\alpha)}
    \|u - u_\eps \|_{\cC_t^\alpha} \;.
\end{equs}
Note that here we have used the fact that the true heat semigroup $S(t)$ satisfies the regularisation properties without introducing a small constant $\kappa$.
The last term can be bounded brutally by
\begin{align*}
 |I_4|_{\cN}
   & \leq |t-s|   \| F(u) - F(u_\eps) \|_{\cC_t^\alpha} 
  \lesssim |t-s|   \| u - u_\eps \|_{\cC_t^\alpha}
 \\& \lesssim  
    (t-s)^{\frac{\gamma}{2}}  t^{1 - \frac{\gamma}{2}}
    \|u - u_\eps \|_{\cC_t^\alpha} \;.
\end{align*}
Combining all of these estimates, we obtain the desired bound.
\end{proof}

The following result gives a bound on the difference between $\Upsilon$ and 
$\Upsilon_\eps$.

\begin{proposition}\label{prop:Ups-bound}
Let $0 < \alpha < \frac12$ and suppose that
$\alpha \leq \gamma < 1$ and $0 <\et < \frac12$. Let $\kappa > 0$. 
Then, for any $t \in [0,T]$ we have 
\begin{align*}
 |   \Upsilon(t) - \Upsilon_\eps(t) |_{\cC^{\gamma}}
&   \lesssim 
 \eps
+   \| \CE_\eps \|_{H_{t}^{-\et}} 
\\& \qquad 
 + t^{1 - \frac12(\gamma - \alpha) - \kappa}
	 \Big( \| u - \ue\|_{\cC_t^\alpha} +  \| v - \ve\|_{\cC_t^\alpha} \Big)\;,\\
 \|   \Upsilon  - \Upsilon_\eps 
 \|_{\cC^{  \frac{\gamma}{2}}([0,t],\cN)}& \lesssim
  \eps^{1-\frac{\gamma}{2}} 
+ 
       \big \| \CE_\eps \big\|_{H_{t}^{-\et}}
+ t^{1 - \frac{\gamma}{2}} 
      \big(\| u - u_\eps \|_{\cC_t^\alpha}
     + \| v - v_\eps \|_{\cC_t^\alpha}\big)\;,
\end{align*}
with implied constants depending on $K$ and $T$. 
\end{proposition}

\begin{proof}
We rewrite the difference $\Upsilon- \Upsilon_\eps$ as
\begin{align*}
  \Upsilon(t) - \Upsilon_\eps(t) 
   & =
    \int_0^t  S_\eps(t-s)  \big( \BB(\ue, \ve)- \BBe(\ue, \ve) \big)(s) \dd s 
  \\& \qquad  +   \int_0^t S_\eps(t-s) \big( \BB(u, v) - \BB(\ue, \ve) \big)(s) \dd s
  \\& \qquad  + \int_0^t  (S - S_\eps)(t-s) \BB(u, v)(s) \dd s\;.
  \\& =: I_1 + I_2 + I_3\;.
\end{align*}
Using  Lemma \ref{lem:conv-estimate} in the second term  we obtain
\begin{align*}
   | I_1 |_{\cC^\ga}
& \leq  \Big| \int_0^t S_\eps(t-s)
    \Big(   D G (\ue) \ve \CE_\eps \ve  \Big)(s)
  \dd s \Big|_{\cC^\ga}
 \\& \lesssim  
 \| \CE_\eps \|_{H_{t}^{-\et}} 
      \int_0^t  
      (t-s)^{-\frac{\alpha + \gamma}{2} - \frac14} 
       \big| \big( \ve D G (\ue)  \ve \big)  (s)\big|_{\cC^\alpha} \dd s
\\ & \lesssim 
   t^{\frac{3}{4} - \frac{\alpha+\gamma}{2}} \| \CE_\eps \|_{H_{t}^{-\et}} \;.
\end{align*}
Here we have made use of the conditions $\eta<\alpha <\frac12, \,\gamma<1$ to ensure that the  exponent is positive.
Furthermore, 
\begin{align*}
 | I_2 |_{\cC^\ga}
 &  \lesssim
     t^{1- \frac12(\gamma - \alpha)-\kappa}
	 \|  \BB(u, v) - \BB(\ue, \ve) \|_{\cC_t^\alpha}
  \\& \lesssim
     t^{1- \frac12(\gamma - \alpha)-\kappa}
	 \Big( \| u - \ue\|_{\cC_t^\alpha} +  \| v - \ve\|_{\cC_t^\alpha} \Big)\;,
\end{align*}
and Corollary \ref{cor:S-Seps} yields for any $\kappa>0$ small enough,
\begin{align*}
  | I_3 |_{\cC^\ga}
   & \lesssim \eps \, t^{1- \frac12(1 +\kappa + (\gamma - \alpha) \vee 0)}
  		 \|  \BB(u, v)\|_{\cC_t^\alpha}
\\ & \lesssim \eps\, t^{ \frac{1-\kappa +( \alpha  -\gamma) \wedge 0}{2}}\;.
\end{align*}
Combining these bounds, we obtain the first estimate.

In order to prove the second estimate, we fix $0 \leq s < t \leq T$ and write
\begin{align*}
 \big( \Upsilon - \Upsilon_\eps \big)(t)
   - \big(\Upsilon - \Upsilon_\eps\big)(s)
 =: \sum_{i = 1}^2 \sum_{j=1}^3 J_{ij}\;,
\end{align*}
where
\begin{align*}
  J_{11} & =  \int_0^s \big( S_\eps(t-r) -  S_\eps(s-r) \big)
      \big( \BB - \BBe\big)(\ue, \ve)(r) \dd r\;,\\
  J_{12} & =   \int_0^s \Big( (S - S_\eps)(t-r) -  (S - S_\eps)(s-r) \Big)
 \BB(\ue,\ve)(r) \dd r\;, \\
  J_{13} & =     \int_0^s \big( S(t-r) -  S(s-r) \big)
     \big( \BB(u,v) - \BB(\ue, \ve)\big)(r) \dd r\;,
\end{align*}
and 
\begin{align*}
J_{21} & =     \int_s^t   S_\eps(t-r) 
   \, \big( \BB(u,v) - \BB(\ue, \ve)\big)(r) \dd r\; ,\\
 J_{22} & =     \int_s^t   S_\eps(t-r) 
    \big( \BB - \BBe\big)(\ue, \ve)(r) \dd r\;,\\
 J_{23} & =     \int_s^t  (S - S_\eps)(t-r) 
    \,\BB(u,v)(r) \dd r\; .
  \end{align*}
%
%

In order to bound $J_{11}$ we will use  Lemma \ref{lem:conv-estimate} below. In this way we obtain for $\kappa>0$ small enough,
\begin{align*}
 |J_{11}|_{\cN}
 & \lesssim 
  \int_0^s 
   \bigg| \big(S_\eps(t-r) - S_\eps(s-r) \big)
     \big( \ve D G(\ue) \ve \CE_\eps   \big)(r) \bigg|_{\cN} \dd r
\\ & \lesssim 
   (t-s)^{\frac{\gamma}{2}} \int_0^s (s-r)^{-\frac{\alpha + \gamma}{2} -\frac14} 
       \big| \CE_\eps(r)\big|_{H^{-\et}}
     \big|(\ve D G(\ue) \ve)(r)\big|_{\cC^\alpha}   \dd r
\\ & \lesssim 
       s^{\frac34 -\frac{\alpha  + \gamma  }{2}} (t-s)^{\frac{\gamma}{2}}
     \|  \CE_\eps\|_{H_{t}^{-\et}} .
\end{align*}
The argument for $J_{12}$ is the same as the argument for the bound on $I_1$ in the Proposition \eqref{prop:Phi-bound}. Arguing as in \eref{eq:I-1}--\eqref{eq:I-1c} we obtain
\begin{align*}
    |J_{12}|_{\cN}
    & \ls \eps^{1-\frac{\gamma}{2}} (t-s)^{\frac{\gamma}{2} - \kappa}
  		\| \BB(\ue,\ve) \|_{\cC_t^\alpha}
   \lesssim \eps^{1-\frac{\gamma}{2}} (t-s)^{\frac{\gamma}{2}}\;.
\end{align*}
The term $J_{13}$ can be treated in the same way as the term $I_3$ in the proof of Proposition \eqref{prop:Phi-bound}. Using the  argument from \eref{e:I-33} we obtain
\begin{align*}
 |J_{13}|_{\cN}
  & \lesssim |t-s| \,  \| \BB(u,v) - \BB(\ue, \ve) \|_{\cC_t^\alpha} 
 \\&  \lesssim t^{1 - \frac{\gamma}{2}} |t-s|^{\frac{\gamma}{2}} 
    \big(  \| u - u_\eps \|_{\cC_t^\alpha}
     + \| v - v_\eps \|_{\cC_t^\alpha}\big)\;.
\end{align*}
For the first term involving an integral over $[s,t]$ we get the same bound
\begin{align*}
  |J_{21}|_{\cN}
  & \lesssim |t-s|  \| \BB(u,v) - \BB(\ue, \ve) \|_{\cC_t^\alpha} 
 \\ & \lesssim t^{1-\frac{\gamma}{2}}|t-s|^{\frac{\gamma}{2}} 
   \big(  \| u - u_\eps \|_{\cC_t^\alpha}
     + \| v - v_\eps \|_{\cC_t^\alpha}\big)\;.
\end{align*}
To bound the term $J_{22}$ we invoke Lemma \ref{lem:conv-estimate} one more time. We obtain\begin{align*}
  |J_{22}|_{\cN}
 & \lesssim 
  \int_s^t 
   \Big| S_\eps(t-r)
       \big(  \ve D G (\ue) \ve \CE_\eps  \big)(r)
   \Big|_{\cN} \dd r
\\ &\lesssim 
   \int_s^t 
    (t-r)^{-\frac{\et}{2} - \frac14}
       \big| ( \ve D G (\ue)  \ve) (r)\big|_{\cC^{\alpha}}\,
       \big |\CE_\eps(r)\big|_{H^{-\et}} \dd r
\\ & \lesssim  
    t^{\frac34 - \frac{\et + \gamma}{2} }(t-s)^{\frac{\gamma}{2}}
       \big \| \CE_\eps \big\|_{H_{t}^{-\et}}\;.
\end{align*}
%
Finally, by the argument in \eref{eq:I-2},
\begin{equ}
    |J_{23}|_{\cN}
   \lesssim \eps  \, (t-s)^{\frac{1 + \alpha}{2} -\kappa} 
  		\| \BB(u,v) \|_{\cC_t^\alpha}
   \lesssim \eps (t-s)^{\frac{1 + \alpha}{2} - \kappa} \;.
\end{equ}
Putting everything together, we obtain the desired bound.
\end{proof}

The following lemma has been used in the proof above.

\begin{lemma}\label{lem:conv-estimate}
Let $0 < \eta< \alpha<\frac12$ and $0<\gamma<1$. Then for  $\psi \in \cC^\alpha$ and $\phi \in H^{-\et}$ we have for any $t>0$ and $\eps \in [0,1]$,
\begin{align}\label{e:impbo1}
 \big| S_\eps(t)    (\psi  \,\phi)  \big|_{\cC^\ga}  \ls t^{-\frac{\gamma +\alpha}{2} - \frac{1}{4}} | \phi |_{H^{-\et}} 
   | \psi|_{\cC^\alpha}.
\end{align}
Furthermore, for $s<t$ and for any $0 \leq \lambda \leq 1$ we have
\begin{align}\label{e:impbo2}
 \big| \big( S_\eps(t)  -S_\eps(s) \big)   (\psi  \,\phi)  \big|_{\cC^\ga}  \ls (t-s)^{\lambda} s^{-\frac{\gamma + \alpha}{2}  -\lambda - \frac{1}{4}} | \phi |_{H^{-\et}} 
   | \psi|_{\cC^\alpha}.
\end{align}
\end{lemma}

\begin{proof}
By the assumption $\et<\alpha$ we have $|  \psi  \,\phi |_{H^{-\alpha}} \ls |  \psi |_{\cC^\alpha}  \, |\phi |_{H^{-\eta}} $. (This elementary multiplicative inequality follows by duality from the estimate \eqref{e:pt10} proved above). Therefore the bounds \eqref{e:impbo1} and \eqref{e:impbo2} follow immediately from the bounds
\minilab{e:sec4lem}
\begin{equs}
|S_\eps(t)|_{H^{-\alpha} \to H^{\gamma+\frac{1}{2}}} & \ls t^{-\frac{\gamma +\alpha }{2} -\frac{1}{4}} \label{e:sec4lema}\\
|S_\eps(t) -S_\eps(s)|_{H^{-\alpha} \to H^{\gamma+\frac{1}{2}}} & \ls (t-s)^{\lambda} s^{-\frac{\gamma + \alpha}{2}  -\lambda - \frac{1}{4}} \label{e:sec4lemb},
\end{equs}
and from  Sobolev embedding.

Actually, the identity \eqref{e:sec4lema} follows immediately from the lower bound on $f$. To prove \eqref{e:sec4lemb}, we write $\theta= \gamma + \alpha + \frac{1}{2}$ and estimate
\begin{equs}
\sup_{k \in \Z} |k|^{\theta} &\ \big( e^{-k^2 f(\eps k) s} -e^{-k^2 f(\eps k ) t}  \big) \\
&\ls \sup_{k \in \Z} |k|^{\theta} \  e^{-k^2 f(\eps k) s} \big( 1  -e^{-k^2 f(\eps k ) (t-s)}  \big)\\
& \ls(t-s)^\lambda \sup_{k \in \Z} | k|^{\theta} \  e^{-k^2 f(\eps k) s} |k|^{2\lambda} f(\eps k )^\lambda  \ls  (t-s)^\lambda \, s^{-\lambda-\frac{\theta }{2}}.
\end{equs}
The estimate \eqref{e:sec4lemb} follows from this bound.
\end{proof}
\begin{remark}
Note that these $L^2$ based regularity properties for the heat semigroup are significantly easier to derive than the estimates in H\"older spaces in Section \ref{sec:HarmAn}. Also note that we do not encounter any problems in the time regularity for $s \leq \eps^2$. 
\end{remark}

\section{Rough path estimates}\label{sec:RouPathEst}

In this section we treat the stability of approximations of the term involving $G(u) \, \partial_x u$. We will make heavy use of the rough path bounds provided in Appendix \ref{AppA}.  
We will fix deterministic data ($u, u_\eps, X$, etc.) and derive bounds based on the regularity of this data. There will be no randomness involved. 

We fix H\"older exponents $\frac13<\tilde{\alpha} \leq \alpha < \frac12$. We also fix rough path valued mappings $ (X(t), \XX(t))$ and $(X_\eps(t) ,\XX_\eps(t))$. To be more precise, we will assume that the mappings $[0,T ] \ni t     \mapsto X(t) \in \cC^{\alpha}$ and $[0,T ] \ni t     \mapsto \XX(t) \in \bB^{2\alpha}$ are continuous and that for every $t$ the functions $x \mapsto X(t,x)$ and $(x,y) \mapsto \XX(t;x,y)$ satisfy the consistency relation \eref{eq:cond-it-int}. The functions $(X_\eps(t) ,\XX_\eps(t))$ will be assumed to satisfy the same conditions.  

We will also fix functions $u, \ue \in \cC^\alpha_T$. We assume that for every $t$ the function $u$ is controlled by $X$. More precisely, we will assume that there are bounded functions
\begin{align*}
[0,T] \ni t \mapsto u(t), u'(t)  \in  \cC^{\alpha}\;, \qquad
[0,T] \ni t \mapsto \Ru(t) \in \bB^{2\alpha};\,
\end{align*}
such that for every $t \in (0,T]$ the maps 
\begin{equs}[e:relrp]
x \mapsto u(t,x), \ X(t,x) \quad \text{and} \quad (x,y) \mapsto \Ru(t;x,y)  
\end{equs}
satisfy the relation \eref{eq:contr-rp}. 
In the same way we will assume that the $\ue$ are controlled by $X_\eps$, but only for $t > \eps^2$. More precisely, we will assume that 
there are bounded functions
\begin{align*}
[0,T] \ni t \mapsto u_\eps(t), u_\eps'(t) \in \cC^{\alpha}\;, \qquad 
(\eps^2,T] \ni t \mapsto \Rue(t) \in \bB^{2\alpha}\;,
\end{align*}
such that for every $t \in (\eps^2,T]$ the maps 
\begin{equs}[e:relrpe]
x \mapsto u_\eps(t,x), \ X_\eps(t,x) \quad \text{and} \quad (x,y) \mapsto \Rue(t;x,y)
\end{equs}
satisfy the relation \eref{eq:contr-rp}. Let us emphasise that although we use the notation $u_\eps'(t)$ for all $t \in [0,T]$, we only assume that $u_\eps'(t)$ is the rough path derivative of $u_\eps(t)$ for $t \in (\eps^2, T]$.

%

Throughout this section we will make the standing assumption that the norms 
\begin{equs}
\| X \|_{\cC^{\alpha}_T}, \,  \| X_\eps \|_{\cC^{\alpha}_T}, \|\XX \|_{\bB^{2\alpha}_T },  \|\XX_\eps \|_{ \bB_T^{2\alpha}} , \, \| u \|_{\cC^{\alpha}_T}, \,  \| u_\eps \|_{\cC^{\alpha}_T},  \,  \| u' \|_{\cC^{\alpha}_{T}}, \,  
\| u_\eps' \|_{\cC^{\alpha}_{T}} 
\end{equs}
are bounded by a large constant $K$. We will also assume that for $t > 0$
\begin{equs}[e:lamRem]
|\Ru(t) |_{2 \alpha }  \leq K t^{-  \frac{\alpha}{2} },&  \qquad  |\Ru(t) |_{2 \tilde{\alpha}}  \leq K  t^{-  \frac{\ta}{2} },
\end{equs}
and for $t >\eps^2$
\begin{equs}[e:lamRembar]
 |\Rue(t) |_{ 2 \alpha} \leq K  (t-\eps^2)^{- \frac{\alpha}{2} }, \qquad  |\Rue(t) |_{ 2 \tilde{\alpha}} \leq K  (t-\eps^2)^{- \frac{\tbm}{2} }.
\end{equs}

Most of the constants that appear in this section (or that are suppressed when we write $\ls$) depend on the choice of this constant $K$.   
%

The main objects under consideration in this section are the quantities
\begin{align*}
 Z(t,x) &= \iint_{-\pi}^x G(u(t,y)) \, d_y u(t,y)\;,\\
 \Xi(t,x) &= \int_0^t  S(t-s)  \partial_x Z(s)  \, ds = \int_0^t   \partial_x S(t-s)   Z(s)  \, ds \;, 
\end{align*}
along with their approximations
\begin{align*}
 Z_\eps(t,x) &= \int_{-\pi}^x \bigg[G(u_\eps(t,y))
 D_\eps u_\eps(t,y)\\
& \qquad\qquad + 
 \partial_k  G\big(\ue(t,y) \big)^i_j  \, u_\eps'(t,y)^k_l    \, D_\eps \XX_\eps(t;y)^{l,m}  u_\eps'(t,y)^j_m \bigg]\, dy\;,\\
 \Xi_\eps(t,x) &= \int_0^t S_\eps(t-s) \partial_x Z_\eps(s) \, ds\; = \int_0^t \partial_x S_\eps(t-s)  Z_\eps(s) \, ds.    
\end{align*}
Here we have made use of the fact that the heat semigroup $S$ as well as the approximated heat semigroup $S_\eps$ commute with the spatial derivative. As above, we have used the notation 
\begin{align*}
D_\eps \XX_\eps(t;y) =  \frac1{\eps}  \int_\R \XX_\eps(t;y, y + \eps z) \, \mu(dz)\;.
\end{align*}
Note that  we have included indices to capture the trilinear structure in the second term on the right-hand side. The linear algebra does not play a crucial role for our argument, and as above we will omit the indices for most of the argument.

%
 Throughout this section we will make the additional assumption that the function $G$ is bounded with bounded derivatives up to order three. This assumption is removed in Section \ref{sec:OFP} using an appropriate stopping time.

As explained in Section \ref{sec:OFP}, the term
\begin{align}\label{e:epsiFP}
 \partial_k  G\big(\ue(t,y) \big)^i_j  \, u_\eps'(t,y)^k_l    \, D_\eps \XX_\eps(t;y)^{l,m}  u_\eps'(t,y)^j_m \;,
\end{align}
which appears on the right-hand side in the definition of $Z_\eps$, is included
to ensure that -- at least formally -- $Z_\eps$ approximates $Z$, and therefore $\XE$ approximates $\Xi$. As discussed  in  Sections  \ref{sec:OFP} and \ref{sec:GauFluc}, this term gives rise to the extra term in the limit. 

The main objective of this section is to show that $\Xi_\eps$ indeed approximates $\Xi$. A precise error bound is given by the following result.
In this section we shall use $ \CE_\eps $ as abbreviation for
\begin{equs}
 \CE_\eps \, = \, &  \Big[ \|X-X_\eps \|_{\cC^{\tilde{\alpha}}_T} +  \|\XX-\XX_\eps \|_{\bB^{2\tilde{\alpha}}_T}    +    \| u- u_\eps \|_{\cC^{\tilde{\alpha}}_T} +   \|u'-u_\eps' \|_{\cC^{\tilde{\alpha}}_{[\eps^2,T]}} \\
 &\qquad +  \|\Ru - \Rue\|_{\bB^{2\tilde{\alpha}}_{[\eps^2,T], \tilde{\alpha}}}      \Big]. 
\end{equs}

%
%

We let $r_+ := 0 \vee r$ denote the positive part of a real number $r$.

\begin{proposition}\label{prop:Xi-bounds}
Let $0 < \gamma < 1$ and $\kappa>0$. Then, for all $t \in [0,T]$ we have
\begin{align*}
 |\Xi(t) - \Xi_\eps(t)|_{C^\gamma}
 &\lesssim 
    		\CE_\eps  \, (t-\eps^2)_+^{\frac12(1-\gamma-\kappa)}
+ 
  \eps^{3 \alpha -1} 
 \,
 (t-\eps^2)_+^{\frac12(1-\gamma-\alpha-\kappa)}
\\& \qquad	 +
        \eps^{1  -\gamma -\kappa}\;.
\end{align*}
\end{proposition}

The constant which is suppressed in the notation depends on $T$ and $K$. 
We shall also prove the following result concerning the time regularity of the difference $ \Xi - \Xi_\eps$.

\begin{proposition}\label{prop:Xi-time}
Let $0 \leq \gamma < \frac12$.Then we have for any $\kappa>0$ small enough and for $t \geq \eps^2$ 
\begin{align*}
 \| \Xi - \Xi_\eps \|_{\cC^{\frac{\gamma}{2}}([\eps^2, t], \cN) }
  \lesssim 
   t^{\frac12(1 -2 \gamma   -\kappa)} \CE_\eps + \eps^{3 \alpha -1} .
\end{align*}
\end{proposition}

%
%

First we obtain bounds on the difference between $Z$ and $Z_\eps$. The bounds \eqref{eq:Z-bound0} -- \eqref{eq:Z-bound2} will be proved using rough path techniques. For $0 < t < \eps^2$ we shall give a crude elementary bound in \eqref{eq:Z-bound-easy2}.

\begin{lemma}\label{lem:Z-bounds}
For $t > 0$ we have 
\begin{align}
\label{eq:Z-bound0}
|Z(t)|_{\cC^\alpha}&\lesssim  t^{-\alpha/2}\;.
\end{align}
Moreover, for $t > \eps^2$ we can write  
\begin{align}
 \label{eq:Z-bound2}
 Z(t) - Z_\eps(t) = T_1(t) + T_2(t) \;,
\end{align}   
where
\begin{align}
 |T_1(t)|_{\cC^{\tilde\alpha}} &\ls (t-\eps^2)^{-\tilde\alpha/2} \CE_\eps\;,\notag\\
  |T_2(t)|_{\cC^{0}}  & \ls  (t-\eps^2)^{-\alpha/2}  \eps^{3 {\alpha} -1}\;.\label{eq:Zbound2A}
\end{align}
%
Finally, for all $0 < t \leq  \eps^2$ we have 
\begin{align}
\label{eq:Z-bound-easy2}
 |Z_\eps(t)|_{\cC^1}&\lesssim 
  \eps^{\alpha -1}\;.
\end{align}  
\end{lemma}

The estimate \eqref{eq:Z-bound-easy2} holds for all $t > 0$, but we shall only use it for $0 < t < \eps^2$.

\begin{proof}[Proof of Lemma~\ref{lem:Z-bounds}]
The estimate \eqref{eq:Z-bound-easy2}   follows directly from the definition and the assumptions involving $K$.

%

Before estimating the other quantities, we  observe that ${Y_\eps}(t,\cdot) := G\big( \ue(t,\cdot) \big)$ is a rough path controlled by $(X_\eps,\XX_\eps)$ with rough path derivative
\begin{equs}[e:422]
{Y_\eps}'(t,y) = DG(\ue(t,y) \big) u_\eps'(t,y)\;,
\end{equs}
and remainder 
\begin{equs}
 \mathcal{R}_{Y_\eps}(t;x,y) & =    DG\big(\ue(t,x)\big)  \, \Rue(t;x,y) \label{e:423} \\
 &+ \int_0^1 \Big[ DG\big( \lambda \ue(t,y) + (1-\lambda) \ue(t,x)   \big)  - DG\big(\ue(t,x) \big) \Big]\, 
\\& \qquad \times\big( \ue(t,y) - \ue(t,x)  \big) \, d\lambda\;.
\end{equs}
Recalling the boundedness assumptions from the beginning of this section and in particular \eqref{e:lamRembar}, we obtain for $t > \eps^2$,
\begin{equs}\label{eq:Y-eps}
|{Y_\eps}(t)|_{\cC^\alpha} \ls 1\;,
 \qquad |{Y_\eps}'(t)|_{\cC^\alpha} \ls 1\;, 
\qquad |\mathcal{R}_{Y_\eps}(t)|_{2\alpha} \ls (t-\eps^2)^{-\frac{\alpha}{2}}\;. 
 \end{equs}
The analogous statements
\begin{equs}\label{eq:Y-eps0}
|{Y}(t)|_{\cC^\alpha} \ls 1\;,
 \qquad |{Y}'(t)|_{\cC^\alpha} \ls 1\;, 
\qquad |\mathcal{R}_{Y}(t)|_{2\alpha} \ls t^{-\frac{\alpha}{2}}\;
 \end{equs}
for $0 < t < T$ hold as well. 
Moreover, we infer form \cite[Lemma 5.5]{HW10} that 
\begin{equation}\begin{aligned}
\label{eq:Y-eps-diff}
|  Y(t) - {Y_\eps}(t) |_{\cC^\alpha} &\ls |  u(t) - {u_\eps}(t) |_{\cC^\alpha}\;,\\
  |{Y}'(t) - {Y_\eps}'(t) |_{\cC^\alpha}& \ls 
 |{u}(t) - {u_\eps}(t) |_{\cC^\alpha}  + |{u}'(t) - {u_\eps}'(t) |_{\cC^\alpha} 
 \;, \\
 |  \mathcal{R}_Y(t) - \mathcal{R}_{Y_\eps}(t) |_{2\alpha} &\ls 
 |{u}(t) - {u_\eps}(t) |_{\cC^\alpha}  + |  \Ru(t) - \Rue(t) |_{2\alpha}\;. 
\end{aligned}\end{equation}

Let us now turn to the proofs of the estimates \eqref{eq:Z-bound0} and \eqref{eq:Z-bound2}.
The estimate \eqref{eq:Z-bound0} is a direct consequence of Lemma \ref{lem:Gub-int}, combined with the assumptions involving $K$ and the bounds \eqref{eq:Y-eps}.

In order to prove \eqref{eq:Z-bound2} we set
\begin{align*}
Q_\eps(t;x,y) &= 
\iint_x^y G(u_\eps(t,z)) \, d u_\eps(t,z)  
  - G(u_\eps(t,x)) \, \big( u_\eps(t,y) - u_\eps(t,x)\big) 
 \\& \qquad - DG(u_\eps(t,x))u_\eps'(t,x) \XX_\eps (t;x,y) u_\eps'(t,x)\;.
\end{align*} 
Applying \eqref{eq:IB2} and using \eqref{eq:Y-eps} and the assumption that \[\max\{
\| X_\eps \|_{\cC^{{\alpha}}_T}, \, 
 \|\XX_\eps \|_{ \bB_T^{2{\alpha}}}, \, 
 \| u_\eps \|_{\cC^{{\alpha}}_T},  \,
 \| u_\eps' \|_{\cC^{{\alpha}}_{[\eps^2,T]}} \}
\leq K, \] we infer that
\begin{align}\label{eq:Q-bound}
 | Q_\eps(t) |_{{3\alpha}}
  \lesssim (t-\eps^2)^{-\alpha/2}\;.
\end{align}
For fixed $t \in (\eps^2,T]$, we may now write
\begin{align*}
 Z(t,x) - Z_\eps(t,x)
 &  =  \bigg(\iint_{-\pi}^x G(u(t,y)) \, d_y u(t,y)
     -\iint_{-\pi}^x G(u_\eps(t,y)) \, d_y u_\eps(t,y) \bigg)
 \\&     \quad+ \int_\R 
 \bigg(
  \iint_{-\pi}^{-\pi+\eps z} \frac{\eps z - \pi - y}{\eps} G(u_\eps(t,y))\,du_\eps(t,y)
 \\&\qquad\quad         + \iint_x^{x+\eps z} \frac{y -\eps z - x}{\eps}
     G(u_\eps(t,y))\,du_\eps(t,y)
      \bigg) \, \mu(dz) 
  \\&   \quad +
  \int_\R
\int_{-\pi}^x \frac{Q_\eps(t;y,y+ \eps z)}{\eps} \,dy \, \mu(dz)
  \\& =: (T_1 + S_1 + S_2)(t,x)\;.
\end{align*}
Here we used a Fubini-type Theorem for rough integrals (\cite[Lemma 2.10]{HW10}) to arrive at the expression for $S_2$. 

In order to bound $T_1$ we shall apply \cite[p. 102]{Gu04}, which provides a bound for the difference between two rough integrals. A slightly weaker result is provided in \cite[Lemma 2.9]{HW10}, but we  cannot apply this result directly here, as we need to be careful not to obtain products of terms which scale like $(t-\eps^2)^{-\tilde\alpha/2}$. 
Taking \eqref{eq:Y-eps} and \eqref{eq:Y-eps-diff} into account, we infer that
\begin{align*}
 |T_1(t)|_{\cC^{\tilde\alpha}} \lesssim (t-\eps^2)^{-\tilde\alpha/2} \CE_\eps\;.
\end{align*}
Moreover, it follows from \eqref{eq:Q-bound} that
\begin{align*}
 |S_3(t)|_{C^{1}}
  \lesssim 
  \int_{\R }  |z|^{3 \alpha}  \mu (dz) \, \eps^{3 {\alpha} -1} 
  \|Q_\eps\|_{\cC_T^{3{\alpha}}} \,  
  \lesssim 
      (t-\eps^2)^{-\alpha/2} \eps^{3 {\alpha} -1} \;.
\end{align*} 
%
%

In order to bound $S_1$, we note that the second term $S_{12}$ of $S_1$ can be written as
\begin{align}\label{eq:T22}
S_{12}(t,x) 
 = \int_{\R}  \bigg[ \iint_x^{x+\eps z} Y_\eps^{t,z}(t,y)  \dd u_\eps(t,y) \bigg] \mu(dz)\;,
\end{align}
where 
\begin{align*}
Y_{\eps,z,x}(t,y) := \frac{y-\eps z - x}{\eps} G(u_\eps(t,y))\;.
\end{align*}
In view of the a priori bounds on $u_\eps$ and $u_\eps'$, it follows from \cite[Lemma 2.2]{Ha10} that $Y_{\eps,z}(t,\cdot)$ is controlled by $X_\eps(t,\cdot)$ with rough path derivative
\begin{align*}
 Y_{\eps,z,x}'(t,y) := \frac{y-\eps z - x}{\eps} DG(u_\eps(t,y)) u_\eps'(t,y) \;.
\end{align*}
Moreover, since $y \in [x,x+\eps z]$, the same result implies that
\begin{equation}\begin{aligned}
\label{eq:Y-bounds}
 | Y_{\eps,z,x}(t,\cdot)|_{\cC^0} +  | Y_{\eps,z,x}'(t,\cdot)|_{\cC^0}  &  \lesssim |z|\;,\\
 | Y_{\eps,z,x}'(t,\cdot)|_{\cC^\alpha}  &  \lesssim |z| + \eps^{-1}\;,\\
 | \mathcal{R}_{Y_{\eps,z,x}}(t;\cdot,\cdot)|_{2 \alpha}  & 
  \lesssim   |z| + \eps^{-1}(t-\eps^2)^{-\alpha/2}\;.
\end{aligned}\end{equation}%
Lemma \ref{lem:Gub-int} allows us to write the rough integral in \eqref{eq:T22} as
\begin{align*}
 &\iint_{x}^{x+\eps z} Y_{\eps,z,x}(t,y) \dd u_\eps(t,y) 
   = Y_{\eps,z,x}(t,x)   \big( u_\eps(t,x + \eps z)-u_\eps(t,x)\big)
  \\& \qquad\qquad\qquad +  Y_{\eps,z,x}'(t,x) \XX_\eps (t;x,x + \eps z) u_\eps'(t,x) +Q_{\eps}(t;x,x+\eps z)\;, 
\end{align*}
where
\begin{align*}
|Q_{\eps}(t;\cdot,\cdot)|_{3 \alpha} \ls (|z| + \eps^{-1}) (t-\eps^2)^{-\alpha/2}\;.
\end{align*}
Taking the a priori bounds on $ |u_\eps|_{\cC^\alpha}$, $ |u_\eps'|_{\cC^0}$ and $| \XX_\eps(t;\cdot,\cdot)|_{2\alpha}$ into account, it thus follows that 
\begin{align*}
&\bigg| \iint_{\cdot}^{\cdot+\eps z} Y_{\eps,z,x}(t,y) \dd u_\eps(t,y)
\bigg|_{\cC^0}
\\&\qquad\lesssim | Y_{\eps,z,x}(t,\cdot)|_{\cC^0}| \eps z|^\alpha 
 + | Y_{\eps,z,x}'(t,\cdot)|_{\cC^0}
 |\eps z|^{2\alpha}
+ |Q_{\eps}(t;\cdot,\cdot)|_{3 \alpha}  |\eps z|^{3\alpha}
\\&\qquad \lesssim |z| \big( |\eps z|^\alpha 
   + |\eps z|^{2\alpha}\big)
   + (|z| + \eps^{-1}) (t-\eps^2)^{-\alpha/2} |\eps z|^{3 \alpha}\;.
\end{align*}
Taking into account that the $(1+3 \alpha)$-moment of $|\mu|$ is finite (this is the only place where we all moments up to order $\frac{5}{2}$) and using that $\eps^\alpha \leq \eps^{3\alpha-1}$, we infer that
\begin{align*}
|S_{12}(t,\cdot)|_{\cC^0} 
\lesssim \eps^{3 \alpha - 1}(t-\eps^2)^{-\alpha/2}\;.
\end{align*}
Since an analogous argument yields the same estimate for the first term $S_{11} := S_1 - S_{12}$, we infer that
\begin{align*}
|S_{1}(t,\cdot)|_{\cC^0} 
\lesssim \eps^{3 \alpha - 1}(t-\eps^2)^{-\alpha/2}\;.
\end{align*}
Hence, setting $T_2:= S_1 + S_2$ we arrive at the desired conclusion.
\end{proof}

\begin{proof}[Proof of Proposition \ref{prop:Xi-bounds}]
%
We start by splitting the integral from $0$ to $t$ into three parts
\begin{align*}
 \Xi(t) - \Xi_\eps(t)
 & \leq \int_0^{\eps^2\wedge t}
    \partial_x (S(t-s) Z(s) - S_\eps(t-s) Z_\eps(s) )\; ds  
 \\&\qquad     +  \int_{\eps^2 \wedge t}^t
         \partial_x (S(t-s) - S_\eps(t-s)) Z(s) \; ds
 \\&\qquad  +  \int_{\eps^2 \wedge t}^t
         \partial_x S_\eps(t-s) ( Z(s) - Z_\eps(s) ) \; ds
 \\& =: I_1 + I_2 + I_3\;.         
\end{align*}
For the first term we use the estimates~\eqref{eq:Z-bound0} and~\eqref{eq:Z-bound-easy2} to obtain for any $\kappa>0$ small enough
\begin{align*}
 |I_1|_{\cC^\gamma}
& \leq 
 \int_0^{\eps^2 \wedge t} |S(t-s)|_{\cC^\alpha \to \cC^{1+\gamma}}
    | Z(s)|_{\cC^\alpha}\; ds
\\& \qquad+
 \int_0^{\eps^2 \wedge t} |S_\eps(t-s)|_{\cN \to \cC^\gamma}
    |\partial_x Z_\eps(s)|_{\cN}\; ds
\\& \lesssim
 \int_0^{\eps^2 \wedge t} (\eps^2 \wedge t -s)^{-\frac12(1+\gamma-\alpha)}
     s^{-\alpha/2}\; ds
\\& \qquad+
 \int_0^{\eps^2 \wedge t} (\eps^2 \wedge t-s)^{-\frac{\gamma}{2}-\kappa}
    \eps^{\alpha-1}\; ds
\\& \lesssim \eps^{1-\gamma}
            + \eps^{1+\alpha-\gamma-2\kappa}
\\& \lesssim \eps^{1-\gamma}\;.
\end{align*}
The terms $I_2$ and $I_3$ are equal to $0$ if $t \leq \eps^2$. So from now on we assume $t >\eps^2$. To bound the second part we use Corollary \ref{cor:S-Seps} and Lemma \ref{lem:Z-bounds} to obtain for any $0 \leq \lambda<1  -\gamma  $ and for $\kappa >0$ small enough
\begin{align*}
|I_2|_{\cC^\gamma}
 &\lesssim \int_{\eps^2}^t
         | S(t-s) - S_\eps(t-s)|_{\cC^\alpha \to \cC^{1+\gamma}}
          | Z(s) |_{\cC^\alpha} \; ds
 \\&\lesssim
 \int_{\eps^2}^t
        (t-s)^{-\frac12(1+\gamma -\alpha +\lambda +\kappa)}
        \eps^{\lambda}
          s^{-\frac{\alpha}{2}}\; ds
 \lesssim
        t^{\frac12(1 -\gamma -\lambda -\kappa)}
        \eps^{\lambda}\;.
\end{align*}
%
%
The third part can be estimated using Lemma \ref{le:newOp} and Lemma \ref{lem:Z-bounds} by 
\begin{align*}
|I_3|_{\cC^\gamma}
 &\lesssim 
  \int_{\eps^2}^t
     | S_\eps(t-s) |_{\cC^{\tilde\alpha} \to \cC^{1+\gamma}} 
       | T_1 |_{\cC^{\tilde\alpha}} \; ds
       + \int_{\eps^2}^t
     | S_\eps(t-s) |_{\cC^{0} \to \cC^{1+\gamma}} 
       | T_2 |_{\cC^{0}} \; ds
\\ &\lesssim 
  \int_{\eps^2}^t
 \Big(     (t-s)^{-\frac12(1+\gamma - \tilde\alpha + \kappa)} 
  (s-\eps^2)^{-\tilde\alpha/2}  
\CE_\eps\\
& \qquad + (t-s)^{-\frac12(1+\gamma  + \kappa)} 
   (s-\eps^2)^{-\alpha/2}  \eps^{3 \alpha -1} 
 \Big)
  \; ds
\\ &\lesssim 
\CE_\eps  \, (t-\eps^2)^{\frac12(1-\gamma-\kappa)}
+ 
  \eps^{3 \alpha -1} 
 \,
 (t-\eps^2)^{\frac12(1-\gamma-\alpha-\kappa)}\;.
\end{align*}
Note that within the range of parameters that we consider the integral in the second line always converges (for $\kappa >0$ small enough), but it may happen that the last exponent of $(t-\eps)^2$ in the last line is negative. 
\end{proof}

\begin{proof}[Proof of Proposition \ref{prop:Xi-time}]
Let $ \eps^2 \leq s < t$. We need to bound the expression $J(t) - J(s)$ where
\begin{align*}
 J(t) := 
  \int_0^t \partial_x\big(\delta_\eps \big( S(t-r)  Z(r) \big)  \big) \; dr. 
\end{align*}
Here and below we will use the notation $\delta_\eps \big( S Z) := S_\eps Z_\eps - S Z$, $\delta_\eps S:= S_\eps-S$  and so forth. We write  $J(t) -J(s) = I_1(s,t) +I_2(s,t) +I_3(s,t)$, where
\begin{align*}
I_1(s,t):=& \int_0^{\eps^2} \partial_x \delta_\eps  \Big[  \big( S(t-r)  -S(s-r) \big) Z(r)   \Big] \; \; dr , \\
I_2(s,t):= &\int_{\eps^2}^{s} \partial_x \delta_\eps  \Big[  \big( S(t-r)  -S(s-r) \big) Z(r)   \Big] \; \; dr , \\
I_3(s,t) :=& \int_s^{t} \partial_x \delta_\eps  \big[   S(t-r)   Z(r)   \big] \; \; dr.
\end{align*}
In order to bound $I_1$ we use the bounds~\eqref{eq:Z-bound0} and ~\eqref{eq:Z-bound-easy2} and brutally bound the $\eps$-difference by the sum. Then using Lemma~\ref{lem:timecontS} we get
\begin{align*}
 |I_1(s,t)|_{\cN}
&  \leq \int_0^{\eps^2} | S(t-s) - \Id |_{\cC^{1+\gamma}\to \cC^1} 
 |S(s-r)|_{\cC^\alpha \to \cC^{1+\gamma}}
    |Z(r)|_{\cC^\alpha} \;dr
\\&  \qquad  + \int_0^{\eps^2} 
| S_\eps(t-r-\tfrac{s-r}{2}) - S_\eps(s-r-\tfrac{s-r}{2}) |_{\cC^{1+\gamma+\kappa}\to \cC^1} 
 \\& \qquad\qquad \times|S_\eps(\tfrac{s-r}{2})|_{\cC^1 \to \cC^{1+\gamma+\kappa}}
    |Z_\eps(r)|_{\cC^1} \;dr
\\& \lesssim (t-s)^{\frac{\gamma}{2}} \eps^{1- \gamma }\;.
\end{align*}
Actually, it is obvious that the first integral is bounded by the right hand side. For the second integral one even gets a better scaling $\eps^{1+ \alpha - \gamma - \kappa}$ for $\kappa>0$ arbitrarily small. 

For the term $I_2$ it is useful to split it up once more. We write
\begin{align*}
I_2(s,t) &= \int_{\eps^2}^{s} \partial_x  \Big[  \delta_\eps  \big( S(t-r)  -S(s-r) \big)  \Big] Z(r)    \; dr \\
& \qquad + \int_{\eps^2}^{s} \partial_x   \big( S_\eps(t-r)  -S_\eps(s-r) \big)  \delta_\eps Z(r)   \; dr =:  I_{2,1}(s,t)+I_{2,2}(s,t).
\end{align*}
To bound $I_{2,1}$ we use the factorisation 
\begin{equs}
\delta_\eps  \big( S(t-r)  -S(s-r)\big)  &=   \big( S_\eps(t-s+\tfrac{s-r}{2}) - S_\eps(\tfrac{s-r}{2}) \big) \,   \delta_\eps S(\tfrac{s-r}{2})  \\
&\qquad + \delta_\eps  \big( S(t-s+\tfrac{s-r}{2}) - S(\tfrac{s-r}{2}) \big) \,    S(\tfrac{s-r}{2}) .
\end{equs}
Then  Lemma \ref{lem:timecontS} as well as Corollary \ref{cor:S-Seps}  yield for any $\kappa>0$ small enough
\begin{align*}
 |I_{2,1}(s,t)|_{\cN}
  \lesssim (t-s)^{\frac{\gamma}{2}} \eps^{1 - \gamma -\kappa}\;.
\end{align*}

For $I_{2,2}$ we use the decomposition $Z-Z_\eps = T_1 +T_2$. For the term involving $T_1$ we get using Lemma \ref{lem:timecontS} for any $\kappa >0$ small enough
\begin{align*}
  \int_{\eps^2}^{s} 
  | &S_\eps(t-s + \tfrac{s-r}{2}) - S_\eps(\tfrac{s-r}{2}) |_{\cC^{1+ \gamma + \kappa} \to \cC^{1}}
 |S_\eps(\tfrac{s-r}{2})|_{\cC^{\tilde\alpha} \to \cC^{1 + \gamma + \kappa}}
    |T_1(r)|_{\cC^{\tilde\alpha}} \;dr
\\    &  \lesssim 
    (t-s)^{\frac{\gamma}{2}}
    \int_{\eps^2}^{s} 
    (s-r)^{-\frac12(1+ 2 \gamma -\tilde\alpha +\kappa)}
   \;  (r-\eps^2)^{-\frac{\tilde\alpha}{2}}\CE_\eps 
  \; dr
\\& \lesssim (t-s)^{\frac{\gamma}{2}}
     t^{\frac12(1 -2 \gamma   -2\kappa)} \CE_\eps 
\;.
\end{align*}
Note that we have used the brutal bound $(1 + \eps^{\gamma} \,(s-r)^{-\gamma/2} ) \ls (s-r)^{-\gamma/2}$.  For the term involving $T_2$ we cannot be quite so brutal and we write
\begin{equs}
\Big| \int_{\eps^2}^s &\partial_x \big( S_\eps(t-r) -S_\eps(s-r) \big) \,T_2 (r)\, dr \Big|_{\cC^0}\\
&\leq \Big| \int_{\eps^2}^{s-\eps^2 \vee \eps^2} \partial_x  \big( S_\eps(t-r) -S_\eps(s-r) \big) \, T_2 (r) \, dr \Big|_{\cC^0}   \\
& \qquad + \Big| \int_{s-\eps^2 \vee \eps^2}^s \partial_x  \big( S_\eps(t-r) -S_\eps(s-r) \big) \,T_2 (r)\, dr \Big|_{\cC^0} .
\end{equs}
The first integral is bounded by 
\begin{equs}
\int_{\eps^2}^{s-\eps^2 \vee \eps^2} \big| &S_\eps(t-s +\eps^2) -S_\eps(\eps^2)  \big|_{\cC^{1 + 
\gamma + \kappa }  \to \cC^1} \, \\ 
& \qquad \times \big|S_\eps(s-\eps^2 -r) \big|_{\cC^0 \to \cC^{1 + \gamma + \kappa}}   \, \big|T_2 (r)\big|_{\cC^0} dr\\
& \ls (t-s)^{\frac{\gamma}{2}}  \eps^{3 \alpha -1}  \int_{\eps^2}^{(s-\eps^2) \vee \eps^2}   \big( s-\eps^2  -r\big)^{- \frac{1 + \gamma +2\kappa}{2}}(r-\eps^2)^{-\frac{\alpha}{2}}  dr\\
&\ls (t-s)^{\frac{\gamma}{2}}  \eps^{3 \alpha -1} .
\end{equs}
For the second integral we get in the same way 
\begin{equs}
\int_{s-\eps^2 \vee \eps^2}^s \big| &S_\eps\big(t-s +\tfrac{s-r}{2}\big) -S_\eps\big(\tfrac{s-r}{2}\big)  \big|_{\cC^{1 + \gamma + \kappa} \to \cC^1} \,  \\
& \qquad  \times \, \Big|S_\eps\big(\tfrac{s-r}{2} \big) \big|_{\cC^0 \to \cC^{1 + \gamma + \kappa}}   \, \big|T_2 (r)\big|_{\cC^0} dr\\
& \ls (t-s)^{\frac{\gamma}{2}}  \eps^{3 \alpha -1}   \int_{s-\eps^2 \vee \eps^2}^s   \eps^{\gamma}( s-r )^{-\frac{\gamma}{2}} ( s-r )^{-\frac{1 + \gamma+2\kappa}{2}}   (r-\eps^2)^{-\frac{\alpha}{2}}  dr\\
&\ls (t-s)^{\frac{\gamma}{2}}  \eps^{3 \alpha -1} \eps^{1 - \gamma  -\alpha -2\kappa}.
\end{equs}
Summarising these calculations and redefining $\kappa$, we obtain the bound 
\begin{equ}
I_{2,2}(s,t) \leq (t-s)^{\frac{\gamma}{2}} \Big( t^{\frac12(1 -2 \gamma   -\kappa)} \CE_\eps  +  \eps^{3 \alpha -1} \Big).
\end{equ}

For $I_3$ we use the same splitting
\begin{align*}
I_3(s,t) &= \int_{s}^{t} \partial_x  \big[  \delta_\eps  S(t-r)   \big] Z(r)\,dr + \int_{s}^{t} \partial_x  \big( S_\eps(t-r)   \delta_\eps Z(r)   \big) \, dr\\
& =:  I_{3,1}(s,t)+I_{3,2}(s,t).
\end{align*}
The term $I_{3,1}(s,t)$ can be estimated easily using Corollary \ref{cor:S-Seps}:
\begin{align*}
 |I_{3,1}(s,t)|_{\cN}
&  \leq \int_s^t  | S(t-r) - S_\eps(t-r) |_{\cC^\alpha \to \cC^1}
    |Z(r)|_{\cC^\alpha} \;dr
\\& \lesssim (t-s)^{\frac{\gamma}{2}} \eps^{1 - \gamma  - \kappa}\;.
\end{align*}
Finally, for $I_{3,2}$ we get using \eqref{eq:Z-bound2} once more that 
\begin{align*}
 |I_{3,2}(s,t)|_{\cN}
    &  \lesssim 
 \int_s^t   (t-r)^{-\frac12(1-{\tilde\alpha} +\kappa)  }
    (r-\eps^2)^{-\frac{\tilde\alpha}{2}}  \CE_\eps 
\\& \qquad +  (t-r)^{-\frac12(1 +\kappa)  }
(r-\eps^2)^{-\frac{\alpha}{2}} \eps^{3 \alpha -1}  \dd r
\\& \lesssim (t-s)^{\frac{\gamma}{2}}
    \Big( t^{\frac12(1 -\gamma - \kappa) }   \CE_\eps + \eps^{3 {\alpha} -1}\Big) \;.
\end{align*}
Here as above we have absorbed the positive power of $t$ in front of $\eps^{3 {\alpha} -1}$ into the implicit constant (that may depend on $T$) because we do not need it later on. This finishes the argument.
\end{proof}

\section{Bounds on the approximated semigroup}\label{sec:HarmAn}
Throughout the paper, we frequently need bounds on the approximated heat semigroup $S_\eps$. These calculations are collected in this section. 
We first give some auxiliary calculations involving the approximated heat kernels that are needed in the proof of Lemma \ref{le:mistakecorrected}.
\begin{lemma}\label{le:pepscalc}
The following bounds hold:
\begin{enumerate}
\item[(i)] For any $ 0< \gamma <1$ and for any $t \in [0,T]$ we have the bound
\begin{equ}[e:pintbo1]
\int_{-\pi}^\pi   \big| (p_{t}^\eps)'( z) \, \big|  \,  |z|^\gamma \, dz \ls  t^{\frac{-1+\gamma}{2}}.
\end{equ}
If $\gamma =0$ we have
\begin{equ}[e:pintbo2]
\int_{-\pi}^\pi   \big| (p_{t}^\eps)'( z) \, \big|  \,  \, dz \ls t^{ -\frac{1}{2}}|\log t|.
\end{equ}
\item[(ii)] For any $0<\alpha< \frac{1}{2}$ we have for any $0 < t \leq T$ and any $x \in [-\pi,\pi]$ 
\begin{equ}[e:pintbo3]
\int_0^t \int_{-\pi}^\pi \big( p_s^\eps(z) - p_s^\eps(z-x)\big)^2 \big| z\big|^{2 \alpha} \, dz \, ds \ls |x| t^{\alpha}.
\end{equ} 
\end{enumerate}
\end{lemma}

\begin{remark}
All of these bounds scale in the optimal way, except for \eqref{e:pintbo2} where an additional $|\log t|$ appears. When applying this bound in the proof of Lemma \ref{le:mistakecorrected} this small correction does not matter . For \eqref{e:pintbo3} our proof also shows the bound
\begin{equ}[e:pintbo4]
\int_0^t \int_{-\pi}^\pi \big( p_s^\eps(z) - p_s^\eps(z-x)\big)^2 \big| z\big|^{2 \alpha} \, dy \, ds \ls |x|^{1 +2\gamma} ,
\end{equ} 
with a uniform constant for $t \in [0,T]$. We state \eqref{e:pintbo3} in this way because it is convenient in the proof of Lemma \ref{le:mistakecorrected}
\end{remark}
\begin{proof}
(i) We start the calculation by deriving pointwise bounds on 
\begin{equ}
\big( p^\eps_{t-s}\big)' (z)= \frac{1}{\sqrt{2\pi}} \sum_{k  \in \Z} ik e^{-k^2 f(\eps k) t}e^{ikz } .
\end{equ}
For $ |z|^2 \leq t$ we will simply use the brutal bound
\begin{equs}[e:casA]
\big| \big( p^\eps_{t}\big)' (z) \big|  \ls \sum_{k  \in \Z} |k| e^{-k^2 f(\eps k) t} \ls \frac{1}{t},
\end{equs}
which holds uniformly for $t \leq T$. 

Else, for $|z|^2>t $  we perform a summation by part and obtain
\begin{equs}[e:pca1]
\frac{1}{\sqrt{2\pi}}&  \sum_{k  \in \Z} ik e^{-k^2 f(\eps k) t}e^{ikz }  \\
&= \frac{1}{\sqrt{2\pi}} \sum_{k  \in \Z}  i\Big( k e^{-k^2 f(\eps k)t} -(k-1) e^{-(k-1)^2 f(\eps (k-1))t} \Big) g_{k}(z),
\end{equs}
where
\begin{equs}[e:pca2]
 \big|g_{k}(z) \big|  :=  \Big| \sum_{j=0}^k e^{ijz} \Big| =  \Big| \frac{1 - e^{i(k+1)z}}{1 - e^{iz}} \Big| \ls \frac{1}{|z|},
\end{equs}
the last two expressions being valid for $z \neq 0$. In order to bound the sum over the increments of $k e^{-k^2 f(\eps k)t}$  we write
\begin{equs}[e:brut2]
\Big|& \sum_{k  \in \Z}  \Big( k e^{-k^2 f(\eps k)t} -(k-1) e^{-(k-1)^2 f(\eps (k-1))t}  \Big| \\
&\leq \sum_{k  \in \Z}  \Big|  e^{-k^2 f(\eps k)t}   \Big| +  \sum_{k  \in \Z} |k| \,  \Big|  e^{-k^2 f(\eps k)t} - e^{-(k-1)^2 f(\eps (k-1))t}  \Big| .
\end{equs}
The first summand can be bounded easily
\begin{equ}
\sum_{k  \in \Z}  \Big|  e^{-k^2 f(\eps k)t}   \Big|  \ls t^{-\frac{1}{2}},
\end{equ}
where again the implicit constant is uniform for $t \leq T$. 
For the second summand we use the brutal bound
\begin{equs}[e:brut]
\big| t^{\frac12} k \big| & \ls \Big( e^{- k^2  f(\eps k)t/2} + e^{- (k-1)^2  f(\eps(k-1))t/2} \Big)^{-1} .
\end{equs}
Actually,  \eref{e:brut}  is obvious for $k=0$ and for $k=1$ it states that 
\begin{equ}
t^{\frac12} \ls  \Big(   e^{- f(\eps )t/2} + 1   \Big)^{-1} ,
\end{equ}
which is  true for $t \leq T$. For $k\neq 0,1$, we note that 
\begin{equ}
\frac{1}{4} \leq \frac{k^2}{(k-1)^2} \leq 4,
\end{equ}
to bound the right hand side of \eref{e:brut} by 
\begin{equs}
\Big( e^{- k^2  f(\eps k)t\frac{t}{2}} + e^{- (k-1)^2  f(\eps(k-1))\frac{t}{2}} \Big)^{-1}  &\geq \Big( e^{- k^2  c_f \frac{t}{2}} + e^{- \frac14 k^2  c_f \frac{t}{2}} \Big)^{-1}\\
& \geq \frac{1}{2} e^{ \frac18 k^2  c_f t}.
\end{equs}
(Recall the definition of $c_f$ in Assumption \ref{a:f}).  This establishes \eqref{e:brut}.

Plugging \eqref{e:brut} into the second term on the right hand side of \eqref{e:brut2} we get
\begin{equs}[e:pca3]
 \sum_{k  \in \Z} &|k| \,  \Big|  e^{-k^2 f(\eps k)t} - e^{-(k-1)^2 f(\eps (k-1))t}  \Big| \\
 & \leq  t^{-\frac12}\sum_{k  \in \Z} \,  \Big|  e^{-k^2 f(\eps k)t/2} - e^{-(k-1)^2 f(\eps (k-1))t/2}  \Big| \ls t^{ -\frac12},
\end{equs}
where we have made use of the $\BV$ boundedness  from Assumption \ref{a:BV}.

Hence, summarising \eqref{e:pca1}--\eqref{e:pca3} we obtain 
\begin{equs}
\big| \big( p^\eps_{t}\big)' (z) \big| \ls  t^{-\frac12} |z|^{-1}.
\end{equs}
Finally, we can conclude the desired bounds \eqref{e:pintbo1} for $\gamma>0$
\begin{equs}
\int_{-\pi}^\pi   \big| (p_{t}^\eps)'( z) \big| |z|^\gamma \, dz
 & \leq t^{\frac{\gamma}{2}}\int_{|z| \leq t^{\frac{1}{2}}}   \big| (p_{t}^\eps)'( z) \big| \, dz + \int_{|z| > t^{\frac{1}{2}}}   \big| (p_{t}^\eps)'( z) \big| |z|^\gamma \, dz\\
&\ls t^{\frac{\gamma-1}{2} } + 2 \int_{t^{\frac12}}^\pi t^{-\frac12} |z|^{\gamma -1} \, dz \ls t^{\frac{\gamma-1}{2} } ,\label{e:pconc1}
\end{equs}
and similarly for $\gamma=0$ we obtain \eqref{e:pintbo2}:
\begin{equs}
\int_{-\pi}^\pi   \big| (p_{t}^\eps)'( z) \big| \, dz &\leq \int_{|z| \leq t^{\frac{1}{2}}}   \big| (p_{t}^\eps)'( z) \big| \, dz + \int_{|z| > t^{\frac{1}{2}}}   \big| (p_{t}^\eps)'( z) \big| \, dz\\
&\leq t^{-1} t^{\frac12}  + 2 \int_{t^{\frac12}}^\pi t^{ -\frac12} |z|^{-1} \, dz \ls t^{ -\frac12} \big| \log t \big|.\label{e:pconc2}
\end{equs}
(ii) In order to prove \eqref{e:pintbo3} we again have to get pointwise bounds bounds on the integrand. We distinguish between two different cases: the case where $|x| > s^{\frac12}$ and the case where $|x| \leq s^{\frac12}$.

Let us start by the first case, i.e. let us assume that $|x| > s^{\frac12}$. In that case $p_s^\eps(z-x)$ is not a good approximation of $p_s^\eps(z)$ and we bound the difference by the sum. We write
\begin{equs}[e:pepsbo1]
\int_{-\pi}^\pi & \big( p_s^\eps(z) - p_s^\eps(z-x)\big)^2 \big| z\big|^{2 \alpha} \, dz \\
& \ls  \int_{-\pi}^\pi  p_s^\eps(z)^2   \big| z\big|^{2 \alpha} \, dz  + |x|^{2 \alpha}\int_{-\pi}^\pi  p_s^\eps(z)^2  \, dz .
\end{equs}
The derivation of bounds for these two integrals is similar to the calculation for \eqref{e:pintbo1} and \eqref{e:pintbo2}. In the same way as in \eqref{e:casA} we get for $|z| \leq s^{\frac12}$ that
\begin{equs}[e:casAnew]
  p^\eps_{s} (z)^2   \leq  \bigg( \sum_{k  \in \Z}  e^{-k^2 f(\eps k) s} \bigg)^2 \ls s^{-1}.
\end{equs}
Then performing the same summation by part as in \eqref{e:pca1} we get for $|z| > s^{\frac12}$
\begin{equ}[e:pca1new]
  p^\eps_{s} (z)^2  \ls  \bigg( \sum_{k  \in \Z}  \Big(  e^{-k^2 f(\eps k)s} - e^{-(k-1)^2 f(\eps (k-1))s} \Big) |z|^{-1}\bigg)^2
 \ls |z|^{-2}.
\end{equ}
Hence we get as in \eqref{e:pconc1} and \eqref{e:pconc2} that
\begin{equs}[e:pconc1new]
\int_{-\pi}^\pi &   (p_{s}^\eps)'( z)^2  |z|^{2\alpha} \, dz\ls s^{\alpha -\frac{1}{2} } +  \int_{s^{\frac12}}^\pi  |z|^{2\alpha -2} \, dz \ls s^{\alpha -\frac{1}{2} } .
\end{equs}
and 
\begin{equs}[e:pconc2new]
 \int_{-\pi}^\pi &   (p_{s}^\eps)'( z)^2  \, dz \ls  |x|^{2\alpha}  \Big(s^{-\frac{1}{2} } +  \int_{s^{\frac12}}^\pi  |z|^{ -2} \, dz \Big)  \ls |x|^{2\alpha} s^{-\frac{1}{2} } .
\end{equs}
Now let us treat the second case, where $|x|\leq s^{\frac12}$. In that case we write 
\begin{equs}
 p_s^\eps(z) - p_s^\eps(z-x) = \frac{1}{\sqrt{2\pi}} \sum_{k \in \Z^\star} e^{-k^2   f(\eps k) s } e^{ikz} \big(1 - e^{ikx} \big).
 \end{equs}
 As before we use a brutal bound for $|z| \leq s^{\frac12}$ 
\begin{equs}
 \Big( p_s^\eps(z) - p_s^\eps(z-x) \Big)^2 \ls \bigg( \sum_{k \in \Z^\star} e^{-k^2   f(\eps k) s }   |kx| \bigg)^2 \ls |x|^2 s^{-2},
 \end{equs}
 and a summation by part if $|z| >s^{\frac12}$. As above in \eqref{e:pca1} and \eqref{e:pca2} we get
\begin{equs}
\Big|\sum_{k \in \Z^\star} & e^{-k^2   f(\eps k) s } e^{ikz} \big(1 - e^{ikx} \big) \Big|\\
& \ls |z|^{-1} \Big|\sum_{k \in \Z^\star}  e^{-k^2   f(\eps k) s }   \Big(1 - e^{ikx}  \Big) - e^{-(k-1)^2  f(\eps (k-1)) s } \Big(1 - e^{i(k-1)x}    \Big) \Big|\\
& \ls |z|^{-1} \sum_{k \in \Z^\star} \Big|  e^{-k^2   f(\eps k) s } - e^{-(k-1)^2  f(\eps (k-1)) s } \Big| \, \Big|1 - e^{ikx}  \Big|  \\
&\qquad +  |z|^{-1} \sum_{k \in \Z^\star}  e^{-k^2 \,  f(\eps (k-1))\, s } \, \Big| e^{ikx} - e^{i(k-1)x}    \Big|.
\end{equs}
The first sum can be bounded as above in \eqref{e:pca3}
\begin{equs}
 \sum_{k \in \Z^\star} \Big| & e^{-k^2   f(\eps k) s } - e^{-(k-1)^2  f(\eps (k-1)) s } \Big|  \, \Big|1 - e^{ikx}  \Big| \\
&\ls  \sum_{k \in \Z^\star} \Big|  e^{-k^2   f(\eps k) s } - e^{-(k-1)^2  f(\eps (k-1)) s } \Big|  \, |kx| \ls |x| s^{-\frac12}.
\end{equs}
For the second sum we write
\begin{equs}
\sum_{k \in \Z^\star}  e^{-k^2 \,  f(\eps (k-1))\, s } \Big| e^{ikx} - e^{i(k-1)x}    \Big| \ls |x|\sum_{k \in \Z^\star}  e^{-k^2 \,  f(\eps (k-1))\, s } \ls |x| s^{-\frac12}.
\end{equs}
Hence integrating over $z$ yields
\begin{equs}
\int_{-\pi}^\pi & \big( p_s^\eps(z) - p_s^\eps(z-x)\big)^2 \big| z\big|^{2 \alpha} \, dz \\
& \ls s^{\alpha} \int_{|z| \leq s^{\frac12}} |x|^2 s^{-2} dz + \int_{|z| > s^{\frac12}} |z|^{-2+2\alpha}  |x|^2 s^{-1} dz
\ls |x|^2 s^{\alpha - \frac32}.
\end{equs}
Finally, integrating over $s$ we get, splitting the integral over $[0,t]$ into an integral over $[0,|x|^2 \wedge t]$ and an integral over $[|x|^2 \wedge t,t]$
\begin{equs}
\int_0^t  & \int_{-\pi}^\pi \big( p_s^\eps(z) - p_s^\eps(z-x)\big)^2 \big| z\big|^{2 \alpha} \, dy \, ds \\
&\leq \int_0^{|x|^2 \wedge t}  s^{\alpha -\frac{1}{2} } + |x|^{2\alpha} s^{-\frac{1}{2} } \, ds + \int_{|x|^2 \wedge t}^t |x|^2 s^{\alpha - \frac32}\, ds \ls |x| t^\alpha\;,
\end{equs}
thus concluding the proof.
\end{proof}

We will now proceed to prove bounds on the regularisation property of the approximate 
heat semigroup $S_\eps$ on H\"older spaces. These bounds are similar to the well-known optimal 
regularity results for the heat semigroup $S$.  Unfortunately, we cannot apply standard multiplier results 
in H\"older spaces such as the one in \cite{ABB04}, since in our application the conditions in these spaces are typically not 
satisfied uniformly in $\eps$. We circumvent this problem by proving optimal regularity in $L^p$-based Sobolev spaces and 
then use Sobolev embeddings. In this way, we do not obtain the optimal regularity though, but we 
always loose an arbitrarily small exponent $\kappa$.

We first state a simple corollary of the classical Marcinkiewicz multiplier theorem \cite{Mar39}. 
In order to state the result we introduce the following notation. For any sequence $\{ m(k)\}_{k \in \Z}$ we define
\begin{align}
 \| m \|_{\MM}
  := \sup_{k \in \Z} |m(k)|
   + 
   \sup_{l \geq 0} \sum_{k = 2^l}^{2^{l+1}-1} 
 \sum_{\sigma \in \{-1, 1\}}
   | m(\sigma k) - m(\sigma(k+1)) |\;. \label{e:MarkNorm}
\end{align}
The result can now be formulated as follows.

\begin{lemma}\label{lem:Mark}
Let $\{ m(k)\}_{k \in \Z}$ be a real sequence and let $T_m$ be  the associated  Fourier multiplication operator given  by
\begin{align*}
 T_m e^{ik\cdot} = m(k)e^{ik\cdot}\;.
\end{align*}
For any $\gamma \in \R$ we define the sequence $\{m^\gamma\}_{k \in \Z}$ by
\begin{equs}\label{e:ml}
m^\gamma(k) = 
|k|^{-\gamma} m(k)  
\end{equs}
for $k \neq 0$ and $m^\gamma(0)= m(0)$. Then, for any $\bar{\gamma} > ( 0 \wedge \, - \gamma)$ and any  $0 <\kappa < \bar{\gamma}$   we have
\begin{equs}
\big\| T_m \big\|_{\cC^{\bar{\gamma} + \gamma } \to \cC^{\bar{\gamma} - \kappa} } \, \ls \, \| m^\gamma \|_{\mathcal{M} }.
\end{equs}
\end{lemma}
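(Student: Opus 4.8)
The plan is to reduce the statement about Hölder spaces to a statement about Fourier multipliers that can be handled by the classical Marcinkiewicz multiplier theorem on $L^p$ spaces, using the standard characterisation of Hölder (Besov) spaces via Littlewood–Paley blocks. First I would decompose any $f \in \cC^{\bar\gamma+\gamma}$ dyadically, writing $f = \sum_{l \ge -1} \Delta_l f$ where $\Delta_l$ projects onto frequencies $|k| \sim 2^l$. The key point is the Littlewood–Paley characterisation $\|f\|_{\cC^{s}} \eqsim \sup_{l} 2^{ls} \|\Delta_l f\|_{L^\infty}$ for $s>0$ (with the usual convention that on the torus these are Besov norms $B^s_{\infty,\infty}$, which coincide with classical Hölder norms for non-integer $s$), so it suffices to control $2^{l(\bar\gamma-\kappa)} \|\Delta_l T_m f\|_{L^\infty}$ for each block.

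The main step is then to estimate, on the $l$-th dyadic block, the operator $\Delta_l T_m$. Since on that block $|k| \sim 2^l$, we have $m(k) = |k|^\gamma m^\gamma(k) \sim 2^{l\gamma} m^\gamma(k)$ up to the factor $|k|^\gamma / 2^{l\gamma}$ which is itself a benign multiplier on each block, so morally $\Delta_l T_m f \approx 2^{l\gamma} \Delta_l T_{m^\gamma} f$. Writing $g = T_{m^\gamma} f$, I would use the embedding $\|\Delta_l g\|_{L^\infty} \lesssim 2^{l/p}\|\Delta_l g\|_{L^p}$ (Bernstein), choosing $p$ large so that $1/p < \kappa$; then summing,
\begin{equs}
2^{l(\bar\gamma-\kappa)}\|\Delta_l T_m f\|_{L^\infty}
   \ls 2^{l(\bar\gamma - \kappa + \gamma + 1/p)} \|\Delta_l g\|_{L^p}
   \ls 2^{l(\bar\gamma+\gamma)}\|\Delta_l g\|_{L^p}\;,
\end{equs}
and the Marcinkiewicz multiplier theorem applied to $m^\gamma$ gives $\|\Delta_l g\|_{L^p} = \|\Delta_l T_{m^\gamma} f\|_{L^p} \ls \|m^\gamma\|_\MM \|\Delta_l f\|_{L^p}$, with constant depending only on $p$ and $\|m^\gamma\|_\MM$ (the $\MM$-norm \eref{e:MarkNorm} is exactly the dyadic Marcinkiewicz condition). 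Finally $\|\Delta_l f\|_{L^p} \le \|\Delta_l f\|_{L^\infty} \lesssim 2^{-l(\bar\gamma+\gamma)}\|f\|_{\cC^{\bar\gamma+\gamma}}$ closes the estimate uniformly in $l$, and taking the supremum over $l$ yields $\|T_m f\|_{\cC^{\bar\gamma-\kappa}} \ls \|m^\gamma\|_\MM \|f\|_{\cC^{\bar\gamma+\gamma}}$.

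The minor technical points I would need to check are: that the Marcinkiewicz theorem on $\T$ indeed gives $L^p$-boundedness with operator norm controlled by the quantity in \eref{e:MarkNorm} (this is standard, and $\MM$ is precisely tailored to it); that the extra factor $|k|^\gamma/2^{l\gamma}$ restricted to a dyadic annulus has $\MM$-norm bounded independently of $l$ (it does, being smooth and bounded between $1$ and $2^\gamma$ there, with bounded logarithmic derivative); and the treatment of the zero mode, which is trivial since $m^\gamma(0)=m(0)$ and $|m(0)| \le \|m^\gamma\|_\MM$. The main obstacle — and the reason this lemma is isolated rather than quoted from a reference — is that one genuinely needs the loss of $\kappa$ derivatives: without the Bernstein step trading $L^p$ for $L^\infty$, a direct $L^\infty \to L^\infty$ multiplier bound would require a stronger (and $\eps$-non-uniform in the applications) condition on $m$, so the care is in routing everything through $L^p$ for finite but large $p$ and paying $1/p < \kappa$ for it.
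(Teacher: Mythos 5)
Your proof is correct and rests on the same key ingredient as the paper's: the Marcinkiewicz multiplier theorem applied to $m^\gamma$ in $L^p$ for large $p$, with the loss of $\kappa$ derivatives paid by the exponent $1/p<\kappa$. The only difference is packaging: the paper factors $T_m$ through the Bessel potential spaces $H^{s,p}$ and invokes the embeddings $\cC^{\gamma+\bar\gamma}\hookrightarrow H^{\gamma+\bar\gamma-\kappa/2,p}$ and $H^{\bar\gamma-\kappa/2,p}\hookrightarrow \cC^{\bar\gamma-\kappa}$, whereas you work block-by-block with the Littlewood--Paley characterisation of $B^s_{\infty,\infty}$ and Bernstein's inequality — which is precisely the mechanism underlying that Sobolev embedding, so the two arguments are interchangeable.
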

\begin{proof}
For any $1 < p < \infty$  the Marcinkiewicz multiplier theorem \cite{Mar39} asserts that
\begin{equs}
\big\| T_{m^\gamma} \big\|_{L^p \to L^p} \ls \| m^\gamma \|_{\mathcal{M} },
\end{equs}
where $T_{m^\gamma}$ is the Fourier multiplier associated to $m^\gamma$. Hence, it follows immediately from the definition of the Bessel potential spaces $H^{\gamma ,p}$   that 
\begin{equs}
\big\| T_{m} \big\|_{H^{\gamma + \bar{\gamma} - \kappa/2,p} \to H^{  \bar{\gamma} - \kappa/2,p}} \ls \| m^\gamma \|_{\mathcal{M} }.
\end{equs}
(See, e.g. \cite[Section 6.1.2]{Gra09} or \cite{MR1228209} for proofs in the whole space; the extension to the torus is immediate.) Then the desired statement follows from the embedding
\begin{equs}
\cC^{\gamma + \bar{\gamma} } \hookrightarrow  H^{\gamma + \bar{\gamma} - \kappa/2,p}, 
\end{equs}
and the Sobolev embedding
\begin{equs}
H^{ \bar{\gamma} - \kappa/2,p} \hookrightarrow \cC^{\bar{\gamma} - \kappa},
\end{equs}
which holds as soon as $p$ is sufficiently large.
\end{proof}

With this result in hand we are now ready to derive the bounds on $S_\eps$. Throughout the following lemmas we will use the notation
\begin{equ}[e:mm]
\met(k) = e^{- k^2 f(\eps k) t}.
\end{equ} 
In this notation Assumption \ref{a:BV} implies that
\begin{equ}[e:aBV]
\sup_{\eps,t>0} \big\| m_{\eps,t} \big\|_{\BV} < \infty . 
\end{equ}
This is because $m_{\eps,t}(k) = b_{t/\eps^2} (\eps k)$, and the $\BV$-norm is invariant under re\-pa\-ra\-me\-tri\-sa\-tions.

\begin{lemma}\label{le:newOp}
For any $\gamma, \bar{\gamma} \geq 0$  and for any $t>0$ we have
\begin{equ}[e:ApOp1]
 \sup_{\eps \in (0,1)}   \| S_{\eps}(t) \|_{\cC^{\bar{\gamma}} \to \cC^{\bar{\gamma} +\gamma- \kappa}}  \lesssim t^{-\frac{\gamma}{2}}\;.
\end{equ}
\end{lemma}
\begin{proof}
We have for any $k \in \Z^\star$ that
\begin{equs}
|k|^\gamma \met(k) =   |k|^\gamma \,e^{- k^2 f(\eps k) t} \ls t^{-\frac{\gamma}{2}} \sup_{x \in \R}  |x|^\gamma e^{-x^2 c_f} \ls t^{-\frac{\gamma}{2}}.
\end{equs}
To bound the $\BV$ norm of $|k |^\gamma \met$ we write
\begin{equs}
&\sum_{k \in \Z \setminus \{0,1 \}} 
 \Big| \ |k+1|^\gamma \met(k+1) - |k|^\gamma \met (k) \ \Big| \\
& \leq \sum_{k \in \Z \setminus \{0,1 \}} \big| \, |k+1|^\gamma -|k|^\gamma \big|  \met(k) + \sum_{k \in \Z \setminus \{0,1 \}} |k|^\gamma |\met(k+1) -\met(k)|.
\end{equs}
To bound the first term we use the fact that $\big| \, |k+1|^\gamma -|k|^\gamma \big| \ls   |k|^{\gamma -1}$ to obtain
\begin{equs}
\sum_{k \in \Z \setminus \{0,1 \}} \big| \, |k+1|^\gamma -|k|^\gamma \big| \met(k) \ls \sum_{k \in \Z \setminus \{0,1 \}}   |k|^{\gamma-1} e^{ -k^2 t c_f } \ls t^{-\frac{\gamma}{2}}.
\end{equs}
To bound the second term we use the same argument as  above in \eqref{e:brut} to show that
\begin{equs}[e:brut-oncemore]
\big| t^{\frac12} k \big|^{\gamma} & \ls \Big( e^{- k^2  f(\eps k)t/2} + e^{- (k-1)^2  f(\eps(k-1))t/2} \Big)^{-1} .
\end{equs}
This bound then implies that
\begin{equs}
\sum_{k \in \Z \setminus \{0,1 \}} & |k|^\gamma |\met(k+1) -\met(k)| \\
& \ls t^{\frac{\gamma}{2}} \sum_{k \in \Z \setminus \{0,1 \}} |m_{\eps,t/2}(k+1) - m_{\eps,t/2}(k)|   \ls t^{\frac{\gamma}{2}},
\end{equs}
where in the last inequality we have made use of \eqref{e:aBV}.
Then  Lemma \ref{lem:Mark} implies the desired bound \eqref{e:ApOp1}
\end{proof}

\begin{lemma}\label{le:ApOp}
 For  any $\gamma \in [0,1]$, for $\bar \gamma \geq 0$, and for $\kappa>0$  we have
\begin{equ}[e:ApOp2]
\sup_{  t \in [0,T]}  \|  S(t)- S_{\eps}(t)     \|_{\cC^{\bar{\gamma} + \gamma} \to \cC^{\bar{\gamma} - \kappa}} \lesssim\eps^\gamma \;.
\end{equ}
\end{lemma}

\begin{proof}

As a shorthand, we use the notations
\begin{equ}
\deltaa \eqdef \met - \mt\;,\qquad 
\deltaag \eqdef \met^\gamma - \mt^\gamma\;,
\end{equ}
where $\met$ is defined in  \eqref{e:mm}  and  $\met^\gamma (k) := |k|^{-\gamma} \met$ and $\mt^\gamma := |k|^{-\gamma} \mt$ as in \eref{e:ml}.
The bound follows from  \ref{lem:Mark} as soon as we have established the estimate
\begin{equs}[e:Mar1]
\big\| \deltaag \big\|_{\mathcal{M}} \ls \eps^\gamma\;.
\end{equs}
Observe that $\deltaag(0) =0$, so that from now on we will only deal with $k \neq 0$.  By symmetry it suffices to consider the positive Fourier modes, hence, to simplify notation,  we will neglect the terms with $\sigma = -1$ in the definition of $\|\cdot\|_{\mathcal{M}}$.

In order to establish \eref{e:Mar1} we start by showing that 
\begin{equs}[e:Mar2]
\sup_{k \in \Z} \big| \deltaag(k) \big| \ls \eps^{\gamma}. 
\end{equs}

Recall that according to Assumption \ref{a:f} $f$ is differentiable on $(-\delta,\delta)$ with bounded derivatives. Therefore, if $0 < |\eps k| \leq \delta$ we can write 
\begin{equs}
\big|  \deltaag(k) \big| &= \frac{1}{|k|^\gamma} \Big[ e^{- tk^2 f(\eps k) } - e^{ -t k^2 } \Big] \label{e:Mar3}\\
&\ls  \frac{1}{|k|^\gamma}  e^{- c_f  t k^2  } t k^2 \big| f(\eps k) -1 \big|
\ls \frac{1}{|k|^\gamma}  |\eps k| \ls \eps^\gamma.
\end{equs}
Here we have made use of the fact that the function $x \mapsto x \exp\big( - c_f x\big)$ is bounded on $[0, \infty)$ as well as of  the boundedness of $f'$ on $(-\delta,\delta)$.

If $|\eps k| \geq \delta$ the bound \eref{e:Mar2} can be established simply by writing
\begin{equs}[e:Mar4]
\big|  \deltaag(k) \big| \ls |k|^{-\gamma} \ls \eps^\gamma.
\end{equs}
The bounds on the $\BV$-norms of the Paley-Littlewood blocks 
\begin{equs}
\sum_{k = 2^l }^{2^{l+1}-1} \big| \deltaag (k) - \deltaag (k+1) \big|
\end{equs}
require more thought.  
Actually, we can always write using the inequality $|f g |_{\BV} \leq |f|_{\cN} |g|_{\BV} + |g|_{\cN} |f|_{\BV}$ 
\begin{equs}
 \sum_{k = 2^l }^{2^{l+1}-1} \big| \deltaag (k) - \deltaag (k+1) \bigr| &\le  \frac{1}{2^{l\gamma}} \sum_{k = 2^l }^{2^{l+1}-1} \big| \deltaa (k) - \deltaa (k+1) \big| \\
&\quad +  \frac{1}{2^{l\gamma}} \sup_{k \in [2^l, 2^{l+1}]}  \big| \deltaa(k) \big|\;.\label{e:Mar5}
\end{equs}
The second summand can be bounded as in  \eref{e:Mar3}  and \eref{e:Mar4}. We get
\begin{equs}[e:Mar6]
\frac{1}{2^{l\gamma}} \sup_{k \in [2^l, 2^{l+1}]} \big| \deltaa(k) \big| \ls \eps^\gamma.
\end{equs}
For the first term on the right-hand side of \eref{e:Mar5} we distinguish between different cases. 

We first consider the case where  $\eps 2^{l+1} \geq\delta$. In this case the   $\met(k)$ for $k \in [2^l, 2^{l+1}]$ are not good approximations to the $\mt(k)$. Hence we bound the difference by the sum 
\begin{equ}
 \big| \deltaa (k) - \deltaa (k+1) \big| 
  \leq    \big| \met(k) - \met(k+1)  \big|  + \big|  \mt (k) - \mt (k+1) \big| \;.\label{e:Mar7}
\end{equ}
Then we get using Assumption \ref{a:BV} on the boundedness of the $\BV$ norm of $\met$
\begin{equs}[e:Mar8]
\frac{1}{2^{l\gamma}} \sum_{k = 2^l }^{2^{l+1}-1} \big| \met(k) - \met(k+1)  \big|  \ls \frac{1}{2^{l\gamma}} \ls \eps^\gamma.
\end{equs}
The second term on the right-hand side of \eref{e:Mar7} can be bounded in the same way.

Secondly, we consider the case  $\eps 2^{l+1} < \delta$.
In order to treat this case, we claim that for any non-negative numbers $g_{ij}$ with $i,j \in \{0,1\}$, we have
\begin{equs}\label{eq:e-id}
| e^{-g_{00}} - e^{-g_{01}} &- e^{-g_{10}} + e^{-g_{11}} |  \leq e^{-m}\Big[  \big|  {g_{00}} - {g_{01}}- {g_{10}} + {g_{11}} \big|   
 \\& + \Big( |g_{00} -g_{01}| + |g_{10} -g_{11}|\Big)
     \Big( |g_{00} -g_{10}| + |g_{01} -g_{11}|\Big) 
   \Big]\;,
\end{equs}
where $m = \min g_{ij}$. To see this,
set
\begin{align*}
g(\lambda,\mu)
 =  (1-\lambda)(1-\mu) g_{00} + (1-\lambda)\mu g_{01}
      + \lambda(1-\mu) g_{10} + \lambda\mu g_{11}
\end{align*}
and note that the left-hand side of \eref{eq:e-id} can be written as 
\begin{align*}
 & \Big|  \int_0^1 \int_0^1 \partial_\lambda \partial_\mu 
     \exp ( - g(\lambda, \mu)) \dd\lambda \dd \mu \Big|
 \\& \leq 
  \Big|  \int_0^1 \int_0^1 
   \Big[  | \partial_\lambda \partial_\mu g(\lambda, \mu)|
    +  |\partial_\lambda g(\lambda, \mu) |
    | \partial_\mu g(\lambda, \mu) |  \Big]
     \exp ( - g(\lambda, \mu)) \dd\lambda \dd \mu \Big|\;.
\end{align*}
The estimate \eref{eq:e-id} follows using the inequalities
\begin{align*}
  \partial_\lambda \partial_\mu g(\lambda, \mu)
  &  =   {g_{00}} - {g_{01}}- {g_{10}} + {g_{11}}\;, \\
    \partial_\lambda g(\lambda, \mu)
  &  \leq  | {g_{00}} - {g_{10}} | + | {g_{01}} - {g_{11}}|\;,\\
    \partial_\mu g(\lambda, \mu)
  &  \leq  | {g_{00}} - {g_{01}} | + | {g_{10}} - {g_{11}}|\;,\\
  g(\lambda, \mu) & \geq m\;.
\end{align*}
Applying this estimate to $g_{0i} =  (k+i)^2 t$ and $g_{1i} =  (k+i)^2 t f(\eps (k+i))$, we infer that 
\begin{equ}\label{e:Mar11}
 \frac{1}{2^{l\gamma}}   \sum_{k = 2^l }^{2^{l+1}-1} \big| \deltaa (k) - \deltaa (k+1) \big|
   \leq  \frac{1}{2^{l\gamma}}  \sum_{k = 2^l }^{2^{l+1}-1} e^{- c_f t k^2}
\big(B_{\eps,t}(k) +  C_{\eps,t}(k)\big)
\end{equ}
where 
\begin{align*}
B_{\eps,t}(k) 
 =  \Big| t k^2 \big(f(\eps k) -1 \big) - t (k+1)^2 \big( f(\eps (k+1)) -1 \big)    \Big|  
 \lesssim t \eps k^2\;,
\end{align*}
and, taking into account that $\eps k \lesssim 1$,
\begin{align*}
  C_{\eps,t}(k) 
  &  = 
\Big[  \Big| t k^2 -  t (k+1)^2 \Big|
      + \Big| t k^2 f(\eps k)-  t (k+1)^2 f(\eps (k+1)) \Big|   \Big] 
\\& \qquad \times    \Big[ \Big| t k^2 \big(f(\eps k) -1 \big) \Big|
      +  t (k+1)^2 \big|f(\eps (k+1)) -1 \big|  \Big] 
\\& \ls  (t k) \cdot (t \eps  k^3)
      = t^2 \eps k^4\;.
\end{align*}
Using these bounds, together with the fact that  $M = \sup_{x \geq 0} \{ x e^{-x},  x^2 e^{-x} \} < \infty$, we infer that
\begin{equ}
 \frac{1}{2^{l\gamma}}  \sum_{k = 2^l }^{2^{l+1}-1} \big| \deltaa (k) - \deltaa (k+1) \big|
  \lesssim
   \frac{M}{2^{l\gamma}}  \sum_{k = 2^l }^{2^{l+1}-1} \eps 
   \lesssim 2^{l(1-\gamma)} \eps 
   \lesssim \eps^{\gamma}\;.
\end{equ}
This finishes the proof of \eref{e:Mar1} and hence of \eref{e:ApOp2}.
\end{proof}

The following result is now an immediate consequence. 

\begin{corollary}\label{cor:S-Seps}
Let $\lambda \in [0,1]$ and $\alpha \leq \gamma  + \lambda$. For $\kappa > 0$ sufficiently small,
\begin{align}\label{eq:S-diff}
 | S(t) - S_\eps(t) |_{\cC^\alpha \to \cC^\gamma}
  \lesssim t^{-\frac12(\gamma - \alpha + \lambda+\kappa)} \eps^{\lambda}\;.
\end{align}
\end{corollary}

\begin{proof}
This follows from the decomposition
\begin{align*}
S(t) - S_\eps(t) =  \big(S(t/2) - S_\eps(t/2)\big)   \big( S( t/2) + S_\eps(t/2) \big) \;,
\end{align*}
and Lemma \ref{le:newOp}  and  Lemma \ref{le:ApOp}.
\end{proof}

The next result concerns the time regularity of solutions to the approximated heat equation. Recall that the approximated heat semigroup $S_\eps$ is not strongly continuous at $0$ and we cannot expect convergence to zero of  $\big|S_\eps(t) - \Id \big|_{\cC^\gamma \to \cN}$ as $t \to 0$. However, the following result states that the approximating semigroup has nice time continuity properties for times $t \geq s$ with a blowup if $s \leq \eps^2$.

\begin{lemma}\label{lem:timecontS}
Let $\bar{\gamma} \geq 0$ and $\gamma \in [0,2]$. Then, for all $t\geq s >0$  we have
\begin{align*}
  |  S_\eps(t) -  S_\eps(s)   |_{\cC^{\bar \gamma +\gamma+\kappa}\to \cC^{\bar \gamma}}
    \lesssim  \big( 1 + \eps^\gamma s^{-\frac{\gamma}{2} } \big) |t-s|^{\frac{\gamma}{2}}  \;.
\end{align*}
\end{lemma}

\begin{proof}
As above we write
\begin{equ}
 m_{\eps,t}^\gamma(k) = k^{-\gamma} 
     \exp(-t k^2 f(\eps k))\;,\qquad
 m_{\eps,t}(k) = \exp(-t k^2 f(\eps k))\;.
\end{equ}
Lemma \ref{lem:Mark} implies the desired result as soon as we have established that 
\begin{equs}
\big| m_{\eps,t}^\gamma - m_{\eps,s}^\gamma \big|_{\MM} \ls  \big( 1 + \eps^\gamma s^{-\frac{\gamma}{2} } \big)(t-s)^{\frac{\gamma}{2}},
\end{equs}
where the norm $| \cdot |_{\MM}$ has been defined in \eref{e:MarkNorm}. By symmetry it suffices to consider the Fourier coefficients with $k > 0$, i.e. the terms with $\sigma = 1$. Throughout the calculations we will write
\begin{equ}
\deltamg(k) \eqdef m_{\eps,t}^\gamma(k) - m_{\eps,s}^\gamma(k)\;,
\end{equ}
and similarly for $\deltam$.

In order to bound the supremum of the $\deltamg$ we write for any $k \in \Z^\star$
\begin{align*}
 | \deltamg(k)|
&  =  k^{-\gamma}  e^{-s k^2 f(\eps k)} 
    \big( 1 - e^{-(t-s) k^2 f(\eps k)}  \big)
\\&  \lesssim   (t-s)^{\frac{\gamma}{2}} e^{- s  k^2 f(\eps k)} 
     f(\eps k)^{{\frac{\gamma}{2}}}
\end{align*}
If $|\eps k| \leq \delta$ Assumption \ref{a:f} implies that $f$ is bounded and hence the whole expression is bounded by $|t-s|^{\frac{\gamma}{2}}$ up to a constant. If $|\eps k| > \delta$ we can write 
\begin{equs}\label{e:Le6666}
 (t-s)^{{\frac{\gamma}{2}}} e^{-s  k^2 f(\eps k)} 
     f(\eps k)^{\frac{\gamma}{2}}
      &\ls (t-s)^{{\frac{\gamma}{2}}} e^{-\frac{s}{\eps^2}  \delta^2 f(\eps k)} 
     f(\eps k)^{\frac{\gamma}{2}}\\
     &\ls  (t-s)^{\frac{\gamma}{2}}  \eps^\gamma s^{-\frac{\gamma}{2}}  \sup_{x \in \R} e^{-  \delta^2 |x| } |x|^{\frac{\gamma}{2}} \ls  (t-s)^{\frac{\gamma}{2}} . 
\end{equs}

It remains to bound the Paley-Littlewood blocks. We start with the case $0 < \eps 2^{l+1} \leq \delta$. On $[-\delta,\delta]$ the function $f$ is $\CC^1$ by assumption. In this case we will  show that
\begin{equ}[e:Le661]
\sum_{k = 2^l}^{2^{l+1}-1} 
   \big| \deltamg (k) - \deltamg(k+1) \big|\lesssim |t-s|^{\gamma / 2}\;.
\end{equ}
We start by writing
\begin{equs}
\sum_{k = 2^l}^{2^{l+1}-1 } 
  & \big| \deltamg (k) - \deltamg(k+1) \big|  \leq \int_{2^l}^{2^{l+1}} \big|  \partial_x \deltamg(x) \big| \, dx\\
   & \leq  \int_{2^l}^{2^{l+1}} \Big| \big[  \eps (t- s) x^{2- \gamma} f'(\eps x)  + 2 x^{1 - \gamma} (t-s)  f(\eps x)  \big] \,  e^{ -t x^2 f( \eps x) }  \Big| \, dx \qquad  \\
&\quad + \int_{2^l}^{2^{l+1}}  \Big|  { \eps  s x^{3} f'(\eps x) + 2 s x^2 f(\eps x) + \gamma \over x^{1+\gamma}}\Big| \Big(e^{  - sx^2f(\eps x)  } -  e^{  - tx^2f(\eps x  )} \Big)  \, dx \\
   & \lesssim |t-s| \int_{2^l}^{2^{l+1}} x^{1-\gamma} e^{ -t x^2 f( \eps x) }  \, dx  \label{e:Lem662}\\
&\quad + \int_{2^l}^{2^{l+1}}  {s x^2 + 1 \over x^{1+\gamma}} \Big(e^{  - sx^2f(\eps x)  } -  e^{  - tx^2f(\eps x  )} \Big)  \, dx\;.
\end{equs}
For the first term, we use the boundedness of $f$ on $[-\delta,\delta]$ as well as the lower bound $f \geq c_f$,
so that
\begin{equs}
|t-s| \int_{2^l}^{2^{l+1}} \Big| x^{1 - \gamma} e^{ -t x^2 f( \eps x) }  \Big| \, dx  &\le  |t-s|^{\frac{\gamma}{2} }\int_{|t-s|^{1/2} 2^l}^{|t-s|^{1/2}2^{l+1}}  z^{1 - \gamma}   \,  e^{ - z^2 c_f }    \, dz \\
& \ls |t-s|^{\frac{\gamma}{2} }\;,
\end{equs}
as required, where we used the fact that $|t-s|\le t$. We break the second term in two components. For the first one, we have 
\begin{equs}
\int_{2^l}^{2^{l+1}}  {s x^2 \over x^{1+\gamma}} &\Big(e^{  - sx^2f(\eps x)  } -  e^{  - tx^2f(\eps x  )} \Big)  \, dx \\
& \ls \int_{2^l}^{2^{l+1}}    {s x^{1-\gamma}}  \,  e^{  -   sx^2 c_f } \big| 1 -  e^{  - (t-s) \,x^2 f(\eps x  )} \big|  \, dx\\
& \ls \int_{2^l}^{2^{l+1}}   s x^{1-\gamma}  \,  e^{  -  sx^2 c_f }  \big|  (t-s) \,x^2 f(\eps x  ) \big|^{\frac{\gamma}{2}}  \, dx\\
 &\ls  (t-s)^{  \frac{\gamma}{2}} \int_{0}^{\infty} 
 		z \,  e^{  - 2 z^2 c_f  }  \, dz  \ls (t-s)^{  \frac{\gamma}{2}}.
\end{equs}
For the remaining term, we obtain
\begin{equs}
\int_{2^l}^{2^{l+1}}  {1 \over x^{1+\gamma}} &\Big(e^{  - sx^2f(\eps x)  } -  e^{  - tx^2f(\eps x  )} \Big)  \, dx \\
& \ls \int_{2^l}^{2^{l+1}}    {1 \over x^{1+\gamma}}   \big| 1 -  e^{  - (t-s) \,x^2 f(\eps x  )} \big|  \, dx
 \ls \int_{2^l}^{2^{l+1}}    {1 \wedge |t-s|x^2 \over x^{1+\gamma}}  \, dx\\
 &\ls  (t-s)^{  \frac{\gamma}{2}} \int_{0}^\infty 
 		{1 \wedge z^2 \over z^{1+\gamma}}   \, dz  \ls (t-s)^{  \frac{\gamma}{2}}.
\end{equs}

Let us now treat the case $\delta \leq \eps 2^{l+1} $.  In this case we will establish  
\begin{equ}[e:Le6611]
\sum_{k = 2^l}^{2^{l+1}-1} 
   \big| \deltamg (k) - \deltamg(k+1) \big|\lesssim \eps^\gamma s^{-\frac{\gamma}{2}} |t-s|^{\gamma / 2}\;.
\end{equ}

Since $\deltamg(x) = x^{-\gamma} \deltam(x)$, we obtain
\begin{equs}
 \sum_{k = 2^l}^{2^{l+1}-1} 
   \big| \deltamg (k) - \deltamg(k+1) \big|  
&\leq  \frac{1}{2^{l\gamma}} \sum_{k = 2^l}^{2^{l+1}-1} 
   \big|\deltam(k) - \deltam(k+1) \big|  
\\&\quad    +  \frac{1}{2^{l\gamma}} \sup_{k \in [2^l, 2^{l+1}]}  \big| \deltam(k) \big|\;. \label{eq:pl}
\end{equs}
The second term in this expression can be bounded by
\begin{align*}
 \frac{1}{2^{l\gamma}} \sup_{k \in [2^l, 2^{l+1}]}  \big|\deltam(k) \big|
&  \lesssim 
 \frac{1}{2^{l\gamma}}
  \sup_{k \in [2^l, 2^{l+1}]}     \exp(-s k^2 f(\eps k)) 
 | t - s |^{\frac{\gamma}{2}} k^\gamma f(\eps k)^{\frac{\gamma}{2}}
\\&  \lesssim 
 | t - s |^{\frac{\gamma}{2}}  \, \sup_{k \in [2^l, 2^{l+1}]}  e^{-s k^2 f(\eps k)} 
 f(\eps k)^{\frac{\gamma}{2}}\;.
\end{align*}
As above in \eqref{e:Le6666} using the fact that $k \geq \frac12 \delta$ this expression can be bounded by $ \eps^\gamma s^{-\frac{\gamma}{2}} | t - s |^{\frac{\gamma}{2}}\;$ up to a constant.

It remains to bound the terms
\begin{equ}[e:BVNorm1]
 \frac{1}{2^{l \gamma}} \,\sum_{k = 2^l}^{2^{l+1}-1} 
    \big| \deltam (k) - \deltam(k+1) \big| \le  \frac{1}{2^{l \gamma}} |\deltam |_\BV\;.
\end{equ}
For this, it turns out to be sufficient to show that for any $\lambda_1 < \lambda_2$ we have the bound
\begin{equ}[e:suff1]
\big|G_{\lambda_1,\lambda_2} \big|_{\BV}  \ls \frac{\lambda_2 - \lambda_1}{\lambda_1}\;,
\end{equ}
where
\begin{equ}
G_{\lambda_1,\lambda_2}(x) :=  \exp\big(-\lambda_1 x^2 f(x) \big) - \exp\big(- \lambda_2  x^2 f(x) \big) \;.
\end{equ}
Assuming that we have established \eref{e:suff1}, we can rewrite $\deltam$ as
\begin{equ}
    \deltam(x) =  \,G_{\frac{s}{\eps^2},\frac{t}{\eps^2}}(\eps x)\;.
\end{equ}
Then \eref{e:suff1} implies the bound
\begin{equ}
|\deltam(x) |_\BV \lesssim {|t-s|\over s}\;.
\end{equ}
On the other hand, Assumption \ref{a:BV} immediately  implies that $|\deltam(x) |_\BV \lesssim 1$,
so that $|\deltam(x) |_\BV \lesssim |t-s|^{\frac{\gamma}{2}} s^{-\frac{\gamma}{2}}$.
Plugging this back into \eref{e:BVNorm1} and using the fact that $\eps 2^l  \geq \delta/2$, 
we immediately obtain the required bound.

It remains to show \eref{e:suff1}.For any $\hat{x} ,c>0$ we set $A_{c}(\hat{x}) = e^{-\hat x} - e^{-(1+c \,)\hat{x}}$. then for $\hat{x} < \hat{y}$ we observe the bound
\begin{equs}
|A_{c}(\hat x) - A_{c}(\hat y)|& =  \int_{\hat x}^{\hat y} e^{-z} - (1+c) e^{-(1+c \,)z} \,dz  \ls  c \int_{\hat x}^{\hat y}     e^{- \frac{z}{2}}  \,dz \\
& = c  \big| e^{-\frac{\hat x}{2}} - e^{-\frac{\hat y}{2}} \big| \;.
\end{equs}
Applying the observation for $\hat x := \lambda_1 x^2 f(x)$ and for $c :=   \frac{\lambda_2 - \lambda_1}{\lambda_1}$
we see that the $\BV$-norm of $G_{\lambda_1,\lambda_2}$ is bounded by $ \frac{\lambda_2 - \lambda_1}{\lambda_1}$ times the $\BV$-norm of
$\exp\bigl(-\frac{\lambda_1}{2} x^2 f(x)\bigr)$.
This on the other hand is bounded uniformly in $\lambda_1$ by Assumption \ref{a:BV}, so that we have established \eref{e:suff1}.
\end{proof}

\appendix
\section{Rough integrals} \label{AppA}

In this appendix we briefly summarise the definition and the properties of rough integrals we use. We refer the reader to \cite{LQ02,Gu04,LCL07,HW10,FV10} for a more complete account of rough path theory.

As above for $X \in \cN$ we will always use the notation $\delta X(x,y) := X(y) - X(x)$. For $R \in \bN$ we will write $\delta R(x,y,z) := R(x,z) - R(x,y) - R(y,z)$. See \cite{Gu04,GT10} for a discussion of the algebraic properties of these operators.

We want to define integrals of the type 
\begin{equ}[e:RoIn]
\int_x^y Y(z) \,  \otimes \,  d Z(z) 
\end{equ}
for functions $Y,Z \in \cC^\alpha$ for some $\alpha \in (0,1)$. 
If  $\alpha >  \frac{1}{2}$ such integrals can be defined as limits of Riemann-sums of type
\begin{equ}[e:RiemSum1]
\sum_{i} Y(x_i)  \otimes \delta Z(x_i, x_{i+1}).
\end{equ}
This yields the Young integral.


If $\alpha \le  \frac{1}{2}$ Riemann sums of type \eref{e:RiemSum1} will in general fail to converge. The idea is then to define a better approximation with the help of additional data. To this end we introduce the following definitions. 

\begin{definition}\label{def:RP}
An $\alpha$-rough path  consists of two functions $X \in \cC^\alpha(\R^n)$ and $\mathbf{X}\in \bB^{2\alpha}\big( \R^n  \otimes  \R^n \big)$ satisfying the relation
\begin{gather}
\mathbf{X}(x,z) - \mathbf{X}(x,y) - \mathbf{X}(y,z)  \,=\,  \delta X(x,y)  \otimes \delta X(y,z) \label{eq:cond-it-int}
\end{gather}
for all $x,y,z$. An $\alpha$-rough path $(X,\XX)$ is called \emph{geometric} if in addition for every $x,y$ the symmetric part $\XX^+(x,y) =  \frac{1}{2}(\XX(x,y) +\XX(x,y)^T$ of $\XX(x,y)$ satisfies
\begin{equs}
\XX^+(x,y) =  \frac{1}{2} \delta X(x,y) \otimes \delta X(x,y) .
\end{equs}

\end{definition}


Following  \cite{Gu04} we also define:

\begin{definition}
Let $X$ be in $\cC^\alpha$  A pair $(Y,Y')$ with  $Y \in \cC^\alpha$  and $Y'\in \cC^{\alpha}(\mathcal{L}(\R^n) )$ is said to be \emph{controlled} by $X$ if for all $x,y $
\begin{gather}
\delta Y(x,y)= Y'(x) \, \delta X(x,y)  + \mathcal{R}_Y(x,y),\label{eq:contr-rp}
\end{gather}
with a remainder $\mathcal{R}_Y \in \bB^{2\alpha}$. 
\end{definition}

Note that \eref{eq:contr-rp} is a linear condition. So for a given $X$  the space of paths that are controlled by $X$ is a vector space.

\begin{remark}
In general the decomposition \eref{eq:contr-rp} need not be unique, but  in all of the situations we will encounter  there is a natural choice of $Y'$, which will be called the \emph{rough path derivative} of $Y$.
\end{remark}


%
If $Y,Z$ are  controlled by $X$ and there is a choice of $\XX$ turning $(X,\XX)$ into a rough path, we construct the rough integral integral $\iint Y {d} Z$ as the limit of the second order approximations
\begin{equation}\label{eq:Riem-sum2}
\sum_{i} Y(x_i) \otimes \big(Z(x_{i+1})- Z(x_{i})  \big) + Y'(x_i) \, \mathbf{X}(x_i,x_{i+1}) \, Z'(x_i)^T.
\end{equation}
If  $\alpha >\frac{1}{3}$, it turns out that these approximations converge:

\def\Gubinelli{\cite[Thm~1 and Cor.~2]{Gu04}}
\begin{lemma}[\Gubinelli]\label{lem:Gub-int}
Let $\alpha > \frac 13$. Suppose $(X, \XX)$ is an $\alpha$ rough path and $Y,Z $ are controlled by $X$. Then the Riemann-sums defined in \eref{eq:Riem-sum2} converge as the mesh of the partition goes to zero. We call the limit \emph{rough integral} and denote it by  $\iint Y(x) \otimes \, {d} Z(x)$. 

The mapping $(Y,Z) \mapsto \iint Y \otimes  d Z$ is bilinear and we have the following bound:
\begin{gather}\label{eq:IB1}
\iint_x^y Y(z) \otimes d Z(z)  \, = \, Y(x) \otimes  \delta Z(x,y) + Y'(x )\XX (x,y)Z'(x)^T + Q(x,y), 
\end{gather}
where the remainder satisfies
\begin{align}
\big| Q \big|_{3 \alpha} \, \ls \,&     |\mathcal{R}_Y|_{2 \alpha} |Z|_\alpha + |Y'|_{\cC^0}  |X|_{\alpha}  |\mathcal{R}_Z|_{2 \alpha}  \notag\\
& \qquad +  |\mathbf{X}|_{2 \alpha} \Big(  |Y'|_\alpha|Z'|_{\cN} +  |Y'|_{\cN}|Z'|_\alpha  \Big)  + |X|_\alpha^2 |Y'|_{\cC^0} |Z|_\alpha  . \label{eq:IB2}    
\end{align}
\end{lemma}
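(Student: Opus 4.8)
The statement is classical and we only indicate the argument, referring to \cite{Gu04,GT10} for the algebraic background and the underlying ``sewing'' construction. The plan is to apply the sewing lemma to the two-parameter germ
\begin{equ}
\Xi(x,y) := Y(x)\otimes\delta Z(x,y) + Y'(x)\,\XX(x,y)\,Z'(x)^T,
\end{equ}
and then to identify the resulting additive two-parameter function with both the limit of the second-order Riemann sums \eqref{eq:Riem-sum2} and with the rough integral $\iint Y\otimes dZ$.

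The first and main step is to compute $\delta\Xi(x,y,z)=\Xi(x,z)-\Xi(x,y)-\Xi(y,z)$ and to show that it is of order $|x-z|^{3\alpha}$. Using additivity $\delta Z(x,z)=\delta Z(x,y)+\delta Z(y,z)$ for the first-order part and Chen's relation \eqref{eq:cond-it-int} for the second-order part, one obtains after rearranging
\begin{equs}
\delta\Xi(x,y,z) ={}& -\,\delta Y(x,y)\otimes\delta Z(y,z) + Y'(x)\bigl(\delta X(x,y)\otimes\delta X(y,z)\bigr)Z'(x)^T\\
& {}+ \bigl(Y'(x)-Y'(y)\bigr)\XX(y,z)Z'(x)^T + Y'(y)\,\XX(y,z)\,\bigl(Z'(x)-Z'(y)\bigr)^T.
\end{equs}
Inserting the controlled-path expansions $\delta Y(x,y)=Y'(x)\delta X(x,y)+R_Y(x,y)$ and $\delta Z(y,z)=Z'(y)\delta X(y,z)+R_Z(y,z)$ into the first summand, and using the matrix identity $(Av)\otimes(Bw)=A(v\otimes w)B^T$, the leading $\delta X\otimes\delta X$ contribution cancels against the second summand up to a factor $Z'(x)-Z'(y)$, which is itself of order $|x-y|^\alpha$. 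What remains is a finite sum of products, each carrying total H\"older exponent at least $3\alpha$: the terms $R_Y(x,y)\otimes\delta Z(y,z)$, $Y'(x)\delta X(x,y)\otimes R_Z(y,z)$, the two $\XX$-contractions above, and the residual $\delta X\otimes\delta X$ term, whose sizes are controlled respectively by $|R_Y|_{2\alpha}|Z|_\alpha$, $|Y'|_{\cC}|X|_\alpha|R_Z|_{2\alpha}$, $|\XX|_{2\alpha}\bigl(|Y'|_\alpha|Z'|_{\cC}+|Y'|_{\cC}|Z'|_\alpha\bigr)$ and $|X|_\alpha^2|Y'|_{\cC}|Z|_\alpha$. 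Grouping these as in \cite{Gu04} yields $|\delta\Xi(x,y,z)|\le C\,|x-z|^{3\alpha}$ with $C$ equal to the right-hand side of \eqref{eq:IB2}; the precise arrangement of the surviving terms into the five expressions listed there is routine bookkeeping. Crucially, no term involving the product $|R_Y|_{2\alpha}|R_Z|_{2\alpha}$ appears, which is exactly the point noted in Remark~\ref{rem56}.

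Since $\alpha>\tfrac13$ we have $3\alpha>1$, so the sewing lemma applies: there is a unique additive function $(x,y)\mapsto\iint_x^y Y\otimes dZ$ (the additive constant in each component being fixed by $\iint_x^x=0$) such that $|\iint_x^y Y\otimes dZ-\Xi(x,y)|\lesssim\|\delta\Xi\|_{3\alpha}\,|x-y|^{3\alpha}$, where the implied constant depends only on $\alpha$ through the convergent series $\sum_{n\ge0}2^{n(1-3\alpha)}$. Setting $Q(x,y):=\iint_x^y Y\otimes dZ-\Xi(x,y)$, this gives \eqref{eq:IB1}--\eqref{eq:IB2}. The same construction shows that for any partition $\{x_i\}$ of $[x,y]$ the Riemann sum $\sum_i\Xi(x_i,x_{i+1})$ differs from $\iint_x^y Y\otimes dZ$ by at most $C\,(\text{mesh})^{3\alpha-1}|y-x|$, hence converges to it as the mesh tends to zero; this identifies the limit with the rough integral and proves convergence. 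Finally, bilinearity of $(Y,Z)\mapsto\iint Y\otimes dZ$ is inherited from the bilinearity of $(Y,Z)\mapsto\Xi$ (together with the linear dependence of $Y',R_Y$ on $Y$ and of $Z',R_Z$ on $Z$) and the linearity of the sewing map. The only genuinely delicate point in the whole argument is the algebraic identity for $\delta\Xi$ and the verification that, after the cancellation of the $\delta X\otimes\delta X$ terms, every surviving term is dominated by one of the five expressions in \eqref{eq:IB2}; everything else is an application of the sewing lemma.
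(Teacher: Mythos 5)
Your argument is correct and is exactly the standard sewing-lemma proof from the cited source \cite{Gu04}; the paper itself states this lemma as a quotation and gives no proof, so there is nothing to compare against beyond noting that your route coincides with Gubinelli's. The computation of $\delta\Xi$, the cancellation of the $\delta X\otimes\delta X$ contribution up to a factor $Z'(x)-Z'(y)$, and the observation that no $|R_Y|_{2\alpha}|R_Z|_{2\alpha}$ product survives are all right. One small point of bookkeeping: the residual term $-\bigl(Y'(x)\delta X(x,y)\bigr)\otimes\bigl((Z'(y)-Z'(x))\delta X(y,z)\bigr)$ is naturally bounded by $|X|_\alpha^2\,|Y'|_{0}\,|Z'|_\alpha$ rather than by $|X|_\alpha^2\,|Y'|_{0}\,|Z|_\alpha$ as written in \eqref{eq:IB2}, so either that term of the quoted bound should be read with $|Z'|_\alpha$ or one absorbs it into the available controlled-path norms; this does not affect any use of the lemma in the paper, where all of these norms are controlled simultaneously.
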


The rough integral also possesses continuity properties with respect to different rough paths. We refer the reader to \cite{Gu04} for more details. The reason for using the notation $\iint$ instead of $\int$ is to keep a reminder of the
fact that this is really an abuse of notation since $\iint Y\,dZ$ also depends on the choices of $Y'$, $Z'$, and $\XX$.

\section{Regularity results}\label{AppB}

We first quote a variation on a classical regularity statement due to Garsia, Rodemich and Rumsey \cite{GRR70}. For $R \in \bN$ we will use the notation
\begin{align*}
|\delta R |_{[x,y]} :=& \sup_{x \leq z_1 < z_2< z_3 \leq y} 
	 \big| \delta R(z_1,z_2,z_3) \big|\;,
\end{align*}
where $\delta R(z_1,z_2,z_3) := R(z_1,z_3) - R( z_1,z_2) - R( z_2,z_3)$. The following result is a special case of \cite[Lemma 4]{Gu04} applied to the functions $\psi(u)=u^p$ and $p(x)=x^{\alpha+2/p}$.

\begin{lemma}[] \label{lem:GRR}
Let $\alpha \geq 0$ and $p \geq 1$. For $R \in \bN$ we have
\begin{align*}
 |R|_{\alpha} \ls
 \bigg(   \int_{[-\pi,\pi]^2}  \frac{|R(x,y)|^p}{|x-y|^{\alpha p+2}}  \, d x \, d y \bigg)^{1/p}
   + \sup_{x<y}\frac{|\delta R |_{[x,y]}}{|x-y|^\alpha}\;.
\end{align*}
\end{lemma}

In the special case where $R(x,y) = f(y) - f(x)$ for some function $f$, the second term vanishes and one recovers a version of the Sobolev embedding theorem.

We will now proceed to extend this statement to derive bounds on functions that depend on several variables. We  will use the abbreviated notation
\begin{equ}
\| F \|_{\cC^{\alpha_1}_T(\cC^\alpha_2)} :=  \| F \|_{\cC^{\alpha_1}([0,T], \cC^{\alpha_2}[-\pi, \pi] )}, 
\end{equ}
and similarly for  $\| R \|_{\cC^{\alpha_1}_T(\bB^\alpha_2)}$. %

\begin{lemma}\label{lem:AnIsKol}
Let $\alpha_1, \alpha_2>0$, let $\gamma_1, \gamma_2 \in [0,1]$ and let $p \geq 1$ be such that
\begin{equ}[e:condal]
\alpha_1  \, < \, \gamma_1 \lambda_1 -  \frac{1}{p} \,, \quad   \alpha_2  \, < \, \gamma_2 \lambda_2 -  \frac{1}{p}  
\end{equ}
for some $\lambda_1, \lambda_2 >0$, $\lambda_3 \geq 0$ with $\lambda_1 + \lambda_2 + \lambda_3 = 1$.

\begin{enumerate}
\item Let $F$ be a random function in $C([0,T], C [-\pi, \pi])$ satisfying
\minilab{e:F-conditions}
\begin{equs} \label{e:TBF}
\sup_{x \in [-\pi, \pi] } \E   \big| F(t,x) - F(s,x)  \big|^p 
  & \leq U_1^p |t-s|^{\gamma_1 p}\;, \\
\label{e:SBF}
\sup_{ t \in [0,T]}  \E \big| F(t,x) -F(t,y)  \big|^p 
  & \leq  U_2^p |x-y |^{\gamma_2 p}\;, \\
\label{e:DBF}
  \sup_{\substack{x \in [-\pi, \pi] \\ t \in [0,T]}} 
  \E   \big| F(t,x)  \big|^p  
  & \leq  U_3^p\;.
\end{equs}
Then we have 
\begin{equs}[e:IB1]
\E  \big\| F  & \big\|_{\cC^{\alpha_1}_T (\cC^{ \alpha_2})}^p    \, \ls \,      
 \Big((  U_1^{\lambda_1 } + U_3^{\lambda_1}  ) 
   (U_2^{\lambda_2 } + U_3^{\lambda_2}) 
   U_3^{\lambda_3}
 \Big)^p.
\end{equs}
\item Similarly, let $R$ be a random function in $C([0,T], \bN[-\pi, \pi])$ satisfying
\minilab{e:R-conditions}
\begin{equs} 
\label{e:TBR}
\sup_{x ,y \in [-\pi, \pi] } \E   \big| R(t,x,y) - R(s;x,y)  \big|^p 
	& \leq U_1^p |t-s|^{\gamma_1 p}\;,\\
\label{e:SBR1}
\sup_{ t \in [0,T]}  \E \big| R(t;x,y)  \big|^p
    & \leq  U_2^p  |x-y|^{\gamma_2 p}\;, \\
\label{e:SBR2} \notag
\sup_{ t \in [0,T]} \E  \big| \delta R(t) \big|_{[x,y]}^p       
    & \leq  U_2^p  |x-y|^{\gamma_2 p}\;, \\
\label{e:DBR}
\sup_{\substack{x,y \in [-\pi, \pi] \\ t \in [0,T]}}
	 \E \big|  R(t;x,y)\big|^p
	  &  \leq  U_3^p\;.
\end{equs}
Then we have 
\begin{equs}[e:IB2]
\E   \big\| R \big\|_{\cC^{\alpha_1}_T (\bB^{\alpha_2}) }^p   \,  \ls     
 \Big((  U_1^{\lambda_1 } + U_3^{\lambda_1}  ) 
   (U_2^{\lambda_2 } + U_3^{\lambda_2}) 
   U_3^{\lambda_3}
 \Big)^p.
\end{equs}
\end{enumerate}
\end{lemma}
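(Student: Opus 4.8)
The plan is to reduce both parts to the classical Garsia--Rodemich--Rumsey estimate of Lemma~\ref{lem:GRR}, applied twice: once in the spatial variable to upgrade the $L^p$-bounds to $\cC^{\alpha_2}$-regularity with a random constant, and once in the time variable to upgrade to $\cC^{\alpha_1}$-regularity. The interpolation parameters $\lambda_1,\lambda_2,\lambda_3$ are precisely what is needed to share the regularity budget between the two directions and to make the constants $U_1,U_2,U_3$ appear with the correct powers. Since part (ii) is proved exactly as part (i) with $R(t;x,y)$ in place of the increment $F(t,y)-F(t,x)$ and using \eref{e:SBR2} to control $|\delta R|_{[x,y]}$ in the application of Lemma~\ref{lem:GRR}, I would treat part (i) in detail and then indicate the (notational) modifications for part (ii).

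\textbf{Step 1: spatial regularity at fixed time.} Fix $t$ and apply Lemma~\ref{lem:GRR} (in the form of Remark~\ref{rem:SobHol}, i.e. with $\Theta(u) = u^p$ and $p(r) = r^{\gamma_2 - \kappa' + 2/p}$ for a small $\kappa'>0$, chosen so that $\alpha_2 < \gamma_2 - \kappa' - 1/p$) to the function $x \mapsto F(t,x)$. This gives a pointwise bound
\begin{equ}
|F(t,\cdot)|_{\cC^{\alpha_2}}^p \ls \int_{[-\pi,\pi]^2} \frac{|F(t,x)-F(t,y)|^p}{|x-y|^{(\gamma_2-\kappa')p - 2 + 1}}\,dx\,dy =: V_2(t)\;,
\end{equ}
and likewise $\sup_x |F(t,x)|^p \ls \int |F(t,x)|^p\,dx =: V_3(t)$. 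Taking expectations and using \eref{e:SBF}, \eref{e:DBF} yields $\E V_2(t) \ls U_2^p$ and $\E V_3(t) \ls U_3^p$, uniformly in $t$. The point of introducing $\kappa'$ is that we will not use the full power $\gamma_2$ of spatial regularity but only the fraction $\lambda_2\gamma_2$; the slack is absorbed into the interpolation below.

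\textbf{Step 2: joint space-time regularity via a second GRR and interpolation.} Now view $t \mapsto F(t,\cdot)$ as a process with values in the Banach space $\cC^{\alpha_2}$ (or rather in $\cC$, interpolated with $\cC^{\alpha_2}$), and apply Lemma~\ref{lem:GRR} a second time in the $t$-variable, with $p(r) = r^{\gamma_1 - \kappa' + 1/p}$. The key algebraic input is the interpolation inequality: for any two times $s,t$ and any $\theta \in [0,1]$,
\begin{equ}
|F(t,\cdot)-F(s,\cdot)|_{\cC^{\theta\alpha_2}} \ls |F(t,\cdot)-F(s,\cdot)|_{\cC}^{1-\theta}\,\big(|F(t,\cdot)|_{\cC^{\alpha_2}}+|F(s,\cdot)|_{\cC^{\alpha_2}}\big)^{\theta}\;,
\end{equ}
combined with the temporal bound \eref{e:TBF} for the $\cC$-part. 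Choosing the exponents so that the spatial Hölder loss equals $\lambda_2\gamma_2$ and the temporal one equals $\lambda_1\gamma_1$, with the remaining $\lambda_3$ accounting for the $U_3$ factor (which enters both through the $\cC$-norm appearing in \eref{e:DBF} and through the interpolation), one obtains after taking $p$-th moments and using Step~1 the estimate
\begin{equ}
\E \|F\|_{\cC^{\alpha_1}_T(\cC^{\alpha_2})}^p \ls \Big((U_1^{\lambda_1}+U_3^{\lambda_1})(U_2^{\lambda_2}+U_3^{\lambda_2})U_3^{\lambda_3}\Big)^p\;,
\end{equ}
which is \eref{e:IB1}. The sums $U_1^{\lambda_1}+U_3^{\lambda_1}$ and $U_2^{\lambda_2}+U_3^{\lambda_2}$ arise because in each direction one interpolates between a genuine increment bound ($U_1$, resp. $U_2$) and the trivial sup bound ($U_3$).

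\textbf{Step 3: part (ii).} The argument is identical with $F(t,y)-F(t,x)$ replaced by $R(t;x,y)$ throughout; the only genuine difference is that the hypothesis \eref{e:SBR2} on $|\delta R(t)|_{[x,y]}$ is exactly what is needed to verify the structural condition \eref{eq:GRR2} in Lemma~\ref{lem:GRR} when that lemma is applied directly to $R(t;\cdot,\cdot) \in \bB$ (rather than to an increment of a function), and the conclusion \eref{eq:GRR3} then controls $|R(t;\cdot,\cdot)|_{\alpha_2}$. The temporal step is unchanged since \eref{e:TBR} has the same form as \eref{e:TBF}.

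\textbf{Main obstacle.} The routine part is the two GRR applications; the part requiring care is bookkeeping of the interpolation exponents so that the powers of $U_1,U_2,U_3$ come out exactly as stated, and in particular ensuring that the small losses $\kappa'$ introduced to satisfy the strict inequalities in \eref{e:condal} and \eref{eq:alp} can be absorbed without spoiling the final bound — this works precisely because \eref{e:condal} asks for strict inequality with $1/p$ to spare. A secondary point is to check measurability/continuity so that the random variables $V_2(t), V_3(t)$ and their time-increment analogues are well-defined and the Fubini interchange of $\E$ and the GRR double integral is legitimate; this is standard given $F \in \cC([0,T],\cC)$ almost surely.
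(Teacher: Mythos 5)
Your overall architecture (a spatial GRR application followed by a temporal one, with the exponents $\lambda_1,\lambda_2,\lambda_3$ distributing the regularity budget) is the right one, and your Step 3 and your remarks about absorbing the losses thanks to the strict inequalities in \eref{e:condal} are fine. However, Step 2 contains a genuine gap. The interpolation inequality you invoke requires \emph{pathwise} control of $|F(t,\cdot)-F(s,\cdot)|_{\cC}=\sup_x|F(t,x)-F(s,x)|$, whereas hypothesis \eref{e:TBF} only controls $\sup_x\E|F(t,x)-F(s,x)|^p$ — the supremum and the expectation are in the wrong order. Upgrading this to a bound on $\E\sup_x|F_{s,t}(x)|^p$ that still carries a factor $|t-s|^{\gamma_1\lambda_1 p}$ forces you to apply GRR in the spatial variable to the \emph{time increment} $F_{s,t}:=F(t,\cdot)-F(s,\cdot)$, and for the resulting double integral to converge you need a moment bound on the mixed second difference $\E|F_{s,t}(x)-F_{s,t}(y)|^p$ decaying simultaneously in $|t-s|$ and in $|x-y|$ (with spatial exponent strictly larger than $\alpha_2 p+1$). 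Nothing in your argument produces such a bound: Step 1 only gives spatial regularity of $F(t,\cdot)$ at each \emph{fixed} time, with a random constant $V_2(t)$ for which you control $\sup_t\E V_2(t)$ but not $\E\sup_t V_2(t)$; trying to obtain the latter by a Kolmogorov argument in $t$ sends you back to needing $\E|F(t)-F(s)|^p_{\cC^{\beta}}\ls|t-s|^{\mathrm{pos}}$, i.e.\ to the same missing estimate. The argument as written is therefore circular at its key step.

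The missing ingredient is to perform the three-way interpolation at the level of the \emph{moments of the double increment} rather than pathwise on norms. Setting $A=F_{s,t}(x)-F_{s,t}(y)$, one has simultaneously $|A|\le 2\sup_z|F(t,z)-F(s,z)|$, $|A|\le 2\sup_{r\in\{s,t\}}|F(r,x)-F(r,y)|$ and $|A|\le 4\sup|F|$; writing $|A|^p=|A|^{\lambda_1p}\,|A|^{\lambda_2p}\,|A|^{\lambda_3p}$ and applying H\"older's inequality with exponents $1/\lambda_i$ together with \eref{e:TBF}--\eref{e:DBF} gives $\E|A|^p\ls (U_1^p|t-s|^{\gamma_1p})^{\lambda_1}(U_2^p|x-y|^{\gamma_2p})^{\lambda_2}U_3^{\lambda_3p}$. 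Feeding this into the spatial GRR applied to $F_{s,t}$ (the integral converges precisely under the second condition in \eref{e:condal}) yields $\E|F_{s,t}|^p_{\cC^{\alpha_2}}\ls \bigl(U_1^{\lambda_1}U_3^{\lambda_3}(U_2^{\lambda_2}+U_3^{\lambda_2})\bigr)^p|t-s|^{\gamma_1\lambda_1p}$, and a second GRR in $(s,t)$ applied to $|F_{s,t}|_{\cC^{\alpha_2}}$ (converging under the first condition in \eref{e:condal}) gives \eref{e:IB1}; this is exactly the paper's route. In that route your Step 1 is only needed to handle the additive term $\E|F(0,\cdot)|^p_{\cC^{\alpha_2}}$, and part (ii) goes through verbatim once \eref{e:SBR2} is used to verify the hypothesis \eref{eq:GRR2} on $|\delta R|_{[x,y]}$.
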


\begin{proof}
Let us start by proving \eref{e:IB1}. To this end for fixed $0 \leq s <t \leq T$ we introduce the notation 
\begin{equ}
\dFs( x) = F(t,x) - F(s,x).  
\end{equ}
We have to bound the quantity 
\begin{equ}[e:APB1]
\E   \big\|   F  \big\|_{\cC^{\alpha_1}_T (\cC^{ \alpha_2})}^p   
 \ls    \E \bigg(  \sup_{0 \leq s< t \leq T}   \frac{\big|  \dFs    \big|_{\cC^{ \alpha_2}}}{|t-s|^{\alpha_1} }  \bigg) ^p    +    \E    \big|  F(0,  \cdot)    \big|_{\cC^{ \alpha_2}}^p   .\qquad 
\end{equ}
To this end for fixed $s,t$ we can write
\begin{equ}[e:APB2]
\E    \big|  \dFs    \big|_{\cC^{ \alpha_2}}^p    \ls  \E \bigg(   \sup_{x \neq y \in [-\pi ,\pi ]}   \frac{ \big| F_{s, t}(y) - F_{s, t}(x) \big| }{|x-y|^{ \alpha_2}} \bigg)^p      +  \E   \big|  \dFs (0) \big|^p . \qquad
\end{equ}
 For the first term we get using the Garsia-Rodemich-Rumsey Lemma \ref{lem:GRR},
\begin{equs}[e:APB3]
\E  \bigg(     \sup_{x,y \in [-\pi ,\pi ]} &  \frac{\big| F_{s, t}(y) - F_{s, t}(x) \big| }{|x-y|^{\alpha_2}}    \bigg)^p  \\ 
 \ls \,&    \,  \E \bigg[   \int_{[-\pi, \pi]^2 }    \frac{|\dFs(x) -\dFs(y)|^p  }{|x-y|^{\alpha_2 p+  2}} \, dx \, dy \bigg] \\
 =  \,&      \int_{[-\pi, \pi]^2 }   \frac{1}{|x-y|^{\alpha_2 p+  2 }}   \, \E   \Big(  \dFs(x) -\dFs(y)   \Big)^p \, dx \, dy . 
\end{equs}
Using H\"older inequality the expectation in the last integral can be bounded by
\begin{equs}[e:APB4]
  \E   \big|  \dFs(x) -\dFs(y)  \big|^p 
   &\ls    \sup_{ \substack{ z \in \{x,y\}}} \! \! 
   \big(\E  \big|   F(t,z) - F (s,z)  \big|^p \big)^{\lambda_1}
    \\ 
      &\qquad\times  \! \sup_{ \substack{ r \in \{s,t\} } } \!
        \big( \E \,  \big|   F(r,x) - F(r,y)  \big|^p \big)^{\lambda_2}\\
        &\qquad \times     \!\!\!   
         \sup_{ \substack{ z \in \{ x,y \}  \\ r \in \{s,t\}}}  \!  \!  \big(  \E  \big|   F(r,z)  \big|^p \big)^{\lambda_3}\qquad \\ 
   & \leq  \, \big(  U_1^p \,  |t-s|^{p \gamma_1}  \big)^{\lambda_1} \, \big(  U_2^p \,  |x-y|^{p \gamma_2} \big)^{\lambda_2} \,  \big( U_3^p \big)^{ \lambda_3} .
\end{equs}
 Here in the last step we have made use of the bounds  \eref{e:TBF}, \eref{e:SBF}, and \eref{e:DBF}. Similarly, according to the assumption \eref{e:TBF} and \eref{e:DBF} we get for the second term on the right-hand side of \eref{e:APB2} 
\begin{equ}[e:APB5]
 \E  \big|  \dFs (0) \big|^p  \, \ls   \, \big(  U_1^p\,  |t-s|^{p \gamma_1} \big)^{\lambda_1} \big(  U_3^p \big)^{\lambda_2 + \lambda_3} . 
\end{equ}
Therefore,  we get
\begin{equs}[e:APB6]
\E   \big|  \dFs     \big|_{\cC^{ \alpha_2}}^p  \,  \ls   \,     \big(   &U_1^{\lambda_1} \,  U_3^{ \lambda_3}  |t-s|^{ \gamma_1 \lambda_1}      \big)^p  \\
  &  \times \Big( U_3^{p\lambda_2 } +  U_2^{p \lambda_2}   \,  \int_{[-\pi, \pi]^2 }   \frac{ |x-y|^{p \gamma_2 \lambda_2}  }{|x-y|^{\alpha_2 p+  2 }}   \,   \, dx \, dy      \Big).  \qquad   
\end{equs}
The integral appearing in \eref{e:APB6} is finite if and only if  $\alpha_2$ satisfies the condition given in \eref{e:condal}. So in that case we get
\begin{equ}[e:APB8]
\E     \big|  \dFs    \big|_{\cC^{ \gamma_2}}^p   \,  \ls    \,  \big(   U_1^{\lambda_1} \,  U_3^{ \lambda_3}  |t-s|^{ \gamma_1 \lambda_1}     \big)^p     \big( U_2^{ \lambda_2}  + U_3^{\lambda_2}      \big)^p  .
\end{equ}
Then, to get uniform bounds in $s,t$ we apply the Garsia-Rodemich-Rumsey Lemma to the first term  on the right-hand side of \eref{e:APB1}
\begin{equs}
\E \bigg(   \sup_{0 \leq s , t \leq T}   \frac{\big|  \dFs    \big|_{\cC^{ \alpha_2}}}{|t-s|^{\alpha_1} }   \bigg)^p      &  \ls  \int_{[0,T]^2}  \frac{\E   \big|  \dFs    \big|_{\cC^{ \alpha_2}}^p }{|t-s|^{\alpha_1  p +  2 }} \, ds \, dt  \label{e:APB9}\\
&  \ls \,   \big(   U_1^{\lambda_1} \,  U_3^{ \lambda_3}   \big)^p    \big( U_2^{\lambda_2} + U_3^{ \lambda_2}    \big)^p    \int_{[0,T]^2}  \frac{   |t-s|^{ \gamma_1 \lambda_1 p}}{|t-s|^{\alpha_1  p +  2 }}    \, ds \, dt. 
\end{equs}
The integral appearing on the right-hand side of \eref{e:APB9} is finite if and only if $\alpha_1$ satisfies the condition \eref{e:condal}. Then we get 
\begin{equ}
\E \Big(   \sup_{0 \leq s , t \leq T}   \frac{\big|  \dFs    \big|_{\cC^{ \alpha_2}}}{|t-s|^{\alpha_1} }   \Big)^p   \, \ls \,  \big(   U_1^{\lambda_1} \,  U_3^{ \lambda_3} \big)^p      \big( U_2^{\lambda_2}  +     U_3^{ \lambda_2}    \big)^p . 
\end{equ}
Finally, to conclude it only remains to bound the term $ \E  \big| F(0,  \cdot)    \big|_{\cC^{ \alpha_2}}^p  $ on the right-hand side of \eref{e:APB1}. This can be done by observing that 
\begin{equs}
\E  \big| F(0, \cdot) \big|_{\cC^{\alpha_2}}^p   &\ls  \E  \big| F(0,0) \big|^p  \\ 
&\quad +   \int_{[-\pi, \pi]^2}   \frac{1}{|x-y|^{\alpha_2 p + 2 }} \,  \E   \big|  F(0,x) - F(0,y) \big|^p  \, dx \, dy 
\\ &\ls  U_3^p  +  U_3^{(\lambda_1 + \lambda_3)p  } \,  U_2^{\lambda_2 p} 
\ls U_3^{(\lambda_1 + \lambda_3)p  } \, (  U_2^{\lambda_2}+  U_3^{\lambda_2})^p .
\end{equs}
This finishes the proof of \eref{e:IB1}. 

The proof of \eref{e:IB2} is very similar and we only sketch it. As above we will use the notation 
\begin{equ}
\dRs(x,y)  \, = \, R(t;x,y) - R(s;x,y). 
\end{equ}
Similarly to \eref{e:APB1} we need to derive a bound on 
\begin{equ}[e:APB11]
\E  \big\|   R   \big\|_{\cC^{\alpha_1}_T (\bB^{ \alpha_2}) }^p   
 \ls   \E \Big(  \sup_{0 \leq s< t \leq T}   \frac{1}{|t-s|^{\alpha_1} }  \big|  \dRs    \big|_{ \alpha_2}\Big) ^p    +  \,    \E     \big|  R(0; \cdot,\cdot)    \big|_{ \alpha_2}^p  \, . 
\end{equ}
For fixed $s , t$ we get using Gubinelli's version of the Garsia-Rodemich-Rumsey inequality \ref{lem:GRR}
\begin{equs}[e:APB10]
\E   \big|  \dRs     \big|_{ \alpha_2}^p     \, \ls \,      \int_{[-\pi, \pi]^2 } &   \frac{\E\big[|\dRs(x,y)|^p  + | \delta \dRs |_{[x,y]}^p\big]}{|x-y|^{\alpha_2 p+  2}} 
\, dx \, dy  \;.
\end{equs}
The difference with respect to the case of $F$ is the appearance of the extra term $ |\delta R|_{[x.y]}$. On the other side there is no lower order term in the $\bB^{ \alpha_2}$ norm. Then using H\"older inequality and the bounds \eref{e:TBR}, \eref{e:SBR1}, \eref{e:SBR2} and \eref{e:DBR}, and then the integrability condition \eref{e:condal} in the same way as in \eref{e:APB3} and \eref{e:APB4}, the expectation in the right-hand side of \eref{e:APB10} can be bounded by 
\begin{equ}
 \big(  U_1^p \,  |t-s|^{p \gamma_1}  \big)^{\lambda_1} \, \big(  U_2^p \,  |x-y|^{p \gamma_2} \big)^{\lambda_2} \, \big(  U_3^p\,\big)^{\lambda_3}. 
\end{equ}
Plugging this back in \eref{e:APB11} we get as in \eref{e:APB9},
\begin{equs}
\E \bigg( &  \sup_{0 \leq s< t \leq T}   \frac{\big|  \dRs    \big|_{ \alpha_2}}{|t-s|^{\alpha_1} }  \bigg) ^p     \\
& \ls   
  \big(   U_1^{\lambda_1} \,  U_3^{ \lambda_3}  \big)^p     \cdot \big( U_2^{\lambda_2}  +     U_3^{ \lambda_2}    \big)^p    \int_{[0,T]^2}  \frac{|t-s|^{ \gamma_1 \lambda_1 p} }{|t-s|^{\alpha_1  p +  2 }}    \, ds \, dt \\
& \ls  \big(   U_1^{\lambda_1} \,  U_3^{ \lambda_3}   \big)^p     \cdot \big( U_2^{\lambda_2}  +  U_3^{ \lambda_2}    \big)^p. 
\end{equs}
Then applying Lemma \ref{lem:GRR} once more we can bound the second term appearing on the right-hand side of \eref{e:APB11} by 
\begin{equ}
 \E   \big|   R(0; \cdot,\cdot)    \big|_{ \alpha_2}^p   \,  \ls  \,  \big( U_2^{\lambda_2 } \,  U_3^{\lambda_1 + \lambda_3}  \, \big)^p.
 \end{equ}
This finishes the proof of \eref{e:IB2}.   
\end{proof}

In a similar spirit is the following Banach space-valued version of Kolmogorov's continuity criterion, which is slightly more convenient in some cases.

\begin{lemma}\label{lem:Kolmogorov}
Let $(\phi(t))_{t \in [0,T]}$ be a Banach space-valued random field having the property that for any $q \in (2,\infty)$ there exists a constant $K_q > 0$ such that
\begin{equation}\begin{aligned}\label{eq:equiv-mom}
  \big(\E \| \phi(t) \|^q \big)^{\frac1q} & \leq K_q 
  \big(\E \| \phi(t) \|^2 \big)^{\frac12}\;\;,\\
  \big(\E \| \phi(s) - \phi(t) \|^q \big)^{\frac1q} & \leq K_q 
  \big(\E \| \phi(s) - \phi(t) \|^2 \big)^{\frac12}\;,
\end{aligned}\end{equation}
for all $s, t \in [0,T]$. Furthermore, suppose that the estimate
\begin{align*}
 \E \| \phi(s) - \phi(t) \|^2 & \leq K_0 | s - t |^\delta
\end{align*}  
holds for some $K_0, \delta > 0$ and all $s,t \in [0,T]$. Then, for every $p  > 0$ there exists $C > 0$ such that
\begin{align*}
 \E \sup_{t \in [0,T]} \| \phi(t) \|^p
    \leq C \big( K_0 + \E\|\phi(0)\|^2 \big)^{\frac{p}{2}}\;.
\end{align*}
\end{lemma}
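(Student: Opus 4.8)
\textbf{Proof proposal for Lemma~\ref{lem:Kolmogorov}.}
The plan is to first reduce everything to the second moment via the equivalence-of-moments hypothesis \eref{eq:equiv-mom}, and then apply a Kolmogorov/Garsia--Rodemich--Rumsey type chaining argument to the scalar random field $t \mapsto \|\phi(t)\|$. Concretely, fix $q$ large (to be chosen at the end depending on $p$ and $\delta$). Applying \eref{eq:equiv-mom} to the increments gives
\begin{equ}
\E\|\phi(s) - \phi(t)\|^q \le K_q^q \big(\E\|\phi(s)-\phi(t)\|^2\big)^{q/2} \le K_q^q K_0^{q/2} |s-t|^{\delta q/2}\;,
\end{equ}
so that the scalar field $\psi(t) := \|\phi(t)\|$ satisfies the Kolmogorov-type bound $\E|\psi(s)-\psi(t)|^q \lesssim |s-t|^{\delta q/2}$ with constant controlled by $K_q^q K_0^{q/2}$ (here I use $|\psi(s)-\psi(t)| \le \|\phi(s)-\phi(t)\|$, the reverse triangle inequality).

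Next I would invoke the classical Garsia--Rodemich--Rumsey lemma (Lemma~\ref{lem:GRR}, or equivalently the scalar Kolmogorov continuity theorem) applied to $\psi$ on $[0,T]$ with $\Theta(u) = u^q$ and $p(r) = r^{2/q + \delta/4}$, say: this yields a modulus-of-continuity bound of the form
\begin{equ}
\sup_{s \ne t} \frac{|\psi(s)-\psi(t)|}{|s-t|^{\eta}} \lesssim \Big(\int_{[0,T]^2} \frac{|\psi(s)-\psi(t)|^q}{|s-t|^{\eta q + 2}}\,ds\,dt\Big)^{1/q}
\end{equ}
for a suitable $\eta > 0$ (any $\eta < \delta/2 - 1/q$ works once $q > 2/\delta$). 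Taking $p$-th moments with $p < q$, using Jensen to pull the $\E$ inside the integral, and the increment bound above, gives
\begin{equ}
\E\Big(\sup_{s \ne t} \frac{|\psi(s)-\psi(t)|}{|s-t|^{\eta}}\Big)^{p} \lesssim \big(K_q^q K_0^{q/2}\big)^{p/q} \lesssim K_0^{p/2}\;,
\end{equ}
where the implied constant depends on $q$, hence on $p$ and $\delta$. Since $\sup_{t}\psi(t) \le \psi(0) + T^{\eta}\sup_{s\ne t}\frac{|\psi(s)-\psi(t)|}{|s-t|^\eta}$, combining with $\E\psi(0)^p = \E\|\phi(0)\|^p \lesssim (\E\|\phi(0)\|^2)^{p/2}$ (again by \eref{eq:equiv-mom} when $p \ge 2$, and by Jensen when $p \le 2$) yields
\begin{equ}
\E\sup_{t\in[0,T]}\|\phi(t)\|^p \lesssim K_0^{p/2} + \big(\E\|\phi(0)\|^2\big)^{p/2} \lesssim \big(K_0 + \E\|\phi(0)\|^2\big)^{p/2}\;,
\end{equ}
which is the claim.

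The only mild subtlety — and the step I would be most careful about — is the bookkeeping of which moment exponent $q$ to use: one needs $q$ large enough that the GRR integrability condition $\eta q + 2 < \delta q/2 + 1$ holds for some $\eta>0$, while $q$ may need to exceed $p$ (handled by Jensen) and the hypothesis \eref{eq:equiv-mom} is assumed for \emph{all} $q \in (2,\infty)$, so there is no obstruction there. A second minor point is the case $p$ small (say $p<2$): then one first proves the bound for some fixed $p_0 \ge 2$ and deduces the $p<p_0$ case by Jensen's inequality on the probability space. No genuine difficulty is expected; this is essentially a packaging of the standard Kolmogorov continuity argument in a form that exploits hypercontractivity-type moment equivalence to reduce all constants to the second moment.
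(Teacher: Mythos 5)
Your argument is correct. Note that the paper in fact states Lemma~\ref{lem:Kolmogorov} without proof, so there is nothing to compare against: what you have written is precisely the standard argument one would supply, namely reducing to the scalar field $t\mapsto\|\phi(t)\|$ via the reverse triangle inequality, upgrading the second-moment increment bound to a $q$-th moment bound by \eref{eq:equiv-mom}, and then running the Garsia--Rodemich--Rumsey/Kolmogorov chaining estimate with $q>2/\delta$ large enough, followed by Jensen to come back down to the exponent $p$. Your bookkeeping of the exponents ($\eta<\delta/2-1/q$, the treatment of $p<2$ by Jensen, and the final absorption of $K_0^{p/2}+(\E\|\phi(0)\|^2)^{p/2}$ into $(K_0+\E\|\phi(0)\|^2)^{p/2}$) is all sound. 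The only point worth making explicit is the usual one: the GRR bound controls the supremum over a dense countable set, so one should either assume (as holds in the paper's applications, where $\phi$ lives in a fixed Wiener chaos and is continuous) or first construct a continuous modification before writing $\sup_{t\in[0,T]}$.
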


The conditions \eqref{eq:equiv-mom} are satisfied for random fields taking values in a fixed Gaussian chaos, as follows from the following well-known result, which is a consequence of the hypercontractivity of the Ornstein-Uhlenbeck semigroup due to Nelson \cite{Nel}. In order to formulate the result, we introduce some notation.

Let $(\Omega, \mathcal{F},\P)$ be a probability space, let $H$ be a separable Hilbert space, and let $\iota : H \to L^2(\Omega)$ be an isometry with the property that $\iota(h)$ is centered Gaussian for all $h \in H$. For a separable Banach space $E$, the $E$-valued Gaussian chaos of order $m \geq 0$ is defined as
\begin{align*}
 \mathcal{H}_m(E) := \overline{\mathrm{lin} \{ H_m( \iota(h)) \otimes x \ : \
  \| h \| = 1, \ x \in E \} }\;,
\end{align*}
where $H_m$ is the Hermite polynomial of degree $m$ and the closure is taken in $L^2(\Omega, \P ; E)$. We set $\mathcal{H}_{\leq m}(E) := \bigcup_{\ell = 0}^m \mathcal{H}_l(E)$. More information on  Banach space-valued Gaussian chaos can be found in \cite{FV10,KwaWoy02,Ma11}.
 
\begin{lemma}\label{lem:Nelson}
Let $1 \leq p < \infty$ and $m \geq 0$. For all $F \in  \mathcal{H}_{\leq m}(E)$ we have
\begin{align*}
 \| F \|_{L^p(\Omega, \P ; E)} \eqsim 
 \| F \|_{L^2(\Omega, \P ; E)}\;.
\end{align*}
\end{lemma}

\bibliographystyle{./Martin}
\bibliography{./burgers}

 \end{document}